\renewcommand{\theequation}{\mbox{\arabic{section}.\arabic{equation}}}
\newcommand{\R}{{\Bbb R}}
\newcommand{\C}{{\Bbb C}}
\newcommand{\tr}{\text{\upshape tr\,}}
\newcommand{\res}{\text{\upshape Res\,}}
\newcommand{\re}{\text{\upshape Re\,}}
\newcommand{\im}{\text{\upshape Im\,}}
\newcommand{\U}{\mathsf{U}}
\newcommand{\V}{\mathsf{V}}
\newcommand{\E}{\mathcal{E}}
\DeclareMathOperator{\Int}{int}
\def\XXint#1#2#3{{\setbox0=\hbox{$#1{#2#3}{\int}$}
\vcenter{\hbox{$#2#3$}}\kern-.5\wd0}}
\newtheorem{theorem}{Theorem}
\newtheorem{proposition}{Proposition}[section]
\newtheorem{lemma}[proposition]{Lemma}
\newtheorem{definition}[proposition]{Definition}
\newtheorem{remark}[proposition]{Remark}
\newtheorem{figuretext}{Figure}
\numberwithin{equation}{section}
\date{\today}
\title[The hyperbolic Ernst equation in a triangular domain]
{The hyperbolic Ernst equation \\ in a triangular domain}
\author{Jonatan Lenells and Julian Mauersberger}
\address{Department of Mathematics, KTH Royal Institute of Technology, \\ 100 44 Stockholm, Sweden.}
\email{jlenells@kth.se, julianma@kth.se}
\begin{document}
\begin{abstract} 
The collision of two plane gravitational waves in Einstein's theory of relativity can be described mathematically by a Goursat problem for the hyperbolic Ernst equation in a triangular domain. We use the integrable structure of the Ernst equation to present the solution of this problem via the solution of a Riemann--Hilbert problem. The formulation of the Riemann--Hilbert problem involves only the prescribed boundary data, thus the solution is as effective as the solution of a pure initial value problem via the inverse scattering transform. Our results are valid also for boundary data whose derivatives are unbounded at the triangle's corners---this level of generality is crucial for the application to colliding gravitational waves. Remarkably, for data with a singular behavior of the form relevant for gravitational waves, it turns out that the singular integral operator underlying the Riemann--Hilbert formalism can be explicitly inverted at the boundary. 
In this way, we are able to show exactly how the behavior of the given data at the origin transfers into a singular behavior of the solution near the boundary. 
\end{abstract}

\maketitle

\noindent
{\small{\sc AMS Subject Classification (2010)}: 35Q75, 83C35, 37K15.}

\noindent
{\small{\sc Keywords}: Gravitational waves, Einstein's theory of relativity, Ernst equation, Euler-Darboux equation, inverse scattering, Riemann--Hilbert problem.}

\setcounter{tocdepth}{1}
\tableofcontents

\section{Introduction}
Half a century after Einstein presented his theory of relativity, F. J. Ernst made the remarkable discovery that, in the presence of one space-like and one time-like Killing vector, the entire solution of the vacuum Einstein field equations reduces to solving a single equation for a complex-valued function $\mathcal{E}$ of two variables \cite{E1968}. This single equation, now known as the (elliptic) Ernst equation, has proved instrumental in the study and construction of stationary axisymmetric spacetimes, cf. \cite{KR2005}. 

It later became clear that a similar reduction of Einstein's equations is possible also in the presence of two space-like Killing vectors, a situation relevant for the description of two colliding plane gravitational waves \cite{CF1984}. In this case the associated equation is known as the hyperbolic Ernst equation and can be written in the form
\begin{align}\label{ernst}  
  (\re \mathcal{E}) \left(\mathcal{E}_{xy} - \frac{\mathcal{E}_x + \mathcal{E}_y}{2(1-x-y)}\right) = \mathcal{E}_x \mathcal{E}_y, 
\end{align}  
where the Ernst potential $\mathcal{E}(x,y)$ is a complex-valued function of the two real variables $(x,y)$ and subscripts denote partial derivatives.

The problem of finding the nonlinear interaction of two plane gravitational waves following their collision has a distinguished history going back to the work of Khan and Penrose \cite{KP1971}, Szekeres \cite{S1972}, Nutku and Halil \cite{NH1977}, and Chandrasekhar and coauthors \cite{CF1984, CX1987}; see the monograph \cite{G1991} for further references and historical remarks.
In terms of the Ernst potential, this collision problem reduces to a Goursat problem for equation (\ref{ernst}) in the triangular region $D$ defined by (see Figure \ref{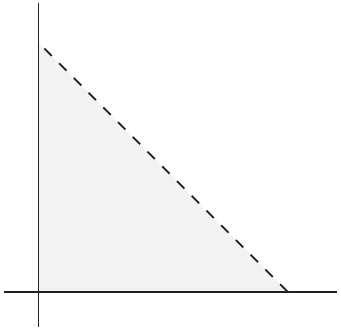})
\begin{align}\label{Ddef}
D = \{(x,y) \in \R^2\, | \, x \geq 0, \; y \geq 0, \; x+y < 1\}.
\end{align}
More precisely, the problem can be formulated as follows (see \cite{G1991} and the appendix): 
\begin{align}\label{Goursat}
\begin{cases}
\text{Given complex-valued functions $\mathcal{E}_0(x)$, $x \in [0, 1)$, and $\mathcal{E}_1(y)$, $y \in [0,1)$, } 
	\\
\text{find a solution $\mathcal{E}(x,y)$ of the hyperbolic Ernst equation (\ref{ernst}) in $D$}
	\\
\text{such that $\mathcal{E}(x,0) = \mathcal{E}_0(x)$ for $x \in [0,1)$ and $\mathcal{E}(0,y) = \mathcal{E}_1(y)$ for $y \in [0,1)$.}
\end{cases}
\end{align}
 
In this paper, we use the integrable structure of equation (\ref{ernst}) and Riemann--Hilbert (RH) techniques to analyze the Goursat problem (\ref{Goursat}). We present four main results, denoted by Theorem \ref{mainth1}-\ref{mainth4}:

\begin{itemize}
\item Theorem \ref{mainth1} is a solution representation result: Assuming that the given data satisfy the  following conditions for some $n \geq 2$:
\begin{align}\label{E0E1assumptions}
\begin{cases}
\mathcal{E}_0, \mathcal{E}_1 \in C([0,1)) \cap C^n((0,1)), 
\\
\text{$x^\alpha \mathcal{E}_{0x}, y^\alpha \mathcal{E}_{1y} \in C([0,1))$ for some $\alpha \in [0,1)$},
\\
\mathcal{E}_0(0) = \mathcal{E}_1(0) = 1,
\\
\text{$\re  \mathcal{E}_0(x) > 0$ for $x \in [0,1)$}, 
\\
\text{$\re  \mathcal{E}_1(y) > 0$ for $y \in [0,1)$},
\end{cases}
\end{align}
and the Goursat problem \eqref{Goursat} has a solution (in the precise sense specified in Definition \ref{solutiondef}), we give a representation formula for this solution. This formula is given in terms of the solution of a corresponding RH problem whose formulation only involves the given boundary data.

\item Theorem \ref{mainth2} is a uniqueness result: Assuming that the given data satisfy the conditions (\ref{E0E1assumptions}) for some $n \geq 2$, we show that the solution of the Goursat problem (\ref{Goursat}) is unique, if it exists.

\item Theorem \ref{mainth3} is an existence and regularity result: Assuming that the given data satisfy the conditions (\ref{E0E1assumptions}) for some $n \geq 2$, we show that there exists a unique solution $\mathcal{E}$ of the problem (\ref{Goursat}) whenever the associated RH problem has a solution, and this $\mathcal{E}$ has the same regularity as the given data. In the case of collinearly polarized waves, this yields existence for general data; for noncollinearly polarized waves, a small-norm assumption is also needed.

\item Theorem \ref{mainth4} provides exact formulas for the singular behavior of the solution $\mathcal{E}$ near the boundary for data satisfying \eqref{E0E1assumptions}. 
\end{itemize}

\begin{figure}
\bigskip
\begin{center}
 \begin{overpic}[width=.4\textwidth]{D.pdf}
 \put(29,33){ $D$}
 \put(103,9.7){\small $x$}
 \put(9.5,101){\small $y$}
 \put(83.5,5){\small $1$}
 \put(6,82.5){\small $1$}
 \put(27,3.5){\small $\mathcal{E}(x,0) = \mathcal{E}_0(x)$}
 \put(2,26){\small \rotatebox{90}{$\mathcal{E}(0,y) = \mathcal{E}_1(y)$}}
 %\put(45,55){\small \rotatebox{-45}{$x+y=1$}}
 \end{overpic}
   \begin{figuretext}\label{D.pdf}
      The triangular region $D$ defined in (\ref{Ddef}) and the boundary conditions relevant for the Goursat problem (\ref{Goursat}). 
      \end{figuretext}
   \end{center}
\end{figure}

We emphasize that the assumptions (\ref{E0E1assumptions}) allow for functions $\mathcal{E}_0(x)$ and $\mathcal{E}_1(y)$ whose derivatives blow up as $x$ and $y$ approach the origin. This level of generality is necessary for the application to gravitational waves. Indeed, in order for the problem (\ref{Goursat}) to be relevant in the context of gravitational waves, it turns out that the solution should obey the conditions (see \cite{G1991} and the appendix)
\begin{subequations}\label{boundaryconditions}
\begin{align}
& \lim_{x \downarrow 0} x^\alpha |\mathcal{E}_x(x,y)| = \frac{m_1\re \mathcal{E}_1(y)}{\sqrt{1-y}} && \text{for each $y \in [0,1)$},
	\\
& \lim_{y \downarrow 0} y^\alpha |\mathcal{E}_y(x,y)| = \frac{m_2\re \mathcal{E}_0(x)}{\sqrt{1-x}} && \text{for each $x \in [0,1)$},
\end{align}
\end{subequations}
where $m_1$ and $m_2$ are real constants such that $m_1, m_2 \in [1, \sqrt{2})$ and $\alpha = 1/2$. 
Remarkably, for data with a singular behavior at the origin of the form given in (\ref{E0E1assumptions}), the singular integral operator underlying the RH formalism can be explicitly inverted in the limit of small $x$  or $y$. This leads to the characterization of the boundary behavior given in Theorem \ref{mainth4}. In particular, it implies the following important conclusion for the collision of gravitational waves: A solution $\mathcal{E}(x,y)$ of the Goursat problem for (\ref{ernst}) fulfills (\ref{boundaryconditions}) iff the boundary data are such that 
$\lim_{x \downarrow 0}  x^\alpha |\mathcal{E}_{0x}(x)|$ and $\lim_{y \downarrow 0}  y^\alpha |\mathcal{E}_{1y}(y)|$ lie in the interval $[1, \sqrt{2})$.

The assumptions $\re  \mathcal{E}_0(x) > 0$ and $\re  \mathcal{E}_1(y) > 0$ in (\ref{E0E1assumptions})  are natural because in the context of gravitational waves the real part of the Ernst potential is automatically strictly positive. The assumption $\mathcal{E}_0(0) = \mathcal{E}_1(0)$ in (\ref{E0E1assumptions}) expresses the compatibility of the boundary values at the origin. If $\mathcal{E}$ is a solution of (\ref{ernst}), then so is $a\mathcal{E} + ib$ for any choice of the real constants $a$ and $b$. Thus, since $\mathcal{E}(0,0) \neq 0$ as a consequence of the assumption $\re  \mathcal{E}_0(x) > 0$, there is no loss of generality in assuming that $\mathcal{E}(0,0) = 1$.

The analysis of a boundary or initial-boundary value problem for an integrable equation is usually complicated by the fact that not all boundary values are known for a well-posed problem cf. \cite{FL2012}. This issue does not arise for (\ref{Goursat}) which is a Goursat problem. This means that the presented solution is as effective as the solution of the initial value problem via the inverse scattering transform for an equation such as the KdV or nonlinear Schr\"odinger equation.

Despite its great importance in the context of gravitational waves, there are few results in the literature on the Goursat problem (\ref{Goursat}). In fact, rather than solving a given initial or boundary value problem, most of the literature on the Ernst equation has dealt with the generation of new exact solutions via solution-generating techniques, cf. \cite{G1991, K1999, KR2005}. 
Solving an initial or boundary value problem is much more difficult than generating particular solutions. In fact, even if a large class of particular solutions are known, the problem of determining which of these solutions satisfies the given initial and boundary conditions remains a highly nonlinear problem, often as difficult as the original problem. As noted by Griffiths \cite[p. 210]{G1991}, ``What would be much more significant would be to find a practical way to determine the solution in the interaction region for an arbitrary set of initial conditions.'' 

Regarding the problem of determining the interaction of two colliding plane waves from arbitrary initial conditions, important first progress was made in a series of papers by Hauser and Ernst, see \cite{HE1990}. Their approach is based on the so-called Kinnersley $H$-potential \cite{K1977} rather than on equation (\ref{ernst}). In terms of the $2\times 2$-matrix valued Kinnersley potential $H(r,s)$, the problem of determining the spacetime metric in the interaction region can be formulated as a Goursat problem in the triangular region 
\begin{align}\label{Deltadef}  
  \Delta = \{(r,s) \in \R^2 \, | -1 \leq r < s \leq 1\}
\end{align} 
for the equation (see Eq. (2.10) in \cite{HE1990})
\begin{align}\label{HEequation}
2(s-r) H_{rs} \Omega - [H_r \Omega, H_s\Omega] = 0, \qquad \Omega = \begin{pmatrix} 0 & i \\ -i & 0 \end{pmatrix}.
\end{align}
Hauser and Ernst were able to relate the solution of this problem to the solution of a homogeneous Hilbert problem. 
The analysis of \cite{HE1990} relies, at least implicitly, on the fact that equation (\ref{HEequation}) admits the Lax pair (see Eq. (3.1) in \cite{HE1990})
$$P_r  = \frac{H_r \Omega}{2(\tau - r)}P, \qquad P_s= \frac{H_s \Omega}{2(\tau - s)}P,$$
where $P(r,s,\tau)$ is a $2 \times 2$-matrix valued eigenfunction and $\tau \in \C$ is the spectral parameter. 
%There exists a (nontrivial) relationship between equation (\ref{HEequation}) and the Ernst equation (\ref{ernst}) and their associated Lax pairs; we refer to \cite{FST1999} for some remarks on this relationship. 

More recently, the authors of \cite{FST1999} have addressed the Goursat problem in the triangle $\Delta$ for the equation
\begin{align}\label{FSTequation}
2(s-r)g_{rs} + g_r - g_s + (r-s)(g_rg^{-1}g_s + g_sg^{-1}g_r) = 0,
\end{align}
where $g(r,s)$ is a $2 \times 2$-matrix valued function. Equation (\ref{FSTequation}) is related to the hyperbolic Ernst equation (\ref{ernst}) as follows: Letting 
$$g(r,s) = \frac{s-r}{2\re \mathcal{E}} \begin{pmatrix} |\mathcal{E}|^2 & \im \mathcal{E} \\
\im \mathcal{E} & 1 \end{pmatrix},$$
equation (\ref{FSTequation}) reduces to the scalar equation
\begin{align}\label{ernstrs}  
  (\re \mathcal{E}) \left(\mathcal{E}_{rs} - \frac{\mathcal{E}_r - \mathcal{E}_s}{2(r-s)}\right) = \mathcal{E}_r \mathcal{E}_s,
\end{align}  
which is related to equation (\ref{ernst}) by the change of variables $y = (r+1)/2$ and $x = (1-s)/2$.
Through a clever series of steps, the authors of \cite{FST1999} express the solution of (\ref{FSTequation}) in terms of the solution of a RH problem. 

Our approach here is inspired by the recent works \cite{Lholedisk} and \cite{LFernst} on the elliptic Ernst equation. We have also drawn some inspiration from \cite{FST1999} and \cite{HE1990}, although in contrast to these references, we analyze equation (\ref{ernst}). Two further differences between the present work and \cite{FST1999} are: 
\begin{enumerate}[$(i)$]

\item It is assumed in \cite{FST1999} that the solution is $C^2$ on all of $\Delta$ up to and including the non-diagonal part of the boundary. However, as explained above (see equation (\ref{boundaryconditions})), the Ernst potentials relevant for gravitational waves have boundary values $\mathcal{E}(x,0)$ and $\mathcal{E}(0,y)$ whose derivatives are not continuous (actually unbounded) at the origin. Here we allow for such singularities in $\mathcal{E}_x(x,0)$ and $\mathcal{E}_y(0,y)$. These singularities transfer, in general, into singularities of the associated eigenfunction solutions of the Lax pair, and the rigorous treatment of all these singularities was one of the main challenges of the present work.

\item The normalization condition for the RH problem derived in \cite{FST1999} involves the solution itself; hence the solution representation is not effective. We circumvent this problem by defining the eigenfunctions on a Riemann surface $\mathcal{S}_{(x,y)}$ with branch points at $x$ and $1-y$. The Riemann surface $\mathcal{S}_{(x,y)}$ is dynamic in the sense that it depends on the spatial point $(x,y)$. This dependence on $(x,y)$ creates some technical difficulties which we handle by introducing a map $F_{(x,y)}$ from $\mathcal{S}_{(x,y)}$ to the standard Riemann sphere which takes the two moving branch points to the two fixed points $-1$ and $1$. After transferring the RH problem to the Riemann sphere in this way, we can analyze it using techniques from the theory of singular integral equations. 
\end{enumerate}

In the traditional implementation of the inverse scattering transform, the two equations in the Lax pair are treated separately---usually the spatial part of the Lax pair is first used to define the scattering data and the temporal part is then used to determine the time evolution. 
The Goursat problem (\ref{Goursat}) does not fit this pattern, so a different approach is required; this is one reason why the solution of the problem (\ref{Goursat}) has proved elusive. Actually, the approach in \cite{FST1999} was one of the first implementations of a general framework for the analysis of boundary value problems for integrable PDEs now known as the unified transform or Fokas method \cite{F1997}. In this method the two equations in the Lax pair are analyzed simultaneously rather than separately. The ideas of this method play an important role also in this paper.

It is an interesting open problem to investigate whether existence and uniqueness results for (\ref{Goursat}) can be obtained also via functional analytic techniques.
As was explained already in Chapter IV of Goursat's original treatise \cite{G1964}, existence and uniqueness results for Goursat problems for linear hyperbolic PDEs can be established by means of successive approximations and Riemann's method (see also \cite{CH1962}). It is possible to extend these ideas to prove existence theorems also for certain nonlinear Goursat problems \cite{P1969, T1962}. However, even in the linear case, these theorems tend to assume that $\{\mathcal{E}, \mathcal{E}_x, \mathcal{E}_y, \mathcal{E}_{xy}\}$ are all continuous \cite{P1969, G1964, CH1962}, or at least that the boundary values are Lipschitz \cite{T1962}. These conditions fail for the assumptions (\ref{E0E1assumptions}) relevant for gravitational waves. 

Let us finally point out that many exact solutions describing colliding plane gravitational waves are known (see e.g. \cite{NH1977, CX1987, TW1992, ET1989}) and that there is a growing literature on colliding gravitational waves which are not necessarily plane (see e.g. \cite{LR2017}).

\subsection{Organization of the paper}
We begin by establishing some notation in Section \ref{notationsec}. Our main results (Theorems \ref{mainth1}-\ref{mainth4}) are stated in Section \ref{mainresultsec}. 

In Section \ref{linearlimitsec}, as preparation for the general case, we analyze the special case in which the colliding waves have collinear polarization. In this case, the problem reduces to a problem for the so-called Euler-Darboux equation. We prove a theorem for this equation (Theorem \ref{linearmainth}) which is  analogous to Theorem \ref{mainth1}-\ref{mainth4}. %The insights gained in Section \ref{linearlimitsec} are crucial for the following sections, because the proof of Theorem \ref{linearmainth} follows steps which are very similar to, but technically simpler than, those used in the proofs of Theorem \ref{mainth1}-\ref{mainth4}. 

In Section \ref{ernstsec}, we discuss the Lax pair of equation \eqref{ernst} and analyze the spectral data as well as the uniqueness of the solution of the corresponding RH problem.

In Section \ref{proofsec}, we present the proofs of Theorem \ref{mainth1}-\ref{mainth4}. 

Section \ref{examplesec} contains two short examples and the appendix contains some background on the origin of the Goursat problem (\ref{Goursat}) in the context of colliding gravitational waves.

\subsection{Organization of the paper}
We begin by establishing some notation in Section \ref{notationsec}. Our main results (Theorems \ref{mainth1}-\ref{mainth4}) are stated in Section \ref{mainresultsec}. 

In Section \ref{linearlimitsec}, as preparation for the general case, we analyze the special case in which the colliding waves have collinear polarization. In this case, the problem reduces to a problem for the so-called Euler-Darboux equation. We prove a theorem for this equation (Theorem \ref{linearmainth}) which is  analogous to Theorem \ref{mainth1}-\ref{mainth4}. %The insights gained in Section \ref{linearlimitsec} are crucial for the following sections, because the proof of Theorem \ref{linearmainth} follows steps which are very similar to, but technically simpler than, those used in the proofs of Theorem \ref{mainth1}-\ref{mainth4}. 

In Section \ref{ernstsec}, we discuss the Lax pair of equation \eqref{ernst} and analyze the spectral data as well as the uniqueness of the solution of the corresponding RH problem.

In Section \ref{proofsec}, we present the proofs of Theorem \ref{mainth1}-\ref{mainth4}. 

Section \ref{examplesec} contains two short examples and the appendix contains some background on the origin of the Goursat problem (\ref{Goursat}) in the context of colliding gravitational waves.

\section{Notation}\label{notationsec}
We introduce notation that will be used throughout the paper. 

We let $D$ denote the triangular region defined in (\ref{Ddef}) and displayed in Figure \ref{D.pdf}. Given $\delta > 0$, we let $D_\delta$ denote the slightly smaller triangular region obtained by removing a narrow strip along the diagonal of $D$ as follows (see Figure \ref{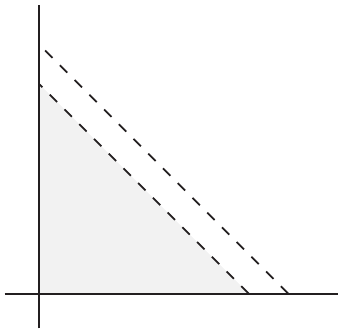}):
\begin{align}\label{Ddeltadef}
D_\delta = \{(x,y) \in D \, | \, x+y < 1-\delta\},
\end{align}
The interiors of  $D$ and $D_\delta$ will be denoted by $\Int D$ and $\Int D_\delta$, respectively. 
The Riemann sphere will be denoted by $\hat{\C} = \C \cup \{\infty\}$.

\begin{figure}
\bigskip
\begin{center}
 \begin{overpic}[width=.4\textwidth]{Ddelta.pdf}
 \put(25,28){ $D_\delta$}
 \put(102.5,9.7){\small $x$}
 \put(10,100){\small $y$}
 \put(85,4.5){\small $1$}
 \put(65,4){\small $1-\delta$}
 \put(6,82){\small $1$}
 \put(-4.5,72){\small $1-\delta$}
 %\put(45,55){\small \rotatebox{-45}{$x+y=1$}}
 \end{overpic}
   \begin{figuretext}\label{Ddelta.pdf}
      The triangle $D_\delta$ defined in (\ref{Ddeltadef}).
      \end{figuretext}
   \end{center}
\end{figure}

\subsection{The Riemann surface $\mathcal{S}_{(x,y)}$}
For each $(x,y) \in D$, we let $\mathcal{S}_{(x,y)}$ denote the Riemann surface consisting of all points $P := (\lambda, k) \in \C^2$ such that
\begin{align}\label{lambdadef}
\lambda^2 = \frac{k - (1-y)}{k - x}
\end{align}
together with two points $\infty^+=(1,\infty)$ and $\infty^-=(-1,\infty)$ at infinity and a branch point $x \equiv (\infty,x)$ which make the surface compact.
The surface $\mathcal{S}_{(x,y)}$ is two-sheeted in the sense that to each $k \in \hat{\C} \setminus \{x, 1-y\}$, there correspond exactly two values of $\lambda$. We introduce a branch cut in the complex $k$-plane from $x$ to $1-y$ and, for $k \in \hat{\C}\setminus [x,1-y]$, we let $k^+$ and $k^-$ denote the corresponding points on the upper and lower sheet of $\mathcal{S}_{(x,y)}$, respectively. By definition, the upper (lower) sheet is characterized by $\lambda \to 1$ ($\lambda \to -1$) as $k \to \infty$. Writing $\lambda(x,y,P)$ for the value of $\lambda$ corresponding to the point $P \in \mathcal{S}_{(x,y)}$, we have
\begin{align}\label{lambdasqrt}
 \lambda(x,y,k^+) = \sqrt{\frac{k - (1-y)}{k - x}} = -\lambda(x,y,k^-), \qquad k \in \hat{\C} \setminus [x, 1-y],
 \end{align}
where the sign of the square root in (\ref{lambdasqrt}) is chosen so that $\lambda(x,y,k^+)$ has positive real part.

\begin{figure}
\bigskip\bigskip
\begin{center}
 \begin{overpic}[width=.9\textwidth]{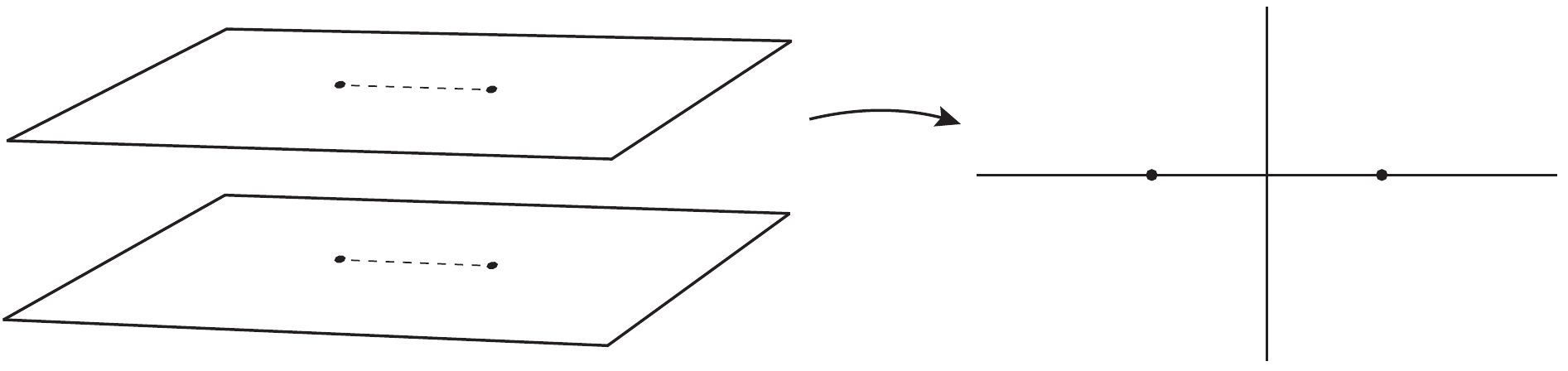}
 \put(18.9,17.8){\small $x$}
 \put(32.4,17.2){\small $1-y$}
 \put(19,7){\small $x$}
 \put(32.4,6){\small $1-y$}
 \put(71.5,9.6){\small $-1$}
 \put(87.6,9.6){\small $1$}
 \put(100.8,11.7){\small $\re z$}
 \put(79,25){\small $\im z$}
 \put(80,-4){\small $\hat{\C}$}
 \put(24,-4){\small $\mathcal{S}_{(x,y)}$}
 \put(53.5,18.5){\small $F_{(x,y)}$}
 \end{overpic}
   \bigskip\bigskip\medskip
   \begin{figuretext}\label{kzmap.pdf}
      The map $F_{(x,y)}:k \mapsto z = \frac{1+ \lambda}{1 - \lambda}$ is a biholomorphism from the two-sheeted Riemann surface $\mathcal{S}_{(x,y)}$ to the Riemann sphere $\hat{\C} = \C \cup \{\infty\}$. It maps the branch points $x$ and $1-y$ to $z = -1$ and $z = 1$, respectively, and the upper (lower) sheet to the outside (inside) of the unit circle.
      \end{figuretext}
   \end{center}
\end{figure}

\subsection{The map $F_{(x,y)}$}
For each point $(x,y) \in D$, $\mathcal{S}_{(x,y)}$ is a compact genus zero Riemann surface with branch points at $k = x$ and $k = 1-y$. In order to fix the locations of these branch points, we introduce a new variable $z$ by
$$z = \frac{1+ \lambda}{1 - \lambda},$$
and let $F_{(x,y)}:\mathcal{S}_{(x,y)} \to \hat{\C}$ be the map that sends $P$  to $z$, i.e.,
$$F_{(x,y)}(P) = \frac{1+ \lambda(x,y,P)}{1 - \lambda(x,y,P)}, \qquad P \in \mathcal{S}_{(x,y)}.$$
For each $(x,y) \in D$, $F_{(x,y)}$ is a biholomorphism (i.e. a bijective holomorphic function whose inverse is also holomorphic) from $\mathcal{S}_{(x,y)}$ to $\hat{\C}$ which maps the two branch points $x$ and $1-y$ to $z = -1$ and $z = 1$, respectively, see Figure \ref{kzmap.pdf}.

\subsection{The contours $\Sigma$ and $\Gamma$}
For each $(x,y) \in D$, we let $\Sigma_0 \equiv \Sigma_0(x,y)$ denote the shortest path from $0^+$ to $0^-$ in $\mathcal{S}_{(x,y)}$, and we let $\Sigma_1  \equiv \Sigma_1(x,y)$ denote the shortest path from $1^-$ to $1^+$ in $\mathcal{S}_{(x,y)}$. More precisely,
\begin{align}\label{Sigma01def}
\Sigma_0 = [0,x]^+ \cup [x,0]^-, \qquad \Sigma_1 =  [1,1-y]^- \cup [1-y,1]^+ ,
\end{align}
where, for a subset $S$ of the complex plane, we use the notation $S^\pm = \{k^\pm \in \mathcal{S}_{(x,y)}\,| \, k \in S\}$ to denote the sets in the upper and lower sheets of $\mathcal{S}_{(x,y)}$ which project onto $S$, see Figure \ref{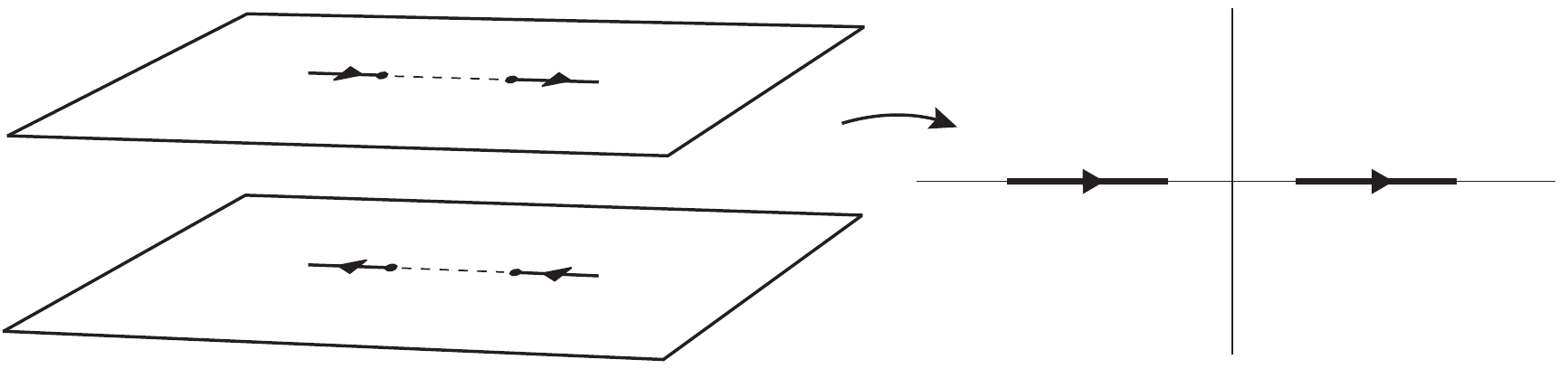}.
We write $\Sigma := \Sigma_0 \cup \Sigma_1$ for the union of $\Sigma_0$ and $\Sigma_1$.

Given $(x,y) \in D$, we let $\Gamma_0 \equiv \Gamma_0(x,y)$ and $\Gamma_1 \equiv \Gamma_1(x,y)$ denote two clockwise nonintersecting smooth contours in the complex $z$-plane which encircle the real intervals
\begin{subequations}\label{twointervals}
\begin{align}
F_{(x,y)}(\Sigma_0) = \bigg[-\frac{\sqrt{1-y}+ \sqrt{x}}{\sqrt{1-y} - \sqrt{x}}, -\frac{\sqrt{1-y}- \sqrt{x}}{\sqrt{1-y}+ \sqrt{x}}\bigg]
\end{align}
and
\begin{align}
F_{(x,y)}(\Sigma_1) =\bigg[\frac{\sqrt{1-x} - \sqrt{y}}{\sqrt{1-x} + \sqrt{y}}, \frac{\sqrt{1-x} + \sqrt{y}}{\sqrt{1-x} - \sqrt{y}}\bigg],
\end{align}
\end{subequations}
respectively, but which do not encircle zero, see Figure \ref{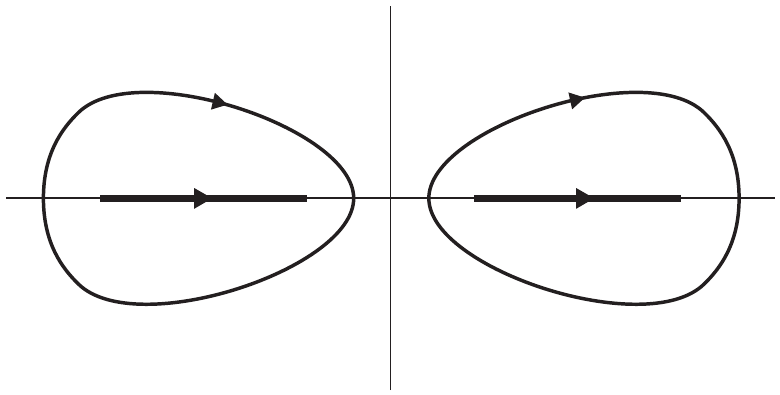}. We let $\Gamma \equiv \Gamma(x,y)$ denote the union $\Gamma := \Gamma_0 \cup \Gamma_1$ of $\Gamma_0$ and $\Gamma_1$.

\begin{figure}
\bigskip\bigskip
\begin{center}
\hspace{-.97cm} \begin{overpic}[width=.94\textwidth]{Sigma01.pdf}
 \put(21.1,16.6){\tiny $\Sigma_0$}
 \put(34.3,16.2){\tiny $\Sigma_1$}
 \put(21.1,4.3){\tiny $\Sigma_0$}
 \put(34.3,3.8){\tiny $\Sigma_1$}
  \put(17,18.5){\tiny $0^+$}
 \put(39,17.8){\tiny $1^+$}
 \put(17,6.4){\tiny $0^-$}
 \put(39,5){\tiny $1^-$}
 \put(64.2,8.8){\small $F_{(x,y)}(\Sigma_0)$}
 \put(82.7,8.8){\small $F_{(x,y)}(\Sigma_1)$}
 \put(101,11.3){\small $\re z$}
 \put(76.5,24.9){\small $\im z$}
  \put(54.5,18){\small $F_{(x,y)}$}
 \end{overpic} 
   \begin{figuretext}\label{Sigma01.pdf}
     The map $F_{(x,y)}$ sends the contours $\Sigma_0$ and $\Sigma_1$ onto the two real intervals $F_{(x,y)}(\Sigma_0)$ and $F_{(x,y)}(\Sigma_1)$, respectively.
      \end{figuretext}
   \end{center}
\end{figure}

\begin{figure}
\bigskip
\begin{center}
 \begin{overpic}[width=.6\textwidth]{Gamma.pdf}
 \put(101.5,24){\small $\re z$}
 \put(47,52){\small $\im z$}
 \put(27,40){\small $\Gamma_0$}
 \put(71,40.5){\small $\Gamma_1$}
\put(17,20.5){\small $F_{(x,y)}(\Sigma_0)$}
 \put(65,20.5){\small $F_{(x,y)}(\Sigma_1)$}
 \end{overpic}
   \begin{figuretext}\label{Gamma.pdf}
      The contour $\Gamma$ in the complex $z$-plane is the union of the loops $\Gamma_0$ and $\Gamma_1$ which encircle the intervals $F_{(x,y)}(\Sigma_0)$ and $F_{(x,y)}(\Sigma_1)$ respectively.
      \end{figuretext}
   \end{center}
\end{figure}

\subsection{Nontangential limits and function spaces}
Let $\Gamma \subset \C$ be a piecewise smooth contour. 
For an analytic function $m: \C \setminus \Gamma \to \C$, we denote the nontangential boundary values of $m$ from the left and right sides of $\Gamma$ by $m_+$ and $m_-$ respectively.
Given a subset $S \subset \R^n$, $n \geq 1$, we let $C(S)$ denote the space of complex-valued continuous functions on $S$. If $S$ is open, we define $C^n(S)$ as the space of complex-valued functions on $S$ which are $n$ times continuously differentiable, i.e., all partial derivatives of order $\leq n$ exist and are continuous.
By $\mathcal{B}(X,Y)$, we denote the space of bounded linear maps from a Banach space $X$ to another Banach space $Y$ equipped with the standard operator norm; if $X = Y$, we write $\mathcal{B}(X) \equiv \mathcal{B}(X,X)$.

\section{Main results}\label{mainresultsec}

We adopt the following notion of a $C^n$-solution of the Goursat problem (\ref{Goursat}).

\begin{definition}\label{solutiondef}\upshape
Let $\mathcal{E}_0(x)$, $x \in [0, 1)$, and $\mathcal{E}_1(y)$, $y \in [0,1)$, be complex-valued functions.
A function $\mathcal{E}:D \to \R$ is called a {\it $C^n$-solution of the Goursat problem for (\ref{ernst}) in $D$ with data $\{\mathcal{E}_0, \mathcal{E}_1\}$} if 
\begin{align*}
\begin{cases}
  \mathcal{E} \in C(D) \cap C^n(\Int(D)), 
  	\\
  \text{$\mathcal{E}(x,y)$ satisfies the hyperbolic Ernst equation (\ref{ernst}) in $\Int(D)$,}
	\\
 \text{$x^\alpha \mathcal{E}_x, y^\alpha \mathcal{E}_y, x^\alpha y^\alpha \mathcal{E}_{xy} \in C(D)$ for some $\alpha \in [0,1)$,}
  	\\
  \text{$\mathcal{E}(x,0) = \mathcal{E}_0(x)$ for $x \in [0,1)$,}
 	\\
  \text{$\mathcal{E}(0,y) = \mathcal{E}_1(y)$ for $y \in [0,1)$,}
	\\
  \text{$\re \mathcal{E}(x,y) > 0$ for $(x,y) \in D$.}	
\end{cases}
\end{align*}
\end{definition}

We next state the four main results of the paper (Theorem \ref{mainth1}-\ref{mainth4}), which all address different aspects of the Goursat problem (\ref{Goursat}). 

In the formulation of Theorem \ref{mainth1}-\ref{mainth4}, it is assumed that $n \geq 2$ is an integer and that $\mathcal{E}_0(x)$, $x \in [0, 1)$, and $\mathcal{E}_1(y)$, $y \in [0,1)$, are two complex-valued functions satisfying the assumptions in (\ref{E0E1assumptions}) for a fixed $\alpha \in [0,1)$. The first theorem provides a representation formula for the solution in terms of the given boundary data via a RH problem.

\begin{theorem}[Representation formula]\label{mainth1}
If $\mathcal{E}(x,y)$ is a $C^n$-solution of the Goursat problem for (\ref{ernst}) in $D$ with data $\{\mathcal{E}_0, \mathcal{E}_1\}$, then this solution can be expressed in terms of the boundary values $\mathcal{E}_0(x)$ and $\mathcal{E}_1(y)$ by 
\begin{align}\label{Erecover}
\mathcal{E}(x,y) = \frac{1 + (m(x,y,0))_{11} - (m(x,y,0))_{21}}{1 + (m(x,y,0))_{11} + (m(x,y,0))_{21}},
\end{align}
where $m(x,y,z)$ is the unique solution of the $2 \times 2$-matrix RH problem
\begin{align}\label{RHm}
\begin{cases} 
\text{$m(x, y, \cdot)$ is analytic in $\C \setminus \Gamma$}, 
	\\
\text{$m_+(x, y, z) = m_-(x, y, z) v(x, y, z)$ for all $z \in \Gamma$},
\\
\text{$m(x,y,z) = I + O(z^{-1})$ as $z \to \infty$},
\end{cases} 
\end{align}
and the jump matrix $v(x,y,z)$ is defined as follows: Let $\Phi_0$ and $\Phi_1$ be the unique solutions of the linear Volterra integral equations
\begin{subequations}\label{Phi0Phi1}
\begin{align}\label{Phi0Phi1a}
\Phi_0(x,k^\pm) = I + \int_0^x (\mathsf{U}_0\Phi_0)(x', k^\pm) dx', \qquad x \in [0, 1), \ k \in \C \setminus [0,1],
	\\ \label{Phi0Phi1b}
\Phi_1(y,k^\pm) = I + \int_0^y (\mathsf{V}_1\Phi_1)(y', k^\pm) dy', \qquad y \in [0, 1), \ k \in \C \setminus [0,1],	
\end{align}
\end{subequations}
where $\mathsf{U}_0$ and $\mathsf{V}_1$ are defined by 
\begin{subequations}\label{U0V1def}
\begin{align}\label{U0def}
& \mathsf{U}_0(x,k^\pm) = \frac{1}{2 \re \mathcal{E}_0(x)} \begin{pmatrix} \overline{\mathcal{E}_{0x}(x)} & \lambda(x,0,k^\pm) \overline{\mathcal{E}_{0x}(x)} \\
\lambda(x,0,k^\pm) \mathcal{E}_{0x}(x) & \mathcal{E}_{0x}(x) \end{pmatrix}, 
	\\ 
& \mathsf{V}_1(y,k^\pm) =  \frac{1}{2 \re \mathcal{E}_0(y)}  \begin{pmatrix} \overline{\mathcal{E}_{1y}(y)} & \frac{1}{\lambda(0,y,k^\pm)} \overline{\mathcal{E}_{1y}(y)} \\
\frac{1}{\lambda(0,y,k^\pm)} \mathcal{E}_{1y}(y) & \mathcal{E}_{1y}(y)  
\end{pmatrix}.
\end{align}
\end{subequations}
Then
\begin{align}\label{jumpdef}
v(x,y, z) = \begin{cases}
 \Phi_0\big(x, F_{(x,y)}^{-1}(z)\big), \quad & z \in \Gamma_0, 
 	\\
\Phi_1\big(y, F_{(x,y)}^{-1}(z)\big), & z \in \Gamma_1,
\end{cases} \quad (x,y) \in D.
\end{align}
\end{theorem}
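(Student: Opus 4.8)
The plan is to reconstruct $\mathcal{E}$ from a suitably normalized eigenfunction of the Lax pair of \eqref{ernst} and to identify the resulting sectionally analytic function with the solution $m$ of \eqref{RHm}. Recall from Section~\ref{ernstsec} that \eqref{ernst} is the compatibility condition of a Lax pair $\Phi_x = \mathsf{U}\Phi$, $\Phi_y = \mathsf{V}\Phi$, where $\mathsf{U}(x,y,P)$ and $\mathsf{V}(x,y,P)$ are obtained from the boundary matrices \eqref{U0V1def} by replacing $\{\mathcal{E}_0,\mathcal{E}_1\}$ with the bulk potential $\mathcal{E}$ and by using $\lambda = \lambda(x,y,P)$ for $P \in \mathcal{S}_{(x,y)}$. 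Given the $C^n$-solution $\mathcal{E}$, I would define the eigenfunction $\Phi(x,y,P)$ as the solution of this overdetermined linear system normalized by $\Phi \to I$ as $P \to \infty^+$, and set $m(x,y,z) := \Phi(x,y,F_{(x,y)}^{-1}(z))$. Since $z=\infty$ corresponds to $\infty^+$, the normalization $m = I + O(z^{-1})$ at $z=\infty$ is then automatic. Because the Lax pair is compatible exactly when $\mathcal{E}$ solves \eqref{ernst}, $\Phi$ is well defined and independent of the path in the $(x,y)$-plane for each fixed $k$.

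The core of the argument is to show that $m(x,y,\cdot)$ is analytic on $\hat{\C}\setminus\Gamma$ and to compute its jump. Using the Volterra representation of $\Phi$ and the analyticity of $\lambda$, the only obstruction to global analyticity of $\Phi$ on $\mathcal{S}_{(x,y)}$ occurs along the sheet-connecting paths $\Sigma_0$ and $\Sigma_1$; since $F_{(x,y)}$ carries these onto the intervals \eqref{twointervals} encircled by $\Gamma_0$ and $\Gamma_1$, the jump of $m$ is supported on $\Gamma$. To evaluate it, I would restrict the Lax pair to the boundary: on $y=0$ the $x$-equation has coefficient $\mathsf{U}_0$, so by uniqueness of the Volterra solution \eqref{Phi0Phi1a} the boundary value of $\Phi$ along $y=0$ equals $\Phi_0$, and analogously on $x=0$ it equals $\Phi_1$. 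Tracking these boundary values across $\Sigma_0$ and $\Sigma_1$ then yields $v = \Phi_0\circ F_{(x,y)}^{-1}$ on $\Gamma_0$ and $v = \Phi_1\circ F_{(x,y)}^{-1}$ on $\Gamma_1$, which is precisely \eqref{jumpdef}.

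It remains to pin down $m$ uniquely and to derive \eqref{Erecover}. Uniqueness of the solution of \eqref{RHm} is established in Section~\ref{ernstsec}, so the $m$ constructed above must coincide with the $m$ appearing in the statement. For the reconstruction, I would evaluate $m$ at $z=0$, which corresponds to the second point $\infty^-$ over $k=\infty$ (where $\lambda\to -1$): combining the large-$k$ expansion of the Lax pair matrices with the symmetries of $\Phi$ (the sheet-exchange symmetry relating $k^+$ and $k^-$ together with the complex-conjugation symmetry of $\mathsf{U},\mathsf{V}$) expresses $\mathcal{E}(x,y)$ as a Möbius function of $(m(x,y,0))_{11}$ and $(m(x,y,0))_{21}$, and the normalization $\mathcal{E}(0,0)=1$ fixes the constants and produces \eqref{Erecover}.

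I expect the main obstacle to be the combination of the dynamic Riemann surface and the singular boundary data. Establishing the claimed analyticity and the precise boundary behavior of $m$ in $z$, uniformly in $(x,y)\in D$, is delicate because the branch points $x$ and $1-y$ move with $(x,y)$ and because $\mathcal{E}_{0x}$ and $\mathcal{E}_{1y}$ may blow up like $x^{-\alpha}$ and $y^{-\alpha}$ near the origin. The weights $x^\alpha, y^\alpha$ in Definition~\ref{solutiondef} and the conditions in \eqref{E0E1assumptions} are exactly what keep the Volterra integrals \eqref{Phi0Phi1} convergent and $\Phi_0,\Phi_1$ continuous up to values of $k$ near the moving branch points; propagating these estimates through the map $F_{(x,y)}$ and controlling everything as $(x,y)$ approaches the corner of $D$ is where the real work lies.
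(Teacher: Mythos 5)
Your overall strategy---build a Lax-pair eigenfunction from the given solution, transplant it to the $z$-sphere via $F_{(x,y)}$, identify the result with $m$, and read off $\mathcal{E}$ from $m(x,y,0)$---is the paper's strategy, but two concrete steps fail, and they are exactly the points where the paper's construction contains structure you have omitted. The main gap is your definition $m(x,y,z):=\Phi\big(x,y,F_{(x,y)}^{-1}(z)\big)$. The eigenfunction $\Phi(x,y,\cdot)$ is analytic on $\mathcal{S}_{(x,y)}\setminus\Sigma$, so under $F_{(x,y)}$ its discontinuities land on the two real intervals \eqref{twointervals}, which lie \emph{inside} $\Omega_0$ and $\Omega_1$---not on the loops $\Gamma_0,\Gamma_1$ that merely encircle them. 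With your definition, $m$ would be continuous across $\Gamma$ and singular on the enclosed intervals, so it cannot solve \eqref{RHm}; ``tracking the boundary values across $\Sigma_0$ and $\Sigma_1$'' does not move the jump onto $\Gamma$. The missing idea is the piecewise modification in the paper's formula \eqref{mVPhi}: on $\Omega_0$ one multiplies on the right by $\Phi\big(x,0,F_{(x,y)}^{-1}(z)\big)^{-1}$ and on $\Omega_1$ by $\Phi\big(0,y,F_{(x,y)}^{-1}(z)\big)^{-1}$. By Lemma \ref{claim2}, $\Phi(x,y,P)\Phi(x,0,P)^{-1}$ is analytic across $\Sigma_0$ (and similarly across $\Sigma_1$), so these factors cancel the interior discontinuities and simultaneously create the jumps $v=\Phi_0\circ F_{(x,y)}^{-1}$ on $\Gamma_0$ and $v=\Phi_1\circ F_{(x,y)}^{-1}$ on $\Gamma_1$, which is where \eqref{jumpdef} actually comes from.

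The second problem is the normalization. An eigenfunction of $\Phi_x=\mathsf{U}\Phi$, $\Phi_y=\mathsf{V}\Phi$ cannot satisfy $\Phi(x,y,\infty^+)=I$ for all $(x,y)$, because $\mathsf{U}(x,y,\infty^+)\neq 0$; the only admissible freedom is right multiplication by a matrix independent of $(x,y)$. The paper normalizes $\Phi$ at the corner $(0,0)$ through the Volterra equation \eqref{Phidef} (which also forces the boundary restrictions to coincide with $\Phi_0$ and $\Phi_1$, as you use), computes the nontrivial value \eqref{Phiatinftyplus} of $\Phi(x,y,\infty^+)$, and inserts the left prefactor $\Phi(x,y,\infty^+)^{-1}$ into \eqref{mVPhi} so that $m\to I$ as $z\to\infty$. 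That prefactor is not cosmetic: it gives $m(x,y,0)=\Phi(x,y,\infty^+)^{-1}\sigma_3\Phi(x,y,\infty^+)\sigma_3$, from which the M\"obius formula \eqref{Erecover} follows by direct computation. If your normalization at $\infty^+$ could be imposed, $m(x,y,0)$ would be the identity and carry no information about $\mathcal{E}$; as it stands, your construction does not produce a well-defined object. Your remarks on where the analytic difficulties lie (singular data at the origin, the moving branch points) are accurate, but they concern the later lemmas rather than the structural steps above.
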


Theorem \ref{mainth2} establishes uniqueness of the $C^n$-solution. 

\begin{theorem}[Uniqueness]\label{mainth2}
The $C^n$-solution $\mathcal{E}(x,y)$ of the Goursat problem for (\ref{ernst}) in $D$ with data $\{\mathcal{E}_0, \mathcal{E}_1\}$ is unique, if it exists. In fact, the value of $\mathcal{E}$ at a point $(x,y) \in D$ is uniquely determined by the boundary values $\mathcal{E}_0(x')$ and $\mathcal{E}_1(y')$ for $0 \leq x' \leq x$ and $0 \leq y' \leq y$.
\end{theorem}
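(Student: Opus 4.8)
The plan is to obtain Theorem \ref{mainth2} as an essentially immediate consequence of the representation formula of Theorem \ref{mainth1}, combined with the uniqueness of the solution of the RH problem \eqref{RHm}. The point is structural: every object entering the right-hand side of \eqref{Erecover}---the contour $\Gamma = \Gamma(x,y)$, the map $F_{(x,y)}$, and the jump matrix $v(x,y,\cdot)$ defined in \eqref{jumpdef}---is manufactured purely from the prescribed boundary data $\{\mathcal{E}_0,\mathcal{E}_1\}$ and from the point $(x,y)$ itself, with no reference whatsoever to the interior values of a solution $\mathcal{E}$. Consequently, two $C^n$-solutions with the same data are forced to satisfy the same formula \eqref{Erecover} with the same RH problem, and hence must coincide.

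To make the sharper domain-of-dependence statement precise, I would fix $(x,y) \in D$ and trace the dependence of the RH data through the definitions. By \eqref{U0V1def}, the matrices $\mathsf{U}_0(x',k^\pm)$ and $\mathsf{V}_1(y',k^\pm)$ are expressed solely in terms of $\mathcal{E}_0,\mathcal{E}_{0x}$ and $\mathcal{E}_1,\mathcal{E}_{1y}$ together with the algebraic function $\lambda$. The Volterra equations \eqref{Phi0Phi1} have unique solutions, and the integral in \eqref{Phi0Phi1a} extends only over $[0,x]$; hence $\Phi_0(x,\cdot)$ depends only on the restriction $\mathcal{E}_0|_{[0,x]}$, and likewise $\Phi_1(y,\cdot)$ depends only on $\mathcal{E}_1|_{[0,y]}$. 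Through \eqref{jumpdef}, the jump matrix $v(x,y,\cdot)$, and therefore the entire RH problem \eqref{RHm}, is thus determined by $(x,y)$ together with $\mathcal{E}_0|_{[0,x]}$ and $\mathcal{E}_1|_{[0,y]}$. Invoking the uniqueness of the RH solution (established in Section \ref{ernstsec}), the matrix $m(x,y,\cdot)$ and in particular the value $m(x,y,0)$ appearing in \eqref{Erecover}---which is well defined since $0\notin\Gamma$---are uniquely determined by this data. Substituting into \eqref{Erecover} yields that $\mathcal{E}(x,y)$ is determined by $\mathcal{E}_0(x')$ for $0\le x'\le x$ and $\mathcal{E}_1(y')$ for $0\le y'\le y$, which is exactly the second assertion; global uniqueness follows at once by letting $(x,y)$ range over $D$.

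I expect the only genuine work to lie upstream, in Theorem \ref{mainth1} and in the RH uniqueness of Section \ref{ernstsec}, rather than in the present argument. The subtle content is precisely the assertion---built into Theorem \ref{mainth1}---that the spectral functions $\Phi_0,\Phi_1$ feeding the jump matrix can be reconstructed from the boundary values alone and do not encode any independent information about the interior of $\mathcal{E}$; once this is granted, uniqueness is a bookkeeping exercise in tracking which portion of the data each factor depends on. The one place requiring a small amount of care is verifying that the domain of integration in \eqref{Phi0Phi1} indeed localizes the dependence to $[0,x]$ and $[0,y]$, but this is transparent from the Volterra structure.
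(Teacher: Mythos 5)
Your proposal is correct and follows essentially the same route as the paper: the authors also deduce Theorem \ref{mainth2} directly from the representation formula of Theorem \ref{mainth1} together with Lemma \ref{uniquenesslemma} (uniqueness of the RH solution), observing that the jump data are built from $\Phi_0,\Phi_1$ via the Volterra equations \eqref{Phi0Phi1} and hence involve only $\mathcal{E}_0|_{[0,x]}$ and $\mathcal{E}_1|_{[0,y]}$. Your tracking of the domain of dependence through \eqref{U0V1def} and the Volterra structure matches the paper's (terser) argument exactly.
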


Theorem \ref{mainth3} establishes existence of a $C^n$-solution---in the collinear case, for general data; otherwise under a small-norm assumption.

\begin{theorem}[Existence and regularity]\label{mainth3}
For each $\delta > 0$, the following three existence and regularity results hold:
	\begin{enumerate}[$(a)$] 
\item Suppose the $2 \times 2$-matrix RH problem \eqref{RHm} has a solution for all $(x,y)\in D_\delta$. Then there exists a $C^n$-solution of the Goursat problem for \eqref{ernst} in $D_\delta$ with data 
$\{\mathcal{E}_0|_{[0,1-\delta)},\mathcal{E}_1|_{[0,1-\delta)} \}$. 
\item Whenever the $L^1$-norms of $\mathcal{E}_{0x}/(\re \mathcal{E}_0)$ and $\mathcal{E}_{0y}/(\re \mathcal{E}_1)$ on $[0,1-\delta)$ are sufficiently small, there exists a $C^n$-solution of the Goursat problem for \eqref{ernst} in $D_\delta$ with data $\{\mathcal{E}_0, \mathcal{E}_1\}$.
\item If $\mathcal{E}_0, \mathcal{E}_1 >0$ on $[0,1-\delta)$, i.e., if the incoming waves are collinearly polarized, then there exists a $C^n$-solution of the Goursat problem for \eqref{ernst} in $D_\delta$ with data 
$\{\mathcal{E}_0|_{[0,1-\delta)},\mathcal{E}_1|_{[0,1-\delta)} \}$. 
	\end{enumerate}
\end{theorem}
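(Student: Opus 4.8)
The plan is to establish existence by running the dressing (inverse) direction of the correspondence built in Theorem \ref{mainth1}: given a solution $m(x,y,z)$ of the RH problem \eqref{RHm}, \emph{define} $\mathcal{E}$ by the reconstruction formula \eqref{Erecover} and then verify, one by one, all the requirements of Definition \ref{solutiondef}. Parts $(b)$ and $(c)$ will be reduced to part $(a)$ by exhibiting, in each case, conditions under which the RH problem \eqref{RHm} is actually solvable for every $(x,y) \in D_\delta$.

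First, for part $(a)$, I would record the basic properties of the boundary eigenfunctions: the Volterra equations \eqref{Phi0Phi1} have unique solutions $\Phi_0, \Phi_1$, analytic in $k \in \C \setminus [0,1]$, and---crucially---satisfying the linear ODEs $\Phi_{0,x} = \mathsf{U}_0 \Phi_0$ and $\Phi_{1,y} = \mathsf{V}_1 \Phi_1$, with the weighted-continuity hypothesis $x^\alpha \mathcal{E}_{0x}, y^\alpha \mathcal{E}_{1y} \in C([0,1))$ controlling their behavior near the corner. To transfer regularity in $z$ into regularity in $(x,y)$, I would pull the RH problem back through $F_{(x,y)}$ onto a contour held fixed in a neighborhood of a chosen $(x,y)$, recast it as a singular integral equation of the form $(1 - \mathcal{C}_v)\mu = I$ on that fixed contour, and invoke the standard theory of smooth parameter-dependence for such equations to conclude that $m(x,y,z)$ is $C^n$ in $(x,y)$ for $z$ off $\Gamma$. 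The delicate point here is that the branch points $x$ and $1-y$, the contour $\Gamma$, and the function $\lambda$ all move with $(x,y)$; the role of the map $F_{(x,y)}$ is precisely to absorb this motion so that the residual $(x,y)$-dependence resides only in the (smooth) jump matrix.

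With $m$ smooth, I would differentiate the RH problem in $x$ and $y$. Using $\Phi_{0,x} = \mathsf{U}_0 \Phi_0$ and $\Phi_{1,y} = \mathsf{V}_1 \Phi_1$, the combinations $m_x$ and $m_y$, after subtracting the appropriate $m$-conjugated terms linear in $\lambda$ and $\lambda^{-1}$, have no jump across $\Gamma$ and only controlled behavior at the images of the branch points; a Liouville argument then identifies them with explicit rational expressions in $z$. This is exactly the statement that the eigenfunction built from $m$ solves the $x$- and $y$-parts of the Lax pair of \eqref{ernst} with potential $\mathcal{E}$ given by \eqref{Erecover}, and the compatibility of these two linear equations forces $\mathcal{E}$ to satisfy the hyperbolic Ernst equation in $\Int(D_\delta)$. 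The same computation reads off the weighted derivatives: the factor of $\lambda$ in $\mathsf{U}_0$ together with $x^\alpha \mathcal{E}_{0x} \in C$ yields $x^\alpha \mathcal{E}_x, y^\alpha \mathcal{E}_y, x^\alpha y^\alpha \mathcal{E}_{xy} \in C(D_\delta)$. The boundary condition $\mathcal{E}(x,0) = \mathcal{E}_0(x)$ I would obtain by analyzing the degeneration of the RH problem as $y \downarrow 0$: the interval $F_{(x,y)}(\Sigma_1)$ collapses, the loop $\Gamma_1$ can be contracted, and $m$ reduces to an object computable from $\Phi_0$ alone, so that \eqref{Erecover} evaluates to $\mathcal{E}_0(x)$; symmetrically for $\mathcal{E}(0,y) = \mathcal{E}_1(y)$. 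Finally, $\re \mathcal{E} > 0$ I would deduce from a Schwarz-type symmetry of the jump matrix inherited from $\re \mathcal{E}_0, \re \mathcal{E}_1 > 0$, which constrains $m$ so that $\mathcal{E}$ lands in the right half-plane.

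For part $(b)$, the small-$L^1$ hypothesis on $\mathcal{E}_{0x}/(\re\mathcal{E}_0)$ and $\mathcal{E}_{1y}/(\re\mathcal{E}_1)$ makes $\Phi_0$ and $\Phi_1$---and hence $v$---uniformly close to $I$ via the Volterra estimate, so $1 - \mathcal{C}_v$ is invertible by a Neumann series and \eqref{RHm} is solvable for every $(x,y) \in D_\delta$; part $(a)$ then yields the solution. For part $(c)$, when $\mathcal{E}_0, \mathcal{E}_1 > 0$ the kernels $\mathsf{U}_0, \mathsf{V}_1$ acquire an extra reality and symmetry structure that produces a vanishing lemma for \eqref{RHm} (the homogeneous RH problem admits only the trivial solution), whence solvability for all $(x,y)$ follows from Fredholm theory without any smallness; alternatively, one may simply invoke the linear Euler-Darboux analysis of Theorem \ref{linearmainth}, to which the collinear problem reduces. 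I expect the principal obstacle to be the analytic bookkeeping of the singularities: proving that $m$ really is $C^n$ in $(x,y)$ up to the corner in spite of the unbounded data derivatives and the moving branch points, and pinning down the behavior of the eigenfunctions at the images of $x$ and $1-y$ sharply enough to run the Liouville step and extract the weighted derivatives. Once these estimates are in hand, the remainder is a largely routine dressing computation.
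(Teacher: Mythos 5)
Your proposal is correct and follows essentially the same route as the paper: the dressing direction via the singular integral representation $m = I + \mathcal{C}(\mu w)$ on a contour fixed by means of $F_{(x,y)}$, a Liouville argument identifying the jump-free combinations of $m_x,m_y$ with rational functions having simple poles at $z=\mp 1$ (which produces the Lax pair and hence, by compatibility, the Ernst equation), degeneration of the jump at $y=0$ for the boundary values, the symmetries of $m(x,y,0)$ for $\re\mathcal{E}>0$, and a Neumann series for part $(b)$. For part $(c)$ the paper uses exactly your stated alternative---the substitution $V=-\log\mathcal{E}$ reducing to the Euler--Darboux theorem---rather than a vanishing lemma, which it never establishes.
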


\begin{remark}\upshape
Part $(a)$ of Theorem \ref{mainth3} shows that the solution $\mathcal{E}(x,y)$ exists and has the same regularity as the given data as long as the associated RH problem has a solution. By taking $\delta > 0$ arbitrarily small, we see that the same statement holds also in all of $D$.
\end{remark}

Theorem \ref{mainth4} establishes explicit formulas for the singular behavior of the solution near the boundary in terms of the given data. 

\begin{theorem}[Boundary behavior]\label{mainth4}
Let $\alpha \in (0,1)$ and $n \geq 2$ be an integer. Let $\mathcal{E}(x,y)$ be a $C^n$-solution of the Goursat problem for (\ref{ernst}) in $D$ with data $\{\mathcal{E}_0, \mathcal{E}_1\}$.
Let $m_1, m_2 \in \C$ denote the values of these functions at the origin, i.e.,
\begin{align}\label{m1m2def}
m_1 = \lim_{x \downarrow 0}  x^\alpha \mathcal{E}_{0x}(x), \qquad
m_2 = \lim_{y \downarrow 0}  y^\alpha \mathcal{E}_{1y}(y).
\end{align}
Then the solution  $\mathcal{E}(x,y)$ has the following behavior near the boundary:
\begin{subequations}\label{boundarylimit}
\begin{align}\label{boundarylimita}
& \lim_{x \downarrow 0} x^\alpha \mathcal{E}_x(x,y) = m_1 \frac{e^{i\int_0^y \frac{\im \mathcal{E}_{1y}(y')}{\re \mathcal{E}_1(y')} dy'} \re \mathcal{E}_1(y)}{\sqrt{1-y}} && \text{for each $y \in [0,1)$},
	\\ \label{boundarylimitb}
& \lim_{y \downarrow 0} y^\alpha \mathcal{E}_y(x,y) = m_2 \frac{e^{i\int_0^x \frac{\im \mathcal{E}_{0x}(x')}{\re \mathcal{E}_0(x')} dx'} \re \mathcal{E}_0(x)}{\sqrt{1-x}} && \text{for each $x \in [0,1)$}.
\end{align}
\end{subequations}
In particular,
\begin{align*}
& \lim_{x \downarrow 0} x^\alpha |\mathcal{E}_x(x,y)| = |m_1| \frac{\re \mathcal{E}_1(y)}{\sqrt{1-y}} && \text{for each $y \in [0,1)$},
	\\
& \lim_{y \downarrow 0} y^\alpha |\mathcal{E}_y(x,y)| = |m_2| \frac{\re \mathcal{E}_0(x)}{\sqrt{1-x}} && \text{for each $x \in [0,1)$}.
\end{align*}

\end{theorem}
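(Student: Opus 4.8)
\emph{Strategy.} The plan is to read off the singular behavior of $\mathcal{E}_x$ as $x\downarrow 0$ directly from the representation formula \eqref{Erecover} of Theorem \ref{mainth1}, by analyzing how the RH problem \eqref{RHm} degenerates in this limit. Writing $m_{ij}=(m(x,y,0))_{ij}$, $N=1+m_{11}-m_{21}$, $D=1+m_{11}+m_{21}$, and differentiating \eqref{Erecover} gives
\[
x^\alpha\mathcal{E}_x=\frac{\big(x^\alpha\partial_x m_{11}-x^\alpha\partial_x m_{21}\big)D-N\big(x^\alpha\partial_x m_{11}+x^\alpha\partial_x m_{21}\big)}{D^2}.
\]
Thus the problem splits into (i) identifying the limiting values $m_{ij}(0,y,0)$, which are fixed by the limiting RH problem and must reproduce $\mathcal{E}(0,y)=\mathcal{E}_1(y)$, and (ii) computing $\lim_{x\downarrow 0}x^\alpha\partial_x m(x,y,0)$.

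First I would describe the degeneration of the contour. From \eqref{twointervals}, as $x\downarrow 0$ the interval $F_{(x,y)}(\Sigma_0)$ collapses to the point $z=-1$ (its endpoints are $-1\mp 2\sqrt{x}/\sqrt{1-y}+O(x)$), so the loop $\Gamma_0$ shrinks onto $z=-1$, while $\Gamma_1$ and its jump converge to their values on $\mathcal{S}_{(0,y)}$. Moreover $\Phi_0(0,\cdot)=I$ by \eqref{Phi0Phi1a}, so the jump on $\Gamma_0$ tends to the identity and $m(x,y,\cdot)\to m(0,y,\cdot)$, the solution of the RH problem with jump supported on $\Gamma_1$ alone. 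Since the $x$-dependence of the $\Gamma_1$-data in \eqref{jumpdef} enters only through the smooth map $F^{-1}_{(x,y)}$ and the smoothly moving contour, its contribution to $\partial_x m(x,y,0)$ stays bounded as $x\downarrow 0$; because $\alpha>0$ here (in contrast to \eqref{E0E1assumptions}), this contribution is annihilated by the prefactor $x^\alpha$. Hence the entire limit of $x^\alpha\partial_x m(x,y,0)$ comes from the collapsing loop $\Gamma_0$.

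The heart of the argument is the explicit evaluation of this $\Gamma_0$-contribution --- the ``explicit inversion of the singular integral operator in the limit of small $x$'' advertised in the introduction. Representing the solution by a Cauchy integral, the $\Gamma_0$-part of $m(x,y,0)$ equals $\frac{1}{2\pi i}\oint_{\Gamma_0}\frac{m_-(x,y,w)(v-I)}{w}\,dw$ with $v=\Phi_0(x,F^{-1}_{(x,y)}(w))$. On $\Gamma_0$ the spectral parameter $k=F^{-1}_{(x,y)}(w)$ sits near the branch point $k=x$, where $\lambda\sim 1/\sqrt{x}$ blows up; inserting the Volterra expansion $\Phi_0-I=\int_0^x \mathsf{U}_0\Phi_0\,dx'$ together with the singular profile $\mathcal{E}_{0x}(x')\sim m_1 x'^{-\alpha}$ from \eqref{m1m2def} into \eqref{U0def}, a scaling analysis (substituting $x'=xt$ and $w=-1+\sqrt{x}\,\zeta$, so that $\Gamma_0$ becomes an $x$-independent loop in $\zeta$) shows that $v-I$ is of size $x^{1/2-\alpha}$ and that the whole $\Gamma_0$-contribution to $m(x,y,0)$ is of size $x^{1-\alpha}$ with an explicitly computable coefficient. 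The factor $m_1$ is carried by $\mathsf{U}_0$; the dependence on $\mathcal{E}_1$ is supplied by the value of the limiting solution at the collapse point, which I would identify with $\Phi_1(y,0^\pm)$: at $x=0$ the point $z=-1$ corresponds to the branch point $k=0$ of $\mathcal{S}_{(0,y)}$, where $1/\lambda=0$ makes $\mathsf{V}_1$ diagonal, so that $\Phi_1(y,0^\pm)$ is the diagonal matrix with entries $\exp\!\big(\int_0^y\tfrac{\overline{\mathcal{E}_{1y}}}{2\re\mathcal{E}_1}\,dy'\big)$ and $\exp\!\big(\int_0^y\tfrac{\mathcal{E}_{1y}}{2\re\mathcal{E}_1}\,dy'\big)$; using $\mathcal{E}_1(0)=1$ this produces exactly the factor $e^{i\int_0^y \frac{\im\mathcal{E}_{1y}}{\re\mathcal{E}_1}dy'}\re\mathcal{E}_1(y)$, while the geometric $1/\sqrt{1-y}$ comes from the $\sqrt{1-y}$ in the rescaling $w=-1+\sqrt{x}\,\zeta$ dictated by \eqref{twointervals}. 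Differentiating the $x^{1-\alpha}$ behavior and multiplying by $x^\alpha$ yields the finite limit \eqref{boundarylimita}, and taking absolute values gives the stated modulus.

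The $y$-derivative formula \eqref{boundarylimitb} follows from the symmetric argument in which $\Gamma_1$ collapses onto $z=1$ as $y\downarrow 0$, with the roles of $\mathsf{U}_0,\Phi_0$ and $\mathsf{V}_1,\Phi_1$ interchanged. I expect the main obstacle to be the rigorous, uniform-in-$x$ control of the localized RH problem on the shrinking loop $\Gamma_0$: one must show that the resolvent of the associated singular integral operator stays bounded as $x\downarrow 0$, that the coupling to $\Gamma_1$ genuinely factors through the single value at the collapse point, and that the competing scales (the loop length $\sim\sqrt{x}$, the blow-up $\lambda\sim 1/\sqrt{x}$, and the integrable data singularity $x^{-\alpha}$) combine to produce precisely the coefficient in \eqref{boundarylimita} rather than merely the correct power $x^{-\alpha}$. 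Justifying the interchange of the $x$-differentiation with these limits, given that $\mathcal{E}$ is only assumed $C^n$ with boundary derivatives unbounded at the origin, is the delicate technical point.
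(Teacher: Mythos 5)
Your overall strategy coincides with the paper's: differentiate the reconstruction formula \eqref{Erecover}, identify $m(0,y,\cdot)$ as the explicit solution of the RH problem with jump only on $\Gamma_1$, compute $\lim_{x\downarrow 0}x^\alpha m_x(x,y,0)$ from the degeneration of the jump data, and extract the factor $e^{i\int_0^y\frac{\im\mathcal{E}_{1y}}{\re\mathcal{E}_1}dy'}\re\mathcal{E}_1(y)$ from the diagonal matrix $\Phi_1(y,0)$ (your formula for it agrees with Lemma \ref{lemmaPhi10}). However, there is a genuine gap in the central step. You assert that the $\Gamma_1$ portion of the Cauchy integral contributes a \emph{bounded} amount to $\partial_x m(x,y,0)$, so that the entire limit of $x^\alpha\partial_x m(x,y,0)$ comes from the loop around $z=-1$. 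This is false: writing $m=I+\mathcal{C}(\mu w)$ one has $m_x=\mathcal{C}(\mu_x w)+\mathcal{C}(\mu w_x)$, and while $x^\alpha w_x\to 0$ on $\Gamma_1$, the density $\mu_x$ on $\Gamma_1$ blows up like $x^{-\alpha}$, because $\mu_x=(I-\mathcal{C}_w)^{-1}\mathcal{C}_-(\mu w_x)$ and the operator $\mathcal{C}_-$ transports the singular data $x^\alpha w_x\sim\mathsf{U}_0$ from $\Gamma_0$ onto $\Gamma_1$. The paper computes this limit explicitly, $\lim_{x\downarrow 0}x^\alpha\mu_x(x,y,\cdot)=\Pi(y,\cdot)$ on $\Gamma_1$ (Lemma \ref{lemmaxalphawxxalphamux}, which requires explicitly inverting $I-\mathcal{C}_{w(0,y,\cdot)}$), and the resulting term \eqref{Cxalphamuxw} is of the same order as your $\Gamma_0$ term \eqref{Cxalphamuwx}: it supplies the right factor $\Phi_1(y,0)^{-1}\sigma_3\Phi_1(y,\infty^+)\sigma_3$ in \eqref{formulaxalphamx}. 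Omitting it yields the wrong constant in \eqref{boundarylimita} (your $\Gamma_0$ integral alone reproduces only \eqref{Cxalphamuwx}).

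Two further points. First, your final step ``differentiating the $x^{1-\alpha}$ behavior'' of $m(x,y,0)$ is not legitimate: an asymptotic expansion of $m(x,y,0)$ cannot be differentiated termwise, and you flag this yourself; the paper avoids it entirely by differentiating the singular-integral representation first and only then passing to the limit of $x^\alpha m_x$. Second, your rescaling $w=-1+\sqrt{x}\,\zeta$ onto a shrinking loop is an unnecessary complication: in the paper the contours $\Gamma_0,\Gamma_1$ are \emph{fixed} (encircling $[-\epsilon^{-1},-\epsilon]$ and $[\epsilon,\epsilon^{-1}]$), and on the fixed $\Gamma_0$ the quantity $x^\alpha w_x=x^\alpha\mathsf{U}_0(x,F_{(x,y)}^{-1}(z))$ converges to a bounded explicit matrix with a simple pole of $\lambda(0,0,F_{(0,y)}^{-1}(z))$ at $z=-1$, after which everything reduces to residue calculus at $z=-1$. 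The uniform control of the resolvent on a shrinking contour that you list as the main obstacle simply does not arise in the paper's setup.
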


\begin{remark}\upshape
Theorem \ref{mainth4} yields the following important result for the collision of plane gravitational waves: A solution $\mathcal{E}(x,y)$ of the Goursat problem for (\ref{ernst}) fulfills the gravitational wave boundary conditions (\ref{boundaryconditions}) if and only if the boundary data  $\mathcal{E}(x,0) = \mathcal{E}_0(x)$ and $\mathcal{E}(0,y) = \mathcal{E}_1(y)$  are such that 
$\lim_{x \downarrow 0}  x^{\alpha} |\mathcal{E}_{0x}(x)|$ and $\lim_{y \downarrow 0}  y^{\alpha} |\mathcal{E}_{1y}(y)|$ belong to the real interval $[1, \sqrt{2})$.
In particular, the behavior of $\mathcal{E}_{x}(x,0)$ and $\mathcal{E}_{y}(0,y)$ at the origin fully determines whether the functions $\mathcal{E}_x(x,y)$ and $\mathcal{E}_y(x,y)$ have the appropriate singular behavior near the edges $\partial D \cap \{x =0\}$  and $\partial D \cap \{y =0\}$.
\end{remark}

\section{Collinearly polarized waves}\label{linearlimitsec}
Before turning to the general case, it is useful to first consider the special case in which the Ernst potential $\mathcal{E}$ is strictly positive. In the context of gravitational waves, this corresponds to the important situation when the two colliding waves have collinear polarization, see \cite{G1991}. 

%Before turning to the nonlinear Ernst equation (\ref{ernst}) and the proof of Theorem \ref{mainth}, it is useful to consider the linearized version of (\ref{ernst}). Indeed, the linearized version of (\ref{ernst}) can be analyzed following steps which are conceptually very similar to those involved in the analysis of (\ref{ernst}).

\subsection{The Euler-Darboux equation}
If the Ernst potential $\mathcal{E}$ is strictly positive, we can write $\mathcal{E}(x,y) = e^{-V(x,y)}$, where $V(x,y)$  is a real-valued function. A simple computation then shows that $\mathcal{E}$ satisfies the Ernst equation (\ref{ernst}) if and only if $V$ satisfies the linear hyperbolic equation
\begin{align}\label{linearernst}  
  V_{xy} - \frac{V_x + V_y}{2(1-x-y)} = 0,
\end{align}  
which is a version of the Euler-Darboux equation \cite{M1973}. Since (\ref{linearernst}) is a linear equation, we can, without loss of generality, assume that $V$ is real-valued and that $V(0,0) =0$. 

\begin{remark}[Linear limit]\upshape
In addition to being a reformulation of (\ref{ernst}) in the special case of collinearly polarized waves, equation (\ref{linearernst}) can also be viewed as the linearized version of (\ref{ernst}). Indeed, substituting $\mathcal{E}(x,y) = 1 + \epsilon V(x,y) + O(\epsilon^2)$ into (\ref{ernst}) and considering the terms of $O(\epsilon)$, we see that (\ref{linearernst}) is the linear limit of (\ref{ernst}).
\end{remark}

The analysis of the Euler-Darboux equation (\ref{linearernst}) presented in this section serves two purposes. First, it is used to prove the part of Theorem \ref{mainth3} regarding existence in the collinearly polarized case. Second, it turns out that the more difficult case of noncollinearly polarized solutions can be analyzed following steps which are conceptually very similar to---but technically more difficult than---those involved in the analysis of the collinear case.
In fact, the analysis of (\ref{ernst}) presented in later sections strongly relies on the insight gained in this section.

We are interested in the following Goursat problem for (\ref{linearernst}) in the triangle $D$: Given $V_0(x)$, $x \in [0, 1)$, and $V_1(y)$, $y \in [0,1)$, find a solution $V(x,y)$ of (\ref{linearernst}) in $D$ such that $V(x,0) = V_0(x)$ for $x \in [0,1)$ and $V(0,y) = V_1(y)$ for $y \in [0,1)$.
We introduce a notion of $C^n$-solution of this problem as follows.

\begin{definition}\label{linearsolutiondef}\upshape
Let $V_0(x)$, $x \in [0, 1)$, and $V_1(y)$, $y \in [0,1)$, be real-valued functions and $\alpha \in [0,1)$.
We define a function $V:D \to \R$ to be a {\it $C^n$-solution of the Goursat problem for (\ref{linearernst}) in $D$ with data $\{V_0, V_1\}$} if 
\begin{align*}
\begin{cases}
  V \in C(D) \cap C^n(\Int(D)), 
  	\\
  \text{$V(x,y)$ satisfies the Euler-Darboux equation (\ref{linearernst}) in $\Int(D)$,}
  	\\
 \text{$x^\alpha V_x, y^\alpha V_y, x^\alpha y^\alpha V_{xy} \in C(D)$ for some $\alpha \in [0,1)$,}
  	\\
  \text{$V(x,0) = V_0(x)$ for $x \in [0,1)$,}
 	\\
  \text{$V(0,y) = V_1(y)$ for $y \in [0,1)$.}
\end{cases}
\end{align*}
\end{definition}

The following theorem establishes the unique existence of a solution of the Goursat problem for (\ref{linearernst}) in $D$. It also provides a representation for the solution in terms of the boundary data and characterizes the singular behavior near the boundary. 

\begin{theorem}[Solution of the Euler-Darboux equation in a triangle]\label{linearmainth}
Let $n \geq 2$ be an integer. Let $V_0(x)$, $x \in [0, 1)$, and $V_1(y)$, $y \in [0,1)$, be two real-valued functions such that 
\begin{align}\label{V0V1assumptions}
\begin{cases}
 V_0, V_1 \in C([0,1)) \cap C^n((0,1)), 
 	\\
  \text{$x^\alpha V_{0x}, y^\alpha V_{1y} \in C([0,1))$ for some $\alpha \in [0,1)$,}
 	\\
V_0(0) = V_1(0) = 0.
\end{cases}
\end{align}

Then there exists a unique $C^n$-solution $V(x,y)$ of the Goursat problem for (\ref{linearernst}) in $D$ with data $\{V_0, V_1\}$.
Moreover, this solution is given in terms of the boundary values $V_0(x)$ and $V_1(y)$ by 
\begin{align}\label{linearVrecover}
V(x,y) = -\frac{1}{2}m(x,y,0), \qquad (x,y) \in D,
\end{align}
where $m(x,y,z)$ is the unique solution of the following scalar RH problem:
\begin{align}\label{linearRHm}
\begin{cases} 
\text{$m(x, y, \cdot)$ is analytic in $\C \setminus \Gamma$}, 
	\\
\text{$m_+(x, y, z) = m_-(x, y, z)  + v(x, y, z)$ for all $z \in \Gamma$},
	\\
\text{$m(x,y,z) =  O(z^{-1})$ as $z \to \infty$},
\end{cases} 
\end{align}
and the jump $v(x,y,z)$ is defined by
\begin{align}\label{linearjumpdef}
v(x,y, z) = \begin{cases}
 \Phi_0\big(x, F_{(x,y)}^{-1}(z)\big), \quad & z \in \Gamma_0, 
 	\\
\Phi_1\big(y, F_{(x,y)}^{-1}(z)\big), & z \in \Gamma_1,
\end{cases}
\end{align}
with
\begin{subequations}\label{linearPhi0Phi1}
\begin{align}\label{linearPhi0Phi1a}
\Phi_0(x,k^\pm) = \int_0^x \lambda(x',0,k^\pm) V_{0x}(x') dx', \qquad x \in [0, 1), \ k \in \C \setminus [0,1],
	\\\label{linearPhi0Phi1b}
\Phi_1(y,k^\pm) = \int_0^y \frac{1}{\lambda(0,y',k^\pm)} V_{1y}(y') dy', \qquad y \in [0, 1), \ k \in \C \setminus [0,1].
\end{align}
\end{subequations}

Furthermore, if $\alpha \in (0,1)$ is such that the functions $x^\alpha V_{0x}$ and $y^\alpha V_{1y}$ are continuous on $[0, 1)$ and 
\begin{align}\label{linearm1m2def}
m_1 := \lim_{x \downarrow 0}  x^\alpha V_{0x}(x), \qquad
m_2 := \lim_{y \downarrow 0}  y^\alpha V_{1y}(y),
\end{align}
then the solution $V(x,y)$ has the following behavior near the boundary:
\begin{subequations}
\begin{align}\label{linearboundarylimita}
& \lim_{x \downarrow 0} x^\alpha V_x(x,y) = \frac{m_1}{\sqrt{1-y}} && \text{for each $y \in [0,1)$},
	\\\label{linearboundarylimitb}
& \lim_{y \downarrow 0} y^\alpha V_y(x,y) = \frac{m_2}{\sqrt{1-x}} && \text{for each $x \in [0,1)$}.
\end{align}
\end{subequations}

\end{theorem}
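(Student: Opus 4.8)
The plan is to realize the scalar RH problem \eqref{linearRHm} as the image, under the map $F_{(x,y)}$, of an eigenfunction for a Lax pair of the Euler--Darboux equation \eqref{linearernst}, and then to run the Fokas unified transform in both directions: solution $\to$ RH data, and RH data $\to$ solution. The starting point is that \eqref{linearernst} is the compatibility condition of the additive scalar Lax pair
\begin{align}\label{plan-lax}
\mu_x = \lambda(x,y,P)\, V_x, \qquad \mu_y = \frac{1}{\lambda(x,y,P)}\, V_y, \qquad P \in \mathcal{S}_{(x,y)}.
\end{align}
Indeed, using $\lambda_x = \lambda/(2(k-x))$, $\lambda_y = 1/(2\lambda(k-x))$, and $\lambda^2-1 = -(1-x-y)/(k-x)$, the equality $\mu_{xy}=\mu_{yx}$ reduces (for fixed $k$) exactly to \eqref{linearernst}, so with $k$ fixed the one-form in \eqref{plan-lax} is closed on the simply connected region $D$. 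First I would fix a $C^n$-solution $V$ and define $\mu(x,y,P)$ by integrating this one-form from the origin; its restrictions to the edges $y=0$ and $x=0$ are precisely $\Phi_0$ and $\Phi_1$ in \eqref{linearPhi0Phi1}. Since $\alpha<1$ and $x^\alpha V_{0x}$, $y^\alpha V_{1y}$ are bounded near $0$, the derivatives $V_{0x},V_{1y}$ are integrable there, so these integrals converge and $\Phi_0,\Phi_1$ are continuous off the cut. Comparing the eigenfunctions obtained by integrating \eqref{plan-lax} along the two boundary segments produces the additive jump $v$ of \eqref{linearjumpdef} across $\Gamma=\Gamma_0\cup\Gamma_1$. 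Because $\lambda=1$ at $\infty^+$ and $\lambda=-1$ at $\infty^-$, one gets $\mu(x,y,\infty^+)=V(x,y)$ and $\mu(x,y,\infty^-)=-V(x,y)$; since $F_{(x,y)}(\infty^+)=\infty$ and $F_{(x,y)}(\infty^-)=0$, the function $m(x,y,z):=\mu(x,y,F_{(x,y)}^{-1}(z))-V(x,y)$ solves \eqref{linearRHm}, and $m(x,y,0)=-2V(x,y)$ gives the representation \eqref{linearVrecover}. As the scalar additive RH problem \eqref{linearRHm} has at most one solution (a difference of two solutions is entire and vanishes at $\infty$), this simultaneously yields uniqueness of the $C^n$-solution.

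For existence I would reverse the construction. The RH problem \eqref{linearRHm} is solved explicitly by the Plemelj formula
\begin{align}\label{plan-plemelj}
m(x,y,z) = \frac{1}{2\pi i}\int_\Gamma \frac{v(x,y,\zeta)}{\zeta - z}\, d\zeta,
\end{align}
which is well defined and analytic off $\Gamma$ since $v$ is integrable on $\Gamma$ by the regularity of $\Phi_0,\Phi_1$ established above. I would then \emph{define} $V:=-\tfrac12 m(x,y,0)$ and verify that it is a $C^n$-solution in the sense of Definition \ref{linearsolutiondef}. The PDE \eqref{linearernst} is checked by differentiating \eqref{plan-plemelj} in $x$ and $y$, carefully accounting for the $(x,y)$-dependence of both $\Gamma$ and $F_{(x,y)}$ (equivalently, by showing that $m+V$ satisfies the Lax-pair relations \eqref{plan-lax}); the interior regularity $V\in C^n(\Int D)$ and the weighted continuity $x^\alpha V_x,\,y^\alpha V_y,\,x^\alpha y^\alpha V_{xy}\in C(D)$ follow by differentiating under the integral sign together with \eqref{V0V1assumptions}. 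Attainment of the data is obtained by collapsing the appropriate loop: as $y\downarrow0$ the interval $F_{(x,y)}(\Sigma_1)$ degenerates to the single point $z=1$, so only the $\Gamma_0$-integral survives and reduces to $V_0(x)$, and symmetrically for $x\downarrow0$.

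The remaining and, I expect, hardest part is the boundary behavior \eqref{linearboundarylimita}--\eqref{linearboundarylimitb}. Here I would analyze $x^\alpha V_x(x,y)$ directly from \eqref{plan-plemelj} (or from $V_x=\mu_x(\cdot,\infty^+)$ via \eqref{plan-lax}) in the limit $x\downarrow0$. As $x\downarrow0$ the branch point $k=x$ merges with $k=0$, the interval $F_{(x,y)}(\Sigma_0)$ shrinks onto $z=-1$, and the loop $\Gamma_0$ collapses around it, while the $\Gamma_1$-contribution stays regular. I would isolate the leading singular term by deforming $\Gamma_0$ tightly onto the cut, rescaling $k=xt$ on $\Sigma_0$, and applying dominated convergence using $x^\alpha V_{0x}(x)\to m_1$; the weight $x^\alpha$ is exactly what produces a finite nonzero limit. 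The geometric factor $1/\sqrt{1-y}$ emerges because the loop $\Gamma_0$ contracts onto $z=-1$ at the rate $\sqrt{x}/\sqrt{1-y}$ fixed by the position $1-y$ of the second branch point (one computes $z+1\sim -2\sqrt{x/(1-y)}$ near $k=0$), whereas $m_1$ emerges from the prescribed singular strength of the datum. The main difficulty is to make this localization uniform and to show that the subleading pieces genuinely vanish after multiplication by $x^\alpha$, so that the stated limit exists; controlling this collapsing Cauchy integral is precisely the step that, in the nonlinear problem, becomes the explicit inversion of the singular integral operator at the boundary.
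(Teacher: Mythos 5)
Your overall strategy coincides with the paper's: the additive scalar Lax pair \eqref{plan-lax}, an eigenfunction $\mu$ obtained by integrating the closed one-form from the origin, transfer to the Riemann sphere via $F_{(x,y)}$, the Plemelj formula for existence, and a localization at $z=-1$ for the boundary limits. There is, however, one step that fails as written. The function $m(x,y,z):=\mu\big(x,y,F_{(x,y)}^{-1}(z)\big)-V(x,y)$ does \emph{not} solve \eqref{linearRHm}: $\mu(x,y,\cdot)$ is analytic only on $\mathcal{S}_{(x,y)}\setminus\Sigma$, so this $m$ has jumps across the two real intervals $F_{(x,y)}(\Sigma_0)\subset\Omega_0$ and $F_{(x,y)}(\Sigma_1)\subset\Omega_1$, hence is not analytic in $\C\setminus\Gamma$, and it has no jump at all across the loops $\Gamma_0,\Gamma_1$ (both sides of each loop lie in the domain of analyticity of $\mu\circ F_{(x,y)}^{-1}$; the jump of an additive RH problem cannot be produced by ``comparing two integration paths,'' since the one-form is closed and $\mu$ is single-valued off $\Sigma$). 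The missing idea is the piecewise definition: inside $\Gamma_0$ one must subtract $\Phi(x,0,\cdot)\circ F_{(x,y)}^{-1}$ and inside $\Gamma_1$ subtract $\Phi(0,y,\cdot)\circ F_{(x,y)}^{-1}$, which simultaneously creates the jump $v=\Phi_0$ (resp.\ $\Phi_1$) across $\Gamma_0$ (resp.\ $\Gamma_1$) and kills the jump across the enclosed intervals. The latter requires the key lemma that $\Phi(x,y,\cdot)-\Phi(x,0,\cdot)$ and $\Phi(x,y,\cdot)-\Phi(0,y,\cdot)$ extend analytically across $\Sigma_0$ and $\Sigma_1$, respectively (in the paper, Lemma \ref{linearclaim2}, proved by writing the difference as $\int_0^x \lambda(x',y,P)V_x(x',y)\,dx'$). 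This is precisely the structural point that lets one pose the RH problem on loops avoiding the singular endpoints, and it is absent from your sketch. Your formula does give the correct value in $\Omega_\infty$, so $m(x,y,0)=-2V(x,y)$ and the representation \eqref{linearVrecover} survive once the definition is repaired.

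Two smaller remarks. For the boundary limit the paper keeps $\Gamma_0$ fixed (independent of $(x,y)$ on $D_\delta$), passes $x^\alpha$ under the integral using $x^\alpha\Phi_{0x}\big(x,F_{(x,y)}^{-1}(z)\big)\to m_1\lambda\big(0,0,F_{(0,y)}^{-1}(z)\big)$, and evaluates the resulting integral by the residue at the simple pole $z=-1$, which yields $-2/\sqrt{1-y}$; this is cleaner than collapsing and rescaling the contour, though your route can be made to work and identifies the same source of the factor $1/\sqrt{1-y}$. Also, at $y=0$ the $\Gamma_1$-contribution vanishes because $\Phi_1(0,\cdot)\equiv 0$, not because the interval $F_{(x,0)}(\Sigma_1)$ degenerates; the recovery of $V_0(x)$ then comes from residues of the $\Gamma_0$-integral at $z=0$ and $z=\infty$, using $\Phi_0(x,\infty^\pm)=\pm V_0(x)$.
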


\begin{remark}\upshape
The scalar RH problem (\ref{linearRHm}) has the unique solution
$$m(x,y,z) = \frac{1}{2\pi i} \int_{\Gamma} \frac{v(x,y,z')}{z'-z} dz'.$$
Hence the solution $V(x,y)$ can be expressed in terms of $v$ by
\begin{align}\label{linearV}
V(x,y) = -\frac{1}{4\pi i} \int_{\Gamma} \frac{v(x,y,z)}{z} dz.
\end{align}
Collapsing the contour $\Gamma$ in (\ref{linearV}) onto the intervals in (\ref{twointervals}) and changing variables from  $z$ to $k$ leads to the following representation for the solution in terms of Abel type integrals:
\begin{align}\nonumber
  V(x,y) = &\; \frac{1}{\pi} \int_0^x \frac{\sqrt{1-k}}{\sqrt{(1-y-k)(x-k)}} \bigg(\int_0^k \frac{V_{0x}(x')}{\sqrt{k - x'}}dx'\bigg) dk
  	\\ \label{linearUrepresentation}
&  + \frac{1}{\pi} \int_{1-y}^1 \frac{\sqrt{k}}{\sqrt{(k - (1-y))(k-x)}} \bigg(\int_0^{1-k} \frac{V_{1y}(y')}{\sqrt{1 - y' - k}} dy'\bigg) dk
\end{align}  
for $(x,y) \in D$. Formulas analogous to (\ref{linearUrepresentation}) for equation (\ref{linearernst}) have been derived in \cite{HE1990} and \cite{FST1999}. 
\end{remark}

\begin{remark}\upshape
The representation (\ref{linearUrepresentation}) can be found more directly by formulating a RH problem for $\Phi$ on $\mathcal{S}_{(x,y)}$ with jump across $\Sigma$. This is essentially the approach adopted in \cite{FST1999}.
The representation (\ref{linearUrepresentation}) has the advantage that it is explicit in its dependence on $V_0$ and $V_1$, but it has the disadvantage that the integrands are singular at some of the endpoints of the integration intervals. These singularities complicate the verification that $V$ satisfies the appropriate regularity and boundary conditions, especially in the situation relevant for gravitational waves where $V_{0x}$ and $V_{1y}$ are singular at the origin. 
For the nonlinear equation (\ref{ernst}), this becomes a serious complication. For this reason, we have formulated the RH problems in Theorem \ref{mainth1} and Theorem \ref{linearmainth} in terms of the contour $\Gamma$ (which avoids the problematic endpoints of the intervals in (\ref{twointervals})) rather than in terms of a contour running along the real axis. However, the representation (\ref{linearUrepresentation}) allows for applying more classical techniques. This approach is used in \cite{MEulerDarboux} to compute an asymptotic expansion of the solution near the diagonal of $D$.
\end{remark}

\begin{remark}\upshape
In \cite{S1972} there was derived an alternative integral formula for the solution of the Goursat problem for the Euler--Darboux equation by applying Riemann's classical method \cite{CH1962, G1964}. Whereas the representation (\ref{linearUrepresentation}) relies on Abel integrals, the expression of \cite{S1972} is given in terms of the Legendre function $P_{-1/2}$ of order $-1/2$. 
\end{remark}

\begin{remark}\upshape
In order to emphasize the analogy between (\ref{ernst}) and its linearized version (\ref{linearernst}), we will use the same symbols in this section for the various linearized quantities as we use elsewhere for the corresponding quantities of the nonlinear problem. Many quantities which are matrices in the noncollinear case reduce to scalar quantities in the collinear case.  For example, in other sections $\Phi$ will denote a $2 \times 2$-matrix valued eigenfunction, but in this section $\Phi$ is a scalar-valued eigenfunction.
\end{remark}

\subsection{Proof of Theorem \ref{linearmainth}}
The proof of Theorem \ref{linearmainth} is divided into three parts. In the first part, we prove uniqueness and establish the solution representation formula (\ref{linearVrecover}). In the second part, we prove existence. In the third part, we consider the boundary behavior.

\subsubsection{Proof of uniqueness and of (\ref{linearVrecover})}
Let $V_0(x)$, $x \in [0, 1)$, and $V_1(y)$, $y \in [0,1)$ be real-valued functions satisfying (\ref{V0V1assumptions}) for some $n \geq 2$ and $\alpha \in [0, 1)$. Suppose that $V(x,y)$ is a $C^n$-solution of the Goursat problem for (\ref{linearernst}) in $D$ with data $\{V_0, V_1\}$. We will show that $V(x,y)$ can be expressed in terms of $V_0$ and $V_1$ by (\ref{linearVrecover}).

Equation (\ref{linearernst}) admits the Lax pair
\begin{align}\label{linearlax}  
\begin{cases}
\Phi_x(x,y,k) = \lambda V_x(x,y),	\\
\Phi_y(x,y,k) = \frac{1}{\lambda} V_y(x,y),
\end{cases}
\end{align}
where $\Phi(x,y,k)$ is an eigenfunction, $\lambda = \lambda(x,y,k)$ is defined by (\ref{lambdadef}), and $k$ is a complex spectral parameter. 
Indeed, using the relations
\begin{align*}
  \lambda_x = \frac{\lambda}{2(k-x)}
  = \frac{(1- \lambda^2)\lambda}{2(1-x-y)}, \qquad 
  \lambda_y = \frac{1}{2(k-x)\lambda}
  = \frac{(1- \lambda^2)}{2(1-x-y)\lambda},
\end{align*}
it is straightforward to check that the compatibility condition $\Phi_{xy} = \Phi_{yx}$ of (\ref{linearlax}) is equivalent to (\ref{linearernst}).

The occurrence of $\lambda$ in (\ref{linearlax}) implies that the spectral parameter is naturally considered as an element of the Riemann surface $\mathcal{S}_{(x,y)}$. Thus, we will henceforth view $\Phi(x,y,\cdot)$ as a function defined on $\mathcal{S}_{(x,y)}$ and write $\Phi(x,y,P)$ for the value of $\Phi$ at $P = (\lambda, k) \in \mathcal{S}_{(x,y)}$. We emphasize, however, that the partial derivatives $\Phi_x(x,y,P)$ and $\lambda_x(x,y,P)$ (resp. $\Phi_y(x,y,P)$ and $\lambda_y(x,y,P)$) are still computed with $(y,k)$ (resp. $(x,k)$) held fixed (and $\lambda$ allowed to change). 

The basic idea in what follows is to write (\ref{linearlax}) in the differential form $d\Phi = W$, where $W$ denotes the one-form $W =  \lambda V_x dx + \frac{1}{\lambda} V_y dy$, and then define a solution $\Phi$ of (\ref{linearlax}) by
\begin{align*}
  \Phi(x,y, k^\pm) = \int_{(0,0)}^{(x,y)} W(x',y',k^\pm), \qquad (x,y) \in D, \ k \in \hat{\C} \setminus [0,1].
\end{align*}
Since the one-form $W$ is closed, the integral on the right-hand side is independent of path. However, since $W$ in general is singular on the boundary of $D$, we need to be more careful when defining $\Phi$. We therefore choose to define $\Phi$ using the specific contour which consists of the horizontal segment from $(0,0)$ to $(x,0)$ followed by the vertical segment from $(x,0)$ to $(x,y)$ (see the left half of Figure \ref{Dcontoursfig}), that is, we define
\begin{align}\nonumber
&\Phi(x,y, k^\pm) = \int_0^x \lambda(x',0,k^\pm) V_x(x',0) dx' + \int_0^y \lambda(x,y',k^\pm)^{-1} V_y(x,y') dy',
	\\ \label{linearPhidef}
&\hspace{8cm} (x,y) \in D, \ k \in \hat{\C} \setminus [0,1].	
\end{align}
Since $x^\alpha V_x, y^\alpha V_y \in C(D)$, the integrals on the right-hand side of (\ref{linearPhidef}) are well-defined. The next lemma establishes several properties of $\Phi$. 
%In particular, $\Phi$ is a $C^n$-function of $(x,y) \in \Int D$.

\begin{figure}
\bigskip
\begin{center}
 \begin{overpic}[width=.37\textwidth]{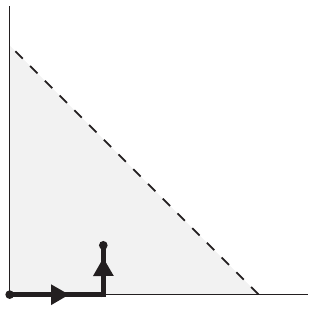}
 \put(103,3.5){\small $x$}
 \put(2,102){\small $y$}
 \put(82.5,-2){\small $1$}
 \put(-3,82){\small $1$}
 \put(-4,-3.5){\small $(0,0)$}
 \put(26,-3.5){\small $(x,0)$}
 \put(26,25){\small $(x,y)$}
 \end{overpic}
\hspace{2cm}  
\begin{overpic}[width=.37\textwidth]{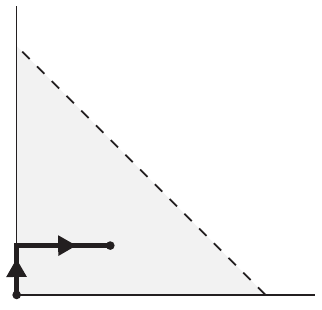}
 \put(103,3.5){\small $x$}
 \put(3,101){\small $y$}
 \put(82.5,-2.5){\small $1$}
 \put(-1,81){\small $1$}
 \put(-3,-3.5){\small $(0,0)$}
 \put(-12,20){\small $(0,y)$}
 \put(27,25){\small $(x,y)$}
 \end{overpic}
      \bigskip
   \begin{figuretext}\label{Dcontoursfig}
      The integration contours in (\ref{linearPhidef}) (left) and (\ref{linearPhidef2}) (right).
      \end{figuretext}
   \end{center}
\end{figure}

\begin{lemma}[Solution of Lax pair equations]\label{linearclaim1}
The function $\Phi(x,y,P)$ defined in (\ref{linearPhidef}) has the following properties:
\begin{enumerate}[$(a)$]

\item $\Phi$ can be alternatively expressed using the contour consisting of the vertical segment from $(0,0)$ to $(0,y)$ followed by the horizontal segment from $(0,y)$ to $(x,y)$ (see the right half of Figure \ref{Dcontoursfig}):
\begin{align}\nonumber
&\Phi(x,y, k^\pm) = \int_0^y \lambda(0,y',k^\pm)^{-1} V_y(0,y') dy' + \int_0^x \lambda(x',y,k^\pm) V_x(x',y) dx', 
	\\\label{linearPhidef2}
&\hspace{8cm} (x,y) \in D, \ k \in \hat{\C} \setminus [0,1].	
\end{align}

\item For each $k \in \hat{\C} \setminus [0,1]$, the function $(x,y) \mapsto \Phi(x,y,k^+)$ is continuous on $D$ and is $C^n$ on  $\Int D$.

\item For each $k \in \hat{\C} \setminus [0,1]$, the functions 
$$(x,y) \mapsto x^\alpha \Phi_x(x,y,k^+), \quad
(x,y) \mapsto y^\alpha \Phi_y(x,y,k^+), \quad
(x,y) \mapsto x^\alpha y^\alpha \Phi_{xy}(x,y,k^+),$$
are continuous on $D$.

   \item $\Phi$ obeys the symmetries
  \begin{align*}
\begin{cases} \Phi(x,y,k^+) = -\Phi(x, y, k^-), 
	\\
\Phi(x,y,k^\pm) = \overline{\Phi(x, y,\bar{k}^\pm)}, 
\end{cases} \qquad (x,y) \in D, \ k \in \hat{\C} \setminus [0,1].
\end{align*}

  \item For each  $(x,y) \in D$, $\Phi(x,y,P)$ extends continuously to an analytic function of $P \in \mathcal{S}_{(x,y)} \setminus \Sigma$, where $\Sigma = \Sigma_0 \cup \Sigma_1$ is the contour defined in (\ref{Sigma01def}).
    
  \item $\Phi(x,y,\infty^+) = V(x,y)$ for $(x,y) \in D$.

\end{enumerate}
\end{lemma}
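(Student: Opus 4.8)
The plan is to take the path-independence statement (a) and the evaluation at infinity (f) as the foundation, then to extract the Lax pair equations from them and deduce (b)--(d), and finally to establish the analytic extension (e), which I expect to be the crux.

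I would begin with (a). Since the one-form $W=\lambda V_x\,dx+\lambda^{-1}V_y\,dy$ is closed on $\Int D$ (this is exactly the compatibility of (\ref{linearlax}), equivalent to (\ref{linearernst})), Green's theorem on the rectangle $[\epsilon,x]\times[\epsilon,y]\subset\Int D$ gives $\oint W=0$ around its boundary, so the two polygonal paths agree once we start from the corner $(\epsilon,\epsilon)$. Letting $\epsilon\downarrow 0$ and using that $x^\alpha V_x$ and $y^\alpha V_y$ are continuous with $\alpha<1$ — hence $\lambda V_x$ and $\lambda^{-1}V_y$ are integrable up to the edges $\{x=0\}$ and $\{y=0\}$, recalling that $\lambda,\lambda^{-1}$ stay bounded and bounded away from $0$ for $k\notin[0,1]$ — yields the representation (\ref{linearPhidef2}). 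Part (f) is then immediate: at $\infty^+$ one has $\lambda=1$, so $\Phi(x,y,\infty^+)=\int_0^x V_x(x',0)\,dx'+\int_0^y V_y(x,y')\,dy'=V(x,y)-V(0,0)=V(x,y)$ by the fundamental theorem of calculus and $V(0,0)=0$.

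Next I would read off the Lax pair directly from the two representations, thereby avoiding any differentiation under the integral sign: differentiating (\ref{linearPhidef}) in $y$ (its first integral being $y$-independent) gives $\Phi_y=\lambda^{-1}V_y$, while differentiating (\ref{linearPhidef2}) in $x$ gives $\Phi_x=\lambda V_x$. Parts (b) and (c) follow at once: continuity on $D$ is clear from the integral formulas; $C^n$-smoothness on $\Int D$ holds because $V\in C^n(\Int D)$ and $\lambda,\lambda^{-1}$ are smooth in $(x,y)$ off the cut; and for the weighted derivatives one writes $x^\alpha\Phi_x=\lambda\,(x^\alpha V_x)$, $y^\alpha\Phi_y=\lambda^{-1}(y^\alpha V_y)$ and, using $\lambda_y=\tfrac{1}{2(k-x)\lambda}$, $x^\alpha y^\alpha\Phi_{xy}=\tfrac{y^\alpha(x^\alpha V_x)}{2(k-x)\lambda}+\lambda\,(x^\alpha y^\alpha V_{xy})$, each factor being continuous on $D$ by the hypotheses $x^\alpha V_x,y^\alpha V_y,x^\alpha y^\alpha V_{xy}\in C(D)$ and the boundedness of $\lambda,\lambda^{-1},(k-x)^{-1}$ for $k\notin[0,1]$. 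The symmetries (d) are a direct consequence of the definition: the sheet-exchange relation $\lambda(\cdot,k^+)=-\lambda(\cdot,k^-)$ from (\ref{lambdasqrt}) flips the sign of each integrand, giving $\Phi(x,y,k^+)=-\Phi(x,y,k^-)$, while $\lambda(\cdot,\bar k^\pm)=\overline{\lambda(\cdot,k^\pm)}$ (the branch points are real) together with the reality of $V_x,V_y$ gives $\Phi(x,y,k^\pm)=\overline{\Phi(x,y,\bar k^\pm)}$.

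The main work, and the step I expect to be the principal obstacle, is the analytic extension (e). Writing $\Phi=I_1+I_2$ with $I_1$ the $x'$-integral and $I_2$ the $y'$-integral in (\ref{linearPhidef}), each integrand is, for fixed integration variable, analytic in $P$ off the branch cut of $\lambda(x',0,\cdot)$ (the segment $[x',1]$) and of $\lambda(x,y',\cdot)^{-1}$ (the segment $[x,1-y']$), respectively; hence $I_1$ is analytic off $[0,1]$ and $I_2$ off $[x,1]$, so on $\hat{\C}\setminus[0,1]$ the function $\Phi$ is analytic on each sheet. Since the part of the real axis lying over $[0,x]$ belongs to $\Sigma_0$ and the part over $[1-y,1]$ belongs to $\Sigma_1$, the only place where analyticity must actually be established is over $(x,1-y)$, the branch cut gluing the two sheets of $\mathcal{S}_{(x,y)}$, which is not part of $\Sigma$. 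This is the crux: for $k_0\in(x,1-y)$ both integrands are purely imaginary and flip sign across the real axis (because $k_0$ lies on both cuts $[x',1]$ and $[x,1-y']$), so the upper-sheet boundary values satisfy $\Phi(x,y,(k_0+i0)^+)=-\Phi(x,y,(k_0-i0)^+)$; combining this with the sheet-exchange symmetry $\Phi(k^+)=-\Phi(k^-)$ from (d) and the crosswise gluing of $\mathcal{S}_{(x,y)}$ along $[x,1-y]$ yields $\Phi(x,y,(k_0+i0)^+)=\Phi(x,y,(k_0-i0)^-)$, which by a standard Morera/removable-singularity argument is precisely the matching making $\Phi$ continue analytically from one sheet to the other across the cut. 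The remaining delicate issues — continuity of the boundary values up to the slits $\Sigma_0$ and $\Sigma_1$, and control of the at worst integrable (logarithmic) behavior near the four endpoints $0,x,1-y,1$ where $\Sigma$ meets the cut — I would settle with the same dominated-convergence and integrability estimates used for (a), relying on $\alpha<1$. I expect the clean cancellation across $(x,1-y)$ together with the endpoint analysis to be the technically most demanding portion of the argument.
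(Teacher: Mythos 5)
Your proposal is correct and follows essentially the same route as the paper's proof: path independence of $\int W$ via closedness of the one-form plus the integrability supplied by $x^\alpha V_x, y^\alpha V_y, x^\alpha y^\alpha V_{xy} \in C(D)$ (you phrase it as Green's theorem on a shrinking rectangle, the paper as a Fubini computation of the double integral of $(\lambda V_x)_y - (\lambda^{-1}V_y)_x$), the weighted-derivative formulas for $(c)$, the $\lambda$-symmetries for $(d)$, matching of boundary values across $(x,1-y)$ for $(e)$ (your purely-imaginary-plus-Schwarz-reflection argument is equivalent to the paper's direct identity $\lambda(x,y,(k+i0)^+)=\lambda(x,y,(k-i0)^-)$), and evaluation at $\lambda=1$ for $(f)$. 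The only point to tighten is $(f)$, where the "fundamental theorem of calculus" step should be justified by the absolute continuity of $V_0$ and $V(x,\cdot)$ (as the paper does), since $V_x$ and $V_y$ are only integrable, not continuous, up to the boundary.
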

\begin{proof}
Let $(x,y) \in D$. In order to prove $(a)$, we need to show that the expression
\begin{align}\nonumber
& \int_0^y \big[\lambda(0,y',k^\pm)^{-1} V_y(0,y') - \lambda(x,y',k^\pm)^{-1} V_y(x,y')\big] dy' 
	\\ \label{linearGreen1}
& +\int_0^x \big[\lambda(x',y,k^\pm) V_x(x',y) - \lambda(x',0,k^\pm) V_x(x',0)\big] dx'
\end{align}
vanishes. Since $x^\alpha V_x, y^\alpha V_y, x^\alpha y^\alpha V_{xy} \in C(D)$, the function $\lambda(\cdot,y',k^\pm)^{-1} V_y(\cdot,y')$ is absolutely continuous on the compact interval $[0,x]$ for each $y' \in (0, y]$. Similarly, the function $\lambda(x',\cdot,k^\pm) V_x(x',\cdot)$ is absolutely continuous on $[0,y]$ for each $x' \in (0, x]$.
Hence, we can write (\ref{linearGreen1}) as
\begin{align}\nonumber
& -\int_0^y \int_0^x \frac{\partial}{\partial x'} \big[\lambda(x',y',k^\pm)^{-1} V_y(x',y')\big] dx' dy' 
	\\ \label{linearGreen2}
& +\int_0^x \int_0^y \frac{\partial}{\partial y'}  \big[\lambda(x',y',k^\pm) V_x(x',y')\big] dy' dx'.
\end{align}
Since $V$ is a solution of (\ref{linearernst}), the Lax pair compatibility condition $(\lambda V_x)_y = (\lambda^{-1} V_y)_x$ is satisfied for $(x,y) \in \Int D$. The assumption $x^\alpha V_x, y^\alpha V_y, x^\alpha y^\alpha V_{xy} \in C(D)$ implies that $V_x, V_y, V_{xy} \in L^1(D_\delta)$ for each $\delta > 0$. Hence Fubini's theorem implies that the expression in (\ref{linearGreen2}) vanishes. This proves $(a)$.
Moreover, if $k \in \hat{\C} \setminus [0,1]$, then it follows from (\ref{linearPhidef}) and (\ref{linearPhidef2}) that $\Phi$ is a continuous function of $(x,y) \in D$ and a $C^n$-function of $(x,y) \in \Int D$, which proves $(b)$.

Let $k \in \hat{\C} \setminus [0,1]$. Then
\begin{align}\label{linearxalphaPhix}
x^\alpha \Phi_x(x,y,k^+) = x^\alpha \lambda(x,y,k^+) V_x(x,y).
\end{align}
The assumption $x^\alpha V_x \in C(D)$ implies that the right-hand side of (\ref{linearxalphaPhix}) is a continuous function of $(x,y) \in D$. 
Similarly, we see that $y^\alpha \Phi_y(x,y,k^+)$ and
$$x^\alpha y^\alpha \Phi_{xy}(x,y,k^+) = \frac{x^\alpha y^\alpha V_x(x,y)}{2(k-x) \lambda(x,y,k^+)} 
+  x^\alpha y^\alpha \lambda(x,y,k^+) V_{xy}(x,y)$$
are continuous functions of $(x,y) \in D$. This proves $(c)$.

The symmetries in $(d)$ are a consequence of the symmetries
\begin{align}\label{lambdasymm}
\lambda(x,y,k^+) = -\lambda(x,y,k^-),  \qquad \lambda(x,y,k^\pm) = \overline{\lambda(x,y,\bar{k}^\pm)},
\end{align}
and the definition (\ref{linearPhidef}) of $\Phi$.

To prove $(e)$, we note that $\lambda(x',0,k^+)$ is an analytic function of $k \in \hat{\C} \setminus [x',1]$ and $\lambda(x,y',k^+)^{-1}$ is an analytic function of $k \in \hat{\C} \setminus [x, 1-y']$. It follows that $\Phi(x,y, k^+)$ and $\Phi(x,y, k^-) = -\Phi(x,y, k^+)$ are analytic functions of $k \in \hat{\C} \setminus [0,1]$. 
Moreover, since
$$\lambda(x,y, (k+i0)^+) = \lambda(x,y,(k-i0)^-), \qquad k \in (x, 1-y),$$
we have
%$$W(x,y, (k+i0)^+) = W(x,y, (k-i0)^-), \qquad k \in (x, 1-y),$$
%and so
\begin{align*}
\Phi(x,y, (k + i0)^+) 
%= \int_{(0,0)}^{(x,y)} W(x',y', (k+i0)^+) = \int_{(0,0)}^{(x,y)} W(x',y', (k-i0)^-) \\
 = \Phi(x,y,(k-i0)^-), \qquad (x,y) \in D, \ k \in (x, 1-y).
\end{align*}
This shows that the values of $\Phi$ on the upper and lower sheets of $\mathcal{S}_{(x,y)}$ fit together across the branch cut; hence $\Phi$ extends to an analytic function of $P \in \mathcal{S}_{(x,y)} \setminus \Sigma$. This proves $(e)$.

To prove $(f)$, we note that $\lambda(x,y,\infty^+) = 1$ for all $(x,y) \in D$, which gives
%$$W(x,y,\infty^+) = V_x(x,y) dx + V_y(x,y) dy = dV(x,y).$$
\begin{align}\label{linearPhiVxVy}
\Phi(x,y, k^\pm) = \int_0^x V_{0x}(x') dx' + \int_0^y V_y(x,y') dy'.
\end{align}
Let $\delta > 0$. Since $V_{0x} \in L^1((1-\delta))$, $V_0$ belongs to the Sobolev space $W^{1,1}((0,1-\delta))$. Hence $V_0$ is absolutely continuous on $(0, 1-\delta)$. Using that $V_0\in C([0, 1))$, we see that $V_0$ is absolutely continuous on the compact interval $[0, 1-\delta]$. Hence,
\begin{align}\label{linearPhiVx}
\int_0^x V_{0x}(x') dx' = V(x,0)-V(0,0), \qquad x \in [0,1 - \delta).
\end{align}
Moreover, since $V_y \in L^1(D_\delta)$, we have $V_y(x, \cdot) \in L^1((0,1-x-\delta))$ for a.e. $x \in [0,1-\delta)$. Hence $V(x, \cdot) \in W^{1,1}((0,1-x-\delta))$ for a.e. $x \in [0,1-\delta)$. Since $V$ is also continuous on $D$, we conclude that $V(x, \cdot)$ is absolutely continuous on the compact interval $[0,1-x-\delta]$ for a.e. $x \in [0,1-\delta)$. 
Hence,
\begin{align}\label{linearPhiVy}
\int_0^y V_y(x,y') dy' = V(x,y)-V(x,0), \qquad (x,y) \in D_\delta.
\end{align}

Hence, substituting \eqref{linearPhiVx} and \eqref{linearPhiVy} into (\ref{linearPhiVxVy}) yields
$$\Phi(x,y, k^\pm) = V(x,0)-V(0,0) + V(x,y) - V(x,0).$$
Since $V(0,0) = 0$, part $(f)$ follows. 
\end{proof}

\begin{lemma}\label{linearclaim2}
For each $(x,y) \in D$, 
\begin{align}\label{linearphiminusphi}
P \mapsto \Phi(x,y,P) - \Phi(x,0, P) \quad \text{and} \quad P \mapsto \Phi(x,y,P) - \Phi(0,y,P)
\end{align}
extend continuously to analytic functions $\mathcal{S}_{(x,y)} \setminus \Sigma_1 \to \C$ and $\mathcal{S}_{(x,y)} \setminus \Sigma_0 \to \C$, respectively.
\end{lemma}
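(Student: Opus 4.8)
The plan is to combine the two representations of $\Phi$ from Lemma \ref{linearclaim1}(a) with the simple observation that, in each of the two differences, one of the two contour integrals drops out. For the first difference I would use the representation (\ref{linearPhidef}): the horizontal integral $\int_0^x \lambda(x',0,k^\pm) V_x(x',0)\,dx'$ does not depend on $y$ and is therefore identical in $\Phi(x,y,k^\pm)$ and $\Phi(x,0,k^\pm)$, so it cancels and leaves
\begin{align*}
\Phi(x,y,k^\pm) - \Phi(x,0,k^\pm) = \int_0^y \lambda(x,y',k^\pm)^{-1} V_y(x,y')\,dy'.
\end{align*}
Symmetrically, using (\ref{linearPhidef2}) the vertical integral cancels in $\Phi(x,y,k^\pm) - \Phi(0,y,k^\pm)$, leaving $\int_0^x \lambda(x',y,k^\pm) V_x(x',y)\,dx'$. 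Since the surviving integrals involve only the factors $\lambda(x,y',\cdot)^{-1}$ and $\lambda(x',y,\cdot)$, respectively, whose moving branch points lie on only one part of $\Sigma$, one expects each difference to inherit a cut on only $\Sigma_1$ or $\Sigma_0$.

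It then remains to show that the first surviving integral extends to an analytic function of $P \in \mathcal{S}_{(x,y)} \setminus \Sigma_1$ (the second case being identical after interchanging the roles of $x$ and $y$ and of $\Sigma_0$ and $\Sigma_1$). For fixed $y' \in [0,y]$, the integrand $\lambda(x,y',k^\pm)^{-1} = \sqrt{(k-x)/(k-(1-y'))}$, viewed as a function of $P = k^\pm$ on $\mathcal{S}_{(x,y)}$, has no branch point over $k \in [0,x)$, vanishes like the local coordinate $\sqrt{k-x}$ at the shared branch point $k = x$ (hence is analytic there), and is singular only over $k = 1-y'$. As $y'$ runs over $[0,y]$, the points $1-y'$ sweep out $[1-y,1]$, so the only possible non-analyticity of the integral lies over $[1-y,1]$ on both sheets, that is, on $\Sigma_1$. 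Convergence near $y'=0$ is ensured by the condition $y^\alpha V_y \in C(D)$ from Definition \ref{linearsolutiondef}, which makes the integrand $O((y')^{-\alpha})$ with $\alpha \in [0,1)$; away from $\Sigma_1$ the integrand is uniformly bounded and analytic in $P$ for every $y'$, so analyticity of the integral follows by differentiating under the integral sign.

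The step I expect to demand the most care is checking that the surviving integral really extends analytically across the branch cut $[x,1-y]$ of $\mathcal{S}_{(x,y)}$, rather than merely across $\mathcal{S}_{(x,y)} \setminus \Sigma$; this is what removes the apparent singularity along $\Sigma_0$. Here one must verify that the integrand respects the gluing of the two sheets along $(x,1-y)$, i.e. that $\lambda(x,y',(k+i0)^+)^{-1} = \lambda(x,y',(k-i0)^-)^{-1}$ for $k \in (x,1-y)$. Since $y' \leq y$ gives $1-y' \geq 1-y$, the interval $(x,1-y)$ lies inside the cut $[x,1-y']$ of $\lambda(x,y',\cdot)$, and combining the jump of $\lambda(x,y',\cdot)$ across this cut with the sheet symmetry $\lambda(x,y',k^+) = -\lambda(x,y',k^-)$ yields $\lambda(x,y',(k+i0)^+) = \lambda(x,y',(k-i0)^-)$ exactly as in the proof of Lemma \ref{linearclaim1}(e). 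Hence the upper- and lower-sheet values of the integrand match across $(x,1-y)$, the singularity along $\Sigma_0$ is removable, and the difference extends continuously and analytically to $\mathcal{S}_{(x,y)} \setminus \Sigma_1$; the second difference is handled in the same way, completing the argument.
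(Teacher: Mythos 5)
Your proof is correct and takes essentially the same approach as the paper: cancel the common integral using the two path representations from Lemma \ref{linearclaim1}$(a)$, then observe that the surviving integrand $\lambda(x,y',P)^{-1}$ (resp.\ $\lambda(x',y,P)$) is, for each value of the integration parameter, analytic on $\mathcal{S}_{(x,y)}\setminus\Sigma_1$ (resp.\ $\mathcal{S}_{(x,y)}\setminus\Sigma_0$), so the integral is as well. Your explicit verification of the sheet-gluing across $(x,1-y)$, of the behavior at the branch point $k=x$, and of integrability at the endpoint merely spells out details the paper compresses into the assertion that $P\mapsto\lambda(x',y,P)$ is analytic on open subsets of $\mathcal{S}_{(x,y)}\setminus\Sigma_0$.
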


\begin{remark}\label{tilderemark}\upshape
The point $P$ in (\ref{linearphiminusphi}) belongs to $\mathcal{S}_{(x,y)}$ whereas the
maps $\Phi(x,0,\cdot)$ and $\Phi(0,y,\cdot)$ are defined on $\mathcal{S}_{(x,0)}$ and $\mathcal{S}_{(0,y)}$, respectively. The interpretation of equation (\ref{linearphiminusphi}) therefore deserves a comment of clarification:
If $(x,y)$ and $(\tilde{x}, \tilde{y})$ are two points in  $D$ and $F$ is a map from $\mathcal{S}_{(x,y)}$ to some space $X$, then $F$ naturally induces a map $\tilde{F}$ from $\mathcal{S}_{(\tilde{x}, \tilde{y})} \setminus \big([0,1]^+ \cup [0,1]^-\big)$ to $X$ according to $\tilde{F}(k^\pm) = F(k^\pm)$ for $k \in \hat{\C} \setminus [0,1].$
We sometimes, as in (\ref{linearphiminusphi}) (and also in (\ref{jumpdef})), identify these two maps and simply write $F$ for $\tilde{F}$.
%Strictly speaking, the point $F_{(x,y)}^{-1}(z)$ in (\ref{jumpdef}) belongs to $\mathcal{S}_{(x,y)}$ whereas the maps $\Phi_0(x,\cdot)$ and $\Phi_1(y, \cdot)$ are defined on $\mathcal{S}_{(x,0)}$ and $\mathcal{S}_{(0,y)}$, respectively. 
\end{remark}

\begin{proof}[Proof of Lemma \ref{linearclaim2}.]
Fix $(x,y) \in D$. 
Let $U$ be an open set in $\mathcal{S}_{(x,y)} \setminus \Sigma_0$. Then
\begin{align}\label{linearphiPmap}
\Phi(x,y, P) - \Phi(0,y,P) = \int_0^x \lambda(x',y,P) V_x(x',y) dx', \qquad P \in U,
\end{align}
where the values of $\Phi(0,y,P)$ and $\lambda(x',y,P)$ in (\ref{linearphiPmap}) are to be interpreted as in Remark \ref{tilderemark}. Since
$$P \mapsto \lambda(x',y,P) = \sqrt{\frac{k - (1-y)}{k - x'}}$$
defines an analytic map $U \to \C$ for each $x' \in [0,x]$, the map (\ref{linearphiPmap}) is also analytic for $P \in U$. This establishes the desired statement for the second map in (\ref{linearphiminusphi}); the proof for the first map is similar. 
\end{proof}

\begin{figure}
\bigskip\bigskip
\begin{center}
 \begin{overpic}[width=.55\textwidth]{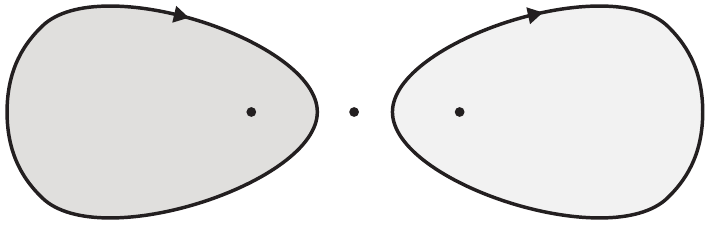}
 \put(24,32){\small $\Gamma_0$}
 \put(73,32){\small $\Gamma_1$}
 \put(63.8,10.5){\small $1$}
 \put(49,10.5){\small $0$}
 \put(32,10.5){\small $-1$}
 \put(13,15){\small $\Omega_0$}
 \put(81,15){\small $\Omega_1$}
 \put(47.5,34){\small $\Omega_\infty$}
 \end{overpic}
   \begin{figuretext}\label{Omegas.pdf}
      The domains $\Omega_0$, $\Omega_1$, and $\Omega_\infty$ in the complex $z$-plane.
      \end{figuretext}
   \end{center}
\end{figure}

Let $\Omega_0$, $\Omega_1$, and $\Omega_\infty$ denote the three open components of $\hat{\C} \setminus \Gamma$ chosen so that (see Figure \ref{Omegas.pdf})
\begin{align}\label{Omegadef}
-1 \in \Omega_0, \qquad 1 \in \Omega_1, \qquad \infty \in \Omega_\infty.
\end{align}

\begin{lemma}\label{linearclaim3}
The complex-valued function $m(x,y,z)$ defined by
\begin{align}\label{linearmVPhi}
m(x,y,z) = 
-V(x,y) + \Phi\big(x,y,F_{(x,y)}^{-1}(z)\big) - \begin{cases} \Phi\big(x,0,F_{(x,y)}^{-1}(z)\big), \;& z \in \Omega_0, \\
 \Phi\big(0,y,F_{(x,y)}^{-1}(z)\big), & z \in \Omega_1, \\
0, & z \in \Omega_\infty,
\end{cases} \; (x,y)\in D,
\end{align}
satisfies the RH problem (\ref{linearRHm}) and the relation (\ref{linearVrecover}) for each $(x,y) \in D$. 
\end{lemma}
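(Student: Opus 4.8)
The plan is to verify, directly from the explicit formula \eqref{linearmVPhi}, the three defining properties of the RH problem \eqref{linearRHm} together with the recovery relation \eqref{linearVrecover}, using the analyticity and symmetry properties of $\Phi$ collected in Lemmas \ref{linearclaim1} and \ref{linearclaim2} and the geometry of the biholomorphism $F_{(x,y)}$. First I would establish analyticity of $m(x,y,\cdot)$ in each of the three components $\Omega_0$, $\Omega_1$, $\Omega_\infty$ of $\hat{\C}\setminus\Gamma$. The key observation is that $F_{(x,y)}$ carries $\Sigma_0$ and $\Sigma_1$ onto the intervals encircled by $\Gamma_0$ and $\Gamma_1$; hence $F_{(x,y)}^{-1}(\Omega_\infty)$ avoids all of $\Sigma$, while $F_{(x,y)}^{-1}(\Omega_0)$ avoids $\Sigma_1$ and $F_{(x,y)}^{-1}(\Omega_1)$ avoids $\Sigma_0$. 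On $\Omega_\infty$ the term $\Phi(x,y,F_{(x,y)}^{-1}(z))$ is analytic by Lemma \ref{linearclaim1}$(e)$; on $\Omega_0$ the combination $\Phi(x,y,\cdot)-\Phi(x,0,\cdot)$ is analytic by the first part of Lemma \ref{linearclaim2}; and on $\Omega_1$ the combination $\Phi(x,y,\cdot)-\Phi(0,y,\cdot)$ is analytic by the second part. Since $F_{(x,y)}$ is biholomorphic, composition preserves analyticity, so $m(x,y,\cdot)$ is analytic on each component. Note that although $\Phi(x,y,\cdot)$ itself jumps across both $F_{(x,y)}(\Sigma_0)$ and $F_{(x,y)}(\Sigma_1)$, the subtracted terms are arranged precisely so that these jumps cancel inside $\Omega_0$ and $\Omega_1$, leaving $m$ analytic there.

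Next I would check the jump relation. With the clockwise orientation of $\Gamma_0$ and $\Gamma_1$, the exterior region $\Omega_\infty$ lies on the left ($+$) side and the bounded regions $\Omega_0$, $\Omega_1$ on the right ($-$) side. Since $\Phi(x,y,F_{(x,y)}^{-1}(z))$ is continuous across $\Gamma$ (its preimage avoids $\Sigma$) and $V(x,y)$ is constant, subtracting the $\Omega_\infty$-formula from the $\Omega_0$-formula on $\Gamma_0$ gives $m_+-m_-=\Phi(x,0,F_{(x,y)}^{-1}(z))$, and likewise $m_+-m_-=\Phi(0,y,F_{(x,y)}^{-1}(z))$ on $\Gamma_1$. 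It then remains to identify these with $v$: setting $y=0$ in \eqref{linearPhidef} makes the second integral vanish and, since $V_x(x',0)=V_{0x}(x')$, reduces $\Phi(x,0,k^\pm)$ to $\Phi_0(x,k^\pm)$ of \eqref{linearPhi0Phi1a}; setting $x=0$ in \eqref{linearPhidef2} similarly gives $\Phi(0,y,k^\pm)=\Phi_1(y,k^\pm)$. Comparison with \eqref{linearjumpdef} yields exactly $m_+-m_-=v(x,y,z)$ on $\Gamma$, where the preimages $F_{(x,y)}^{-1}(z)\in\mathcal{S}_{(x,y)}$ are read as functions on $\mathcal{S}_{(x,0)}$ and $\mathcal{S}_{(0,y)}$ via the identification of Remark \ref{tilderemark}.

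Finally I would treat the normalization and \eqref{linearVrecover}. Since $z=\frac{1+\lambda}{1-\lambda}$, the value $z=\infty$ corresponds to $\lambda=1$, so $F_{(x,y)}^{-1}(\infty)=\infty^+$, and $z=0$ corresponds to $\lambda=-1$, so $F_{(x,y)}^{-1}(0)=\infty^-$; both $z=\infty$ and $z=0$ lie in $\Omega_\infty$, where $m=-V(x,y)+\Phi(x,y,F_{(x,y)}^{-1}(z))$. By Lemma \ref{linearclaim1}$(f)$, $\Phi(x,y,\infty^+)=V(x,y)$, so $m\to 0$ as $z\to\infty$; working in the local coordinate $1/z$ near $z=\infty$, both $F_{(x,y)}^{-1}$ and $\Phi(x,y,\cdot)$ are analytic at $\infty^+$, so the leading correction is $O(z^{-1})$, which is the required decay. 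At $z=0$, the symmetry $\Phi(x,y,\infty^-)=-\Phi(x,y,\infty^+)=-V(x,y)$ from Lemma \ref{linearclaim1}$(d)$ gives $m(x,y,0)=-V(x,y)-V(x,y)=-2V(x,y)$, which is exactly \eqref{linearVrecover}.

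I expect the main obstacle to be careful bookkeeping rather than a conceptual difficulty: matching the three components $\Omega_0$, $\Omega_1$, $\Omega_\infty$ against the two-sheet-and-cut structure of $\mathcal{S}_{(x,y)}$ under $F_{(x,y)}$, fixing the $+/-$ convention for the clockwise contours so that the jump comes out as $+v$ (and not $-v$), and applying the identification of Remark \ref{tilderemark} consistently so that $\Phi(x,0,\cdot)$ and $\Phi(0,y,\cdot)$, which a priori live on the distinct surfaces $\mathcal{S}_{(x,0)}$ and $\mathcal{S}_{(0,y)}$, are read as genuine functions of $z$ along $\Gamma$.
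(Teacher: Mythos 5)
Your proposal is correct and follows essentially the same route as the paper's (much terser) proof: analyticity on each component from Lemmas \ref{linearclaim1}$(e)$ and \ref{linearclaim2}, the jump from the identifications $\Phi(x,0,\cdot)=\Phi_0$, $\Phi(0,y,\cdot)=\Phi_1$, and the evaluation $m(x,y,0)=-V+\Phi(x,y,\infty^-)=-2V$ via $F_{(x,y)}^{-1}(0)=\infty^-$ and the odd symmetry of $\Phi$. The extra details you supply (which preimages avoid which parts of $\Sigma$, the orientation convention fixing the sign of the jump, and the $O(z^{-1})$ decay from analyticity at $\infty^+$) are exactly the points the paper leaves implicit.
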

\begin{proof}
Since $F_{(x,y)}$ is a biholomorphism $\mathcal{S}_{(x,y)} \to \hat{\C}$, we infer from Lemmas \ref{linearclaim1} and \ref{linearclaim2} that $m(x,y, \cdot)$ is analytic in $\hat{\C} \setminus \Gamma$ and $m(x,y,z) =O(z^{-1})$ as $z\to \infty$ for each $(x,y) \in D$. The jump condition in (\ref{linearRHm}) holds as a consequence of the definition (\ref{linearjumpdef}) of $v(x,y,z)$ and the fact that 
$$\Phi_0(x,k) = \Phi(x,0,k), \qquad \Phi_1(y,k) = \Phi(0,y,k).$$
Finally, since $0 \in \Omega_\infty$ and $F_{(x,y)}^{-1}(0) = \infty^-$, (\ref{linearmVPhi}) and Lemma \ref{linearclaim1} yield
$$m(x,y,0) = - V(x,y) + \Phi(x,y, \infty^-) = - 2V(x,y).$$
This proves (\ref{linearVrecover}).
\end{proof}

We have showed that if $V(x,y)$ is a $C^n$-solution of the Goursat problem for (\ref{linearernst}) in $D$ with data $\{V_0, V_1\}$, then $V(x,y)$ can be expressed in terms of $V_0$ and $V_1$ by (\ref{linearVrecover}). This also proves that the solution $V$ is unique if it exists, and completes the first part of the proof.

\subsubsection{Proof of existence}
The second part of the proof is devoted to proving existence. Let us therefore suppose that $V_0(x)$, $x \in [0, 1)$, and $V_1(y)$, $y \in [0,1)$ are real-valued functions satisfying (\ref{V0V1assumptions}) for some $n \geq 2$. We will construct a solution $V(x,y)$ of the associated Goursat problem as follows: Using the given data $V_0$ and $V_1$, we define $\Phi_0(x,P)$ and $\Phi_1(x,P)$ by (\ref{linearPhi0Phi1}). Then we define the jump matrix $v$ by (\ref{linearjumpdef}) and let $m(x,y,z)$ denote the unique solution of the RH problem (\ref{linearRHm}). Finally, we show that the function $V(x,y)$ defined in terms of $m(x,y,0)$ via  (\ref{linearVrecover}) constitutes a $C^n$-solution of the Goursat problem in $D$ with data $\{V_0, V_1\}$.
The proof proceeds through a series of lemmas.

\begin{lemma}[Solution of the $x$-part]\label{linearclaim1E}
The eigenfunction $\Phi_0(x,P)$ defined in (\ref{linearPhi0Phi1a}) has the following properties:
\begin{enumerate}[$(a)$]

\item For each $k \in \hat{\C} \setminus [0,1]$, the function $x \mapsto \Phi_0(x,k^+)$ is continuous on $[0,1)$ and is $C^n$ on  $(0,1)$.

   \item $\Phi_0$ obeys the symmetries
  \begin{align}\label{linearphi0symmetries}
\begin{cases} \Phi_0(x,k^+) = -\Phi_0(x, k^-), 
	\\
\Phi_0(x,k^\pm) = \overline{\Phi_0(x, \bar{k}^\pm)}, 
\end{cases} \qquad x \in [0,1), \ k \in \hat{\C} \setminus [0, 1].
\end{align}

  \item For each  $x \in [0, 1)$, $\Phi_0(x,P)$ extends continuously to an analytic function of $P \in \mathcal{S}_{(x,0)} \setminus \Sigma_0$.
    
  \item $\Phi_0(x,\infty^+) = V_0(x)$ for $x \in [0, 1)$.

  \item For each $x \in (0,1)$, $\Phi_{0x}(x,P)$ is an analytic function of $P \in \mathcal{S}_{(x,0)}$ except for a simple pole (at most) at the branch point $k = x$. 

  \item For each $x_0 \in (0,1)$  and each compact subset $K \subset \hat{\C} \setminus [0, x_0]$, 
  \begin{align}\label{linearxkphimap}
  x \mapsto \big(k \mapsto \Phi_0(x,k^+)\big)
  \end{align}
  is a continuous map $[0, x_0) \to L^\infty(K)$ and a $C^n$-map $(0, x_0) \to L^\infty(K)$. Moreover,
$x \mapsto x^\alpha \Phi_{0x}(x,k^+)$  and $x \mapsto \Phi_{0k}(x,k^+)$ are continuous maps $[0, x_0) \to L^\infty(K)$.
\end{enumerate}
\end{lemma}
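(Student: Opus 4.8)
The plan is to follow the template of Lemma~\ref{linearclaim1}, exploiting that $\Phi_0$ is essentially the ``$x$-half'' of the eigenfunction $\Phi$ with $y=0$ frozen, so that $\lambda(x',0,k^\pm)=\pm\sqrt{(k-1)/(k-x')}$ and $\Phi_0(x,k^\pm)=\int_0^x\lambda(x',0,k^\pm)V_{0x}(x')\,dx'$. Parts $(a)$--$(d)$ are then single-variable versions of Lemma~\ref{linearclaim1}$(b)$,$(e)$,$(f)$. For $(a)$ I would fix $k\in\hat{\C}\setminus[0,1]$, note that $\lambda(\cdot,0,k^+)$ is bounded and smooth on $[0,x]$ and that $\int_0^x|V_{0x}|\,dx'<\infty$ because $x^\alpha V_{0x}\in C([0,1))$ with $\alpha<1$; this gives continuity of $x\mapsto\Phi_0(x,k^+)$ on $[0,1)$, while $\Phi_{0x}(x,k^+)=\lambda(x,0,k^+)V_{0x}(x)$ together with $V_0\in C^n((0,1))$ gives the $C^n$ regularity on $(0,1)$. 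Part $(b)$ is immediate from the symmetries~\eqref{lambdasymm} of $\lambda$ and the fact that $V_{0x}$ is real. For $(c)$ I would repeat the argument of Lemma~\ref{linearclaim1}$(e)$: each integrand $\lambda(x',0,k^+)$ is analytic for $k\in\hat{\C}\setminus[x',1]$, hence $\Phi_0(x,k^+)$ is analytic for $k\in\hat{\C}\setminus[0,1]$, and for $k\in(x,1)$ one has $k>x\ge x'$ for all $x'\in[0,x]$, so $\lambda(x',0,(k+i0)^+)=\lambda(x',0,(k-i0)^-)$ across the whole integrand; this shows the upper- and lower-sheet values glue across the cut $[x,1]$, leaving a jump only over $[0,x]$, i.e.\ across $\Sigma_0$. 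For $(d)$, $\lambda(x',0,\infty^+)=1$ gives $\Phi_0(x,\infty^+)=\int_0^x V_{0x}\,dx'=V_0(x)-V_0(0)=V_0(x)$, using that $V_0$ is absolutely continuous (since $V_{0x}\in L^1_{\mathrm{loc}}$ and $V_0\in C([0,1))$) and $V_0(0)=0$.

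Part $(e)$ is the cleanest: differentiating the upper limit gives $\Phi_{0x}(x,P)=\lambda(x,0,P)V_{0x}(x)$, a constant (in $P$) multiple of $\lambda(x,0,\cdot)$. Since $F_{(x,0)}$ is a biholomorphism and $\lambda=(z-1)/(z+1)$ in the coordinate $z=F_{(x,0)}(P)$, the function $\lambda(x,0,\cdot)$ is meromorphic on $\mathcal{S}_{(x,0)}$ with a single simple pole at the branch point over $k=x$ (where $\lambda=\infty$) and is otherwise analytic; hence $\Phi_{0x}(x,\cdot)$ has at most a simple pole there.

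The technical heart is part $(f)$, where I must upgrade pointwise-in-$k$ statements to continuity and $C^n$-regularity of $x$ valued in $L^\infty(K)$. The key preliminary is that, for $K$ compact in $\hat{\C}\setminus[0,x_0]$, the functions $\lambda(x',0,\cdot)$ and its $x$-derivatives $\partial_x^j\lambda(x',0,\cdot)$ are bounded on $K$ uniformly for $x'\in[0,x_0]$, because $K$ avoids every branch point $k=x'$. Granting this, continuity of $x\mapsto\Phi_0(x,\cdot)\in L^\infty(K)$ follows from $\|\Phi_0(x,\cdot)-\Phi_0(\tilde x,\cdot)\|_{L^\infty(K)}\le C\int_{\min(x,\tilde x)}^{\max(x,\tilde x)}|V_{0x}|\,dx'\to0$, the integrability near $0$ again coming from $x^\alpha V_{0x}\in C$; continuity at $x=0$ is clear since $\Phi_0(0,\cdot)=0$. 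The $C^n$-statement on $(0,x_0)$ follows by induction: $\partial_x\Phi_0(x,\cdot)=\lambda(x,0,\cdot)V_{0x}(x)$, and Leibniz expresses $\partial_x^j\Phi_0$ as a finite sum of products $\partial_x^i\lambda(x,0,\cdot)\cdot V_0^{(j-i)}(x)$, each factor of which is a continuous $L^\infty(K)$-valued (resp.\ scalar) function of $x$ for $j\le n$. For the weighted derivative, $x^\alpha\Phi_{0x}(x,\cdot)=\lambda(x,0,\cdot)\,\big(x^\alpha V_{0x}(x)\big)$ is a product of the continuous maps $x\mapsto\lambda(x,0,\cdot)\in L^\infty(K)$ and $x\mapsto x^\alpha V_{0x}(x)\in\C$ (continuous by hypothesis), hence continuous into $L^\infty(K)$. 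Finally, $\Phi_{0k}(x,\cdot)=\int_0^x\partial_k\lambda(x',0,\cdot)\,V_{0x}(x')\,dx'$ is handled exactly as $\Phi_0$ itself, bounding it by $C\int_0^x|V_{0x}|$ and using continuity in the upper limit.

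The main obstacle I anticipate is the uniform control near the branch point $k=1$. A direct computation gives $\partial_k\lambda(x',0,k^+)=\tfrac{1-x'}{2}(k-x')^{-3/2}(k-1)^{-1/2}$, so $\partial_k\lambda$ blows up like $(k-1)^{-1/2}$ at the right branch point; consequently the bound for $\Phi_{0k}$ in $L^\infty(K)$ genuinely requires $K$ to stay bounded away from $k=1$ as well as from $[0,x_0]$, which is the case for the sets $K$ arising later as preimages under $F_{(x,y)}^{-1}$ of the loops $\Gamma$ (these avoid the branch points by construction). Away from $k=1$ the estimates are uniform and the dominated-convergence and Leibniz arguments above close the proof; the care needed in isolating the $(k-1)^{-1/2}$ singularity, and in simultaneously treating the integrable singularity of $V_{0x}$ at $x'=0$, is the only genuinely delicate point.
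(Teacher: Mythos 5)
Your proposal is correct and follows essentially the same route as the paper: parts $(a)$--$(d)$ are obtained as the $y=0$ specialization of Lemma \ref{linearclaim1}, part $(e)$ is the observation that $\Phi_{0x}(x,\cdot)=\lambda(x,0,\cdot)V_{0x}(x)$ with $\lambda$ having a single simple pole at the branch point $k=x$, and part $(f)$ rests on the uniform boundedness of $\lambda(x',0,\cdot)$ and its $x$-derivatives on $K$ together with the local integrability of $V_{0x}$ near the origin. Your additional remark that $\partial_k\lambda$ blows up like $(k-1)^{-1/2}$ at the second branch point, so that the $L^\infty(K)$ statement for $\Phi_{0k}$ implicitly requires $K$ to stay away from $k=1$ (as the sets $F_{(x,y)}^{-1}(\Gamma)$ used later in fact do), is a genuine subtlety that the paper's own proof passes over silently.
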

\begin{proof}
If we note that $\Phi_0(x,P)$ is analytic at the points $1^\pm \in \mathcal{S}_{(x,0)}$ for each $x \in [0, 1)$, the properties $(a)$-$(d)$ follow immediately by setting $y = 0$ in Lemma \ref{linearclaim1}.
Moreover, since $\Phi_{0x}(x,k^\pm) = \lambda(x,0,k^\pm) V_{0x}(x)$ for $x \in (0,1)$, property $(e)$ follows from the definition of $\lambda$.

It remains to prove $(f)$. Fix $x_0 \in (0,1)$  and let $K$  be a compact subset $\hat{\C} \setminus [0, x_0]$.
The function $\lambda(x,0,\cdot)$ is bounded on $\mathcal{S}_{(x,0)}$ except for a simple pole at $k = x$.
Hence, for $x_1, x_2 \in [0, x_0)$, 
\begin{align*}
& \sup_{k \in K} \big|\Phi_0(x_2, k^+) - \Phi_0(x_1, k^+)\big|
= \sup_{k \in K} \bigg|\int_{x_1}^{x_2} \lambda(x,0,k^+) V_{0x}(x) dx\bigg|
	\\
& \leq \bigg( \sup_{k \in K} \sup_{x \in [0, x_0)} |\lambda(x,0,k^+)|\bigg) \int_{x_1}^{x_2} |V_{0x}(x)| dx
\leq C \int_{x_1}^{x_2} |V_{0x}(x)| dx,
\end{align*}
where the right-hand side tends to zero as $x_2 \to x_1$ because $V_{0x} \in L^1((0,x_0))$.
This shows that the map (\ref{linearxkphimap}) is continuous $[0, x_0) \to L^\infty(K)$.

If $x \in (0, x_0)$, then
\begin{align*}
  \sup_{k \in K} \bigg| &\frac{\Phi_0(x+h, k^+) - \Phi_0(x,k^+)}{h} - \Phi_{0x}(x, k)\bigg|
  	\\
  & \leq \sup_{k \in K} \bigg| \frac{1}{h} \int_x^{x+h} \lambda(x', 0, k) V_{0x}(x') dx' - \Phi_{0x}(x,k)\bigg|
  	\\
&  \leq \sup_{k \in K} \bigg| \lambda(\xi, 0, k) V_{0x}(\xi)  - \lambda(x, 0, k) V_{0x}(x)\bigg|,
\end{align*}
where $\xi$ lies between $x$ and $x+h$. As $h \to 0$, the right-hand side goes to zero. Hence (\ref{linearxkphimap}) is differentiable as a map $(0,x_0) \to L^\infty(K)$ and the derivative satisfies $\Phi_{0x}(x,k^+) = \lambda(x,0,k^+)V_{0x}(x)$. The same argument with $\lambda_k$ instead of $\lambda$ implies continuity of $x \mapsto \Phi_{0k}(x,k)$.

The map
$$x \mapsto \big(k \mapsto \lambda(x,0,k^+)\big)$$
is $C^\infty$ from $(0, x_0)$ to $L^\infty(K)$ and $V_{0x}$ is $C^{n-1}$ on $(0,1)$. Hence the map
$$x \mapsto \big(k \mapsto \lambda(x,0,k^+)V_{0x}(x)\big)$$
is $C^{n-1}$ from $(0, x_0)$ to $L^\infty(K)$.
It follows that (\ref{linearxkphimap}) is a $C^n$-map $(0, x_0) \to L^\infty(K)$. 
Moreover, equation (\ref{linearxalphaPhix}) evaluated at $y = 0$ implies $x \mapsto x^\alpha \Phi_{0x}(x,k^+)$ is continuous $[0, x_0) \to L^\infty(K)$.
This proves $(f)$ and completes the proof of the lemma.
\end{proof}

In the same way that we constructed the eigenfunction $\Phi_0(x,k)$ of the $x$-part, we can construct an eigenfunction $\Phi_1(y,k)$ of the $y$-part. 

\begin{lemma}[Solution of the $y$-part]\label{linearclaim2E}
The eigenfunction $\Phi_1(y,P)$ defined in (\ref{linearPhi0Phi1b}) has the following properties:
\begin{enumerate}[$(a)$]

\item For each $k \in \hat{\C} \setminus [0,1]$, the function $y \mapsto \Phi_1(y,k^+)$ is continuous on $[0,1)$ and is $C^n$ on  $(0,1)$. 

   \item $\Phi_1$ obeys the symmetries
  \begin{align}\label{linearphi1symmetries}
\begin{cases} \Phi_1(y,k^+) = -\Phi_1(y, k^-), 
	\\
\Phi_1(y,k^\pm) = \overline{\Phi_1(y, \bar{k}^\pm)}, 
\end{cases} \qquad y \in [0,1), \ k \in \hat{\C} \setminus [0,1].
\end{align}

  \item For each  $y \in [0, 1)$, $\Phi_1(y,P)$ extends continuously to an analytic function of $P \in \mathcal{S}_{(0,y)} \setminus \Sigma_1$.
    
  \item $\Phi_1(y,\infty^+) = V_1(y)$ for $y \in [0, 1)$.

  \item For each $y \in (0,1)$, $\Phi_{1y}(y,P)$ is an analytic function of $P \in \mathcal{S}_{(0,y)}$ except for a simple pole at the branch point $k = 1- y$. 

  \item For each $y_0 \in (0,1)$  and each compact subset $K \subset \hat{\C} \setminus [1-y_0, 1]$, 
  \begin{align}
  y \mapsto \big(k \mapsto \Phi_1(y,k^+)\big)
  \end{align}
  is a continuous map $[0, y_0] \to L^\infty(K)$ and a $C^n$-map $(0, y_0) \to L^\infty(K)$. Moreover,
  $y\mapsto y^\alpha \Phi_{1y}(y,k^+)$ and $xy\mapsto \Phi_{1k}(y,k^+)$ are continuous maps $[0, x_0) \to L^\infty(K)$.
\end{enumerate}
\end{lemma}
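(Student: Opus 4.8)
The plan is to prove Lemma \ref{linearclaim2E} by exploiting the symmetry between the $x$-part and the $y$-part that is already visible in the definitions: $\Phi_1$ is obtained from $\Phi_0$ by interchanging the roles of $x$ and $y$, replacing $\lambda$ by $\lambda^{-1}$, and moving the distinguished branch point from $k = x$ to $k = 1-y$. Concretely, setting $x = 0$ in the alternative representation (\ref{linearPhidef2}) of $\Phi$ gives $\Phi(0,y,k^\pm) = \int_0^y \lambda(0,y',k^\pm)^{-1} V_{1y}(y') dy' = \Phi_1(y,k^\pm)$, so $\Phi_1$ is the restriction to the edge $x=0$ of the function $\Phi$ studied in Lemma \ref{linearclaim1}. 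This identification is what lets me transport each of the properties $(a)$--$(f)$ of Lemma \ref{linearclaim1E} to the present setting.

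First I would dispose of $(a)$--$(d)$ by specializing Lemma \ref{linearclaim1} to $x=0$, exactly as properties $(a)$--$(d)$ of $\Phi_0$ were obtained by setting $y=0$, together with the direct observation that $\Phi_{1y}(y,k^+) = \lambda(0,y,k^+)^{-1} V_{1y}(y)$ is $C^{n-1}$ in $y$ on $(0,1)$. The only point that requires a separate remark is that when $x=0$ the contour $\Sigma_0$ in (\ref{Sigma01def}) degenerates to the single branch point $0$, so to conclude analyticity of $P \mapsto \Phi_1(y,P)$ on all of $\mathcal{S}_{(0,y)} \setminus \Sigma_1$ I must check that $\Phi_1$ is in fact analytic at the points $0^\pm \in \mathcal{S}_{(0,y)}$. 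This is the mirror image of the observation used for $\Phi_0$ at $1^\pm$: near the branch point $k=0$ a local holomorphic coordinate on $\mathcal{S}_{(0,y)}$ is $w \sim \sqrt{k}$, and for each fixed $y' \in (0,y)$ the integrand $\lambda(0,y',k^+)^{-1} = \sqrt{k/(k-(1-y'))}$ vanishes like $w$ and is therefore analytic in $w$; integrating in $y'$ preserves analyticity. Granting this, $(a)$--$(d)$ follow at once, with $\Phi_1(y,\infty^+) = V_1(y)$ coming from $\lambda(0,y,\infty^+)^{-1} = 1$ and $V_1(0)=0$.

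Next I would treat $(e)$ directly from $\Phi_{1y}(y,k^\pm) = \lambda(0,y,k^\pm)^{-1} V_{1y}(y)$. Since $\lambda(0,y,\cdot)$ vanishes to first order at the branch point $k = 1-y$ of $\mathcal{S}_{(0,y)}$ and is nonzero and analytic elsewhere on the surface, its reciprocal is analytic on $\mathcal{S}_{(0,y)}$ except for a simple pole at $k=1-y$, which is precisely $(e)$. Finally, for $(f)$ I would repeat the integral estimates of Lemma \ref{linearclaim1E}$(f)$ verbatim: on a compact set $K \subset \hat{\C}\setminus[1-y_0,1]$ the function $\lambda(0,y,k^+)^{-1}$ is bounded uniformly in $y \in [0,y_0)$, because its only singularity sits at $k=1-y \in (1-y_0,1]$, which $K$ avoids. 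Combining this uniform bound with $V_{1y}\in L^1((0,y_0))$ yields continuity of $y \mapsto (k\mapsto \Phi_1(y,k^+))$ into $L^\infty(K)$; differentiating under the integral, together with the smoothness of $y \mapsto (k \mapsto \lambda(0,y,k^+)^{-1})$ into $L^\infty(K)$ and the $C^{n-1}$-regularity of $V_{1y}$, gives the $C^n$-statement as well as the continuity of $y\mapsto y^\alpha\Phi_{1y}(y,k^+)$ and $y\mapsto\Phi_{1k}(y,k^+)$.

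The whole argument is routine given the $x$-part analysis; the only place demanding genuine care is the bookkeeping of the swap of the distinguished branch point from $k=x$ to $k=1-y$ and the accompanying replacement of $\lambda$ by $\lambda^{-1}$, in particular the verification of analyticity of $\Phi_1$ at the degenerate contour $\Sigma_0 = \{0\}$ in part $(c)$. I expect no substantive obstacle beyond keeping the two distinguished intervals $[0,x_0]$ and $[1-y_0,1]$ straight when specifying the compact sets $K$ in $(f)$.
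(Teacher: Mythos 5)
Your proposal is correct and follows essentially the same route as the paper, which proves this lemma simply by declaring it analogous to the $x$-part (Lemma \ref{linearclaim1E}), whose proof in turn specializes Lemma \ref{linearclaim1} to the relevant edge and supplies the removable-singularity remark at the opposite branch point together with the $L^1$/uniform-bound estimates for part $(f)$. Your bookkeeping of the swap $\lambda \leftrightarrow \lambda^{-1}$, the relocation of the distinguished branch point to $k=1-y$, and the analyticity check at $0^\pm$ is exactly the mirror image the paper intends.
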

\begin{proof}
The proof is analogous to that of Lemma \ref{linearclaim1E}.
\end{proof}

Recall from the definition in Section \ref{notationsec} that the contour $\Gamma \equiv \Gamma(x,y)$ consists of two nonintersecting clockwise loops $\Gamma_0$ and $\Gamma_1$ which encircle the intervals $F_{(x,y)}(\Sigma_0)$ and $F_{(x,y)}(\Sigma_1)$ respectively, but which do not encircle the origin. We are free to choose $\Gamma_0$ and $\Gamma_1$ as long as these requirements are met. It turns out to be convenient to choose $\Gamma_0$ and $\Gamma_1$  independent of $(x,y)$. However, we see from (\ref{twointervals}) that the intervals $F_{(x,y)}(\Sigma_0)$ and $F_{(x,y)}(\Sigma_1)$  get arbitrarily close to the origin as  $(x,y)$ approaches the diagonal edge $x+y=1$ of $D$. Hence we cannot take $\Gamma$ independent of $(x,y)$  for all  $(x,y) \in D$. However, if we restrict ourselves to points $(x,y)$ which lie in the slightly smaller triangle $D_\delta$, $\delta > 0$, defined in (\ref{Ddeltadef}), then we can choose $\Gamma$ independent of $(x,y)$.

\begin{figure}
\bigskip\bigskip
\begin{center}
 \begin{overpic}[width=.7\textwidth]{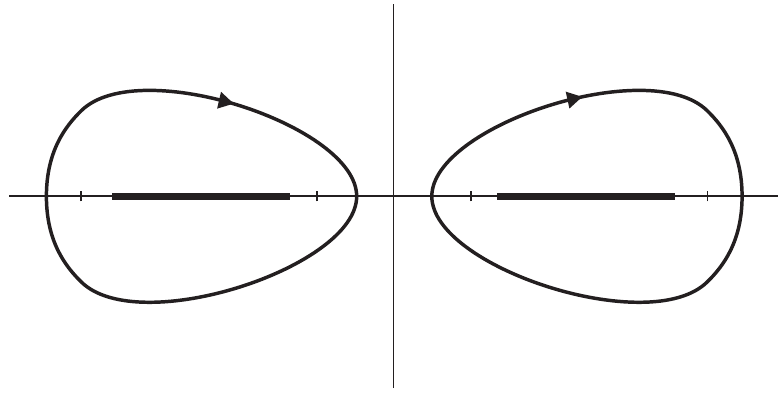}
 \put(102,24.5){\small $\re z$}
 \put(47,53){\small $\im z$}
 \put(28,40){\small $\Gamma_0$}
 \put(71,40.5){\small $\Gamma_1$}
 \put(59.5,22){\small $\epsilon$}
 \put(88.5,21.5){\small $\epsilon^{-1}$}
 \put(37.5,22){\small $-\epsilon$}
 \put(7,21.5){\small $-\epsilon^{-1}$}
  \put(18,21){\small $F_{(x,y)}(\Sigma_0)$}
 \put(68,21){\small $F_{(x,y)}(\Sigma_1)$}
 \end{overpic}
   \bigskip\bigskip\bigskip
   \begin{figuretext}\label{Gamma01.pdf}
      We choose the loops $\Gamma_0$ and $\Gamma_1$ in the complex $z$-plane so that they encircle the intervals $[-\epsilon^{-1}, -\epsilon]$ and $[\epsilon, \epsilon^{-1}]$, respectively.
       \end{figuretext}
   \end{center}
\end{figure}

Thus, fix $\delta \in (0,1)$ and choose $\epsilon > 0$ so small that $F_{(x,y)}(\Sigma_0)$ and $F_{(x,y)}(\Sigma_1)$ are contained in the intervals $[-\epsilon^{-1}, -\epsilon]$ and $[\epsilon, \epsilon^{-1}]$, respectively, for all $(x,y) \in D_\delta$. Fix two smooth nonintersecting clockwise contours $\Gamma_0$ and $\Gamma_1$ in the complex $z$-plane which encircle once the intervals $[-\epsilon^{-1}, -\epsilon]$ and $[\epsilon, \epsilon^{-1}]$, respectively, but which do not encircle zero, see Figure \ref{Gamma01.pdf}. Suppose also that $\Gamma_0$ and $\Gamma_1$ are invariant under the involutions $z \mapsto z^{-1}$ and $z \mapsto \bar{z}$. 
Let $\Gamma = \Gamma_0 \cup \Gamma_1$ and, using this particular choice of $\Gamma$, define $V(x,y)$ for $(x,y) \in D_\delta$ by (\ref{linearV}), i.e.,
\begin{align}\label{linearV2}
  V(x,y) = -\frac{1}{4\pi i} \int_{\Gamma} \frac{v(x,y,z)}{z} dz,
\end{align}
where $v(x,y,z)$ is given by (\ref{linearjumpdef}).
We will show that
\begin{align}\label{linearVDdelta}
\begin{cases}
  V \in C(D_\delta) \cap C^n(\Int D_\delta), 
  	\\
   \text{$V(x,y)$ satisfies the Euler-Darboux equation (\ref{linearernst}) in $\Int(D_\delta)$,}
  	\\
    \text{$x^\alpha V_x, y^\alpha V_y, x^\alpha y^\alpha V_{xy} \in C(D_\delta)$ for some $\alpha \in [0,1)$,}
  	\\
  \text{$V(x,0) = V_0(x)$ for $x \in [0,1-\delta)$,}
 	\\
  \text{$V(0,y) = V_1(y)$ for $y \in [0,1-\delta)$.}
\end{cases}
\end{align}
Since $\delta > 0$ can be chosen arbitrarily small, this will complete the proof of the theorem. 

Consider the family of scalar RH problems given in (\ref{linearRHm}) parametrized by the two parameters $(x,y) \in D_\delta$. For each $(x,y) \in D_\delta$, the unique solution of (\ref{linearRHm}) is given by
\begin{align}\label{linearmsolution}
m(x,y,z) = \frac{1}{2\pi i} \int_{\Gamma} \frac{v(x,y,z')}{z'-z} dz', \qquad (x,y) \in D_\delta, \ z \in \hat{\C} \setminus \Gamma.
\end{align}

\begin{lemma}\label{linearclaim3E}
The map $(x,y) \mapsto v(x,y, \cdot)$ is continuous from $D_\delta$ to $L^\infty(\Gamma)$ and $C^n$ from $\Int D_\delta$ to $L^\infty(\Gamma)$.
Moreover, the three maps 
\begin{align}\label{linearvthreemaps}
(x,y) \mapsto x^\alpha v_x(x,y, \cdot), \qquad
(x,y) \mapsto y^\alpha v_x(x,y, \cdot), \qquad
(x,y) \mapsto x^\alpha y^\alpha v_{xy}(x,y, \cdot),
\end{align}
are continuous from $D_\delta$ to $L^\infty(\Gamma)$.
\end{lemma}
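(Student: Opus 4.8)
The plan is to reduce everything to the explicit integral representations \eqref{linearPhi0Phi1} for $\Phi_0,\Phi_1$ together with an explicit formula for $F_{(x,y)}^{-1}$, and then to differentiate under the integral sign; the only genuine difficulties stem from the simultaneous $(x,y)$-dependence of the surface $\mathcal{S}_{(x,y)}$ and of the map $F_{(x,y)}$. First I would invert $F_{(x,y)}$: since $z=\frac{1+\lambda}{1-\lambda}$ gives $\lambda=\frac{z-1}{z+1}$, substituting into \eqref{lambdadef} yields
$$F_{(x,y)}^{-1}(z)=k(x,y,z)^{\sigma(z)},\qquad k(x,y,z)=\frac{(1-y)(z+1)^2-x(z-1)^2}{4z},$$
where the sheet is $\sigma(z)=+$ for $|z|>1$ and $\sigma(z)=-$ for $|z|<1$ (because $\lambda\mapsto z$ maps $\{\re\lambda>0\}$ onto $\{|z|>1\}$). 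Thus $k(x,y,z)$ is rational, jointly real-analytic in $(x,y)\in\Int D_\delta$ and analytic in $z$ for $z$ bounded away from $0$, and $(x,y,z)\mapsto k(x,y,z)$ is continuous on $\overline{D_\delta}\times\Gamma$.

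The key input is a uniform separation statement. The loops $\Gamma_0,\Gamma_1$ were chosen to encircle $[-\epsilon^{-1},-\epsilon]\supset F_{(x,y)}(\Sigma_0)$ and $[\epsilon,\epsilon^{-1}]\supset F_{(x,y)}(\Sigma_1)$ for all $(x,y)\in D_\delta$, so $\Gamma$ stays at a positive distance, uniform over $\overline{D_\delta}$, from the moving set $F_{(x,y)}(\Sigma)$; equivalently $k(x,y,z)\notin[0,x]$ for $z\in\Gamma_0$ and $k(x,y,z)\notin[1-y,1]$ for $z\in\Gamma_1$, uniformly in $(x,y)\in D_\delta$. Although $F_{(x,y)}^{-1}(\Gamma)$ unavoidably crosses the branch cut $[x,1-y]$ of $\mathcal{S}_{(x,y)}$ (this is the unit circle $|z|=1$, which must separate the interior points $-1=F_{(x,y)}(x)\in F_{(x,y)}(\Sigma_0)$ and $1=F_{(x,y)}(1-y)\in F_{(x,y)}(\Sigma_1)$), this is harmless: by Lemma \ref{linearclaim1E}$(c)$–$(e)$ the function $\Phi_0(x,\cdot)$, and for each $x'\in[0,x]$ the integrand $\lambda(x',0,\cdot)$ in \eqref{linearPhi0Phi1a}, extends analytically across the cut, its only singularities being the pole of $\lambda(x',0,\cdot)$ at $k=x'\in[0,x]$ and the jump set $\Sigma_0$, both of which project strictly inside $\Gamma_0$. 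Consequently, for fixed $(x,y)$, the composite $z\mapsto v(x,y,z)$ is analytic in a fixed annular neighborhood of $\Gamma$ (independent of $(x,y)\in D_\delta$), with $\sup$-norm bounded by $C(\|V_{0x}\|_{L^1}+\|V_{1y}\|_{L^1})$.

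For the regularity in $(x,y)$, on $\Gamma_0$ I would write $v(x,y,z)=\int_0^x \lambda\big(x',0,F_{(x,y)}^{-1}(z)\big)V_{0x}(x')\,dx'$, where by the previous step the integrand and all its $(x,y)$-derivatives are bounded uniformly for $z\in\Gamma_0$, $x'\in[0,x]$, $(x,y)\in\overline{D_\delta}$. Continuity $D_\delta\to L^\infty(\Gamma_0)$ and, on $\Int D_\delta$, the $C^n$-property then follow by dominated convergence and differentiation under the integral sign, using $V_{0x}\in L^1((0,1-\delta))\cap C^{n-1}((0,1))$ (Lemma \ref{linearclaim1E}$(a)$,$(f)$); differentiation in $x$ produces boundary terms $\partial_x^{\,j}\big[\lambda(x,0,F_{(x,y)}^{-1}(z))V_{0x}(x)\big]$ which are controlled on $\Int D_\delta$, where $x$ is bounded away from $0$. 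For the three maps in \eqref{linearvthreemaps}, the only singular contributions are these boundary terms, in which $V_{0x}(x)\sim x^{-\alpha}$; multiplying by $x^\alpha$ (resp.\ $y^\alpha$, $x^\alpha y^\alpha$) and invoking $x^\alpha V_{0x},\,y^\alpha V_{1y}\in C([0,1))$ gives continuity up to the edges $x=0$ and $y=0$, while the integral remainders carry a factor $\int_0^x|V_{0x}|\to0$ that restores continuity at $x=0$. The analysis on $\Gamma_1$ is identical, with $\Phi_1$, \eqref{linearPhi0Phi1b}, and Lemma \ref{linearclaim2E} replacing $\Phi_0$, \eqref{linearPhi0Phi1a}, and Lemma \ref{linearclaim1E}.

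The main obstacle is the second step: because $F_{(x,y)}^{-1}(\Gamma)$ crosses the branch cut of the moving surface, one cannot simply read off the needed bounds from the $L^\infty(K)$ statements of Lemmas \ref{linearclaim1E}$(f)$ and \ref{linearclaim2E}$(f)$ with a single compact $K\subset\hat{\C}\setminus[0,1]$. The crux is therefore to verify that $\Phi_0\circ F_{(x,y)}^{-1}$ (and $\Phi_1\circ F_{(x,y)}^{-1}$) remains analytic and uniformly bounded across the cut, which rests on the analytic continuation established in Lemma \ref{linearclaim1E}$(c)$–$(e)$ together with the uniform separation of $\Gamma$ from $F_{(x,y)}(\Sigma)$ guaranteed by the choice of $\epsilon$.
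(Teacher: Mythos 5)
Your proposal is correct and follows the same basic mechanism as the paper's proof: $v$ is the composition of $\Phi_0$ (resp.\ $\Phi_1$) with $F_{(x,y)}^{-1}$, the chain rule isolates the boundary term $\lambda\big(x,0,F_{(x,y)}^{-1}(z)\big)V_{0x}(x)$ as the only singular contribution to $v_x$, and the weights $x^\alpha$, $y^\alpha$ tame it via the hypothesis $x^\alpha V_{0x},y^\alpha V_{1y}\in C([0,1))$, while the remaining terms carry an integral factor controlled by $V_{0x}\in L^1$. The difference is one of packaging: the paper simply cites part $(f)$ of Lemmas \ref{linearclaim1E} and \ref{linearclaim2E} (continuity and $C^n$-dependence of $x\mapsto\Phi_0(x,\cdot)$, $x\mapsto x^\alpha\Phi_{0x}(x,\cdot)$, $x\mapsto\Phi_{0k}(x,\cdot)$ into $L^\infty(K)$) and composes with $F_{(x,y)}^{-1}$, whereas you re-derive the needed uniform bounds directly from the integral representations \eqref{linearPhi0Phi1}. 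Your identification of the ``main obstacle'' is a fair one: part $(f)$ as stated concerns compact $K\subset\hat{\C}\setminus[0,x_0]$ on the upper sheet, while the $k$-projection of $F_{(x,y)}^{-1}(\Gamma_0)$ necessarily meets the moving cut $[x,1-y]$, so a literal application with a single such $K$ does not cover all of $\Gamma_0$; the paper handles this implicitly through the analytic continuation across the cut in part $(c)$ (and, in the nonlinear analogue, Lemma \ref{claim2E}, by taking $K$ a compact subset of the surface avoiding $\Sigma_0$), and your explicit resolution via the uniform separation of $\Gamma$ from $F_{(x,y)}(\Sigma)$ and the continuation of $\lambda(x',0,\cdot)$ across the cut is the correct way to close that point. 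The only nitpick is the phrase ``the integrand and all its $(x,y)$-derivatives are bounded uniformly for $x'\in[0,x]$'': the factor $V_{0x}(x')$ is only $L^1$ near $x'=0$, so the uniform bounds apply to the $\lambda\circ F_{(x,y)}^{-1}$ factor, with $V_{0x}$ kept as the integrable density --- which is evidently what you intend, given the subsequent appeal to dominated convergence.
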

\begin{proof}
The map $(x,y) \mapsto v(x,y, \cdot)$ is continuous from $D_\delta$ to $L^\infty(\Gamma)$ and $C^n$ from $\Int D_\delta$ to $L^\infty(\Gamma)$ as a consequence of part $(f)$ of Lemmas \ref{linearclaim1E} and \ref{linearclaim2E}.
Furthermore,
$$x^\alpha v_x(x,y, z) = \begin{cases} x^\alpha \Phi_{0x}\big(x, F_{(x,y)}^{-1}(z)\big) + x^\alpha \Phi_{0k}\big(x, F_{(x,y)}^{-1}(z)\big) \Big(\frac{d}{dx}F_{(x,y)}^{-1}(z)\Big), \quad & z \in \Gamma_0,
	\\
x^\alpha \Phi_{1k}\big(y, F_{(x,y)}^{-1}(z)\big) \Big(\frac{d}{dx}F_{(x,y)}^{-1}(z)\Big), \quad & z \in \Gamma_1.
\end{cases}
$$
Part $(f)$ of Lemma \ref{linearclaim1E} implies that the terms $x^\alpha \Phi_{0x}\big(x, F_{(x,y)}^{-1}(\cdot)\big)$ and $\Phi_{0k}\big(x, F_{(x,y)}^{-1}(\cdot)\big)$ are continuous $D_\delta \to L^\infty(\Gamma_0))$.
Similarly, part $(f)$ of Lemma \ref{linearclaim2E} implies that the term $\Phi_{1k}\big(y, F_{(x,y)}^{-1}(\cdot)\big)$ is continuous $D_\delta \to L^\infty(\Gamma_1))$.
We conclude that $(x,y) \mapsto x^\alpha v_x(x,y, \cdot)$ is continuous $D_\delta \to L^\infty(\Gamma)$. The other two maps in (\ref{linearvthreemaps}) are treated in a similar way. 
\end{proof}

\begin{lemma}\label{linearclaim4E}
The solution $m(x,y,z)$ defined in (\ref{linearmsolution}) has the following properties:
\begin{enumerate}[$(a)$]
   \item For each point $(x,y) \in D_\delta$, $m(x,y,\cdot)$ obeys the symmetries
  \begin{align}\label{linearmsymm}
 m(x,y,z) = m(x,y,0) - m(x,y,z^{-1}) = \overline{m(x,y,\bar{z})}, \qquad z \in \hat{\C} \setminus \Gamma.
\end{align}

  \item For each  $z \in \hat{\C}\setminus \Gamma$, the map $(x,y) \mapsto m(x,y,z)$ is continuous from $D_\delta$ to $\C$ and is $C^n$ from $\Int D_\delta$ to $\C$.

  \item For each  $z \in \hat{\C}\setminus \Gamma$, the three maps 
  $$(x,y) \mapsto x^\alpha m_x(x,y, z), \qquad
(x,y) \mapsto y^\alpha m_x(x,y, z), \qquad
(x,y) \mapsto x^\alpha y^\alpha m_{xy}(x,y, z),$$
are continuous from $D_\delta$ to $\C$.

\end{enumerate}
\end{lemma}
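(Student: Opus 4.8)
The plan is to exploit that $m(x,y,z)$ is the Cauchy transform over $\Gamma$ of the jump $v(x,y,\cdot)$ and to transfer the properties of $v$ recorded in Lemma \ref{linearclaim3E} through this transform. The key observation is that for a \emph{fixed} $z \in \hat{\C}\setminus\Gamma$ we have $\mathrm{dist}(z,\Gamma) > 0$, so the linear functional
\[
\ell_z : f \mapsto \frac{1}{2\pi i}\int_\Gamma \frac{f(z')}{z'-z}\,dz'
\]
is bounded from $L^\infty(\Gamma)$ to $\C$, with $\|\ell_z\| \le |\Gamma|/(2\pi\,\mathrm{dist}(z,\Gamma))$, and $m(x,y,z) = \ell_z\big(v(x,y,\cdot)\big)$. (For $z=\infty$ the normalization gives $m(x,y,\infty)=0$ identically, so there is nothing to prove; assume $z$ finite.)

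For parts $(b)$ and $(c)$ I would argue as follows. Since $\ell_z$ is bounded linear (hence $C^\infty$), and $(x,y)\mapsto v(x,y,\cdot)$ is continuous $D_\delta\to L^\infty(\Gamma)$ and $C^n$ on $\Int D_\delta$ by Lemma \ref{linearclaim3E}, the composition $m(\cdot,\cdot,z)=\ell_z\circ v(\cdot,\cdot,\cdot)$ inherits these regularities, which is $(b)$. Moreover the chain rule permits differentiation under the integral, giving $m_x(x,y,z)=\ell_z(v_x(x,y,\cdot))$ and similarly for $m_{xy}$ on $\Int D_\delta$. Pulling the scalar factors $x^\alpha$, $y^\alpha$, $x^\alpha y^\alpha$ inside $\ell_z$ yields $x^\alpha m_x=\ell_z(x^\alpha v_x)$, $y^\alpha m_x=\ell_z(y^\alpha v_x)$, and $x^\alpha y^\alpha m_{xy}=\ell_z(x^\alpha y^\alpha v_{xy})$ on $\Int D_\delta$. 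Since Lemma \ref{linearclaim3E} asserts that each of $x^\alpha v_x$, $y^\alpha v_x$, $x^\alpha y^\alpha v_{xy}$ is continuous from \emph{all} of $D_\delta$ into $L^\infty(\Gamma)$, applying $\ell_z$ shows that the three expressions in $(c)$ extend continuously to $D_\delta$, proving $(c)$.

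For part $(a)$ I would first record two pointwise symmetries of $v$. Because $\lambda(x,y,k^-)=-\lambda(x,y,k^+)$, the map $F_{(x,y)}$ intertwines the sheet involution with $z\mapsto z^{-1}$, i.e.\ $F_{(x,y)}(k^-)=1/F_{(x,y)}(k^+)$; combined with the odd symmetry $\Phi_0(x,k^-)=-\Phi_0(x,k^+)$ (Lemma \ref{linearclaim1E}$(b)$) and its analogue for $\Phi_1$ (Lemma \ref{linearclaim2E}$(b)$), this gives $v(x,y,z^{-1})=-v(x,y,z)$. Likewise $\lambda(x,y,\bar k^\pm)=\overline{\lambda(x,y,k^\pm)}$ yields $F_{(x,y)}(\bar k^+)=\overline{F_{(x,y)}(k^+)}$, which with the reality symmetry $\Phi_0(x,\bar k^+)=\overline{\Phi_0(x,k^+)}$ gives $v(x,y,z)=\overline{v(x,y,\bar z)}$; both identities make sense on $\Gamma$ precisely because $\Gamma_0,\Gamma_1$ were chosen invariant under $z\mapsto z^{-1}$ and $z\mapsto\bar z$. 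To obtain the inversion symmetry for $m$ I would substitute $z'\mapsto 1/z'$ in the integral for $m(x,y,z^{-1})$: using the invariance of $\Gamma$ (orientation preserved, as $z\mapsto z^{-1}$ is conformal), the relation $v(x,y,z'^{-1})=-v(x,y,z')$, and the partial-fraction identity $\tfrac{z}{w(z-w)}=\tfrac1w+\tfrac1{z-w}$, the two minus signs (from $-v$ and from $-w^{-2}dw$) cancel and the integral splits into $m(x,y,0)-m(x,y,z)$, giving $m(x,y,z^{-1})=m(x,y,0)-m(x,y,z)$. For the reality symmetry I would conjugate the representation of $m(x,y,\bar z)$ and substitute $z'\mapsto\bar z'$; here conjugation reverses the orientation of $\Gamma$, and that sign cancels the sign from $\overline{1/(2\pi i)}=-1/(2\pi i)$, while $\overline{v(x,y,\bar z')}=v(x,y,z')$ restores the integrand of $m(x,y,z)$, yielding $m(x,y,z)=\overline{m(x,y,\bar z)}$.

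I expect the main obstacle to be the careful bookkeeping in part $(a)$: correctly tracking how $F_{(x,y)}$ converts the sheet and conjugation involutions into $z\mapsto z^{-1}$ and $z\mapsto\bar z$, and above all keeping the orientation of $\Gamma$ straight under the two changes of variables—the conformal inversion preserves it whereas complex conjugation reverses it—since a single sign error would break the identities. Parts $(b)$ and $(c)$, by contrast, reduce to the routine principle that a bounded linear functional composed with a regular map is equally regular and commutes with differentiation, once Lemma \ref{linearclaim3E} is available.
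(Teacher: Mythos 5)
Your proposal is correct. Parts $(b)$ and $(c)$ coincide with the paper's argument: both observe that for fixed $z\notin\Gamma$ the Cauchy functional $\ell_z$ is bounded linear on $L^\infty(\Gamma)$, so the regularity of $(x,y)\mapsto v(x,y,\cdot)$ and of the weighted derivatives from Lemma \ref{linearclaim3E} passes directly to $m$ and its weighted derivatives. For part $(a)$ you take a genuinely different (though equally valid) route: you verify the two identities by explicit changes of variables $z'\mapsto 1/z'$ and $z'\mapsto \bar z'$ in the Cauchy integral, carefully tracking the orientation of $\Gamma$ (preserved under the conformal inversion, reversed under conjugation) and using the jump symmetries $v(x,y,z^{-1})=-v(x,y,z)$, $v(x,y,z)=\overline{v(x,y,\bar z)}$ together with the partial-fraction split $\tfrac{z}{w(z-w)}=\tfrac1w+\tfrac1{z-w}$; your bookkeeping checks out. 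The paper instead derives the same symmetries of $v$ and then argues that $m(x,y,0)-m(x,y,z^{-1})$ and $\overline{m(x,y,\bar z)}$ solve the same additive RH problem as $m$, concluding by uniqueness. Your computation is more self-contained and makes the origin of the constant term $m(x,y,0)$ transparent, but it leans on the solution being exactly a Cauchy transform; the uniqueness argument is the one that survives in the matrix (noncollinear) case, where the paper reuses it verbatim in Lemma \ref{claim4E} and no closed-form integral representation of the full solution is available.
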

\begin{proof}
The symmetries in (\ref{linearphi0symmetries}) and (\ref{linearphi1symmetries}) show that $v$ satisfies
\begin{align}\label{linearvsymmetries}
\begin{cases} v(x,y,z) = -v(x, y, z^{-1}), 
	\\
v(x,y,z) = \overline{v(x, y, \bar{z})}, 
\end{cases} \qquad z \in \Gamma, \ (x,y) \in D_\delta.
\end{align}
These symmetries imply that $m(x,y,0) - m(x,y,z^{-1})$ and $\overline{m(x,y,\bar{z})}$ satisfy the same RH problem as $m(x,y,z)$. Hence, by uniqueness, (\ref{linearmsymm}) holds. This proves $(a)$. 

For each $z \in \hat{\C}\setminus \Gamma$, the map 
$$f \mapsto \int_\Gamma \frac{f(z')}{z' - z} dz'$$
is a bounded linear map $L^\infty(\Gamma) \to \C$.
Hence properties $(b)$ and $(c)$ follow immediately from (\ref{linearmsolution}) and Lemma \ref{linearclaim3E}.
\end{proof}

Given a contour $\gamma \subset \C$, we use the notation $N(\gamma)$ to denote an open tubular neighborhood of $\gamma$. We extend the definition (\ref{linearjumpdef}) of $v$ to a tubular neighborhood $N(\Gamma) = N(\Gamma_0) \cup N(\Gamma_1)$ of $\Gamma$ as follows, see Figure  \ref{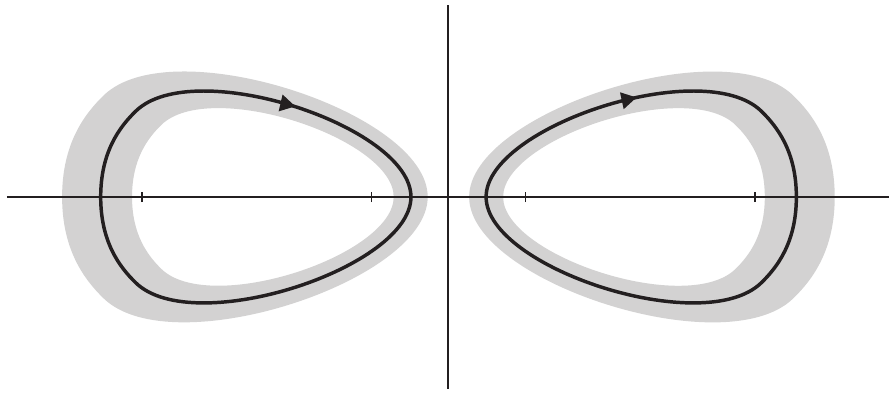}:
\begin{align}\label{linearjumpdef2}
v(x,y, z) = \begin{cases}
 \Phi_0\big(x, F_{(x,y)}^{-1}(z)\big), \quad & z \in N(\Gamma_0), 
 	\\
\Phi_1\big(y, F_{(x,y)}^{-1}(z)\big), & z \in N(\Gamma_1),
\end{cases}\qquad (x,y) \in D_\delta.
\end{align}
We choose $N(\Gamma)$ so narrow that it does not intersect the intervals $[-\epsilon^{-1}, -\epsilon]$ and $[\epsilon, \epsilon^{-1}]$. Then, for each $(x,y) \in D_\delta$, $v(x,y,\cdot)$ is an analytic function of $z \in N(\Gamma)$. 
Using the notation $z(x,y,P) := F_{(x,y)}(P)$, we can write (\ref{linearjumpdef2}) as
\begin{align}\label{linearvzPhi0}
  v(x,y,z(x,y,P)) = \begin{cases} \Phi_0(x, P), \quad & P \in F_{(x,y)}^{-1}\big(N(\Gamma_0)\big), \\
 \Phi_1(y, P), \quad & P \in F_{(x,y)}^{-1}\big(N(\Gamma_1)\big),  
 \end{cases}
  \quad (x,y) \in D_\delta.
\end{align}
We define functions $f_0(x,y,z)$ and $f_1(x,y,z)$ for $(x,y) \in D_\delta$ by
\begin{align*}
f_0(x,y,z) = m_x(x,y,z) + z_x\big(x,y, F_{(x,y)}^{-1}(z)\big) m_z(x,y,z), \qquad z \in \hat{\C} \setminus \Gamma,
	\\
f_1(x,y,z) = m_y(x,y,z) + z_y\big(x,y, F_{(x,y)}^{-1}(z)\big) m_z(x,y,z), \qquad z \in \hat{\C} \setminus \Gamma.
\end{align*}
Moreover, we let $n_0(x,y,z)$ and $n_1(x,y,z)$ denote the functions given by
\begin{subequations}\label{linearndef}
\begin{align}\label{linearndefa}
n_0(x,y,z) = \begin{cases}
  f_0(x,y,z) + \Phi_{0x}\big(x,F_{(x,y)}^{-1}(z)\big), \quad &  z \in \Omega_0,	\\
  f_0(x,y,z), & z \in \Omega_1 \cup \Omega_\infty,
\end{cases}
\end{align}
and
\begin{align}\label{linearndefb}
n_1(x,y,z) = \begin{cases}
  f_1(x,y,z) + \Phi_{1y}\big(y,F_{(x,y)}^{-1}(z)\big), \quad &  z \in \Omega_1,	\\
  f_1(x,y,z), & z \in \Omega_0 \cup \Omega_\infty.
\end{cases}
\end{align}
\end{subequations}

\begin{figure}
\bigskip\bigskip
\begin{center}
 \begin{overpic}[width=.7\textwidth]{Gammatubular.pdf}
 \put(101,21.2){\small $\re z$}
 \put(47,46){\small $\im z$}
 \put(11,38){\small $N(\Gamma_0)$}
 \put(80,38){\small $N(\Gamma_1)$}
  \put(58,19){\small $\epsilon$}
 \put(82,18.5){\small $\epsilon^{-1}$}
 \put(38,19){\small $-\epsilon$}
 \put(12,18.5){\small $-\epsilon^{-1}$}
 \end{overpic}
   \begin{figuretext}\label{Gammatubular.pdf}
      The tubular neighborhood $N(\Gamma) = N(\Gamma_0) \cup N(\Gamma_1)$ of the contour $\Gamma$ in the complex $z$-plane.
      \end{figuretext}
   \end{center}
\end{figure}

\begin{lemma}\label{linearclaim5E}
For each $(x,y) \in \Int D_\delta$, it holds that
\begin{enumerate}[$(a)$]
\item $n_0(x,y,z)$ is an analytic function of $z \in \hat{\C} \setminus \{-1\}$ and has at most a simple pole at $z = -1$. 
\item $n_1(x,y,z)$ is an analytic function of $z \in \hat{\C} \setminus \{1\}$ and has at most a simple pole at $z = 1$. 
\item $n_0(x,y,\infty) = 0$ and $n_0(x,y,0) = -2V_x(x,y)$.
\item $n_1(x,y,\infty) = 0$ and $n_1(x,y,0) = -2V_y(x,y)$.
\end{enumerate}
\end{lemma}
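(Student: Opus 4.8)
The plan is to recognize $f_0 = m_x + z_x m_z$ as the total $x$-derivative of $m$ taken along a point $P \in \mathcal{S}_{(x,y)}$ held fixed: writing $z = z(x,y,P) = F_{(x,y)}(P)$, the chain rule gives $f_0(x,y,z(x,y,P)) = \frac{d}{dx}\big[m(x,y,z(x,y,P))\big]$ at fixed $P$, the differentiation being legitimate by the regularity in Lemma \ref{linearclaim4E}. The extra term $\Phi_{0x}(x,F_{(x,y)}^{-1}(z))$ added in $\Omega_0$ in the definition of $n_0$ is engineered precisely to cancel the jump that $f_0$ inherits from the jump of $m$ across $\Gamma_0$, while across $\Gamma_1$ there is nothing to cancel because there $v$ is independent of $x$.

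First I would record the two facts about $F_{(x,y)}$ that drive everything. Using $\lambda_x = \lambda/(2(k-x))$, $\lambda_y = 1/(2(k-x)\lambda)$ and $z = (1+\lambda)/(1-\lambda)$, a direct computation yields the explicit rational expressions
\begin{align*}
z_x\big(x,y,F_{(x,y)}^{-1}(z)\big) = \frac{z(z-1)}{(1-x-y)(z+1)}, \qquad
z_y\big(x,y,F_{(x,y)}^{-1}(z)\big) = \frac{z(z+1)}{(1-x-y)(z-1)}.
\end{align*}
Thus $z_x$ is analytic on $\hat{\C}\setminus\{-1\}$ with a simple pole at $z=-1$ and $z_x = O(z)$ as $z\to\infty$, and it vanishes at $z=0$; symmetrically $z_y$ has its only finite pole at $z=1$ and vanishes at $z=0$. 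Next, since $v$ extends analytically to the tubular neighbourhood $N(\Gamma)$ via (\ref{linearjumpdef2}) and $v(x,y,z(x,y,P)) = \Phi_0(x,P)$ on $N(\Gamma_0)$ is $x$-independent at fixed $P$ (while $v(x,y,z(x,y,P)) = \Phi_1(y,P)$ on $N(\Gamma_1)$ is likewise $x$-independent), the chain rule gives the identities
\begin{align*}
v_x + z_x v_z = \Phi_{0x}\big(x,F_{(x,y)}^{-1}(z)\big) \ \text{ on } N(\Gamma_0), \qquad v_x + z_x v_z = 0 \ \text{ on } N(\Gamma_1).
\end{align*}

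With these in hand I would prove $(a)$. Differentiating the jump relation $m_+ - m_- = v$, which holds across the $(x,y)$-independent contour $\Gamma$, in $x$ and in $z$ (valid because each of $m_\pm$ continues analytically to $N(\Gamma)$ and $v$ is analytic there) gives $(m_x)_+ - (m_x)_- = v_x$ and $(m_z)_+ - (m_z)_- = v_z$. Since $z_x(x,y,F_{(x,y)}^{-1}(z))$ is analytic across $\Gamma$, it factors out of the jump, so $(f_0)_+ - (f_0)_- = v_x + z_x v_z$, which by the identities above is $\Phi_{0x}(x,F_{(x,y)}^{-1}(z))$ on $\Gamma_0$ and $0$ on $\Gamma_1$. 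Comparing with the definition of $n_0$, the added term cancels the $\Gamma_0$ jump exactly and there is nothing to cancel on $\Gamma_1$, so $n_0$ extends analytically across all of $\Gamma$. It then remains to locate its singularities. In $\Omega_0$ we have $n_0 = m_x + z_x m_z + \Phi_{0x}(x,F_{(x,y)}^{-1}(z))$, where $m_x,m_z$ are analytic on $\Omega_0$, $z_x$ has at most a simple pole at $z=-1$, and---crucially---$\Phi_{0x}(x,\cdot) = \lambda(x,0,\cdot)V_{0x}(x)$ is single-valued on the surface and analytic there except for a simple pole at the branch point $k=x$ by Lemma \ref{linearclaim1E}$(e)$; since $F_{(x,y)}$ sends $k=x$ to $z=-1$ and does not create a jump across $F_{(x,y)}(\Sigma_0)$, this contributes only a possible simple pole at $z=-1$. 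In $\Omega_1\cup\Omega_\infty$ we have $n_0 = f_0 = m_x + z_x m_z$, which is analytic, and at $z=\infty$ the simple pole of $z_x$ is killed by $m_z = O(z^{-2})$ (as $m = O(z^{-1})$), so $n_0$ is analytic at $\infty$. This gives $(a)$, and $(b)$ follows by the symmetric argument exchanging $x\leftrightarrow y$, $\Gamma_0\leftrightarrow\Gamma_1$, and $-1\leftrightarrow 1$, using $z_y$.

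Finally I would read off $(c)$ and $(d)$. Since $0\in\Omega_\infty$ and $z_x(x,y,F_{(x,y)}^{-1}(0))=0$, we get $n_0(x,y,0) = f_0(x,y,0) = m_x(x,y,0)$; differentiating the normalization $m(x,y,0) = -2V(x,y)$ from (\ref{linearVrecover}) in $x$ yields $n_0(x,y,0) = -2V_x(x,y)$, while $n_0(x,y,\infty)=0$ was already obtained from the decay at infinity. The statements for $n_1$ follow identically using $z_y(x,y,F_{(x,y)}^{-1}(0))=0$ and $\partial_y$ of $m(x,y,0)=-2V(x,y)$. The main obstacle to handle carefully is the bookkeeping in the third paragraph: matching the left/right boundary values $m_\pm$ to the regions $\Omega_0,\Omega_\infty$ under the clockwise orientation of $\Gamma_0$ so that the cancellation carries the correct sign, together with the point that $\Phi_{0x}$ (unlike $\Phi_0$ itself) is single-valued on $\mathcal{S}_{(x,y)}$ and therefore maps to a function of $z$ with no spurious jump across $F_{(x,y)}(\Sigma_0)$, its only singularity in $\Omega_0$ being the simple pole at $z=-1$.
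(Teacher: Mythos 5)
Your argument is correct and follows essentially the same route as the paper: the same chain-rule identities $v_x + z_x v_z = \Phi_{0x}(x,F_{(x,y)}^{-1}(z))$ on $N(\Gamma_0)$ and $=0$ on $N(\Gamma_1)$ to cancel the jump of $f_0$, the same pole analysis of $z_x$ and of $\Phi_{0x}\circ F_{(x,y)}^{-1}$ at $z=-1$, and the same evaluations at $z=\infty$ (using $m_z=O(z^{-2})$) and at $z=0$ (using $z_x\big|_{z=0}=0$ and $m(x,y,0)=-2V(x,y)$). The only cosmetic difference is that you obtain the jump of $f_0$ by differentiating $m_+-m_-=v$, whereas the paper writes that jump condition down directly.
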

\begin{proof}
Let $(x,y) \in \Int D_\delta$.
The function
\begin{align}\label{linearzx}
z_x\big(x,y, F_{(x,y)}^{-1}(z)\big) = -\frac{1-z}{1+z} \frac{z}{1 - x - y}
\end{align}
is analytic for $z \in \hat{\C}\setminus \{-1, \infty\}$ with simple poles at $z = -1$ and $z = \infty$. 
Equation (\ref{linearmsolution}) implies that $m_z(x,y, z) = O(z^{-2})$ and $m_x(x,y, z) = O(z^{-1})$ as $z \to \infty$. Hence $f_0(x,y,z)$ is analytic at $z = \infty$. 
It follows that $f_0(x,y,z)$ is analytic for all $z \in \hat{\C}\setminus (\Gamma \cup \{-1\})$ with a simple pole at $z = -1$ at most.
Now $f_0$ has continuous boundary values on $\Gamma$ and satisfies the following jump condition across $\Gamma$:
\begin{align}\label{linearf0jump}
f_{0+}(x,y,z) = f_{0-}(x,y,z) + v_x(x,y,z) + z_x\big(x,y, F_{(x,y)}^{-1}(z)\big) v_z(x,y,z), \qquad z \in \Gamma.
\end{align}
Differentiating (\ref{linearvzPhi0}) with respect to $x$ and $y$ and evaluating the resulting equations at $k = F_{(x,y)}^{-1}(z)$, we find, for $(x,y) \in \Int D_\delta$,
\begin{align}\label{linearvxzxvza}
\begin{cases}
v_x(x,y,z) + z_x\big(x,y, F_{(x,y)}^{-1}(z)\big) v_z(x,y,z) = \Phi_{0x}(x,F_{(x,y)}^{-1}(z)),
	\\
v_y(x,y,z) + z_y\big(x,y, F_{(x,y)}^{-1}(z)\big) v_z(x,y,z) = 0,
\end{cases} \quad z \in N(\Gamma_0),
\end{align}	
and
\begin{align}\label{linearvxzxvzb}
\begin{cases}
v_x(x,y,z) + z_x\big(x,y, F_{(x,y)}^{-1}(z)\big) v_z(x,y,z) = 0,
	\\
v_y(x,y,z) + z_y\big(x,y, F_{(x,y)}^{-1}(z)\big) v_z(x,y,z) = \Phi_{1y}(x,F_{(x,y)}^{-1}(z)),
\end{cases} \quad z \in N(\Gamma_1).
\end{align}	
Using the first equations in (\ref{linearvxzxvza}) and (\ref{linearvxzxvzb}) in (\ref{linearf0jump}), we conclude that $f_0$ is analytic across $\Gamma_1$ and has the following jump across $\Gamma_0$:
\begin{align}\label{linearf0jump2}
f_{0+}(x,y,z) = f_{0-}(x,y,z) + \Phi_{0x}(x,F_{(x,y)}^{-1}(z)), \qquad z \in \Gamma_0.
\end{align}
Consequently, $n_0$ is analytic across $\Gamma$. Furthermore, by Lemma \ref{linearclaim1E}, $\Phi_{0x}(x,F_{(x,y)}^{-1}(z))$ is analytic for $z \in \hat{\C}\setminus \{-1\}$ with at most a simple pole at $z = -1$. It follows that $n_0$ satisfies $(a)$. The proof of $(b)$ is similar and relies on the second equations in (\ref{linearvxzxvza}) and (\ref{linearvxzxvzb}).

Using (\ref{linearzx}) in the definition (\ref{linearndefa}) of $n_0$, we can write
\begin{align}\label{linearn0f0}
n_0(x,y,z) = f_0(x,y,z) = m_x(x,y,z) -\frac{1-z}{1+z} \frac{z}{1 - x - y} m_z(x,y,z), \qquad z \in \Omega_\infty.
\end{align}
Since $m_z(x,y, z) = O(z^{-2})$ and $m_x(x,y, z) = O(z^{-1})$  as $z \to \infty$, this gives $n_0(x,y,\infty) = 0$. On the other hand, evaluating (\ref{linearn0f0}) at $z = 0$, we find $n_0(x,y,0) = m_x(x,y,0) = -2V_x(x,y)$. This proves $(c)$; the proof of $(d)$ is analogous. 
\end{proof}

Equation (\ref{linearmVPhi}) suggests that we define a function $\Phi(x,y,P)$ for $(x,y) \in D_\delta$ and $P \in F_{(x,y)}^{-1}(\Omega_\infty) \subset \mathcal{S}_{(x,y)}$ by
\begin{align}\label{linearphidef}
\Phi(x,y,P) = V(x,y) + m(x,y,F_{(x,y)}(P)).
\end{align}

\begin{lemma}\label{linearclaim6E}
The function $\Phi$  defined in (\ref{linearphidef}) satisfies the Lax pair equations
  \begin{align}\label{linearphilax}
\begin{cases}
\Phi_x(x,y,P) = \lambda(x,y,P)  V_x(x,y), \\
\Phi_y(x,y,P) =  \frac{1}{\lambda(x,y,P)} V_y(x,y),
\end{cases}
\end{align}
for $(x,y) \in \Int D_\delta$ and $P \in F_{(x,y)}^{-1}(\Omega_\infty)$.
\end{lemma}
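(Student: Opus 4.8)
The plan is to differentiate (\ref{linearphidef}) directly and to recognize the functions $f_0, n_0$ (respectively $f_1, n_1$) from Lemma \ref{linearclaim5E} as precisely the ingredients of $\Phi_x$ (respectively $\Phi_y$). Fix $(x,y) \in \Int D_\delta$ and $P \in F_{(x,y)}^{-1}(\Omega_\infty)$. Here $P$ is held fixed in the sense that its projection $k$ onto the base is fixed, while $\lambda$, and hence $z = F_{(x,y)}(P)$, varies with $(x,y)$. Writing $z = F_{(x,y)}(P)$ and applying the chain rule to $\Phi = V(x,y) + m(x,y,z)$, I would obtain
\[
\Phi_x(x,y,P) = V_x(x,y) + m_x(x,y,z) + z_x\big(x,y, F_{(x,y)}^{-1}(z)\big)\, m_z(x,y,z) = V_x(x,y) + f_0(x,y,z),
\]
which is exactly the definition of $f_0$. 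Since $P \in F_{(x,y)}^{-1}(\Omega_\infty)$ means $z \in \Omega_\infty$, the definition (\ref{linearndefa}) gives $f_0(x,y,z) = n_0(x,y,z)$, so $\Phi_x(x,y,P) = V_x(x,y) + n_0(x,y,z)$.

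Next I would pin down $n_0$ using Lemma \ref{linearclaim5E}. By parts $(a)$ and $(c)$ of that lemma, $n_0(x,y,\cdot)$ is meromorphic on all of $\hat{\C}$ with at most a simple pole at $z=-1$ and with $n_0(x,y,\infty)=0$; hence it must be the rational function $c(x,y)/(z+1)$, and the normalization $n_0(x,y,0) = -2V_x(x,y)$ forces $c(x,y) = -2V_x(x,y)$. Therefore
\[
\Phi_x(x,y,P) = V_x(x,y)\Big(1 - \frac{2}{z+1}\Big) = V_x(x,y)\,\frac{z-1}{z+1}.
\]
Substituting $z = (1+\lambda)/(1-\lambda)$ yields $(z-1)/(z+1) = \lambda$, so $\Phi_x(x,y,P) = \lambda(x,y,P) V_x(x,y)$, the first Lax pair equation. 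The second equation follows by the identical argument applied to $f_1$ and $n_1$: the chain rule gives $\Phi_y = V_y + f_1 = V_y + n_1$ on $\Omega_\infty$; Lemma \ref{linearclaim5E}$(b),(d)$ identifies $n_1$ as $c'(x,y)/(z-1)$ with $c'(x,y) = 2V_y(x,y)$; and $(z+1)/(z-1) = 1/\lambda$ then gives $\Phi_y = \lambda^{-1} V_y$.

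The step requiring the most care is the very first one: because the Riemann surface $\mathcal{S}_{(x,y)}$ is dynamic, differentiating $m(x,y,F_{(x,y)}(P))$ at fixed $P$ is \emph{not} the same as differentiating at fixed $z$, and it is precisely the extra ``moving-surface'' term $z_x\, m_z$ that combines with $m_x$ to reproduce $f_0$. Once the identification $\Phi_x = V_x + n_0$ on $\Omega_\infty$ is secured, the conclusion is immediate, since Lemma \ref{linearclaim5E} already packages the rigidity of a rational function with a single prescribed simple pole; the remaining algebra involving $z = (1+\lambda)/(1-\lambda)$ is routine.
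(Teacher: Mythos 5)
Your proposal is correct and follows essentially the same route as the paper: differentiate (\ref{linearphidef}) at fixed $P$ so that the moving-surface term $z_x m_z$ combines with $m_x$ into $f_0=n_0$ on $\Omega_\infty$, then use Lemma \ref{linearclaim5E} to identify $n_0$ as $-2V_x/(z+1)$ and conclude via $1-\frac{2}{z+1}=\lambda$ (and the analogous computation for $n_1$). The only cosmetic difference is that the paper records the formula for $n_0$ as a separate displayed identity before differentiating; your normalization of $n_1$ at $z=0$ and the resulting $c'=2V_y$ are also consistent with the paper.
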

\begin{proof}
The analyticity structure of $n_0$ established in Lemma \ref{linearclaim5E} implies that there exists a function  $C(x,y)$ independent of  $z$ such that
\begin{align}\label{linearn0C}
n_0(x,y,z) = \frac{C(x,y)}{z+1}, \qquad z \in \hat{\C}.
\end{align}
We determine $C(x,y)$ by evaluating (\ref{linearn0C}) at $z = 0$. By Lemma \ref{linearclaim5E} $(d)$, this gives $C(x,y) = -2V_x(x,y)$.
It follows that
\begin{align}\label{linearnoutside}
  n_0 = -\frac{2V_x(x,y)}{z+1}, \qquad (x,y) \in D_\delta, \  z \in \hat{\C}.
\end{align}
Note that we did not exclude that $n_0$ is free of singularities. In this case we have $C=-2V_x=0$ by Lemma \ref{linearclaim5E}.

Differentiating (\ref{linearphidef}) with respect to $x$ and using (\ref{linearn0f0}) and (\ref{linearnoutside}), we find, for $P \in F_{(x,y)}^{-1}(\Omega_\infty)$,
\begin{align*}
\Phi_x(x,y,P)
& = V_x(x,y) + f_0(x,y,z(x,y,P))
= V_x(x,y) - \frac{2V_x(x,y)}{z(x,y,P) +1}.
\end{align*}
Since
$$1 - \frac{2}{z+1} = \lambda,$$
this yields the first equation in (\ref{linearphilax}). A similar argument gives the second equation in (\ref{linearphilax}). This proves the lemma.
\end{proof}

\begin{lemma}\label{linearclaim7E}
The real-valued function $V:D \to \R$ defined by (\ref{linearV2}) has the properties listed in (\ref{linearVDdelta}).
\end{lemma}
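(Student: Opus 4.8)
The plan is to check the five requirements in \eqref{linearVDdelta} separately, using that \eqref{linearV2} together with the solution formula \eqref{linearmsolution} gives $V(x,y)=-\tfrac12 m(x,y,0)$ outright, so that essentially all the analytic content has already been front-loaded into Lemmas \ref{linearclaim4E}--\ref{linearclaim6E}. Setting $z=0$ in Lemma \ref{linearclaim4E}$(b)$ shows that $(x,y)\mapsto m(x,y,0)$ is continuous on $D_\delta$ and $C^n$ on $\Int D_\delta$, hence $V\in C(D_\delta)\cap C^n(\Int D_\delta)$, and the reality symmetry $m(x,y,0)=\overline{m(x,y,0)}$ from part $(a)$ makes $V$ real-valued. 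Likewise, evaluating Lemma \ref{linearclaim4E}$(c)$ at $z=0$ and combining it with $V_x=-\tfrac12 m_x(\cdot,\cdot,0)$, $V_y=-\tfrac12 m_y(\cdot,\cdot,0)$, $V_{xy}=-\tfrac12 m_{xy}(\cdot,\cdot,0)$ yields continuity of $x^\alpha V_x$, $y^\alpha V_y$, $x^\alpha y^\alpha V_{xy}$ on $D_\delta$. This disposes of the first and third lines of \eqref{linearVDdelta}.

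For the Euler--Darboux equation I would appeal to Lemma \ref{linearclaim6E}, by which the function $\Phi$ of \eqref{linearphidef} solves the Lax pair \eqref{linearphilax} for $(x,y)\in\Int D_\delta$ and $P\in F_{(x,y)}^{-1}(\Omega_\infty)$. For fixed $k$ the map $(x,y)\mapsto\lambda(x,y,k^\pm)$ is smooth on $\Int D_\delta$ and $V,m$ are $C^n$ with $n\ge2$, so $\Phi(\cdot,\cdot,k^\pm)$ is $C^2$ and its mixed second derivatives coincide. Differentiating the first equation in \eqref{linearphilax} in $y$ and the second in $x$, subtracting, and inserting $\lambda_x=\frac{(1-\lambda^2)\lambda}{2(1-x-y)}$ and $\lambda_y=\frac{1-\lambda^2}{2(1-x-y)\lambda}$, the common factor $1-\lambda^2$ cancels (legitimately, since $F_{(x,y)}^{-1}(\Omega_\infty)$ contains points with $\lambda\neq\pm1$) and leaves exactly \eqref{linearernst}. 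This is nothing but the equivalence between the compatibility condition of \eqref{linearlax} and equation \eqref{linearernst} recorded below \eqref{linearlax}.

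The boundary conditions are the main point, and I would obtain them by a residue computation. Fixing $x\in[0,1-\delta)$ and putting $y=0$, we have $\Phi_1(0,\cdot)=0$ by \eqref{linearPhi0Phi1b}, so $v(x,0,\cdot)$ vanishes on $\Gamma_1$ and \eqref{linearV2} collapses to $V(x,0)=-\frac{1}{4\pi i}\int_{\Gamma_0}\frac{\Phi_0(x,F_{(x,0)}^{-1}(z))}{z}\,dz$. By Lemma \ref{linearclaim1E}$(c)$ the integrand is analytic in $z$ off the interval $F_{(x,0)}(\Sigma_0)$ enclosed by $\Gamma_0$, apart from the simple pole at $z=0$ supplied by the factor $1/z$. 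Since $\Gamma_0$ is clockwise and does not enclose $z=0$, I would pair it with a large counterclockwise circle $C_R$; the two bound a region whose only pole is at $z=0$, so $\int_{C_R}+\int_{\Gamma_0}$ equals $2\pi i$ times the residue at $z=0$, namely $\Phi_0(x,F_{(x,0)}^{-1}(0))=\Phi_0(x,\infty^-)=-V_0(x)$, the last equality from \eqref{linearphi0symmetries} and Lemma \ref{linearclaim1E}$(d)$. As $R\to\infty$, the relation $F_{(x,0)}^{-1}(z)\to\infty^+$ gives $\int_{C_R}\to 2\pi i\,\Phi_0(x,\infty^+)=2\pi i\,V_0(x)$, whence $\int_{\Gamma_0}=-4\pi i\,V_0(x)$ and therefore $V(x,0)=V_0(x)$. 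The symmetric computation, with $\Gamma_1,\Phi_1$ in place of $\Gamma_0,\Phi_0$, yields $V(0,y)=V_1(y)$.

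I expect the only genuinely delicate step to be this contour deformation---keeping the clockwise orientation of $\Gamma_0$, the pole at $z=0$, and the two sheets of $\mathcal{S}_{(x,0)}$ correctly bookkept---while every other item reduces to reading off the already-established Lemmas \ref{linearclaim4E}--\ref{linearclaim6E} at $z=0$.
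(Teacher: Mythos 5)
Your proposal is correct and follows essentially the same route as the paper: regularity and real-valuedness are read off from Lemma \ref{linearclaim4E} at $z=0$, the Euler--Darboux equation comes from cross-differentiating the Lax pair \eqref{linearphilax} supplied by Lemma \ref{linearclaim6E}, and the boundary values are recovered by the same residue computation at $z=0$ and $z=\infty$ (the paper phrases your $C_R$ deformation as ``using Cauchy's formula to compute the contributions from $z=0$ and $z=\infty$''). All the signs, orientations, and the identifications $F_{(x,0)}^{-1}(0)=\infty^-$, $F_{(x,0)}^{-1}(\infty)=\infty^+$ check out.
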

\begin{proof}
The function $V(x,y) = -\frac{1}{2}m(x,y,0)$ is real-valued by (\ref{linearmsymm}).
Moreover, by part $(b)$ of Lemma \ref{linearclaim4E}, the map $(x,y) \mapsto m(x,y,0)$ is continuous from $D_\delta$ to $\C$ and is $C^n$ from $\Int D_\delta$ to $\C$. Hence $V \in C(D_\delta) \cap C^n(\Int D_\delta)$. Similarly, part $(c)$ of Lemma \ref{linearclaim4E} implies that $x^\alpha V_x, y^\alpha V_y, x^\alpha y^\alpha V_{xy} \in C(D_\delta)$.

Let $P = (\lambda, k)$ be a point in $F_{(x,y)}^{-1}(\Omega_\infty) \subset \mathcal{S}_{(x,y)}$. 
For  each  fixed  $k \in \hat{\C}$ with $k^+ \in F_{(x,y)}^{-1}(\Omega_\infty)$, the map $(x,y) \to \Phi(x,y,k^+)$ is $C^n$ from $\Int D_\delta$ to $\C$.
By Lemma \ref{linearclaim6E}, it satisfies the Lax pair equations (\ref{linearphilax}). Since  $n \geq 2$, it follows that
\begin{align*}
0 &= \Phi_{xy}(x,y,P) - \Phi_{yx} (x,y,P)
	\\
& = \lambda_y  V_x + \lambda V_{xy} 
+ \frac{\lambda_x}{\lambda^2} V_y - \frac{1}{\lambda} V_{xy} 
	\\
& = \frac{1}{2\lambda(k-x)}  (V_x + V_y) + \bigg(\lambda  - \frac{1}{\lambda}\bigg)V_{xy}
	\\
& = \frac{1}{2\lambda(k-x)} \big(V_x + V_y - 2(1-x-y)V_{xy}\big), \qquad (x,y) \in \Int D_\delta.
\end{align*}
It follows that $V(x,y)$ satisfies Euler-Darboux equation (\ref{linearernst}) for $(x,y) \in \Int D_\delta$. 

Finally, we show that $V(x,0) = V_0(x)$ for $x \in [0, 1-\delta)$; the proof that $V(0,y) = V_1(y)$ for $y \in [0,1-\delta)$ is similar.
By definitions (\ref{linearV2}) and (\ref{linearjumpdef}) of $V$ and $v$, we have
$$V(x,0) = -\frac{1}{4\pi i} \int_{\Gamma} \frac{v(x,0,z)}{z} dz
= -\frac{1}{4\pi i} \int_{\Gamma_0} \frac{\Phi_0(x,F_{(x,0)}^{-1}(z))}{z} dz, \qquad x \in [0, 1-\delta).$$
But $\Phi_0(x,F_{(x,0)}^{-1}(z))$ is analytic for $z \in \hat{\C} \setminus [-\epsilon^{-1}, -\epsilon]$ by Lemma \ref{linearclaim1E}, so using Cauchy's formula to compute the contributions from  $z = 0$ and $z = \infty$, we find
\begin{align*}
V(x,0) & = - \frac{1}{2}\Phi_0\big(x,F_{(x,0)}^{-1}(0)\big) + \frac{1}{2}\Phi_0\big(x,F_{(x,0)}^{-1}(\infty)\big)
	\\
& = -\frac{1}{2}\Phi_0(x, \infty^-) + \frac{1}{2}\Phi_0(x, \infty^+)
 = \Phi_0(x, \infty^+) = V_0(x), \qquad x \in [0, 1-\delta).
\end{align*}
This completes the proof of the lemma. Since $\delta > 0$ was arbitrary, it also completes the proof of existence. 
\end{proof}

\subsubsection{Proof of boundary behavior}
Let $V_0(x)$, $x \in [0, 1)$, and $V_1(y)$, $y \in [0,1)$ be real-valued functions satisfying (\ref{V0V1assumptions}) for some $n \geq 2$ and some $\alpha \in (0, 1)$. Suppose $V(x,y)$ is a $C^n$-solution of the Goursat problem for (\ref{linearernst}) in $D$ with data $\{V_0, V_1\}$ and define $m_1, m_2 \in \R$ by (\ref{linearm1m2def}).
By (\ref{linearV}), we have
\begin{align*}
V(x,y) = -\frac{1}{4\pi i} \int_{\Gamma_0} \frac{\Phi_0\big(x, F_{(x,y)}^{-1}(z)\big)}{z} dz
-\frac{1}{4\pi i} \int_{\Gamma_1} \frac{\Phi_1\big(y, F_{(x,y)}^{-1}(z)\big)}{z} dz.
\end{align*}
Hence
\begin{align}\nonumber
V_x(x,y) = & -\frac{1}{4\pi i} \int_{\Gamma_0} \frac{\Phi_{0x}\big(x, F_{(x,y)}^{-1}(z)\big)}{z} dz
-\frac{1}{4\pi i} \int_{\Gamma_0} \frac{\Phi_{0k}\big(x, F_{(x,y)}^{-1}(z)\big)}{z} \Big(\frac{d}{dx}F_{(x,y)}^{-1}(z)\Big)dz
	\\ \label{Vxxy}
& -\frac{1}{4\pi i} \int_{\Gamma_1} \frac{\Phi_{1k}\big(y, F_{(x,y)}^{-1}(z)\big)}{z} \Big(\frac{d}{dx}F_{(x,y)}^{-1}(z)\Big) dz.
\end{align}
Now
$$k = F_{(x,y)}^{-1}(z) = -\frac{x (z-1)^2+(y-1) (z+1)^2}{4 z},$$
so
\begin{align}\label{Finversexyderivatives}
\frac{d}{dx}F_{(x,y)}^{-1}(z) = -\frac{(z-1)^2}{4 z}, \qquad
\frac{d}{dy}F_{(x,y)}^{-1}(z) = -\frac{(z+1)^2}{4 z}.
\end{align}
It follows from Lemma \ref{linearclaim1E} and Lemma \ref{linearclaim2E} that the last two integrals on the right-hand side of (\ref{Vxxy}) remain bounded as $x \downarrow 0$. 
Moreover,
\begin{align*}
\lim_{x \downarrow 0} x^\alpha \Phi_{0x}\big(x, F_{(x,y)}^{-1}(z)\big)
& = \lim_{x \downarrow 0} x^\alpha \lambda(x,0, F_{(x,y)}^{-1}(z)) V_{0x}(x)
= m_1 \lambda(0,0, F_{(0,y)}^{-1}(z)).
\end{align*}
Using that $F_{(0,y)}^{-1}(z)= -\frac{(y-1) (z+1)^2}{4 z}$, we find
\begin{align}\label{lambda00F0yinv}
\lambda(0,0, F_{(0,y)}^{-1}(z)) %= \sqrt{\frac{k-1}{k}}
= \sqrt{\frac{1}{(z+1)^2}\left(z-\frac{1-\sqrt{y}}{1+\sqrt{y}}\right)
   \left(z-\frac{1+\sqrt{y}}{1-\sqrt{y}}\right)},
\end{align}
where the square roots have positive (negative) real part for $|z| > 1$ ($|z| < 1$). Thus
\begin{align*}\nonumber
\lim_{x \downarrow 0} x^\alpha \Phi_{0x}\big(x, F_{(x,y)}^{-1}(z)\big)
 =\frac{-m_1}{z+1} \sqrt{\left(z-\frac{1-\sqrt{y}}{1+\sqrt{y}}\right)
   \left(z-\frac{1+\sqrt{y}}{1-\sqrt{y}}\right)},
 \end{align*}
where the square root has a branch cut along the interval $[\frac{1-\sqrt{y}}{1+\sqrt{y}}, \frac{1+\sqrt{y}}{1-\sqrt{y}}]$ and the branch is fixed so that the root has positive real part for $z < 0$.
Hence
\begin{align}\nonumber
\lim_{x \downarrow 0} x^\alpha V_x(x,y) = &\; \frac{m_1}{4\pi i} \int_{\Gamma_0} \frac{1}{z+1} \sqrt{\left(z-\frac{1-\sqrt{y}}{1+\sqrt{y}}\right) \left(z-\frac{1+\sqrt{y}}{1-\sqrt{y}}\right)}\frac{dz}{z}
	\\\nonumber
= & -\frac{m_1}{2} \underset{z = -1}{\res} \frac{1}{z+1} \sqrt{\left(z-\frac{1-\sqrt{y}}{1+\sqrt{y}}\right) \left(z-\frac{1+\sqrt{y}}{1-\sqrt{y}}\right)}\frac{1}{z}
	\\\nonumber
= & -\frac{m_1}{2} \frac{-2}{\sqrt{1-y}} = \frac{m_1}{\sqrt{1-y}}.
\end{align}
This proves (\ref{linearboundarylimita}); the proof of (\ref{linearboundarylimitb}) is similar.
Thus the proof of Theorem \ref{linearmainth} is complete.

\section{Lax pair and eigenfunctions}\label{ernstsec}
In this section we introduce a Lax pair for (\ref{ernst}) and define appropriate eigenfunctions in preparation for the proofs of Theorems  \ref{mainth1}-\ref{mainth4}. 

\subsection{Lax pair}
The hyperbolic Ernst equation (\ref{ernst}) admits the Lax pair
\begin{equation}
\begin{cases}\label{lax}
\Phi_x(x,y, k) = \mathsf{U}(x,y, k) \Phi(x,y, k),
	\\
\Phi_y(x,y, k) = \mathsf{V}(x,y,k) \Phi(x,y,k),
\end{cases}
\end{equation}
where $k$  is the spectral parameter, the function $\Phi(x,y, k)$ is a $2 \times 2$-matrix valued eigenfunction, and the $2\times 2$-matrix valued functions $\mathsf{U}(x,y,k)$ and $\mathsf{V}(x,y,k)$ are defined as follows:
$$\mathsf{U} = \frac{1}{\mathcal{E} + \bar{\mathcal{E}}} \begin{pmatrix} \bar{\mathcal{E}}_x & \lambda \bar{\mathcal{E}}_x \\
\lambda \mathcal{E}_x & \mathcal{E}_x \end{pmatrix}, \qquad 
\mathsf{V} =  \frac{1}{\mathcal{E} + \bar{\mathcal{E}}} \begin{pmatrix} \bar{\mathcal{E}}_y & \frac{1}{\lambda} \bar{\mathcal{E}}_y \\
\frac{1}{\lambda} \mathcal{E}_y & \mathcal{E}_y  
\end{pmatrix},$$
with $\lambda$ given by (\ref{lambdadef}). 
We write the Lax pair (\ref{lax}) in terms of differential forms as
\begin{equation}\label{laxdiffform}  
  d\Phi = W\Phi,
\end{equation}
where $W$ is the closed one-form
\begin{align}\label{Wdef}  
  W = \mathsf{U}dx + \mathsf{V}dy.
\end{align}
As in Section \ref{linearlimitsec}, we will view the map $\Phi(x,y,\cdot)$ as being defined on the Riemann surface $\mathcal{S}_{(x,y)}$ and write $\Phi(x,y,P)$ for the value of $\Phi$ at $P = (\lambda, k) \in \mathcal{S}_{(x,y)}$.

\subsection{Spectral analysis}

Suppose that $\mathcal{E}_0(x)$, $x \in [0, 1)$, and $\mathcal{E}_1(y)$, $y \in [0,1)$ are real-valued functions satisfying (\ref{E0E1assumptions}) for some $n \geq 2$. Let $\mathsf{U}_0$ and $\mathsf{V}_1$ be given by (\ref{U0V1def}), i.e., $\mathsf{U}_0$ and $\mathsf{V}_1$ denote the functions $\mathsf{U}$ and $\mathsf{V}$ evaluated at  $y = 0$ and $x = 0$, respectively.
Let $\Phi_0(x,P)$ and $\Phi_1(y,P)$ be the eigenfunctions defined in terms of $\mathcal{E}_0$ and $\mathcal{E}_1$ via the Volterra integral equations (\ref{Phi0Phi1}).

\begin{lemma}[Solution of the $x$-part]\label{xpartlemma}
The eigenfunction $\Phi_0(x,P)$ defined via the Volterra integral equation (\ref{Phi0Phi1})  has the following properties:
\begin{enumerate}[$(a)$]

\item For each $k \in \hat{\C} \setminus [0,1]$, the function $x \mapsto \Phi_0(x,k^+)$ is continuous on $[0,1)$ and is $C^n$ on $(0,1)$. Furthermore, for each $x \in [0,1)$, the function $k \mapsto \Phi_0(x,k^+)$ is analytic on $\hat{\C} \setminus [0,1]$.

   \item $\Phi_0$ obeys the symmetries
  \begin{align}\label{phi0symmetries}
\begin{cases} \Phi_0(x,k^+) = \sigma_3\Phi_0(x, k^-)\sigma_3, 
	\\
\Phi_0(x,k^\pm) = \sigma_1\overline{\Phi_0(x, \bar{k}^\pm)}\sigma_1, 
\end{cases} \qquad x \in [0,1), \ k \in \hat{\C} \setminus [0, 1].
\end{align}

  \item For each  $x \in [0, 1)$, $\Phi_0(x,P)$ extends continuously to an analytic function of $P \in \mathcal{S}_{(x,0)} \setminus \Sigma_0$.
    
  \item The value of $\Phi_0$ at $P = \infty^+$ is given by
  \begin{align}\label{Phi0atinftyplus}
  \Phi_0(x,\infty^+) = \frac{1}{2} \begin{pmatrix} \overline{\mathcal{E}_0(x)} & 1 \\ \mathcal{E}_0(x) & -1 \end{pmatrix}\begin{pmatrix}1 & 1 \\ 1 & -1 \end{pmatrix}, \qquad x \in [0, 1).
  \end{align}

\item The determinant of $\Phi_0$ is given by
$$\det \Phi_0(x,P) = \re \mathcal{E}_0(x), \qquad x \in [0,1), \ P \in \mathcal{S}_{(x,0)} \setminus \Sigma_0.$$

  \item For each $x_0 \in (0,1)$  and each compact subset $K \subset \hat{\C} \setminus [0, x_0]$, 
  \begin{align}\label{xkphimap}
  x \mapsto \big(k \mapsto \Phi_0(x,k^+)\big)
  \end{align}
  is a continuous map $[0, x_0) \to L^\infty(K)$ and a $C^n$-map $(0, x_0) \to L^\infty(K)$. Moreover, the map
$x \mapsto \big( k \mapsto x^\alpha \Phi_{0x}(x,k^+) \big)$ is continuous $[0, x_0) \to L^\infty(K)$.

\end{enumerate}
\end{lemma}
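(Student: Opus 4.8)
The plan is to exploit the Volterra (equivalently, matrix-ODE) structure of the integral equation (\ref{Phi0Phi1a}) and to follow the strategy already used in the collinear case (Lemmas \ref{linearclaim1} and \ref{linearclaim1E}), the essential new ingredient being the resolution of the integral equation itself. The crucial quantitative input is that the assumption $x^\alpha\mathcal{E}_{0x}\in C([0,1))$ with $\alpha\in[0,1)$ gives $|\mathcal{E}_{0x}(x)|\le C x^{-\alpha}$ near $x=0$, so that $\mathcal{E}_{0x}\in L^1((0,x_1))$ for each $x_1<1$; this is what makes the kernel integrable despite its blow-up at the origin.

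First I would establish that $\Phi_0$ is well defined and prove part $(a)$. Since $\re\mathcal{E}_0$ is continuous and strictly positive, $1/\re\mathcal{E}_0$ is bounded on compact subintervals of $[0,1)$, and for fixed $k\in\hat{\C}\setminus[0,1]$ the factor $\lambda(x,0,k^+)$ is bounded there as well (because $k-x$ stays away from $0$). Hence $\mathsf{U}_0(\cdot,k^+)\in L^1((0,x))$ for each $x<1$, and the Neumann series $\Phi_0=\sum_{j\ge0}\Phi_0^{(j)}$ produced by Picard iteration converges absolutely and uniformly, the $j$-th term being bounded by $\big(\int_0^x\|\mathsf{U}_0(\cdot,k^+)\|\big)^j/j!$. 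This gives existence, uniqueness, and continuity of $x\mapsto\Phi_0(x,k^+)$ on $[0,1)$; since each iterate is analytic in $k$ and the convergence is locally uniform in $k$, the limit is analytic on $\hat{\C}\setminus[0,1]$. On $(0,1)$ the data are $C^n$, so $\mathsf{U}_0$ is $C^{n-1}$ and the relation $\Phi_{0x}=\mathsf{U}_0\Phi_0$ upgrades $\Phi_0$ to $C^n$ by bootstrapping.

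The symmetries in $(b)$ follow from uniqueness: conjugation by $\sigma_3$ sends $\lambda\mapsto-\lambda$, so $\mathsf{U}_0(x,k^-)=\sigma_3\mathsf{U}_0(x,k^+)\sigma_3$, whence $\sigma_3\Phi_0(x,k^+)\sigma_3$ solves the same Volterra equation as $\Phi_0(x,k^-)$; likewise $\overline{\mathsf{U}_0(x,\bar k^+)}=\sigma_1\mathsf{U}_0(x,k^+)\sigma_1$, using $\overline{\lambda(x,0,\bar k^+)}=\lambda(x,0,k^+)$, yields the second symmetry. Part $(c)$ is proved exactly as Lemma \ref{linearclaim1}$(e)$ with $y=0$: $\Phi_0(x,\cdot)$ is analytic on each sheet off $[0,1]$, and the matching $\lambda(x,0,(k+i0)^+)=\lambda(x,0,(k-i0)^-)$ on $(x,1)$ propagates through the integral equation to glue the two sheets across the cut, leaving a jump only over $\Sigma_0$. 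For $(d)$ I would set $\lambda=1$ (its value at the regular point $\infty^+$), reducing (\ref{Phi0Phi1a}) to the rank-one linear ODE $\Phi_{0x}=\mathsf{U}_0(x,\infty^+)\Phi_0$ with $\Phi_0(0,\infty^+)=I$; a direct computation shows the right-hand side of (\ref{Phi0atinftyplus}) satisfies this ODE and (using $\mathcal{E}_0(0)=1$) the initial condition, so the two agree by uniqueness. Part $(e)$ is Abel's formula: $\frac{d}{dx}\det\Phi_0=(\tr\mathsf{U}_0)\det\Phi_0$ with $\tr\mathsf{U}_0=\frac{\re\mathcal{E}_{0x}}{\re\mathcal{E}_0}=\frac{d}{dx}\log\re\mathcal{E}_0$, giving $\det\Phi_0(x,P)=\re\mathcal{E}_0(x)/\re\mathcal{E}_0(0)=\re\mathcal{E}_0(x)$, which extends to all of $\mathcal{S}_{(x,0)}\setminus\Sigma_0$ and to $x=0$ by continuity.

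I expect the \emph{main obstacle} to be part $(f)$, which requires promoting these pointwise statements to the Banach space $L^\infty(K)$. Here I would read (\ref{Phi0Phi1a}) as a Volterra equation valued in $L^\infty(K)$, using that $k\mapsto\lambda(x,0,k^+)$ is bounded on $K$ uniformly for $x\in[0,x_0)$ (since $K\subset\hat{\C}\setminus[0,x_0]$ keeps $k-x$ away from zero). The integral equation together with the a priori bound on $\Phi_0$ from the Neumann series controls $\sup_{k\in K}|\Phi_0(x_2,k^+)-\Phi_0(x_1,k^+)|$ by $C\int_{x_1}^{x_2}|\mathcal{E}_{0x}|$, which tends to $0$ as $x_2\to x_1$ because $\mathcal{E}_{0x}\in L^1$; this gives continuity $[0,x_0)\to L^\infty(K)$, and the $C^n$ statement on $(0,x_0)$ follows by differentiating and bootstrapping as in the linear case. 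The genuinely delicate point is the weighted derivative near the singular endpoint $x=0$: I would write $x^\alpha\Phi_{0x}(x,k^+)=x^\alpha\mathsf{U}_0(x,k^+)\Phi_0(x,k^+)$ and note that $x^\alpha\mathsf{U}_0$ is continuous into $L^\infty(K)$ precisely because $x^\alpha\mathcal{E}_{0x}$ is continuous on $[0,1)$ by hypothesis, while $\Phi_0$ is already continuous into $L^\infty(K)$; their product then yields continuity of $x\mapsto x^\alpha\Phi_{0x}(x,k^+)$ from $[0,x_0)$ into $L^\infty(K)$, completing the proof.
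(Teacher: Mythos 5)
Your proposal is correct and follows essentially the same route as the paper: Picard iteration with the $\|\mathsf{U}_0\|_{L^1}^j/j!$ bound for part $(a)$, uniqueness of the Volterra solution for the symmetries in $(b)$, gluing of the sheets across $(x,1)$ for $(c)$, the reduced ODE at $\lambda=1$ for $(d)$, Liouville's formula with $\tr\mathsf{U}_0=(\ln\re\mathcal{E}_0)_x$ for $(e)$, and the $L^\infty(K)$-valued Volterra estimates together with the factorization $x^\alpha\Phi_{0x}=\bigl(x^\alpha\mathsf{U}_0\bigr)\Phi_0$ for $(f)$. The only cosmetic difference is in $(f)$, where you bound the increment by $C\int_{x_1}^{x_2}|\mathcal{E}_{0x}|$ using the a priori sup bound on $\Phi_0$, while the paper keeps $x^{-\alpha}\Phi_0$ under the integral and controls it via the Neumann series; both are equivalent.
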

\begin{proof}
We first use successive approximations to show that the integral equation
\begin{align}\label{phixk}
\Phi_0(x, k^+) = I + \int_0^x \mathsf{U}_0(x',k^+) \Phi_0(x',k^+) dx', \qquad x \in [0,1),
\end{align}
has a unique solution for each $k \in \hat{\C} \setminus [0,1]$. Let $K$ be a compact subset of $\hat{\C} \setminus [0,1]$.
Let $\Phi_0^{(0)} = I$ and define $\Phi_0^{(j)}(x,k^+)$ for $j \geq 1$ inductively by 
\begin{align*}
& \Phi_0^{(j+1)}(x,k^+) = \int_0^x \mathsf{U}_0(x',k^+) \Phi_0^{(j)}(x',k^+) dx',	  \qquad x \in [0,1), \ k \in K.
\end{align*}
Then
\begin{align}\label{xPhijiterated}
\Phi_0^{(j)}(x,k^+) = &\; \int_{0 \leq x_1 \leq \cdots \leq x_j \leq x} \mathsf{U}_0(x_j, k^+) \mathsf{U}_0(x_{j-1}, k^+)
\cdots \mathsf{U}_0(x_1, k^+) dx_1 \cdots dx_j. 
\end{align}
The function $\lambda(x,0,k^+)$ is analytic for $k \in \hat{\C} \setminus [x,1]$; in particular, it is a bounded function of $k \in K$ for each fixed $x \in [0,1)$. In view of the assumptions (\ref{E0E1assumptions}), this implies 
$$\|\mathsf{U}_0(x,k^+)\|_{L^1([0,x])} < C(x), \qquad x \in [0,1),\ k \in K,$$
where the function  $C(x)$  is bounded on each compact subset of $[0,1)$.
Thus
\begin{align} \label{phi0jestimate}
|\Phi_0^{(j)}(x,k^+)| \leq & \frac{1}{j!} \|\mathsf{U}_0(\cdot, k^+)\|_{L^1([0, x])}^j 
\leq \frac{1}{j!} C(x)^j, \qquad x \in [0,1), \ k \in K.
\end{align}
Hence the series
\begin{align} \label{phi0assum}
	\Phi_0(x,k^+) = \sum_{j=0}^\infty \Phi_0^{(j)}(x,k^+)
\end{align}
converges absolutely and uniformly for $k \in K$ and $x$ in compact subsets of $[0,1)$ to a continuous solution $\Phi_0(x,k^+)$ of (\ref{phixk}). The fact that $x \mapsto \Phi_0(x,k^+) \in C^n((0,1)) $ follows from differentiating $x\mapsto \Phi_0^{(j)}(x,k^+)$ and applying estimates similar to \eqref{phi0jestimate} to the derivative.
Differentiating (with respect to $k$) under the integral sign in (\ref{xPhijiterated}), we see that $k \mapsto \Phi_0^{(j)}(x, k^+)$ is analytic on $\Int K$ for each $j$; the uniform convergence then proves that $k \mapsto \Phi_0(x, k^+)$  is analytic on $\Int K$. 
A similar argument applies to the integral equation defining $\Phi_0(x,k^-)$. We conclude that the functions $\Phi_0(x,k^+)$ and $\Phi_0(x,k^-)$ are well-defined for $x \in [0,1)$ and $k \in \hat{\C} \setminus [0,1]$ and are analytic functions of $k \in \hat{\C} \setminus [0,1]$ for each fixed $x$.  

We next show uniqueness. Assume that $\tilde \Phi_0$ is another solution of the Volterra equation \eqref{phixk} such that $x \mapsto \Phi_0(x,k^\pm)$ is continuous on $[0,1)$, respectively, and let $\Psi =\Phi_0 - \tilde \Phi_0$. Then $\Psi$ is a solution of the homogeneous equation
\[
\Psi(x,k^\pm) = \int_0^x \mathsf{U}_0(x',k^\pm) \Psi (x',k^\pm) dx'.
\] 
Iterating this yields
\begin{align*}
\Psi(x,k^\pm) &= \int_0^x  \mathsf{U}_0(x_j,k^\pm) \int_{0}^{x_j}  \mathsf{U}_0(x_{j-1},k^\pm) \cdots \int_{0}^{x_2}  \mathsf{U}_0(x_{1},k^\pm)\Psi(x_1,k^\pm) \,dx_1 \ldots dx_n
\\
&= \int_{0 \leq x_1 \leq \cdots \leq x_j \leq x} \mathsf{U}_0(x_j, k^\pm) \mathsf{U}_0(x_{j-1}, k^\pm)
\cdots \mathsf{U}_0(x_1, k^\pm)\Psi(x_1,k^\pm) dx_1 \cdots dx_j.
\end{align*}
Hence, as in the proof of existence, we get the estimate
\[
|\Psi(x,k^\pm) | \le \sup_{x'\in[0,x]} |\Psi(x',k^\pm) | \frac{\lVert  \mathsf{U}_0(\cdot,k^\pm) \rVert_{L^1([0,x])}^j}{j!} \to 0, \qquad j \to \infty,
\]
which yields $\Psi = 0$. This proves $(a)$. 

The symmetries (\ref{lambdasymm}) of $\lambda$ show that
$$\mathsf{U}_0(x,k^+) = \sigma_3\mathsf{U}_0(x, k^-)\sigma_3, \qquad \mathsf{U}_0(x,k^+) = \sigma_1\overline{\mathsf{U}_0(x,\bar{k}^+)}\sigma_1.$$
Hence $\sigma_3\Phi_0(x, k^-)\sigma_3$ and $\sigma_1\overline{\Phi_0(x, \bar{k}^+)}\sigma_1$ satisfy the same Volterra equation as $\Phi_0(x,k^+)$. By uniqueness, all three functions must be equal. This proves $(b)$.

We next show that $\Phi_0(x, k^\pm)$ can be continuously extended across the branch cut to an analytic function on $\mathcal{S}_{(x,0)} \setminus \Sigma_0$.
Since $\mathsf{U}_0(x,k^\pm)$ has continuous boundary values on the interval $(x, 1)$, the above argument (applied with a $K$ that reaches up to the boundary) shows that $\Phi_0(x,k^\pm)$ also has continuous boundary values on $(x, 1)$. 
Moreover, since 
$$\lambda(x, 0, (k+i0)^+) = \lambda(x,0,(k-i0)^-), \qquad k \in (x, 1),$$
the boundary functions $\Phi(x,0, (k + i0)^+)$ and $\Phi(x,0, (k - i0)^-)$ satisfy the same integral equation, so by uniqueness they are equal:
\begin{align*}
\Phi(x,y, (k + i0)^+) = \Phi(x,y,(k-i0)^-), \qquad (x,y) \in D, \ k \in (x, 1).
\end{align*}
Hence the values of $\Phi_0$ on the upper and lower sheets of $\mathcal{S}_{(x,0)}$ fit together across the branch cut $(x,1)$, showing that $\Phi_0$ extends to an analytic function of $P \in \mathcal{S}_{(x,0)} \setminus \big(\Sigma_0 \cup \{1\}\big)$. But $\lambda(x,0,P)$ is bounded in a neighborhood of the branch point $1$, hence the possible singularity of $\Phi_0(x,P)$ at this point must be removable. This shows that $\Phi_0$ satisfies $(c)$.

Since $\lambda(x,y,\infty^+) = 1$, $\Phi_0(x,\infty^+)$ satisfies the equation
$$\Phi_{0x}(x, \infty^+) = \frac{1}{2\re\mathcal{E}_0(x)} \begin{pmatrix} \overline{\mathcal{E}_{0x}(x)} & \overline{\mathcal{E}_{0x}(x)} \\
\mathcal{E}_{0x}(x) & \mathcal{E}_{0x}(x) \end{pmatrix} \Phi_0(x,\infty^+), \qquad x \in [0, 1).$$
This equation has the two linearly independent solutions
$$\begin{pmatrix} \overline{\mathcal{E}_0(x)}\\ \mathcal{E}_0(x) \end{pmatrix} \quad \text{and} \quad \begin{pmatrix} 1 \\  -1 \end{pmatrix}.$$
Hence there exists a constant matrix $A$ such that
$$\Phi_0(x, \infty^+) = \begin{pmatrix} \overline{\mathcal{E}_0(x)} & 1 \\ \mathcal{E}_0(x) & -1 \end{pmatrix} A, \qquad x \in [0, 1).$$
We determine $A$ by evaluating this equation at $x = 0$ and using that $\mathcal{E}_0(0) = 1$ and $\Phi_0(0, \infty^+) = I$. This yields (\ref{Phi0atinftyplus}) and proves $(d)$.

The proof of $(e)$ relies on the general identity
$$(\ln \det B)_x = \tr(B^{-1} B_x),$$
where $B = B(x)$ is a differentiable matrix-valued function taking values in $GL(n, \C)$.
We find
$$(\ln \det \Phi_0)_x = \tr(\Phi_0^{-1} \mathsf{U}_0 \Phi_0) = \tr \mathsf{U}_0 = \frac{\re \mathcal{E}_{0x}}{\re \mathcal{E}_0} = (\ln \re \mathcal{E}_0)_x.$$
This relation is valid at least for small $x$ because $\Phi_0(0, k^\pm) = I$ is invertible. In fact, since $\re \mathcal{E}_0(x) > 0$ for $x \in [0,1)$ by assumption (\ref{E0E1assumptions}), it extends to all of $[0,1)$ and we infer that, for $P \in \mathcal{S}_{(x,0)} \setminus \Sigma_0$,
$$\det \Phi_0(x, P) = C(P) \re \mathcal{E}_0(x), \qquad x \in [0,1),$$
where $C(P) \in \C$ is independent of $x$. Evaluation at $x = 0$ gives $C_P = 1$. This proves $(e)$.

It remains to prove $(f)$. Fix $x_0 \in (0,1)$  and let $K$  be a compact subset of $\hat{\C} \setminus [0, x_0]$.
The function $\lambda(x,0,\cdot)$ is bounded on $\mathcal{S}_{(x,0)}$ except for a simple pole at $k = x$.
Hence,  
\begin{align*}
& \sup_{k \in K} \big|\Phi_0(x_2, k^+) - \Phi_0(x_1, k^+)\big|
= \sup_{k \in K} \bigg|\int_{x_1}^{x_2} (\mathsf{U}_0\Phi_0)(x, k^+) dx\bigg|
\\
& \leq \bigg( \sup_{k \in K} \sup_{x \in [0, x_0)} |x^\alpha \mathsf{U}_0(x,k^+)|\bigg) \sup_{k \in K} \left(\int_{x_1}^{x_2} |x^{-\alpha}\Phi_0(x, k^+)| dx \right) \\
&\leq C \sup_{k \in K} \left(\int_{x_1}^{x_2} |x^{-\alpha}\Phi_0(x, k^+)| dx \right), \qquad x_1, x_2 \in [0, x_0),
\end{align*}
where the right-hand side tends to zero as $x_2 \to x_1$, because
\begin{align*}
\sup_{k \in K} \left(\int_{x_1}^{x_2} |x^{-\alpha} \Phi_0(x, k^+)| dx \right) 
& \le \sum_{j=0}^{\infty} \frac{1}{j!} \sup_{k\in K} \lVert \mathsf{U}_0(\cdot,k^+)\rVert_{L^1([0,x_0])}^j  \int_{x_1}^{x_2} x^{-\alpha} dx 
	\\
& \le \frac{e^{C(x_0)}(x_2^{1-\alpha}-x_1^{1-\alpha})}{1-\alpha},
\end{align*}
where $C(x_0)$ is chosen as in the proof of $(a)$.
This shows that the map (\ref{xkphimap}) is continuous $[0, x_0) \to L^\infty(K)$.
If $x \in (0, x_0)$, then
\begin{align*}
  \sup_{k \in K} \bigg| &\frac{\Phi_0(x+h, k^+) - \Phi_0(x,k^+)}{h} - \Phi_{0x}(x, k^+)\bigg|
  	\\
  & \leq \sup_{k \in K} \bigg| \frac{1}{h} \int_x^{x+h} (\mathsf{U}_0\Phi_0)(x', k^+) dx' - \Phi_{0x}(x,k^+)\bigg|
  	\\
&  \leq \sup_{k \in K} \bigg| (\mathsf{U}_0\Phi_0)(\xi, k^+)  - (\mathsf{U}_0\Phi_0)(x, k^+)\bigg|,
\end{align*}
where $\xi$ lies between $x$ and $x+h$. As $h \to 0$, the right-hand side goes to zero. Hence (\ref{xkphimap}) is differentiable as a map $(0,x_0) \to L^\infty(K)$ and the derivative satisfies $\Phi_{0x}(x,k^+) = \mathsf{U}_0(x,k^+)\Phi_{0}(x)$. 
Furtermore, the map
$$x \mapsto \big(k \mapsto \lambda(x,0,k^+)\big)$$
is $C^\infty$ from $(0, x_0)$ to $L^\infty(K)$ and $\mathcal{E}_{0}$ is $C^{n}$ on $(0,1)$. Hence the map
$$x \mapsto \big(k \mapsto \mathsf{U}_0(x, k^+)\big)$$
is $C^{n-1}$ from $(0, x_0)$ to $L^\infty(K)$.
It follows that (\ref{xkphimap}) is a $C^n$-map $(0, x_0) \to L^\infty(K)$. 

Finally, since
$$x^\alpha \Phi_{0x}(x,k^+) = x^\alpha \mathsf{U}_0(x, k^+) \Phi_0(x,k^+)$$
we see that $x \mapsto x^\alpha \Phi_{0x}(x,k^+)$ is continuous $[0, x_0) \to L^\infty(K)$.
This proves $(f)$ and completes the proof of the lemma.
\end{proof}

\begin{lemma}[Solution of the $y$-part]\label{ypartlemma}
The eigenfunction $\Phi_1(y,P)$ is well-defined for $y \in [0,1)$  and $P \in \mathcal{S}_{(0,y)} \setminus \Sigma_1$ and has the following properties:
\begin{enumerate}[$(a)$]

	\item For each $k \in \hat{\C} \setminus [0,1]$, the function $y \mapsto \Phi_1(y,k^+)$ is continuous on $[0,1)$ and is $C^n$ on  $(0,1)$. Furthermore, for each $y \in [0,1)$, the function $k \mapsto \Phi_0(x,k^+)$ is analytic on $\hat{\C} \setminus [0,1]$.

   \item $\Phi_1$ obeys the symmetries
  \begin{align}\label{phi1symmetries}
\begin{cases} \Phi_1(y,k^+) = \sigma_3\Phi_1(y, k^-)\sigma_3, 
	\\
\Phi_1(y,k^\pm) = \sigma_1\overline{\Phi_1(y, \bar{k}^\pm)}\sigma_1, 
\end{cases} \qquad y \in [0,1), \ k \in \hat{\C} \setminus [0, 1].
\end{align}
      \item For each  $y \in [0, 1)$, $\Phi_1(y,P)$ is an analytic function of $P \in \mathcal{S}_{(0,y)} \setminus \Sigma_1$.
  \item The value of $\Phi_1$ at $P = \infty^+$ is given by
  \begin{align}\label{Phi1atinftyplus}
  \Phi_1(y,\infty^+) = \frac{1}{2} \begin{pmatrix} \overline{\mathcal{E}_1(y)} & 1 \\ \mathcal{E}_1(y) & -1 \end{pmatrix}\begin{pmatrix}1 & 1 \\ 1 & -1 \end{pmatrix}, \qquad y \in [0, 1).
  \end{align}

\item The determinant of $\Phi_1$ is given by
$$\det \Phi_1(y,P) = \re \mathcal{E}_1(y), \qquad y \in [0,1), \ P \in \mathcal{S}_{(0,y)} \setminus \Sigma_1.$$

  \item For each $y_0 \in (0,1)$  and each compact subset $K \subset \hat{\C} \setminus [0, y_0]$, 
  \begin{align}\label{ykphimap}
  y \mapsto \big(k \mapsto \Phi_1(y,k^+)\big)
  \end{align}
  is a continuous map $[0, y_0) \to L^\infty(K)$ and a $C^n$-map $(0, y_0) \to L^\infty(K)$. Moreover, the map
  $y \mapsto \big( k \mapsto y^\alpha \Phi_{1y}(y,k^+)\big)$ is continuous $[0, y_0) \to L^\infty(K)$.
\end{enumerate}
\end{lemma}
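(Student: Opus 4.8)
The plan is to carry over the proof of Lemma~\ref{xpartlemma} essentially verbatim under the formal exchange $x\leftrightarrow y$, $\mathsf{U}_0\leftrightarrow\mathsf{V}_1$, $\mathcal{E}_0\leftrightarrow\mathcal{E}_1$, and $\lambda(x,0,\cdot)\leftrightarrow 1/\lambda(0,y,\cdot)$, accompanied by the exchange of the Riemann surface $\mathcal{S}_{(x,0)}\leftrightarrow\mathcal{S}_{(0,y)}$, the contour $\Sigma_0\leftrightarrow\Sigma_1$, and the relevant branch point $k=x\leftrightarrow k=1-y$. Since the Volterra equation~\eqref{Phi0Phi1b} defining $\Phi_1$ has exactly the same structure as~\eqref{Phi0Phi1a}, most of the argument transfers without change, and I only need to watch the one place where the two kernels behave differently.

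First I would establish parts $(a)$ and $(b)$ by successive approximations. The sole analytic input is that $1/\lambda(0,y,k^+)$ is bounded for $k$ in a compact set $K$ separated from the branch point $k=1-y$; combined with $y^\alpha\mathcal{E}_{1y}\in C([0,1))$ from~\eqref{E0E1assumptions}, this gives $\|\mathsf{V}_1(\cdot,k^+)\|_{L^1([0,y])}<C(y)$ with $C$ locally bounded, so the Neumann series converges absolutely and uniformly, defines a continuous solution that is $C^n$ in $y$ (differentiate the series termwise), and is analytic in $k$ on $\hat{\C}\setminus[0,1]$ (uniform limit of analytic iterates). Uniqueness follows from iterating the homogeneous equation exactly as for $\Phi_0$. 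The symmetries in $(b)$ follow from the symmetries~\eqref{lambdasymm} of $\lambda$, which pass to $1/\lambda$ and hence give $\mathsf{V}_1(y,k^+)=\sigma_3\mathsf{V}_1(y,k^-)\sigma_3$ and $\mathsf{V}_1(y,k^+)=\sigma_1\overline{\mathsf{V}_1(y,\bar k^+)}\sigma_1$; uniqueness of the Volterra solution then transfers them to $\Phi_1$.

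Parts $(c)$--$(e)$ are handled as in Lemma~\ref{xpartlemma}. For $(c)$, the boundary values of $\Phi_1$ from the two sheets agree across the interior $(0,1-y)$ of the surface's branch cut because $\lambda(0,y,(k+i0)^+)=\lambda(0,y,(k-i0)^-)$ there and both boundary functions solve the same Volterra equation; gluing leaves $\Phi_1$ non-analytic only across the complementary set, which is precisely $\Sigma_1$ (projecting onto $[1-y,1]$). At the remaining branch point $k=0$ the kernel $1/\lambda(0,y,\cdot)$ stays bounded (indeed vanishes), so the candidate singularity there is removable, giving analyticity on $\mathcal{S}_{(0,y)}\setminus\Sigma_1$. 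Part $(d)$ reduces to the $y$-ODE $\Phi_{1y}(y,\infty^+)=\mathsf{V}_1(y,\infty^+)\Phi_1(y,\infty^+)$ coming from $\lambda(0,y,\infty^+)=1$, whose independent solutions are $(\overline{\mathcal{E}_1},\mathcal{E}_1)^{T}$ and $(1,-1)^{T}$; fixing the constant matrix through $\mathcal{E}_1(0)=1$ and $\Phi_1(0,\infty^+)=I$ yields~\eqref{Phi1atinftyplus}. Part $(e)$ follows from $(\ln\det\Phi_1)_y=\tr\mathsf{V}_1=(\ln\re\mathcal{E}_1)_y$ together with the value at $y=0$.

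The one genuinely asymmetric point---and the step I expect to require the most care---is the location of the kernel's pole in part $(f)$. In the $x$-part the pole of $\lambda(x',0,\cdot)$ sits at the left branch point $k=x'\in[0,x]$, whereas here $1/\lambda(0,y',\cdot)$ has its pole at the right branch point $k=1-y'\in(1-y,1]$ and vanishes at $k=0$, so the roles of the two branch points are swapped. Consequently the compact set in $(f)$ must avoid the \emph{moving} poles $\{1-y':y'\in[0,y_0)\}$, i.e.\ one should take $K\subset\hat{\C}\setminus[1-y_0,1]$, paralleling Lemma~\ref{linearclaim2E}$(f)$. Once $K$ is so chosen, $1/\lambda(0,y',k^+)$ is uniformly bounded for $y'\in[0,y_0)$ and $k\in K$, and the remaining estimates go through verbatim: the continuity bound $\sup_{k\in K}|\Phi_1(y_2,k^+)-\Phi_1(y_1,k^+)|\le C\sup_{k\in K}\int_{y_1}^{y_2}|y'^{-\alpha}\Phi_1(y',k^+)|\,dy'\to0$ (using that $y^\alpha\mathsf{V}_1$ is bounded and $\int y'^{-\alpha}\,dy'$ converges), the difference-quotient estimate identifying $\Phi_{1y}=\mathsf{V}_1\Phi_1$, and the $C^{n-1}$ bound for $y\mapsto\mathsf{V}_1(y,\cdot)$ (since $1/\lambda$ is $C^\infty$ in $y$ into $L^\infty(K)$ and $\mathcal{E}_1$ is $C^n$) together yield the claimed continuity, $C^n$-regularity, and continuity of $y\mapsto y^\alpha\Phi_{1y}(y,\cdot)$.
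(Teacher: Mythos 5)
Your proposal is correct and follows essentially the same route as the paper, whose entire proof of this lemma is the remark that it is ``similar to that of Lemma \ref{xpartlemma}''; your systematic swap $x\leftrightarrow y$, $\mathsf{U}_0\leftrightarrow\mathsf{V}_1$, $\lambda(x,0,\cdot)\leftrightarrow 1/\lambda(0,y,\cdot)$ is exactly the intended adaptation. You are also right about the one asymmetric point: since the pole of $1/\lambda(0,y',\cdot)$ sits at $k=1-y'$, the compact set in part $(f)$ must be taken in $\hat{\C}\setminus[1-y_0,1]$ (as in the linear analogue, Lemma \ref{linearclaim2E}) rather than $\hat{\C}\setminus[0,y_0]$ as literally printed in the statement --- a typo in the paper that your proof correctly repairs.
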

\begin{proof}
The proof is similar to that of Lemma \ref{xpartlemma}.
\end{proof}

\subsection{Uniqueness}

The following lemma ensures uniqueness of the solution of the RH problem (\ref{RHm}). The proof relies on the fact that the determinant of the jump matrix $v$ defined in (\ref{jumpdef}) is constant on each of the subcontours $\Gamma_0$ and $\Gamma_1$.

\begin{lemma}\label{uniquenesslemma}
Suppose that $\mathcal{E}_0(x)$, $x \in [0, 1)$, and $\mathcal{E}_1(y)$, $y \in [0,1)$ are real-valued functions satisfying (\ref{E0E1assumptions}) for some $n \geq 2$. 
Then, for each $(x,y)\in D$, the solution $m(x,y,\cdot)$ of the RH problem (\ref{RHm}) is unique, if it exists. Moreover, 
\begin{align}\label{detmone}
\det m(x,y,z) = 1, \qquad (x,y)\in D, \ z \in \Omega_\infty.
\end{align} 
\end{lemma}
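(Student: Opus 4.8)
The plan is to exploit the observation (already flagged in the remark preceding the lemma) that $\det v$ is constant along each of the loops $\Gamma_0$ and $\Gamma_1$, reduce the determinant of $m$ to a scalar Riemann--Hilbert problem with a piecewise-constant jump, and then deduce uniqueness by the standard Liouville argument once invertibility of $m$ is in hand.

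First I would compute $\det v$. On $\Gamma_0$ the jump is $v(x,y,z) = \Phi_0\big(x, F_{(x,y)}^{-1}(z)\big)$, so part $(e)$ of Lemma \ref{xpartlemma} gives $\det v(x,y,z) = \det \Phi_0\big(x, F_{(x,y)}^{-1}(z)\big) = \re \mathcal{E}_0(x)$, which is independent of $z$; likewise part $(e)$ of Lemma \ref{ypartlemma} gives $\det v(x,y,z) = \re \mathcal{E}_1(y)$ on $\Gamma_1$. By assumption (\ref{E0E1assumptions}) the two constants $c_0 := \re \mathcal{E}_0(x)$ and $c_1 := \re \mathcal{E}_1(y)$ are strictly positive. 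Setting $d(x,y,z) := \det m(x,y,z)$ and taking determinants of the jump relation in (\ref{RHm}) yields $d_+ = d_-\,\det v$ on $\Gamma$, so $d$ solves a scalar RH problem with jump $c_0$ on $\Gamma_0$, jump $c_1$ on $\Gamma_1$, and normalization $d \to 1$ at infinity.

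Next I would solve this scalar problem explicitly via the Cauchy integral
\[ h(z) = \frac{1}{2\pi i}\int_{\Gamma} \frac{\log \det v(x,y,s)}{s - z}\,ds = \frac{\log c_0}{2\pi i}\int_{\Gamma_0}\frac{ds}{s-z} + \frac{\log c_1}{2\pi i}\int_{\Gamma_1}\frac{ds}{s-z}, \]
each integral being a winding number. For $z \in \Omega_\infty$ both loops wind zero times around $z$, so $h \equiv 0$ there, while in $\Omega_0$ and $\Omega_1$ the integrals contribute the nonzero constants $\pm \log c_0$ and $\pm \log c_1$. Since $e^{h}$ has the same jump as $d$ and never vanishes, the ratio $d/e^{h}$ has no jump across $\Gamma$, extends to an entire function, and tends to $1$ at infinity; Liouville then forces $d = e^{h}$. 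In particular $\det m = e^{0} = 1$ on $\Omega_\infty$, which is (\ref{detmone}), and $\det m$ is nonvanishing on all of $\C \setminus \Gamma$. For uniqueness, suppose $m$ and $\tilde m$ both solve (\ref{RHm}). By this computation both are invertible throughout $\C \setminus \Gamma$, so $r := \tilde m\, m^{-1}$ is well-defined; using $m_+ = m_- v$ and $\tilde m_+ = \tilde m_- v$ the jumps cancel, $r_+ = \tilde m_- v v^{-1} m_-^{-1} = r_-$, so $r$ extends to an entire function, and since $m, \tilde m \to I$ at infinity we get $r \to I$, whence $r \equiv I$ and $\tilde m = m$.

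The main obstacle is really the first observation that $\det v$ is $z$-independent along each loop; this is exactly what collapses the otherwise nontrivial scalar problem for $\det m$ into one with constant jumps, making the winding-number evaluation and hence the conclusion $\det m \equiv 1$ on $\Omega_\infty$ possible. The remaining analytic points---that the boundary values are regular enough for the jumps to cancel and for the removable-singularity and Liouville steps to apply---are mild here, since the contour $\Gamma$ was chosen to avoid the endpoints of the intervals in (\ref{twointervals}) at which $\Phi_0$ and $\Phi_1$ are singular, so $v$ is bounded and continuous on $\Gamma$ and $m^{-1}$ is controlled near $\Gamma$ via $\det m = e^{h}$ being bounded away from zero.
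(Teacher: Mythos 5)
Your proposal is correct and follows essentially the same route as the paper: both arguments rest on the observation (via part $(e)$ of Lemmas \ref{xpartlemma} and \ref{ypartlemma}) that $\det v$ equals the positive constants $\re\mathcal{E}_0(x)$ and $\re\mathcal{E}_1(y)$ on $\Gamma_0$ and $\Gamma_1$ respectively, deduce $\det m=1$ on $\Omega_\infty$ and the global invertibility of $m$, and conclude uniqueness by the standard Liouville argument. The only cosmetic difference is that the paper renormalizes $m$ by the piecewise constants $\sqrt{\det v}$ to reduce to a unit-determinant jump matrix, whereas you solve the scalar problem for $\det m$ directly via the Cauchy integral of $\log\det v$; the two devices are interchangeable.
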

\begin{proof}
Fix $(x,y) \in D$. 
By (\ref{Phidet}) and the definition (\ref{jumpdef}) of $v$, we have 
$$\det v(x,y,z) = \begin{cases} \re \mathcal{E}_0(x) > 0, \quad & z \in \Gamma_0, \\
 \re \mathcal{E}_1(y) > 0, \quad & z \in \Gamma_1.
\end{cases}$$
Hence 
$$\sqrt{\det v(x,y,z)} = \begin{cases} c_0(x), \quad & z \in \Gamma_0, \\
c_1(y), & z \in \Gamma_1,
\end{cases} \quad (x,y) \in D,$$
where the two functions $c_0(x) > 0$ and $c_1(y) > 0$ are independent of $z$. 
The function $m(x,y,\cdot)$ is a solution of the RH problem (\ref{RHm}) if and only if the function $\tilde{m}(x,y,\cdot)$ defined by 
$$\tilde{m}(x,y,z) = \begin{cases} c_0(x) m(x,y,z), \quad & z \in \Omega_0, \\
c_1(y) m(x,y,z), & z \in \Omega_1, \\ 
m(x,y,z),  & z \in \Omega_\infty, \end{cases}$$
satisfies the RH problem 
\begin{align*}
\begin{cases}
\text{$\tilde{m}(x, y, \cdot)$ is analytic in $\C \setminus \Gamma$},\\
\text{$\tilde{m}_+(x,y,z) = \tilde{m}_-(x,y,z) \tilde{v}(x,y,z)$ for all $z \in \Gamma$},\\
\text{$\tilde{m}(x, y, z) = I + O(z^{-1})$ as $z\to \infty$},
\end{cases}
\end{align*}
where
$$\tilde{v}(x,y,z) = \begin{cases} \frac{1}{c_0(x)}v(x,y,z), & z \in \Gamma_0, \\
\frac{1}{c_1(y)} v(x,y,z), \quad & z \in \Gamma_1.
\end{cases}$$
But $\det \tilde{v}(x,y,z) = 1$ for all $z \in \Gamma$; hence the solution $\tilde{m}(x,y,\cdot)$ is unique and $\det \tilde{m} =1$. It follows that the solution $m$ is unique and that $\det m(x,y,z) = \det \tilde{m}(x,y,z) = 1$ for $z \in \Omega_\infty$. 
\end{proof}

\section{Proofs of main results}\label{proofsec}
In this section, we use the lemmas from the previous section to prove Theorem \ref{mainth1}--\ref{mainth4}.

\subsection{Proofs of Theorem 1 \& 2}
Let $\mathcal{E}_0(x)$, $x \in [0, 1)$, and $\mathcal{E}_1(y)$, $y \in [0,1)$ be complex-valued functions satisfying (\ref{E0E1assumptions}) for some $n \geq 2$. Suppose $\mathcal{E}(x,y)$ is a $C^n$-solution of the Goursat problem for (\ref{ernst}) in $D$ with data $\{\mathcal{E}_0, \mathcal{E}_1\}$. We will show that $\mathcal{E}(x,y)$ can be uniquely expressed in terms of $\mathcal{E}_0$ and $\mathcal{E}_1$ by (\ref{Erecover}). 

The idea in what follows is to introduce a solution $\Phi$ of (\ref{lax}) as the solution of the integral equation 
$$\Phi(x,y,k^\pm) = I + \int_{(0,0)}^{(x,y)} (W\Phi)(x',y', k^\pm).$$
However, since $W$ in general is singular on the boundary of $D$, we need to be more careful with the definition. We therefore instead define $\Phi$ as the solution of 
\begin{align}\label{Phidef}
\Phi(x, y, k^+) = \Phi_0(x, k^+) + \int_0^y (\mathsf{V} \Phi)(x,y',k^+) dy', \qquad (x,y) \in D, \ k \in \hat{\C} \setminus [0,1].
\end{align}

\begin{lemma}[Solution of Lax pair equations]\label{claim1}
The function $\Phi(x,y,P)$ defined in (\ref{Phidef}) has the following properties:
\begin{enumerate}[$(a)$]

\item $\Phi(x,y,k^\pm)$ is a well-defined $2\times 2$-matrix valued function of $(x,y) \in D$ and $k \in \hat{\C} \setminus [0,1]$ which also satisfies the alternative Volterra integral equation:
\begin{align}\label{Phidef2}
\Phi(x, y, k^+) = \Phi_1(y, k^+) + \int_0^x (\mathsf{U} \Phi)(x',y,k^+) dx', \qquad (x,y) \in D, \ k \in \hat{\C} \setminus [0,1].
\end{align}

\item For each $k \in \hat{\C} \setminus [0,1]$, the function $(x,y) \mapsto \Phi(x,y,k^+)$ is continuous on $D$ and is $C^n$ on  $\Int D$.

\item For each $k \in \hat{\C} \setminus [0,1]$, the functions 
$$(x,y) \mapsto x^\alpha \Phi_x(x,y,k^+), \quad
(x,y) \mapsto y^\alpha \Phi_y(x,y,k^+), \quad
(x,y) \mapsto x^\alpha y^\alpha \Phi_{xy}(x,y,k^+),$$
are continuous on $D$.

   \item $\Phi$ obeys the symmetries
  \begin{align}\label{phisymmetries}
\begin{cases} \Phi(x,y,k^+) = \sigma_3\Phi(x, y, k^-)\sigma_3, 
	\\
\Phi(x,y,k^\pm) = \sigma_1\overline{\Phi(x, y,\bar{k}^\pm)}\sigma_1, 
\end{cases} \qquad (x,y) \in D, \ k \in \hat{\C} \setminus [0,1].
\end{align}
    
  \item For each  point $(x,y) \in D$, $\Phi(x,y,P)$ extends continuously to an analytic function of $P \in \mathcal{S}_{(x,y)} \setminus \Sigma$, where $\Sigma = \Sigma_0 \cup \Sigma_1$ is the contour defined in (\ref{Sigma01def}).

  \item The value of $\Phi$ at $P = \infty^+$ is given by
  \begin{align}\label{Phiatinftyplus}
  \Phi(x,y,\infty^+) = \frac{1}{2} \begin{pmatrix} \overline{\mathcal{E}(x,y)} & 1 \\ \mathcal{E}(x,y) & -1 \end{pmatrix}\begin{pmatrix}1 & 1 \\ 1 & -1 \end{pmatrix}, \qquad (x,y) \in D.
  \end{align}

\item The determinant of $\Phi$ is given by
\begin{align}\label{Phidet}
\det \Phi(x,y,P) = \re \mathcal{E}(x,y) > 0, \qquad (x,y) \in D, \ P \in \mathcal{S}_{(x,y)} \setminus \Sigma.
\end{align}
\end{enumerate}
\end{lemma}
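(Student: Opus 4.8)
The plan is to follow the template of the linear case (Lemma \ref{linearclaim1}), replacing scalar quantities by $2\times2$ matrices and the closed-form path integral by a Volterra fixed point seeded with $\Phi_0$. I would first establish part $(a)$. Well-definedness of the solution of (\ref{Phidef}) I would obtain by successive approximations in the $y$-variable, taking $\Phi_0(x,k^+)$ (already controlled by Lemma \ref{xpartlemma}) as the inhomogeneous term and iterating against $\mathsf{V}$. The only subtlety is that the kernel $\mathsf{V}$ behaves like $(y')^{-\alpha}$ near the boundary, so I would run the iteration with the weighted bound $|\Phi^{(j)}|\leq\frac{1}{j!}\|\mathsf{V}(x,\cdot,k^+)\|_{L^1([0,y])}^j\sup|\Phi_0|$, which is finite since $\alpha<1$ and $y^\alpha\mathcal{E}_y\in C(D)$. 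The resulting series then converges absolutely and uniformly for $k$ in compact subsets of $\hat{\C}\setminus[0,1]$ and $(x,y)$ in compact subsets of $D$, exactly as in Lemma \ref{xpartlemma}$(a)$, and uniqueness follows from the same iterated estimate applied to the homogeneous equation.

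The heart of the lemma, and the step I expect to be the main obstacle, is the equivalence of (\ref{Phidef}) with the alternative equation (\ref{Phidef2}), i.e.\ path independence. Since $\Phi$ is now defined implicitly by a Volterra equation rather than as an explicit integral of a closed form, the Fubini argument of the linear case does not apply directly, and I would instead exploit the zero-curvature structure. Writing $R:=\Phi_x-\mathsf{U}\Phi$, a direct computation using $\Phi_y=\mathsf{V}\Phi$ together with the Lax compatibility relation $\mathsf{U}_y-\mathsf{V}_x+[\mathsf{U},\mathsf{V}]=0$ (equivalent to (\ref{ernst})) gives the linear homogeneous ODE $R_y=\mathsf{V}R$. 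As $\Phi_0$ solves $\Phi_{0x}=\mathsf{U}_0\Phi_0$, the initial value $R(x,0)=\Phi_{0x}-\mathsf{U}_0\Phi_0$ vanishes, so uniqueness forces $R\equiv0$; hence $\Phi_x=\mathsf{U}\Phi$, evaluating at $x=0$ shows that $\Phi(0,y,\cdot)$ obeys the same Volterra equation as $\Phi_1$ and therefore equals it, and integrating $\Phi_x=\mathsf{U}\Phi$ in $x$ yields (\ref{Phidef2}). The difficulty is that the computation of $R_y$ requires the mixed second derivative $\Phi_{xy}$ and is therefore licit only on $\Int D$, where $\mathsf{U},\mathsf{V}$ are smooth; near the corners only $x^\alpha\mathsf{U}$ and $y^\alpha\mathsf{V}$ are continuous. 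I would accordingly perform the differentiation on $\Int D$, note that the coefficient $\mathsf{V}$ is $L^1$ in $y$ so that a Gronwall argument still yields $R\equiv0$ up to the singular edge, and use the continuity of $\Phi$ up to $y=0$ (from part $(b)$) to legitimize the initial condition $R(x,0)=0$.

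The remaining parts are routine consequences. For $(b)$ and $(c)$, continuity and $C^n$-regularity on $\Int D$ follow by differentiating the convergent iterates, while $x^\alpha\Phi_x=x^\alpha\mathsf{U}\Phi$, $y^\alpha\Phi_y=y^\alpha\mathsf{V}\Phi$, and the mixed expression for $x^\alpha y^\alpha\Phi_{xy}$ are continuous on $D$ precisely because $x^\alpha\mathcal{E}_x$, $y^\alpha\mathcal{E}_y$, and $x^\alpha y^\alpha\mathcal{E}_{xy}$ are. For $(d)$, the symmetries (\ref{lambdasymm}) of $\lambda$ give $\mathsf{U}(x,y,k^+)=\sigma_3\mathsf{U}(x,y,k^-)\sigma_3=\sigma_1\overline{\mathsf{U}(x,y,\bar{k}^+)}\sigma_1$ and likewise for $\mathsf{V}$; then $\sigma_3\Phi(x,y,k^-)\sigma_3$ and $\sigma_1\overline{\Phi(x,y,\bar{k}^\pm)}\sigma_1$ solve the same Volterra equation as $\Phi$ (using the corresponding symmetries of $\Phi_0$ from Lemma \ref{xpartlemma}$(b)$), so uniqueness yields (\ref{phisymmetries}). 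For $(e)$, since $\lambda(x,y,(k+i0)^+)=\lambda(x,y,(k-i0)^-)$ for $k\in(x,1-y)$, the boundary values $\Phi(x,y,(k+i0)^+)$ and $\Phi(x,y,(k-i0)^-)$ satisfy the same integral equation and hence agree, gluing the two sheets across the middle of the cut; analyticity then holds on all of $\mathcal{S}_{(x,y)}\setminus\Sigma$, the possible singularity at the branch point being removable because $\lambda$ stays bounded there.

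Finally, $(f)$ and $(g)$ I would handle exactly as in Lemma \ref{xpartlemma}$(d),(e)$. At $\infty^+$ we have $\lambda=1$, and the pair $\big(\begin{smallmatrix}\overline{\mathcal{E}}\\\mathcal{E}\end{smallmatrix}\big)$, $\big(\begin{smallmatrix}1\\-1\end{smallmatrix}\big)$ spans the solution space of the Lax pair evaluated at $\lambda=1$, so $\Phi(x,y,\infty^+)=\big(\begin{smallmatrix}\overline{\mathcal{E}}&1\\\mathcal{E}&-1\end{smallmatrix}\big)A$ for a constant matrix $A$ fixed by $\Phi(0,0,\infty^+)=I$ and $\mathcal{E}(0,0)=1$, which gives (\ref{Phiatinftyplus}). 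For the determinant, the identity $(\ln\det\Phi)_x=\tr(\Phi^{-1}\mathsf{U}\Phi)=\tr\mathsf{U}=\frac{\re\mathcal{E}_x}{\re\mathcal{E}}=(\ln\re\mathcal{E})_x$ and its $y$-analogue give $\det\Phi=C\,\re\mathcal{E}$ with $C$ determined to equal $1$ by evaluation at the origin, and the assumed positivity of $\re\mathcal{E}$ on $D$ then yields (\ref{Phidet}).
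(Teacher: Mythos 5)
Your proposal is correct and follows essentially the same route as the paper: well-definedness by successive approximation in $y$ seeded with $\Phi_0$, the key identity $\Phi_x=\U\Phi$ obtained by showing $R=\Phi_x-\U\Phi$ solves a homogeneous linear equation in $y$ with vanishing data at $y=0$ via the zero-curvature relation $\V_x=\U_y+[\U,\V]$ (the paper phrases this as a homogeneous Volterra equation rather than an ODE-plus-Gronwall argument, which is the same thing in integrated form), and parts $(b)$--$(g)$ by the same uniqueness, symmetry, sheet-gluing, and $\tr\U$-Wronskian arguments as in Lemma \ref{xpartlemma}. The only cosmetic difference is that you derive (\ref{Phidef2}) by integrating $\Phi_x=\U\Phi$ in $x$ after identifying $\Phi(0,y,\cdot)=\Phi_1(y,\cdot)$, whereas the paper subtracts the two representations and annihilates the difference with Fubini and the compatibility condition $(\V\Phi)_x=(\U\Phi)_y$.
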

\begin{proof}
By Lemma \ref{xpartlemma} the lemma holds for $y = 0$, i.e., the function $\Phi(x,0,P)$ is well-defined and  the properties $(a)$-$(d)$ are satisfied when $x = 0$ or $y=0$. 
In order to see that $\Phi$ is well-defined also for $(x,y)$ in the interior of $D$, we note that (\ref{Phidef}) implies
\begin{align}\label{Phidefxy}
\Phi(x, y, k^+) = \Phi(x,0, k^+) + \int_0^y \mathsf{V}(x,y',k^+) \Phi(x,y',k^+) dy', \qquad (x,y) \in D, \ k \in \hat{\C} \setminus [0,1].
\end{align}
The same type of successive approximation argument already used in the proof of Lemma \ref{xpartlemma} shows that the Volterra equation (\ref{Phidefxy}) has a unique solution for each fixed $x \in (0,1)$ and each $k \in \hat{\C} \setminus [0,1]$, and that this solution $\Phi(x,y,P)$ extends continuously to an analytic function of $P \in \mathcal{S}_{(x,y)} \setminus \Sigma$. This proves $(b)$.

 In order to prove $(a)$, it remains to deduce the alternative representation (\ref{Phidef2}). Note that $\Phi_y = V\Phi$ by definition and
\begin{align*}
\Phi_x(x,y,k^+)
&=\Phi_x(x,0,k^+) + \int_{0}^y \V_x\Phi(x,y',k^+)+ \V \Phi_x(x,y',k^\pm)dy'.
\end{align*}
Since $\E$ is a solution of the Goursat problem, we have
\[
\V_x=\U_y +[\U,\V],
\]
and, moreover, $\Phi_x(x,0,k^+) =\U\Phi(x,0,k^+)$. Now a straightforward calculation shows
\begin{align*}
\Phi_x(x,y,k^+)=  \U \Phi(x,y,k^+) + \int_{0}^y  \V \Phi_x(x,y',k^\pm)-\V \U\Phi(x,y',k^+) dy'.
\end{align*}
Thus the function $\tilde{\Phi}=\Phi_x -\U\Phi$ is the unique solution of the Volterra integral equation
\[
\tilde{\Phi}(x,y',k^+) = \int_0^y \V \tilde{\Phi}(x,y',k^+)dy'
\]
giving $\tilde{\Phi}=0$. This implies $\Phi_x = \U \Phi$. Consequently, $\Phi$, defined by (\ref{Phidef}), is an eigenfunction for the Lax pair equations \eqref{lax}. The difference between (\ref{Phidef}) and (\ref{Phidef2}) is given by
\begin{align*}
&\Phi_0(x,k^+) - \Phi_1(y,k^+) +\int_0^y \V \Phi(x,y',k^+)dy' - \int_0^x \U \Phi(x',y,k^+)dx' 
\\
=& \, \int_0^y \int_0^x (\V \Phi)_x(x',y',k^+)dx' dy' - \int_0^x \int_0^y (\U \Phi)_y(x',y',k^+)dy'dx' 
\\
=& \, \int_0^x \int_0^y (\V \Phi)_x(x',y',k^+)-(\U \Phi)_y(x',y',k^+) dy' dx'
\end{align*}
and $(\V\Phi)_x = (\U \Phi)_y$ is the compatibility condition for the Lax pair. Hence the two representations (\ref{Phidef}) and (\ref{Phidef2}) are equal.
This proves $(a)$.

The symmetries (\ref{lambdasymm}) of $\lambda$ show that
$$W(x,y,k^+) = \sigma_3W(x,y, k^-)\sigma_3, \qquad W(x,y,k^+) = \sigma_1\overline{W(x,y,\bar{k}^+)}\sigma_1.$$
Since $\lambda(x,y,\infty^+) = 1$, $\Phi(x,y,\infty^+)$ satisfies the equation
$$\Phi_y(x, y, \infty^+) = \frac{1}{2\re\mathcal{E}(x,y)} \begin{pmatrix} \overline{\mathcal{E}_y(x,y)} & \overline{\mathcal{E}_y(x,y)} \\
\mathcal{E}_y(x,y) & \mathcal{E}_y(x,y) \end{pmatrix} \Phi(x,y,\infty^+), \qquad (x,y) \in D.$$
Using the above equations and arguing as in the proof of Lemma \ref{xpartlemma}, the statements $(c)$, $(d)$, $(e)$, $(f)$, and $(g)$ follow from equation (\ref{Phidefxy}) and the corresponding statements in Lemma \ref{xpartlemma}.
\end{proof}

Part $(g)$ of Lemma \ref{claim1} implies that the inverse matrix $\Phi(x,y,P)^{-1}$ is well-defined for $(x,y) \in D$ and $P \in \mathcal{S}_{(x,y)} \setminus \Sigma$.

\begin{lemma}\label{claim2}
For each $(x,y) \in D$, 
\begin{align}\label{phiminusphi}
P \mapsto \Phi(x,y,P)\Phi(x,0, P)^{-1} \quad \text{and} \quad P \mapsto \Phi(x,y,P)\Phi(0,y,P)^{-1}
\end{align}
are analytic functions of $P \in \mathcal{S}_{(x,y)} \setminus \Sigma_1$ and $P \in \mathcal{S}_{(x,y)} \setminus \Sigma_0$, respectively.
\end{lemma}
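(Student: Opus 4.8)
The plan is to prove the two statements by constructing the multiplicative analogues of the additive identity used in the linear case (Lemma \ref{linearclaim2}). Set
\[
M(x,y,P) := \Phi(x,y,P)\Phi(0,y,P)^{-1}, \qquad N(x,y,P):=\Phi(x,y,P)\Phi(x,0,P)^{-1},
\]
which are well defined wherever both factors are, since $\det\Phi=\re\mathcal{E}>0$ by Lemma \ref{claim1}$(g)$ (and likewise $\det\Phi_0,\det\Phi_1>0$ by Lemmas \ref{xpartlemma}$(e)$ and \ref{ypartlemma}$(e)$). The second map in (\ref{phiminusphi}) is $M$ and the first is $N$. A priori each is analytic only on $\mathcal{S}_{(x,y)}\setminus\Sigma$ (reading $\Phi(0,y,\cdot)$ and $\Phi(x,0,\cdot)$ on $\mathcal{S}_{(x,y)}$ for $k\notin[0,1]$ via Remark \ref{tilderemark}); the content of the lemma is that $M$ extends analytically across $\Sigma_1$ and $N$ across $\Sigma_0$. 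The mechanism is that $M$ and $N$ satisfy Volterra equations whose kernels are analytic on the enlarged domains.

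First I would record the Volterra equations. Right-multiplying the representation (\ref{Phidef2}), $\Phi(x,y,P)=\Phi(0,y,P)+\int_0^x(\mathsf{U}\Phi)(x',y,P)\,dx'$, by the $x'$-independent factor $\Phi(0,y,P)^{-1}$ gives
\[
M(x,y,P)=I+\int_0^x \mathsf{U}(x',y,P)\,M(x',y,P)\,dx',
\]
and symmetrically, right-multiplying (\ref{Phidef}) by $\Phi(x,0,P)^{-1}$ gives $N(x,y,P)=I+\int_0^y\mathsf{V}(x,y',P)\,N(x,y',P)\,dy'$. Next I would check the crucial analyticity of the kernels. Since $\mathsf{U}(x',y,\cdot)$ depends on $P$ only through $\lambda(x',y,P)=\sqrt{(k-(1-y))/(k-x')}$, I would verify, exactly as in the proof of Lemma \ref{linearclaim2}, that for every $x'\in[0,x]$ this function is analytic on $\mathcal{S}_{(x,y)}\setminus\Sigma_0$: its branch point and blow-up sit over $k=x'\in[0,x]$, whose two lifts lie on $\Sigma_0$, whereas at the surface branch point $k=1-y$ it is analytic and in fact vanishes. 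Hence $\mathsf{U}(x',y,\cdot)$ is analytic on $\mathcal{S}_{(x,y)}\setminus\Sigma_0$. In the same way $\mathsf{V}(x,y',\cdot)$ depends on $P$ only through $\lambda(x,y',P)^{-1}=\sqrt{(k-x)/(k-(1-y'))}$, whose branch point and pole lie over $k=1-y'\in[1-y,1]$ for $y'\in[0,y]$ and which is analytic (vanishing) at the surface branch point $k=x$, so $\mathsf{V}(x,y',\cdot)$ is analytic on $\mathcal{S}_{(x,y)}\setminus\Sigma_1$.

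I would then run the successive-approximation scheme used in the proof of Lemma \ref{xpartlemma}$(a)$. Setting $M^{(0)}=I$ and $M^{(j+1)}(x,y,P)=\int_0^x\mathsf{U}(x',y,P)M^{(j)}(x',y,P)\,dx'$, each $M^{(j)}(x,y,\cdot)$ is analytic on $\mathcal{S}_{(x,y)}\setminus\Sigma_0$, and the bound $\|\mathsf{U}(\cdot,y,P)\|_{L^1([0,x])}<\infty$ (from $x^\alpha\mathcal{E}_x\in C(D)$ together with the boundedness of $\lambda$ on compact subsets of $\mathcal{S}_{(x,y)}\setminus\Sigma_0$) makes $\sum_j M^{(j)}$ converge uniformly on compact subsets, so its sum is analytic on $\mathcal{S}_{(x,y)}\setminus\Sigma_0$. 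Uniqueness of the Volterra solution identifies this sum with $M$ on the overlap $\{k\notin[0,1]\}$, so $M$ extends analytically to all of $\mathcal{S}_{(x,y)}\setminus\Sigma_0$. The identical argument with $\mathsf{V}$ and $y$-integration extends $N$ analytically to $\mathcal{S}_{(x,y)}\setminus\Sigma_1$, establishing both claims.

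I expect the main obstacle to be the bookkeeping forced by the multiplicative structure: unlike the additive cancellation in Lemma \ref{linearclaim2}, here neither factor of $M$ extends across $\Sigma_1$, so the extension is genuinely produced only through the Volterra representation. One must therefore reconcile the two descriptions of $M$ — as a product of functions living natively on the different surfaces $\mathcal{S}_{(x,y)}$ and $\mathcal{S}_{(0,y)}$, interpreted via Remark \ref{tilderemark}, versus as a Neumann series on $\mathcal{S}_{(x,y)}\setminus\Sigma_0$ — and confirm that the branch-point and pole contributions at $k=x$ and $k=1-y$ fall exactly on the removed arcs $\Sigma_0$ and $\Sigma_1$ rather than in the interior of the cut $[x,1-y]$, so that the claimed analytic continuation is across $\Sigma_1$ (resp. $\Sigma_0$) only.
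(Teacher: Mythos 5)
Your proposal is correct and follows essentially the same route as the paper: the paper also right-multiplies the Volterra representation \eqref{Phidefxy} by $\Phi(x,0,P)^{-1}$ to obtain a Volterra equation for the product whose kernel $\mathsf{V}(x,y',\cdot)$ is analytic on $\mathcal{S}_{(x,y)}\setminus\Sigma_1$ (and symmetrically with $\mathsf{U}$ for the other map), concluding analyticity of the solution. Your explicit Neumann-series justification is just a spelled-out version of the step the paper leaves implicit.
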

\begin{proof}
Let $U$ be an open set in $\mathcal{S}_{(x,y)} \setminus \Sigma_1$.
Multiplying (\ref{Phidefxy}) by $\Phi(x,0,P)^{-1}$ from the right, we find
\begin{align}\nonumber
& \Phi(x, y, P)\Phi(x,0,P)^{-1} = I + \int_0^y \mathsf{V}(x,y',P) \Phi(x,y',P)\Phi(x,0,P)^{-1} dy', 
	\\ \label{phiPmap}
& \hspace{8cm} (x,y) \in D, \ k \in \hat{\C} \setminus [0,1].
\end{align}
where the values of $\Phi(x,0,P)$ and $\lambda(x,y',P)$ in (\ref{phiPmap}) are to be interpreted as in Remark \ref{tilderemark}. Since
$$P \mapsto \lambda(x,y',P)^{-1} = \sqrt{\frac{k - x}{k - (1-y')}}$$
is an analytic map $U \to \C$ for each $y'$, so is $\mathsf{V}(x,y',\cdot)$. It follows that the solution $\Phi(x, y, P)\Phi(x,0,P)^{-1}$ of (\ref{phiPmap}) also is analytic for $P \in U$. This establishes the desired statement for the first map in (\ref{phiminusphi}); the proof for the second map is similar. 
\end{proof}

Let $\Omega_0$, $\Omega_1$, and $\Omega_\infty$ denote the three components of $\hat{\C} \setminus \Gamma$ defined in (\ref{Omegadef}) and displayed in Figure \ref{Omegas.pdf}.

\begin{lemma}\label{claim3}
The $2\times 2$-matrix valued function $m(x,y,z)$ defined for $(x,y)\in D$ by
\begin{align}\label{mVPhi}
m(x,y,z) = 
\Phi\big(x,y, \infty^+\big)^{-1} \Phi\big(x,y,F_{(x,y)}^{-1}(z)\big) \times \begin{cases}  \Phi\big(x,0,F_{(x,y)}^{-1}(z)\big)^{-1},  & z \in \Omega_0, \\
 \Phi\big(0,y,F_{(x,y)}^{-1}(z)\big)^{-1}, & z \in \Omega_1, \\
I, & z \in \Omega_\infty,
\end{cases}
\end{align}
satisfies the RH problem (\ref{RHm}) and the relation (\ref{Erecover}) for each $(x,y) \in D$. 
\end{lemma}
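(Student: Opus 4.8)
The plan is to verify the three defining properties of the RH problem (\ref{RHm}) in turn and then to establish the recovery formula (\ref{Erecover}) by a direct computation of $m(x,y,0)$. The argument runs parallel to the proof of Lemma \ref{linearclaim3} in the collinear case, the only structural change being that additive differences of eigenfunctions are replaced by matrix products of the form $\Phi\,\Phi^{-1}$, whose analyticity is supplied by Lemma \ref{claim2}.

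First I would treat analyticity. Since $F_{(x,y)}$ is a biholomorphism $\mathcal{S}_{(x,y)}\to\hat{\C}$ sending $\Sigma_0$ and $\Sigma_1$ onto the two real intervals (\ref{twointervals}) that are encircled by $\Gamma_0$ and $\Gamma_1$, the composition $z\mapsto\Phi(x,y,F_{(x,y)}^{-1}(z))$ inherits the analyticity of $\Phi(x,y,\cdot)$ from Lemma \ref{claim1}$(e)$ and is in particular analytic on $\Omega_\infty$, which meets neither interval; this gives analyticity of $m$ on $\Omega_\infty$ at once. On $\Omega_0$ the function $m$ is the left translate by $\Phi(x,y,\infty^+)^{-1}$ of $P\mapsto\Phi(x,y,P)\Phi(x,0,P)^{-1}$, which by Lemma \ref{claim2} is analytic on $\mathcal{S}_{(x,y)}\setminus\Sigma_1$ and hence across $\Sigma_0$, i.e.\ on $F_{(x,y)}^{-1}(\Omega_0)$; likewise on $\Omega_1$ one uses that $\Phi(x,y,P)\Phi(0,y,P)^{-1}$ is analytic on $\mathcal{S}_{(x,y)}\setminus\Sigma_0$. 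This yields analyticity in $\C\setminus\Gamma$. For the normalization, as $z\to\infty$ we have $F_{(x,y)}^{-1}(z)\to\infty^+$ (the upper sheet maps to the exterior of the unit circle); since $\infty^+\notin\Sigma$, the function $m$ is analytic at $z=\infty$ with value $\Phi(x,y,\infty^+)^{-1}\Phi(x,y,\infty^+)=I$, giving $m(x,y,z)=I+O(z^{-1})$.

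Next I would verify the jump. The key identities are $\Phi(x,0,P)=\Phi_0(x,P)$ and $\Phi(0,y,P)=\Phi_1(y,P)$, obtained by setting $y=0$ in (\ref{Phidef}) and $x=0$ in (\ref{Phidef2}). Across $\Gamma_0$, the boundary value of $m$ from the exterior $\Omega_\infty$ equals $\Phi(x,y,\infty^+)^{-1}\Phi(x,y,F_{(x,y)}^{-1}(z))$, while from the interior $\Omega_0$ it equals the same expression multiplied on the right by $\Phi(x,0,F_{(x,y)}^{-1}(z))^{-1}=v(x,y,z)^{-1}$. With the clockwise orientation of $\Gamma_0$ the exterior is the $+$ side and the interior the $-$ side, so the relation $m|_{\Omega_\infty}=m|_{\Omega_0}\,v$ reads exactly $m_+=m_-v$ with $v=\Phi_0(x,F_{(x,y)}^{-1}(z))$, matching (\ref{jumpdef}); the computation on $\Gamma_1$ is identical with $\Phi_1$ in place of $\Phi_0$. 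Uniqueness of the solution then follows from Lemma \ref{uniquenesslemma}.

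Finally, for the recovery formula I would note that $0\in\Omega_\infty$ and $F_{(x,y)}^{-1}(0)=\infty^-$ (as $z=0$ corresponds to $\lambda=-1$), so $m(x,y,0)=\Phi(x,y,\infty^+)^{-1}\Phi(x,y,\infty^-)$. The first symmetry in (\ref{phisymmetries}) gives $\Phi(x,y,\infty^-)=\sigma_3\Phi(x,y,\infty^+)\sigma_3$, whence $m(x,y,0)=\Phi(x,y,\infty^+)^{-1}\sigma_3\Phi(x,y,\infty^+)\sigma_3$. Substituting the explicit form (\ref{Phiatinftyplus}) and using $\det\Phi(x,y,\infty^+)=\re\mathcal{E}$, a short $2\times2$ computation gives $(m(x,y,0))_{11}=\frac{|\mathcal{E}|^2+1}{2\re\mathcal{E}}$ and $(m(x,y,0))_{21}=\frac{(1-\mathcal{E})(1+\overline{\mathcal{E}})}{2\re\mathcal{E}}$; inserting these into the right-hand side of (\ref{Erecover}) and simplifying (writing $\mathcal{E}=a+ib$, so the numerator becomes $\frac{a^2+a+b^2+bi}{a}$ and the denominator $\frac{a+1-bi}{a}$) returns exactly $\mathcal{E}(x,y)$. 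I expect the main obstacle to be bookkeeping rather than anything conceptual: correctly identifying on each of $\Omega_0$ and $\Omega_1$ which matrix product is the analytic one so that the piecewise definition glues together, and pinning down the orientation and side conventions so that the jump emerges as $m_+=m_-v$ rather than its inverse. Everything else reduces to Lemmas \ref{claim1} and \ref{claim2} together with the explicit evaluation at $z=0$.
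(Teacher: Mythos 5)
Your proposal is correct and follows essentially the same route as the paper's proof: analyticity and the normalization from Lemmas \ref{claim1} and \ref{claim2} via the biholomorphism $F_{(x,y)}$, the jump from the identities $\Phi_0(x,\cdot)=\Phi(x,0,\cdot)$, $\Phi_1(y,\cdot)=\Phi(0,y,\cdot)$ and the definition (\ref{jumpdef}), and the recovery formula from $F_{(x,y)}^{-1}(0)=\infty^-$ together with the symmetry (\ref{phisymmetries}) and the explicit form (\ref{Phiatinftyplus}). Your computed entries $(m(x,y,0))_{11}$ and $(m(x,y,0))_{21}$ agree with the paper's, and the orientation bookkeeping for $m_+=m_-v$ is handled correctly.
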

\begin{proof}
Since $F_{(x,y)}$ is a biholomorphism $\mathcal{S}_{(x,y)} \to \hat{\C}$, we infer from Lemma \ref{claim1} together with Lemma \ref{claim2} that $m(x,y, \cdot) $ is analytic in $\C \setminus \Gamma$ and that $m(x,y,z) \to I$ as $z \to \infty$ for each $(x,y) \in D$. The jump condition in (\ref{RHm}) holds as a consequence of the definition (\ref{jumpdef}) of $v(x,y,z)$ and the fact that 
$$\Phi_0(x,k) = \Phi(x,0,k), \qquad \Phi_1(y,k) = \Phi(0,y,k).$$
Finally, since $0 \in \Omega_\infty$ and $F_{(x,y)}^{-1}(0) = \infty^-$, the first symmetry in (\ref{phisymmetries}) yields 
\begin{align}\label{mxy0}
m(x,y,0) = \Phi\big(x,y, \infty^+\big)^{-1}  \Phi(x,y, \infty^-)
= \Phi\big(x,y, \infty^+\big)^{-1}  \sigma_3 \Phi\big(x,y, \infty^+\big) \sigma_3.
\end{align}
Substituting in the expression (\ref{Phiatinftyplus}) for $\Phi\big(x,y, \infty^+\big)$, the $(11)$ and $(21)$ entries of (\ref{mxy0}) give
\begin{align*}
 (m(x,y,0))_{11} =  \frac{1 + \mathcal{E}(x,y) \overline{\mathcal{E}(x,y)}}{\mathcal{E}(x,y) + \overline{\mathcal{E}(x,y)}}, \qquad
  (m(x,y,0))_{21} =  \frac{(1 - \mathcal{E}(x,y))(1 + \overline{\mathcal{E}(x,y)})}{\mathcal{E}(x,y) + \overline{\mathcal{E}(x,y)}}.
\end{align*}
Solving these two equations for $\mathcal{E}$ and $\bar{\mathcal{E}}$, we find (\ref{Erecover}).
\end{proof}

We have showed that if $\mathcal{E}(x,y)$ is a $C^n$-solution of the Goursat problem for (\ref{ernst}) in $D$ with data $\{\mathcal{E}_0, \mathcal{E}_1\}$, then $\mathcal{E}(x,y)$ can be expressed in terms of the function $m$ defined in (\ref{mVPhi}) via equation (\ref{Erecover}). 
By Lemma \ref{uniquenesslemma}, this function $m(x,y,z)$ is the unique solution of the RH-problem (\ref{RHm}) whose formulation involves only the values $\mathcal{E}_0(x')$ and $\mathcal{E}_1(y')$ for $0\le x' \le x$ and $0\le y'\le y$. As a consequence, the value of the solution $\mathcal{E}$ at $(x,y)$ is uniquely determined by the values $\mathcal{E}_0(x')$ and $\mathcal{E}_1(y')$ for $0\le x' \le x$ and $0\le y'\le y$, if it exists. This completes the proofs of Theorem \ref{mainth1} and \ref{mainth2}.

\subsection{Proof of Theorem \ref{mainth3}}
This subsection is devoted to proving Theorem \ref{mainth3} regarding existence. 
Let us therefore suppose that $\mathcal{E}_0(x)$, $x \in [0, 1)$, and $\mathcal{E}_1(y)$, $y \in [0,1)$ are real-valued functions satisfying (\ref{E0E1assumptions}) for some $n \geq 2$. 
Define $\Phi_0(x,P)$ and $\Phi_1(y,P)$ in terms of $\mathcal{E}_0$ and $\mathcal{E}_1$ via the Volterra equations (\ref{Phi0Phi1}). Then $\Phi_0$ and $\Phi_1$ have the properties listed in Lemma \ref{xpartlemma} and Lemma \ref{ypartlemma}.
Let $\delta \in (0,1)$ and let $D_\delta$ be the triangle defined in (\ref{Ddeltadef}). 
As in the proof of Theorem \ref{linearmainth}, choose $\epsilon > 0$ so small that $F_{(x,y)}(\Sigma_0)$ and $F_{(x,y)}(\Sigma_1)$ are contained in the intervals $[-\epsilon^{-1}, -\epsilon]$ and $[\epsilon, \epsilon^{-1}]$, respectively, for all $(x,y) \in D_\delta$. Fix two smooth nonintersecting clockwise contours $\Gamma_0$ and $\Gamma_1$ in the complex $z$-plane which encircle the intervals $[-\epsilon^{-1}, -\epsilon]$ and $[\epsilon, \epsilon^{-1}]$, respectively, but which do not encircle zero, see Figure \ref{Gamma01.pdf}. Suppose $\Gamma_0$ and $\Gamma_1$ are invariant under the involutions $z \mapsto z^{-1}$ and $z \mapsto \bar{z}$. 
Let $\Gamma = \Gamma_0 \cup \Gamma_1$ and consider the family of RH problems given in (\ref{RHm}) parametrized by the two parameters $(x,y) \in D_\delta$. We will show that if (\ref{RHm}) has a (unique) solution $m(x,y,z)$  for each $(x,y) \in D_\delta$, then the function $\mathcal{E}(x,y)$ defined in terms of $m$ via equation (\ref{Erecover}) satisfies
\begin{align}\label{EDdelta}
\begin{cases}
  \mathcal{E} \in C(D_\delta) \cap C^n(\Int D_\delta), 
  	\\
   \text{$\mathcal{E}(x,y)$ satisfies the hyperbolic Ernst equation (\ref{ernst}) in $\Int(D_\delta)$,}
  	\\
       \text{$x^\alpha \mathcal{E}_x, y^\alpha \mathcal{E}_y, x^\alpha y^\alpha \mathcal{E}_{xy} \in C(D_\delta)$ for some $\alpha \in [0,1)$,}
  	\\
  \text{$\mathcal{E}(x,0) = \mathcal{E}_0(x)$ for $x \in [0,1-\delta)$,}
 	\\
  \text{$\mathcal{E}(0,y) = \mathcal{E}_1(y)$ for $y \in [0,1-\delta)$.}
  	\\
  \text{$\re \mathcal{E}(x,y) > 0$ for $(x,y) \in D_\delta$.}		
\end{cases}
\end{align}

We next list some facts about Cauchy integrals that we will use throughout the proof. If $h \in L^2(\Gamma)$, then the Cauchy transform $\mathcal{C}h$ is defined by
\begin{align}\label{Cauchytransform}
(\mathcal{C}h)(z) = \frac{1}{2\pi i} \int_\Gamma \frac{h(z')}{z' - z} dz', \qquad z \in \C \setminus \Gamma,
\end{align}
We denote the nontangential boundary values of $\mathcal{C}f$ from the left and right sides of $\Gamma$ by $\mathcal{C}_+ f$ and $\mathcal{C}_-f$ respectively. Then $\mathcal{C}_+$ and $\mathcal{C}_-$ are bounded operators on $L^2(\Gamma)$ and $\mathcal{C}_+ - \mathcal{C}_- = I$.
Let $w(x,y,z) = v(x,y,z) - I$. We define the operator $\mathcal{C}_w: L^2(\Gamma) + L^\infty(\Gamma) \to L^2(\Gamma)$ by 
\begin{align}\label{Cwdef}
\mathcal{C}_{w}(f) = \mathcal{C}_-(f w).
\end{align}
Then
\begin{align}\label{Cwnorm}
\|\mathcal{C}_w\|_{\mathcal{B}(L^2(\Gamma))} \leq C \|w\|_{L^\infty(\Gamma)},
\end{align}
where $C = \|\mathcal{C}_-\|_{\mathcal{B}(L^2(\Gamma))}$. 

We henceforth assume that the RH problem \eqref{RHm} has a solution for all $(x,y) \in D_\delta$ or, equivalently, that $I - \mathcal{C}_{w} \in \mathcal{B}(L^2(\Gamma))$ is bijective for each  $(x,y) \in D_\delta$.

For each $(x,y) \in D_\delta$, we have $v \in C(\Gamma)$ and $v, v^{-1} \in I + L^2(\Gamma) \cap L^\infty(\Gamma)$. The theory of singular integral equations then implies that the solution of the RH problem (\ref{RHm}) is given by (see e.g. \cite{D1999} or \cite[Proposition 5.8]{LCarleson})
\begin{align}\label{msolution}
  m = I + \mathcal{C}(\mu w),
\end{align}
where the $2\times 2$-matrix valued function $\mu(x,y,\cdot)$ is defined by
$$\mu = I + (I - \mathcal{C}_w)^{-1}\mathcal{C}_w I \in I + L^2(\Gamma).$$
Equation (\ref{msolution}) can be written more explicitly as
\begin{align}\label{msolution2}
  m(x,y,z) = I + \frac{1}{2\pi i} \int_\Gamma \frac{(\mu w)(x,y,s) ds}{s-z}, \qquad (x,y) \in D_\delta, \ z \in \hat{\C} \setminus \Gamma.
\end{align}

\begin{lemma}\label{claim2E}
The map 
\begin{align} \label{xtww1}
(x,y) \mapsto w(x,y, \cdot) 
\end{align}
 is continuous from $D_\delta$ to $L^\infty(\Gamma)$ and $C^n$ from $\Int D_\delta$ to $L^\infty(\Gamma)$.
Moreover, the three maps 
\begin{align}\label{vthreemaps}
(x,y) \mapsto x^\alpha w_x(x,y, \cdot), \quad
(x,y) \mapsto y^\alpha w_x(x,y, \cdot), \quad
(x,y) \mapsto x^\alpha y^\alpha w_{xy}(x,y, \cdot),
\end{align}
are continuous from $D_\delta$ to $L^\infty(\Gamma)$.
\end{lemma}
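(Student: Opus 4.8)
The plan is to mirror the proof of the linear analogue, Lemma~\ref{linearclaim3E}, the only change being that the scalar eigenfunctions are replaced by the $2\times 2$-matrix eigenfunctions of Lemmas~\ref{xpartlemma} and~\ref{ypartlemma}, whose conclusions are stated in a form identical to the scalar case. Since $w=v-I$ differs from $v$ only by the constant matrix $I$, it suffices to prove all three assertions for $v$. First I would record the geometric input that makes the subsequent composition uniform in $(x,y)$: because $\Gamma=\Gamma_0\cup\Gamma_1$ is fixed independently of $(x,y)$ and $\epsilon$ is chosen so that $F_{(x,y)}(\Sigma_0)\subset[-\epsilon^{-1},-\epsilon]$ and $F_{(x,y)}(\Sigma_1)\subset[\epsilon,\epsilon^{-1}]$ for all $(x,y)\in D_\delta$, the preimages $F_{(x,y)}^{-1}(\Gamma_0)$ and $F_{(x,y)}^{-1}(\Gamma_1)$ remain, uniformly in $(x,y)\in D_\delta$, inside the analyticity regions $\mathcal{S}_{(x,y)}\setminus\Sigma_0$ of $\Phi_0$ and $\mathcal{S}_{(x,y)}\setminus\Sigma_1$ of $\Phi_1$ and at a positive distance from $\Sigma$. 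This is precisely what prevents the contour from collapsing onto the moving branch points as $(x,y)$ approaches the diagonal $x+y=1$.

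With this in hand, the continuity and $C^n$-dependence of $(x,y)\mapsto v(x,y,\cdot)\in L^\infty(\Gamma)$ follow by writing $v=\Phi_0\big(x,F_{(x,y)}^{-1}(\cdot)\big)$ on $\Gamma_0$ and $v=\Phi_1\big(y,F_{(x,y)}^{-1}(\cdot)\big)$ on $\Gamma_1$ and composing the smooth, explicit map $(x,y)\mapsto F_{(x,y)}^{-1}$ (whose derivatives are given in~(\ref{Finversexyderivatives})) with the continuity/$C^n$ statements of part~$(f)$ of Lemmas~\ref{xpartlemma} and~\ref{ypartlemma}. For the weighted derivatives I would differentiate these compositions by the chain rule; since $w_x=v_x$, on the two subcontours this gives
\[
x^\alpha w_x(x,y,z)=
\begin{cases}
x^\alpha\Phi_{0x}\big(x,F_{(x,y)}^{-1}(z)\big)+x^\alpha\Phi_{0k}\big(x,F_{(x,y)}^{-1}(z)\big)\Big(\tfrac{d}{dx}F_{(x,y)}^{-1}(z)\Big), & z\in\Gamma_0,\\
x^\alpha\Phi_{1k}\big(y,F_{(x,y)}^{-1}(z)\big)\Big(\tfrac{d}{dx}F_{(x,y)}^{-1}(z)\Big), & z\in\Gamma_1.
\end{cases}
\]
The first term on $\Gamma_0$ is continuous into $L^\infty(\Gamma_0)$ by the weighted-derivative clause of Lemma~\ref{xpartlemma}$(f)$; the remaining terms are products of a continuous $k$-derivative, the bounded smooth factor $\tfrac{d}{dx}F_{(x,y)}^{-1}$, and the factor $x^\alpha\le 1$, hence are continuous as well. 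The remaining maps in~(\ref{vthreemaps}) are treated in the same way, now using the $y$-weighted bound on $y^\alpha\Phi_{1y}$ from Lemma~\ref{ypartlemma}$(f)$ together with the $y$-derivative of $F_{(x,y)}^{-1}$ in~(\ref{Finversexyderivatives}).

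The one point requiring a short supplementary argument—and the main (mild) obstacle—is the continuity into $L^\infty$ of the $k$-derivatives $\Phi_{0k}$ and $\Phi_{1k}$, which appear in the displayed formula but are not spelled out in part~$(f)$ as stated. Here I would invoke analyticity in the spectral variable: since $\Phi_0(x,\cdot)$ is analytic on a neighborhood of the (uniform) compact set on which it is evaluated and $x\mapsto\Phi_0(x,\cdot)$ is continuous into $L^\infty$ of a slightly larger compact set, Cauchy's integral formula for the derivative converts the latter continuity into continuity of $x\mapsto\Phi_{0k}(x,\cdot)$ into $L^\infty$ on the smaller set, with an identical estimate for $\Phi_{1k}$. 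Beyond this, there is no genuinely new analysis: the absolute continuity in the spatial variable, the uniform-in-$k$ Volterra bounds, and in particular the $x^\alpha$-weighted control of $\Phi_{0x}$ near $x=0$, have all been established in Lemmas~\ref{xpartlemma} and~\ref{ypartlemma}, and the present lemma merely transports this regularity through the fixed biholomorphism $F_{(x,y)}$ onto the contour $\Gamma$.
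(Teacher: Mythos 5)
Your proposal is correct and follows essentially the same route as the paper: compose the regularity of $\Phi_0,\Phi_1$ from part $(f)$ of Lemmas \ref{xpartlemma} and \ref{ypartlemma} with the explicit, smooth map $F_{(x,y)}^{-1}$ (using that $\Gamma$ is fixed and its preimages stay uniformly away from $\Sigma$ on $D_\delta$), and then handle the weighted derivatives via the chain rule. Your supplementary Cauchy-formula step for the continuity of $\Phi_{0k},\Phi_{1k}$ is exactly the device the paper uses (uniform convergence of holomorphic functions implies uniform convergence of derivatives on compact subsets), so no genuinely different argument is involved.
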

\begin{proof}
For $N \geq 0$, let $C^N(K)$ denote the Banach space of functions on $K$ with continuous partial derivatives of order $\leq N$ equipped with the usual norm
$$\|f\|_{C^N(K)} = \sup_{|\alpha| \leq N} \|D^\alpha f\|_{L^\infty(K)}.$$
By part $(f)$ of Lemma \ref{xpartlemma} the map
\begin{align}\label{}
(x,y) \mapsto \Phi_0(x,\cdot): D_\delta \to C(K)
\end{align}
is continuous for any compact set $K$ not intersecting $\Sigma$. 
Moreover, assuming $F_{(x,y)}^{-1}(\Gamma) \subset K $, the map
\begin{align}\label{}
(x,y) \mapsto (f \mapsto f(F_{(x,y)}^{-1}(\cdot))):D_\delta \to \mathcal{B}(C(K), C(\Gamma))
\end{align}
is continuous, because
$$\sup_{\|f\|_{C(K)}=1} \sup_{z \in \Gamma} \big|f(F_{(x,y)}^{-1}(z)) - f(F_{(x',y')}^{-1}(z))\big| \to 0$$
as $(x',y') \to (x,y)$ by uniform continuity of $f \in C(K)$ on the compact set $K$.
It follows that the composed map
$$(x,y) \mapsto \Phi_0(x,F_{(x,y)}^{-1}(\cdot)):\Int D_\delta \to C(\Gamma_0)$$
also is continuous. A similar argument shows that 
$$(x,y) \mapsto \Phi_1(y,F_{(x,y)}^{-1}(\cdot)): D_\delta \to C(\Gamma_1)$$
is continuous. Recalling the definition (\ref{jumpdef}) of $v$, this shows that the map (\ref{xtww1}) is continuous from $D_\delta $ to $ L^\infty(\Gamma)$.

If a sequence of holomorphic functions $f_n$ converges uniformly on an open set $\Omega$ then the sequence of derivatives $f_n'$ converges uniformly on compact subsets of $\Omega$.
Fix $N \geq n$ and let $K$ be a compact subset of $\mathcal{S}_{(x,0)} \setminus \Sigma_0$. Then part $(f)$ of Lemma \ref{xpartlemma} implies that the map
\begin{align}\label{xyPhi0map}
(x,y) \mapsto \Phi_0(x,\cdot):\Int D_\delta \to C^N(K)
\end{align}
is $C^n$.
On the other hand, the map
\begin{align}\label{xyfFmap}
(x,y) \mapsto (f \mapsto f(F_{(x,y)}^{-1}(\cdot))):\Int D_\delta \to \mathcal{B}(C^N(K), C(\Gamma))
\end{align}
is $C^n$. Indeed, the map is continuous because
$$\sup_{\|f\|_{C^N(K)}=1} \sup_{z \in \Gamma} \big|f(F_{(x,y)}^{-1}(z)) - f(F_{(x',y')}^{-1}(z))\big| \to 0$$
as $(x',y') \to (x,y)$ by uniform continuity of $f$ on the compact set $K$.
Moreover, the map has a continuous partial derivative with respect to $x$ because
$$\sup_{\|f\|_{C^N(K)}=1} \sup_{z \in \Gamma} \bigg|\frac{f(F_{(x+h,y)}^{-1}(z)) - f(F_{(x,y)}^{-1}(z))}{h} - \frac{d}{dx}f(F_{(x,y)}^{-1}(z))  \bigg| \to 0$$
as $h \to 0$ by the mean-value theorem and the uniform continuity of the first partial derivatives of $f$.
Similar arguments show that all partial derivatives of order $\leq n$ exist and are continuous.
We conclude that the composed map
$$(x,y) \mapsto \Phi_0(x,F_{(x,y)}^{-1}(\cdot)):\Int D_\delta \to C(\Gamma_0)$$
built from (\ref{xyPhi0map}) and (\ref{xyfFmap}) is $C^n$. 
A similar argument shows that 
$$(x,y) \mapsto \Phi_1(y,F_{(x,y)}^{-1}(\cdot)):\Int D_\delta \to C(\Gamma_1)$$
is $C^n$. Recalling the definition (\ref{jumpdef}) of $v$, this shows that the map (\ref{xtww1}) is $C^n$ as a map from $\Int D_\delta$ to $ L^\infty(\Gamma)$.
If $z \in \Gamma_0$, we have
\[
w_x(x,y,z) = \Phi_{0x}(x,F_{(x,y)}^{-1}(z)) + \Phi_{0k}(x,F_{(x,y)}^{-1}(z))\frac{d}{d x}F_{(x,y)}^{-1}(z),
\]
where $\frac{d}{d x}F_{(x,y)}^{-1}(z)$ denotes the derivative of the $k$-projection of $F_{(x,y)}^{-1}(z)$, which is given by
\[
\frac{d}{d x}F_{(x,y)}^{-1}(z) = -\frac{(z-1)^2}{4z}.
\]
Thus part $(f)$ of Lemma \ref{xpartlemma} and of Lemma \ref{ypartlemma} imply that $(x,y) \mapsto x^\alpha w_x(x,y,\cdot)$ is a continuous map $D_\delta \to L^\infty(\Gamma)$. The maps $(x,y) \mapsto y^\alpha w_y(x,y,\cdot)$ and $(x,y) \mapsto x^\alpha y^\alpha w_{xy}(x,y,\cdot)$ can be treated similarly.
\end{proof}

\begin{lemma}\label{claim3E}
The map
\begin{align}\label{xttomuI}
(x,y) \mapsto \mu(x,y, \cdot) - I
\end{align}
is continuous from $D_\delta$ to $L^2(\Gamma)$ and $C^n$ from $\Int D_\delta$ to $L^2(\Gamma)$.
Moreover, the three maps 
\begin{align}\label{muthreemaps}
(x,y) \mapsto x^\alpha \mu_x(x,y, \cdot), \qquad
(x,y) \mapsto y^\alpha \mu_x(x,y, \cdot), \qquad
(x,y) \mapsto x^\alpha y^\alpha \mu_{xy}(x,y, \cdot),
\end{align}
are continuous from $D_\delta$ to $L^2(\Gamma)$.
\end{lemma}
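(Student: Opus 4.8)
The plan is to transfer the regularity of the jump $w$ established in Lemma \ref{claim2E} onto $\mu - I = (I - \mathcal{C}_w)^{-1}\mathcal{C}_w I$ by exploiting that $w \mapsto \mathcal{C}_w$ is linear and bounded and that operator inversion is smooth. Write $R(x,y) := (I - \mathcal{C}_w)^{-1} \in \mathcal{B}(L^2(\Gamma))$, which exists for every $(x,y) \in D_\delta$ by the standing invertibility assumption. By the estimate \eqref{Cwnorm}, the linear map $w \mapsto \mathcal{C}_w$ is bounded from $L^\infty(\Gamma)$ to $\mathcal{B}(L^2(\Gamma))$; composing with Lemma \ref{claim2E} shows that $(x,y) \mapsto \mathcal{C}_w$, and hence $(x,y) \mapsto I - \mathcal{C}_w$, is continuous from $D_\delta$ and $C^n$ from $\Int D_\delta$ to $\mathcal{B}(L^2(\Gamma))$. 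Since inversion is analytic on the open set of invertible elements of the Banach algebra $\mathcal{B}(L^2(\Gamma))$ and $I - \mathcal{C}_w$ lands in this set, the resolvent $(x,y) \mapsto R(x,y)$ inherits the same regularity, with derivatives given by the usual formula, e.g. $R_x = R\,\mathcal{C}_{w_x} R$, where we have used that $\partial_x \mathcal{C}_w = \mathcal{C}_{w_x}$ by linearity of $w \mapsto \mathcal{C}_w$.

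For the first assertion I would then note that $\mathcal{C}_w I = \mathcal{C}_-(w)$, and since $\Gamma$ is compact one has the bounded inclusion $L^\infty(\Gamma) \hookrightarrow L^2(\Gamma)$, so $(x,y) \mapsto \mathcal{C}_w I$ is continuous from $D_\delta$ and $C^n$ from $\Int D_\delta$ to $L^2(\Gamma)$ by Lemma \ref{claim2E} and boundedness of $\mathcal{C}_-$. The identity $\mu - I = R\,(\mathcal{C}_w I)$ then exhibits $\mu - I$ as the image, under the bounded bilinear evaluation $(A,f) \mapsto Af$ on $\mathcal{B}(L^2(\Gamma)) \times L^2(\Gamma)$, of two maps of the stated regularity; hence $(x,y) \mapsto \mu - I$ is continuous from $D_\delta$ and $C^n$ from $\Int D_\delta$ to $L^2(\Gamma)$.

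For the weighted maps in \eqref{muthreemaps}, I would differentiate $\mu - I = R\,(\mathcal{C}_w I)$ to obtain
\begin{equation*}
\mu_x = R\,\mathcal{C}_{w_x} R\,(\mathcal{C}_w I) + R\,\mathcal{C}_-(w_x).
\end{equation*}
The key observation, exactly as in the scalar case treated in Lemma \ref{linearclaim3E}, is that the weight $x^\alpha$ is a scalar independent of the contour variable, so it commutes through the bounded operators $\mathcal{C}_\pm$ and $R$ and can be absorbed onto $w_x$:
\begin{equation*}
x^\alpha \mu_x = R\,\mathcal{C}_{x^\alpha w_x} R\,(\mathcal{C}_w I) + R\,\mathcal{C}_-(x^\alpha w_x).
\end{equation*}
By Lemma \ref{claim2E} the map $(x,y) \mapsto x^\alpha w_x$ is continuous from $D_\delta$ to $L^\infty(\Gamma)$, whence $(x,y) \mapsto \mathcal{C}_{x^\alpha w_x}$ is continuous to $\mathcal{B}(L^2(\Gamma))$ and $(x,y) \mapsto \mathcal{C}_-(x^\alpha w_x)$ is continuous to $L^2(\Gamma)$. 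Combined with the already-established continuity of $R$ and of $\mathcal{C}_w I$, this yields continuity of $(x,y) \mapsto x^\alpha \mu_x$ from $D_\delta$ to $L^2(\Gamma)$. The remaining two maps $(x,y) \mapsto y^\alpha \mu_x$ and $(x,y) \mapsto x^\alpha y^\alpha \mu_{xy}$ are handled in the same way, distributing the weights across the differentiated factors and using the corresponding weighted statements of Lemma \ref{claim2E}.

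The main obstacle is the control of $\mu_x$ up to the edges $\{x=0\}$ and $\{y=0\}$, where $w_x$ generically blows up at the rate $x^{-\alpha}$. What makes this manageable is precisely the scalar nature of the weight: since $x^\alpha$ does not depend on $z \in \Gamma$, it passes freely through $\mathcal{C}_\pm$ and the resolvent, so the singularity is confined to the factor $x^\alpha w_x$, which Lemma \ref{claim2E} controls in $L^\infty(\Gamma)$. A secondary point requiring care is the rigorous justification of the differentiation formula $R_x = R\,\mathcal{C}_{w_x} R$ and its higher-order analogues, which follows from the analyticity of Banach-algebra inversion together with the linearity of $w \mapsto \mathcal{C}_w$.
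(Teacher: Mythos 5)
Your proof is correct and takes essentially the same route as the paper: both write $\mu - I = (I-\mathcal{C}_w)^{-1}\mathcal{C}_-(w)$ and obtain the stated regularity by composing the regularity of $w$ from Lemma \ref{claim2E} with the boundedness of $f \mapsto \mathcal{C}_f$ from $L^\infty(\Gamma)$ to $\mathcal{B}(L^2(\Gamma))$ and the smoothness of inversion in the Banach algebra, absorbing the scalar weights $x^\alpha$, $y^\alpha$ onto the differentiated jump. Your write-up merely makes explicit the resolvent identity $R_x = R\,\mathcal{C}_{w_x}R$ and the product-rule expansion of $\mu_x$ that the paper's terser ``composition of maps'' argument leaves implicit.
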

\begin{proof}
In view of the definition of $\mu$, the map (\ref{xttomuI}) is given by
$$(x,y) \mapsto (I - \mathcal{C}_{w(x,y,\cdot)})^{-1}\mathcal{C}_-(w(x,y,\cdot)).$$
We note that the map
\begin{align}\label{wICw}
f \mapsto I - \mathcal{C}_f: L^\infty(\Gamma) \to \mathcal{B}(L^2(\Gamma))
\end{align}
is smooth by the estimate
\begin{align*}
\|\mathcal{C}_f\|_{\mathcal{B}(L^2(\Gamma))} \leq C\|f\|_{L^\infty(\Gamma)},
\end{align*}
and that the linear map
\begin{align}\label{wwCwI}
f \mapsto \mathcal{C}_-f:  L^2(\Gamma) \to L^2(\Gamma)
\end{align}
is bounded.
Since (\ref{xttomuI}) can be viewed as a composition of maps of the form (\ref{xtww1}), (\ref{wICw}),  and (\ref{wwCwI}) together with the smooth inversion map $I - \mathcal{C}_w \mapsto (I - \mathcal{C}_w)^{-1}$, it follows that (\ref{xttomuI}) is continuous $D_\delta \to L^2(\Gamma)$ and $C^n$ from $\Int D_\delta$ to $L^2(\Gamma)$. Similarly, $(x,y) \mapsto x^\alpha \mu_x(x,y,\cdot)$ can be viewed as composition of the continuous maps (\ref{wICw}), (\ref{wwCwI}), $I - \mathcal{C}_w \mapsto (I - \mathcal{C}_w)^{-1}$, \eqref{xtww1}, and \eqref{muthreemaps}, and is hence continuous. The maps $(x,y) \mapsto y^\alpha \mu_y(x,y,\cdot)$ and $(x,y) \mapsto x^\alpha y^\alpha \mu_{xy}(x,y,\cdot)$ can be treated analogously.
\end{proof}

\begin{lemma}\label{claim4E}
The solution $m(x,y,z)$ of the RH problem (\ref{RHm}) defined in (\ref{msolution2}) has the following properties:
\begin{enumerate}[$(a)$]
   \item For each point $(x,y) \in D_\delta$, $m(x,y,\cdot)$ obeys the symmetries
  \begin{align}\label{msymm}
 m(x,y,z) = m(x,y,0)\sigma_3m(x,y,z^{-1})\sigma_3 = \sigma_1\overline{m(x,y,\bar{z})}\sigma_1, \qquad z \in \hat{\C} \setminus \Gamma.
\end{align}

  \item For each  $z \in \hat{\C}\setminus \Gamma$, the map $(x,y) \mapsto m(x,y,z)$ is continuous from $D_\delta$ to $\C^{2 \times 2}$ and is $C^n$ from $\Int D_\delta$ to $\C^{2 \times 2}$.
  
  \item For each  $z \in \hat{\C}\setminus \Gamma$, the three maps 
  $$(x,y) \mapsto x^\alpha m_x(x,y, z), \qquad
(x,y) \mapsto y^\alpha m_x(x,y, z), \qquad
(x,y) \mapsto x^\alpha y^\alpha m_{xy}(x,y, z),$$
are continuous from $D_\delta$ to $\C^{2\times 2}$.
\end{enumerate}
\end{lemma}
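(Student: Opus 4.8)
The three assertions are essentially independent, and my plan is to treat the symmetries in $(a)$ by a uniqueness argument and the regularity statements $(b)$ and $(c)$ by differentiating the explicit formula \eqref{msolution2} under the integral sign.

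For $(a)$, the plan is to transfer the sheet-exchange and conjugation symmetries \eqref{phi0symmetries}--\eqref{phi1symmetries} of $\Phi_0$ and $\Phi_1$ into symmetries of the jump matrix $v$. Since $F_{(x,y)}$ carries the sheet involution $k^+\leftrightarrow k^-$ to the map $z\mapsto z^{-1}$ and the conjugation $k^\pm\mapsto\bar k^\pm$ to $z\mapsto\bar z$, and since $\Gamma_0$ and $\Gamma_1$ were chosen invariant under both $z\mapsto z^{-1}$ and $z\mapsto\bar z$, one reads off from \eqref{jumpdef} that $v(x,y,z^{-1})=\sigma_3 v(x,y,z)\sigma_3$ and $v(x,y,z)=\sigma_1\overline{v(x,y,\bar z)}\sigma_1$ on $\Gamma$. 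I would then verify that $\sigma_3 m(x,y,z^{-1})\sigma_3$ and $\sigma_1\overline{m(x,y,\bar z)}\sigma_1$ both satisfy the jump relation $m_+=m_- v$; the key bookkeeping point is that for a clockwise loop the $+$ (left) side is the exterior region, which is preserved by both involutions, so each involution maps the $+$ side to the $+$ side. The conjugated function is normalized to $I$ at infinity and hence equals $m$ by the uniqueness Lemma \ref{uniquenesslemma}, giving the second equality in \eqref{msymm}. The inverted function $\sigma_3 m(x,y,z^{-1})\sigma_3$ is instead normalized to the constant matrix $\sigma_3 m(x,y,0)\sigma_3$; dividing by it and applying uniqueness yields $m(x,y,z)=\sigma_3 m(x,y,0)^{-1}m(x,y,z^{-1})\sigma_3$, and substituting this relation into itself once forces $(m(x,y,0)\sigma_3)^2=I$, which rewrites it in the stated form $m(x,y,0)\sigma_3 m(x,y,z^{-1})\sigma_3$.

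For $(b)$ and $(c)$, I would use that, for each fixed $z\in\hat{\C}\setminus\Gamma$, the Cauchy functional $g\mapsto\frac{1}{2\pi i}\int_\Gamma\frac{g(s)}{s-z}\,ds$ is a bounded linear map $L^1(\Gamma)\to\C^{2\times 2}$, its kernel $1/(s-z)$ being bounded on $\Gamma$ and independent of $(x,y)$. In view of \eqref{msolution2}, it therefore suffices to control $(x,y)\mapsto(\mu w)(x,y,\cdot)$ and its weighted derivatives as maps into $L^1(\Gamma)$. Writing $\mu w=w+(\mu-I)w$ and using that multiplication is a bounded, hence smooth, bilinear map $L^2(\Gamma)\times L^\infty(\Gamma)\to L^2(\Gamma)\hookrightarrow L^1(\Gamma)$, the continuity and $C^n$-regularity of $w$ into $L^\infty(\Gamma)$ supplied by Lemma \ref{claim2E} combine with those of $\mu-I$ into $L^2(\Gamma)$ supplied by Lemma \ref{claim3E} to show that $\mu w$ is continuous $D_\delta\to L^1(\Gamma)$ and $C^n$ from $\Int D_\delta$ to $L^1(\Gamma)$; composing with the bounded functional gives $(b)$. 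For $(c)$, differentiating under the integral on $\Int D_\delta$ gives $m_x(\cdot,z)=\frac{1}{2\pi i}\int_\Gamma\frac{(\mu w)_x(s)}{s-z}\,ds$, and the product rule $x^\alpha(\mu w)_x=(x^\alpha\mu_x)w+\mu(x^\alpha w_x)$ together with the analogous identities for the $y$- and mixed derivatives expresses the weighted derivatives through the three maps in \eqref{vthreemaps} and \eqref{muthreemaps}; the same bilinear bounds show each summand is continuous $D_\delta\to L^1(\Gamma)$, so multiplying by the scalar $x^\alpha$ and passing through the bounded functional shows that $x^\alpha m_x(\cdot,z)$, and likewise the other two maps, extend continuously to all of $D_\delta$.

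The main obstacle is the orientation and normalization bookkeeping in part $(a)$: one must correctly identify the $+$ and $-$ sides of the clockwise loops under the two involutions and handle the nontrivial infinity-normalization $\sigma_3 m(x,y,0)\sigma_3$ of the inverted function before uniqueness applies. By contrast, parts $(b)$ and $(c)$ are a routine matter of combining the $L^\infty$- and $L^2$-regularity provided by Lemmas \ref{claim2E} and \ref{claim3E} via Hölder-type bilinear estimates and the boundedness of the Cauchy functional on $L^1(\Gamma)$.
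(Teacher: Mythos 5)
Your proposal is correct and follows essentially the same route as the paper: part $(a)$ by transferring the symmetries \eqref{phi0symmetries}--\eqref{phi1symmetries} to the jump matrix $v$ (the relations \eqref{vsymmetries}) and invoking the uniqueness of Lemma \ref{uniquenesslemma}, and parts $(b)$--$(c)$ by combining the representation \eqref{msolution} with Lemmas \ref{claim2E} and \ref{claim3E}. Your extra care with the orientation bookkeeping under the two involutions and with converting $\sigma_3 m(x,y,0)^{-1}m(x,y,z^{-1})\sigma_3$ into the stated form $m(x,y,0)\sigma_3 m(x,y,z^{-1})\sigma_3$ via $(m(x,y,0)\sigma_3)^2=I$ only makes explicit what the paper leaves implicit.
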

\begin{proof}
The symmetries in (\ref{phi0symmetries}) and (\ref{phi1symmetries}) show that $v$ satisfies
\begin{align}\label{vsymmetries}
\begin{cases} v(x,y,z) = \sigma_3v(x, y, z^{-1})\sigma_3, 
	\\
v(x,y,z) = \sigma_1\overline{v(x, y, \bar{z})}\sigma_1, 
\end{cases} \qquad z \in \Gamma, \ (x,y) \in D_\delta.
\end{align}
These symmetries imply that $\sigma_3 m(x,y,0)^{-1} m(x,y,z^{-1})\sigma_3$ 
and $\sigma_1\overline{m(x,y,\bar{z})}\sigma_1$ satisfy the same RH problem as $m(x,y,z)$. The symmetries in (\ref{msymm}) follow by uniqueness.

Properties $(b)$ and $(c)$ follow from (\ref{msolution}) together with the Lemmas \ref{claim2E} and \ref{claim3E}. 
\end{proof}

As in the proof of Theorem \ref{linearmainth}, we extend the definition (\ref{jumpdef}) of $v$ to an open tubular neighborhood $N(\Gamma) = N(\Gamma_0) \cup N(\Gamma_1)$ of $\Gamma$ as follows, see Figure  \ref{Gammatubular.pdf}:
\begin{align}\label{jumpdef2}
v(x,y, z) = \begin{cases}
 \Phi_0\big(x, F_{(x,y)}^{-1}(z)\big), \quad & z \in N(\Gamma_0), 
 	\\
\Phi_1\big(y, F_{(x,y)}^{-1}(z)\big), & z \in N(\Gamma_1),
\end{cases}\quad (x,y) \in D_\delta.
\end{align}
We choose $N(\Gamma)$ so narrow that it does not intersect the intervals $[-\epsilon^{-1}, -\epsilon]$ and $[\epsilon, \epsilon^{-1}]$. Then, for each $(x,y) \in D_\delta$, $v(x,y,\cdot)$ is an analytic function of $z \in N(\Gamma)$. 
Using the notation $z(x,y,P) := F_{(x,y)}(P)$, we can write (\ref{jumpdef2}) as
\begin{align}\label{vzPhi0}
  v(x,y,z(x,y,P)) = \begin{cases} \Phi_0(x, P), \quad & P \in F_{(x,y)}^{-1}\big(N(\Gamma_0)\big), \\
 \Phi_1(y, P), \quad & P \in F_{(x,y)}^{-1}\big(N(\Gamma_1)\big),  
 \end{cases}
  \quad (x,y) \in D_\delta.
\end{align}
We define functions $f_0(x,y,z)$ and $f_1(x,y,z)$ for $(x,y) \in D_\delta$ by
\begin{align*}
f_0(x,y,z) = \big[m_x(x,y,z) + z_x\big(x,y, F_{(x,y)}^{-1}(z)\big) m_z(x,y,z)\big]m(x,y,z)^{-1}, \qquad z \in \hat{\C} \setminus \Gamma,
	\\
f_1(x,y,z) = \big[m_y(x,y,z) + z_y\big(x,y, F_{(x,y)}^{-1}(z)\big) m_z(x,y,z)\big]m(x,y,z)^{-1}, \qquad z \in \hat{\C} \setminus \Gamma.
\end{align*}
Moreover, we let $n_0(x,y,z)$ and $n_1(x,y,z)$ denote the functions given by
\begin{subequations}\label{ndef}
\begin{align}\label{ndefa}
n_0(x,y,z) = \begin{cases}
  f_0(x,y,z) + m(x,y,z)\mathsf{U}_0\big(x,F_{(x,y)}^{-1}(z)\big)m(x,y,z)^{-1}, \quad &  z \in \Omega_0,	\\
  f_0(x,y,z), & z \in \Omega_1 \cup \Omega_\infty,
\end{cases}
\end{align}
and
\begin{align}\label{ndefb}
n_1(x,y,z) = \begin{cases}
  f_1(x,y,z) + m(x,y,z)\mathsf{V}_1\big(y,F_{(x,y)}^{-1}(z)\big)m(x,y,z)^{-1}, \quad &  z \in \Omega_1,	\\
  f_1(x,y,z), & z \in \Omega_0 \cup \Omega_\infty.
\end{cases}
\end{align}
\end{subequations}

\begin{lemma}\label{claim5E}
For each $(x,y) \in D_\delta$, it holds that
\begin{enumerate}[$(a)$]
\item $n_0(x,y,z)$ is an analytic function of $z \in \hat{\C} \setminus \{-1\}$ and has at most a simple pole at $z = -1$. 
\item $n_1(x,y,z)$ is an analytic function of $z \in \hat{\C} \setminus \{1\}$ and has at most a simple pole at $z = 1$. 
\item $n_0(x,y,\infty) = 0$ and $n_0(x,y,0) = m_x(x,y,0)m(x,y,0)^{-1}$.
\item $n_1(x,y,\infty) = 0$ and $n_1(x,y,0) = m_y(x,y,0)m(x,y,0)^{-1}$.
\end{enumerate}
\end{lemma}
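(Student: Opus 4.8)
The plan is to follow the proof of the linear case (Lemma \ref{linearclaim5E}) step by step, replacing the additive structure there by the multiplicative/conjugative structure forced by the fact that the jump condition for $m$ now reads $m_+ = m_- v$. The whole argument is organised around the total-derivative operator $D_x := \partial_x + z_x(x,y,F_{(x,y)}^{-1}(z))\,\partial_z$ (and its analogue $D_y$), which is nothing but differentiation of $z \mapsto (\cdot)(x,y,z(x,y,P))$ in $x$ with $P$ held fixed; in this notation $f_0 = (D_x m)\,m^{-1}$ and $f_1 = (D_y m)\,m^{-1}$. Before starting I would record two facts: first, that $m(x,y,\cdot)$ is invertible on all of $\hat{\C}\setminus\Gamma$, since by Lemma \ref{uniquenesslemma} (and its proof) $\det m$ equals the nonzero constants $1$, $\re\mathcal{E}_0(x)$, $\re\mathcal{E}_1(y)$ on $\Omega_\infty$, $\Omega_0$, $\Omega_1$ respectively; and second, the nonlinear analogues of (\ref{linearvxzxvza})--(\ref{linearvxzxvzb}), namely $D_x v = \mathsf{U}_0(x,F_{(x,y)}^{-1}(z))\,v$ and $D_y v = 0$ on $N(\Gamma_0)$, while $D_x v = 0$ and $D_y v = \mathsf{V}_1(y,F_{(x,y)}^{-1}(z))\,v$ on $N(\Gamma_1)$. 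These follow by differentiating (\ref{vzPhi0}) with $P$ fixed and using $\Phi_{0x} = \mathsf{U}_0\Phi_0$, $\Phi_{1y} = \mathsf{V}_1\Phi_1$ from Lemmas \ref{xpartlemma} and \ref{ypartlemma}.

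The key computation is the jump of $f_0$ across $\Gamma$. Applying $D_x$ to $m_+ = m_- v$ and noting that $z_x$ is analytic near $\Gamma$, I get $(D_x m)_+ = (D_x m_-)v + m_-(D_x v)$, whence $f_{0+} = f_{0-} + m_-\,(D_x v)\,v^{-1}\,m_-^{-1}$. By the relations above, $(D_x v)v^{-1}$ vanishes on $\Gamma_1$ and equals $\mathsf{U}_0(x,F_{(x,y)}^{-1}(z))$ on $\Gamma_0$; hence $f_0$ is analytic across $\Gamma_1$ and satisfies $f_{0+} = f_{0-} + m_-\mathsf{U}_0 m_-^{-1}$ across $\Gamma_0$. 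Since $\Gamma_0$ is clockwise, the interior $\Omega_0$ is the minus side, so the term $m\mathsf{U}_0 m^{-1}$ added to $f_0$ on $\Omega_0$ in (\ref{ndefa}) is precisely what is needed to cancel this jump. Therefore $n_0$ extends analytically across all of $\Gamma$. For the remaining analyticity and the pole, I would use that $z_x$ is given by (\ref{linearzx}), hence analytic on $\hat{\C}\setminus\{-1,\infty\}$ with simple poles at $-1$ and $\infty$, while $m, m_x, m_z, m^{-1}$ are analytic off $\Gamma$; thus $f_0 = (m_x + z_x m_z)m^{-1}$ is analytic on $\hat{\C}\setminus(\Gamma\cup\{-1,\infty\})$. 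At $\infty$ the asymptotics $m = I + O(z^{-1})$, $m_z = O(z^{-2})$ from (\ref{msolution2}) and $z_x = O(z)$ give $f_0 = O(z^{-1})$, so $f_0$ is in fact analytic at $\infty$ with $f_0(\infty)=0$. Near $z=-1$ the entries of $\mathsf{U}_0(x,F_{(x,y)}^{-1}(z))$ involving $\lambda$ have a simple pole (the branch point $k=x$ is sent to $z=-1$, and there $k-x \sim \tfrac{x+y-1}{4}(z+1)^2$ forces $\lambda \sim (z+1)^{-1}$); combined with the simple pole of $z_x$, both $f_0$ and $m\mathsf{U}_0 m^{-1}$ have at most a simple pole at $-1$ on $\Omega_0$. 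This proves (a).

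For (c), since $0\in\Omega_\infty$ I have $n_0(x,y,0) = f_0(x,y,0) = (m_x(x,y,0) + z_x|_{z=0}\,m_z(x,y,0))m(x,y,0)^{-1}$, and $z_x$ vanishes at $z=0$ by (\ref{linearzx}), giving $n_0(x,y,0) = m_x(x,y,0)m(x,y,0)^{-1}$; together with $n_0(\infty) = f_0(\infty) = 0$ this is (c). Statements (b) and (d) are proved by the same argument with $D_y$, $f_1$, $\mathsf{V}_1$ and $z_y$ in place of $D_x$, $f_0$, $\mathsf{U}_0$ and $z_x$; here $z_y$ has its poles at $z=1$ and $z=\infty$, the branch point $k=1-y$ is sent to $z=1$, and the conjugated term $m\mathsf{V}_1 m^{-1}$ added on $\Omega_1$ cancels the jump of $f_1$ across $\Gamma_1$. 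I expect the main obstacle to be the jump bookkeeping in the multiplicative setting---checking that $D_x$ genuinely commutes with the boundary-value operation (which rests on $z_x$ being analytic across $\Gamma$ and on continuity of the boundary values of $m$, $m_x$, $m_z$, available from Lemma \ref{claim4E}) and that the conjugated correction $m\mathsf{U}_0 m^{-1}$ matches the jump exactly on the correct ($\Omega_0$) side---rather than the pole counting, which is a local computation identical in spirit to the linear case.
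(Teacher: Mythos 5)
Your proposal follows essentially the same route as the paper's proof: you differentiate the jump relation $m_+=m_-v$ to get $f_{0+}=f_{0-}+m_-(v_x+z_xv_z)v^{-1}m_-^{-1}$, use the relations obtained by differentiating (\ref{vzPhi0}) (the paper's (\ref{vxzxvza})--(\ref{vxzxvzb})) to reduce this to $m_-\mathsf{U}_0 m_-^{-1}$ on $\Gamma_0$ and $0$ on $\Gamma_1$, cancel the jump with the conjugated correction on $\Omega_0$, and then count the simple poles of $z_x$ and of $\mathsf{U}_0(x,F_{(x,y)}^{-1}(\cdot))$ at $z=-1$ and read off the values at $z=0$ and $z=\infty$ from the asymptotics $m_x=O(z^{-1})$, $m_z=O(z^{-2})$ and the vanishing of $z_x$ at $z=0$. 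The only additions are cosmetic (the operator $D_x$, the explicit remark that $\Omega_0$ is the minus side, and the invertibility of $m$ via $\det m$ --- where, incidentally, $\det m$ equals $(\re\mathcal{E}_0(x))^{-1}$ rather than $\re\mathcal{E}_0(x)$ on $\Omega_0$, which does not affect the nonvanishing conclusion).
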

\begin{proof}
By \eqref{linearzx} the function $z_x\big(x,y, F_{(x,y)}^{-1}(z)\big)$ is analytic for $z \in \hat{\C}\setminus \{-1, \infty\}$ with simple poles at $z = -1$ and $z = \infty$. 
Equation (\ref{msolution}) implies that $m_x(x,y, z) = O(z^{-1})$ and $m_z(x,y, z) = O(z^{-2})$ as $z \to \infty$. Hence $f_0(x,y,z)$ is analytic at $z = \infty$. 
It follows that $f_0(x,y,z)$ is analytic for all $z \in \hat{\C}\setminus (\Gamma \cup \{-1\})$ with a simple pole at $z = -1$ at most.
Now $f_0$ satisfies the following jump condition across $\Gamma$:
\begin{align}\nonumber
f_{0+}(x,y,z) = &\; f_{0-}(x,y,z) + m_-(x,y,z)\big[v_x(x,y,z) + z_x\big(x,y, F_{(x,y)}^{-1}(z)\big) v_z(x,y,z)\big]
	\\ \label{f0jump}
& \times v(x,y,z)^{-1} m_-(x,y,z)^{-1}, \qquad z \in \Gamma.
\end{align}
Differentiating (\ref{vzPhi0}) with respect to $x$ and $y$ and evaluating the resulting equations at $k = F_{(x,y)}^{-1}(z)$, we find
\begin{align}\label{vxzxvza}
\begin{cases}
v_x(x,y,z) + z_x\big(x,y, F_{(x,y)}^{-1}(z)\big) v_z(x,y,z) = \Phi_{0x}(x,F_{(x,y)}^{-1}(z)),
	\\
v_y(x,y,z) + z_y\big(x,y, F_{(x,y)}^{-1}(z)\big) v_z(x,y,z) = 0,
\end{cases} \quad z \in N(\Gamma_0),
\end{align}	
and
\begin{align}\label{vxzxvzb}
\begin{cases}
v_x(x,y,z) + z_x\big(x,y, F_{(x,y)}^{-1}(z)\big) v_z(x,y,z) = 0,
	\\
v_y(x,y,z) + z_y\big(x,y, F_{(x,y)}^{-1}(z)\big) v_z(x,y,z) = \Phi_{1y}(x,F_{(x,y)}^{-1}(z)),
\end{cases} \quad z \in N(\Gamma_1).
\end{align}	
Using the first equations in (\ref{vxzxvza}) and (\ref{vxzxvzb}) in (\ref{f0jump}), we conclude that $f_0$ is analytic across $\Gamma_1$ and has the following jump across $\Gamma_0$:
\begin{align}\label{f0jump2}
f_{0+}(x,y,z) = &\; f_{0-}(x,y,z) + m_-(x,y,z)\mathsf{U}_0\big(x,F_{(x,y)}^{-1}(z)\big) m_-(x,y,z)^{-1}, \quad \ z \in \Gamma_0.
\end{align}
Thus $n_0$ is analytic across $\Gamma$. Furthermore, since $\lambda(x,y,k)$ is analytic on $\mathcal{S}_{(x,y)}$ except for a simple pole at the branch point $k = x$, the function $\mathsf{U}_0\big(x,F_{(x,y)}^{-1}(z)\big)$ is analytic for $z \in \hat{\C}\setminus \{-1\}$ with a simple pole at $z = -1$. It follows that $n_0$ satisfies $(a)$. The proof of $(b)$ is similar and relies on the second equations in (\ref{vxzxvza}) and (\ref{vxzxvzb}).

Using (\ref{linearzx}) in the definition (\ref{ndefa}) of $n_0$, we can write, for $z \in \Omega_\infty$,
\begin{align}\label{n0f0}
n_0(x,y,z) = f_0(x,y,z) = \Big[m_x(x,y,z) -\frac{1-z}{1+z} \frac{z}{1 - x - y} m_z(x,y,z)\Big]m(x,y,z)^{-1}.
\end{align}
Since $m_x(x,y, z) = O(z^{-1})$ and $m_z(x,y, z) = O(z^{-2})$ as $z \to \infty$, it follows that $n_0(x,y,\infty) = 0$. On the other hand, evaluating (\ref{n0f0}) at $z = 0$, we find 
$$n_0(x,y,0) = m_x(x,y,0)m(x,y,0)^{-1}.$$ 
This proves $(c)$; the proof of $(d)$ is analogous. 
\end{proof}

Let $\hat{m}(x,y)$ denote the function $m(x,y,z)$  evaluated at $z = 0$, that is, 
$$\hat{m}(x,y) = m(x,y,0).$$ 
Evaluating the first symmetry in (\ref{msymm}) at $z = \infty$, we find
\begin{align}\label{m0sigma3}
I = \hat{m}(x,y)\sigma_3\hat{m}(x,y)\sigma_3.
\end{align}
The unit determinant condition (\ref{detmone}) implies that $\det \hat{m} = 1$. Hence equation (\ref{m0sigma3}) reduces to
$$\text{adj}(\hat{m})=\sigma_3 \hat{m} \sigma_3,$$
where $\text{adj}$ denotes the adjugate matrix, which shows that $\hat{m}_{11} = \hat{m}_{22}$.
%Since $\det \hat{m} = 1$, this relation implies $\tr(\hat{m}\sigma_3) = 0$ (i.e. $\hat{m}_{11} = \hat{m}_{22}$).
%[Conversely, the two relations $\det m = 1$ and $\tr(\hat{m}\sigma_3) = 0$ imply that $\hat{m} \sigma_3\hat{m} \sigma_3=I$.]
A straightforward algebraic computation then yields
\begin{align}\label{mhatexpression}
\hat{m}(x,y) = \tilde{\Phi}(x,y) \sigma_3\tilde{\Phi}(x,y)\sigma_3, \qquad (x,y) \in D_\delta,
\end{align}
where the $2\times 2$-matrix valued function $\tilde{\Phi}(x,y)$ is defined by
%(our goal is to show that $\tilde{\Phi}(x,y) = \Phi(x,y,\infty^+)$)
\begin{align}\label{tildePhidef}
\tilde{\Phi}(x,y) = \frac{1}{2} \begin{pmatrix} \overline{\mathcal{E}(x,y)} & 1 \\ \mathcal{E}(x,y) & -1 \end{pmatrix}\begin{pmatrix}1 & 1 \\ 1 & -1 \end{pmatrix}
\end{align}
and the functions $\mathcal{E}(x,y)$ and $\overline{\mathcal{E}(x,y)}$ are defined by
\begin{align}\label{Edef}
\mathcal{E} = \frac{1 + \hat{m}_{11} - \hat{m}_{21}}{1 + \hat{m}_{11} + \hat{m}_{21}}, \qquad
\bar{\mathcal{E}} = -\frac{1 - \hat{m}_{11} + \hat{m}_{21}}{1 - \hat{m}_{11} - \hat{m}_{21}}.
\end{align}
The second symmetry in (\ref{msymm}) evaluated at $z = 0$ implies
\begin{align}\label{m0symm}
  \hat{m}_{11} = \overline{\hat{m}_{22}}, \qquad  \hat{m}_{12} = \overline{\hat{m}_{21}}. 
\end{align}
Recalling the relations $\hat{m}_{11} = \hat{m}_{22}$ and $\det \hat{m} = 1$, it follows that $\bar{\mathcal{E}}$ is the complex conjugate of $\mathcal{E}$. The next lemma shows, among other things, that $\mathcal{E}$ is free of singularities.

\begin{lemma}\label{claim6E}
  The function $\mathcal{E}(x,y)$ defined in (\ref{Edef}) has the following properties:
  \begin{align*}
\begin{cases}
  \mathcal{E} \in C(D_\delta) \cap C^n(\Int D_\delta), 
  \\
  x^\alpha \mathcal{E}_x, y^\alpha \mathcal{E}_y, x^\alpha y^\alpha \mathcal{E}_{xy} \in C(D_\delta),
  	\\
  \text{$\mathcal{E}(x,0) = \mathcal{E}_0(x)$ for $x \in [0,1-\delta)$,}
 	\\
  \text{$\mathcal{E}(0,y) = \mathcal{E}_1(y)$ for $y \in [0,1-\delta)$.}
  	\\
  \text{$\re \mathcal{E}(x,y) > 0$ for $(x,y) \in D_\delta$.}		
\end{cases}
\end{align*}
\end{lemma}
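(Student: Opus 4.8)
The plan is to read off every asserted property of $\mathcal{E}$ from the corresponding property of $\hat{m}(x,y) = m(x,y,0)$ already established in Lemma \ref{claim4E}, the only genuinely new input being the strict positivity of $\re\mathcal{E}$, which will simultaneously guarantee that the denominator in (\ref{Edef}) never vanishes. First I would pin down the algebraic constraints on $\hat{m}$. Combining the relation $\hat{m}_{11} = \hat{m}_{22}$ (already derived from the $\sigma_3$-symmetry and $\det\hat{m}=1$) with the conjugation symmetry (\ref{m0symm}) shows that $\hat{m}_{11}$ is real, whereupon the unit determinant condition (\ref{detmone}) reads $\hat{m}_{11}^2 - |\hat{m}_{21}|^2 = 1$, so that $|\hat{m}_{11}| \geq 1$ on all of $D_\delta$. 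Writing $a = 1 + \hat{m}_{11}$ and $b = \hat{m}_{21}$, the identity $|\hat{m}_{21}|^2 = \hat{m}_{11}^2 - 1$ gives $a^2 - |b|^2 = 2a$, and since $a$ is real,
\begin{align*}
\re\mathcal{E}(x,y) = \frac{a^2 - |b|^2}{|a+b|^2} = \frac{2(1 + \hat{m}_{11})}{|1 + \hat{m}_{11} + \hat{m}_{21}|^2}.
\end{align*}
Thus the sign of $\re\mathcal{E}$ is that of $1 + \hat{m}_{11}$, and the denominator of (\ref{Edef}) vanishes precisely when $\hat{m}_{11} = -1$.

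To show $\hat{m}_{11} \geq 1$ (in particular $\hat{m}_{11} > -1$) everywhere, I would use a connectedness argument anchored at the corner. The Volterra equations (\ref{Phi0Phi1}) give $\Phi_0(0,\cdot) = \Phi_1(0,\cdot) = I$, hence $v(0,0,\cdot) = I$, so $m(0,0,\cdot) \equiv I$ and $\hat{m}_{11}(0,0) = 1$. By part $(b)$ of Lemma \ref{claim4E}, $(x,y)\mapsto \hat{m}_{11}(x,y)$ is real-valued and continuous on the connected set $D_\delta$, so its image is a connected subset of $(-\infty,-1]\cup[1,\infty)$ containing $1$, which forces $\hat{m}_{11} \geq 1$; consequently $1 + \hat{m}_{11} \geq 2$ and the denominator $1 + \hat{m}_{11} + \hat{m}_{21}$ is nonzero throughout $D_\delta$. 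This proves $\re\mathcal{E} > 0$ and shows that $\mathcal{E}$ is a finite rational expression in the entries of $\hat{m}$ with non-vanishing denominator. The regularity assertions then follow mechanically: $\mathcal{E}$ depends smoothly on $(\hat{m}_{11}, \hat{m}_{21})$ away from the denominator's zero set, so parts $(b)$ and $(c)$ of Lemma \ref{claim4E} yield $\mathcal{E} \in C(D_\delta)\cap C^n(\Int D_\delta)$ and, via the chain and product rules, $x^\alpha\mathcal{E}_x, y^\alpha\mathcal{E}_y, x^\alpha y^\alpha\mathcal{E}_{xy} \in C(D_\delta)$ (the mixed term producing, besides $x^\alpha y^\alpha \hat{m}_{xy}$, a product $(x^\alpha \hat{m}_x)(y^\alpha \hat{m}_y)$ of continuous factors).

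For the boundary values I would treat $y = 0$; the case $x = 0$ is symmetric. On $\Gamma_1$ the jump is $v(x,0,\cdot) = \Phi_1(0,\cdot) = I$, so the RH problem at $(x,0)$ has a jump only across $\Gamma_0$, equal to $\Phi_0(x, F_{(x,0)}^{-1}(z))$. Since $\Phi_0(x,\cdot)$ is analytic on $\mathcal{S}_{(x,0)}\setminus\Sigma_0$ by Lemma \ref{xpartlemma}$(c)$, the explicit matrix equal to $\Phi_0(x,\infty^+)^{-1}$ on $\Omega_0$ and to $\Phi_0(x,\infty^+)^{-1}\Phi_0(x, F_{(x,0)}^{-1}(z))$ on $\Omega_1\cup\Omega_\infty$ is analytic off $\Gamma_0$, normalized to $I$ at $z=\infty$ (as $F_{(x,0)}^{-1}(\infty) = \infty^+$), and carries exactly the prescribed jump across $\Gamma_0$; by the uniqueness in Lemma \ref{uniquenesslemma} it coincides with $m(x,0,\cdot)$. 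Evaluating at $z = 0 \in \Omega_\infty$ and using $F_{(x,0)}^{-1}(0) = \infty^-$ together with the symmetry $\Phi_0(x,\infty^-) = \sigma_3\Phi_0(x,\infty^+)\sigma_3$ gives $\hat{m}(x,0) = \Phi_0(x,\infty^+)^{-1}\sigma_3\Phi_0(x,\infty^+)\sigma_3$, which is precisely of the form (\ref{mxy0}). Inserting the value (\ref{Phi0atinftyplus}) of $\Phi_0(x,\infty^+)$ and running the same algebra as in Lemma \ref{claim3} then yields $\mathcal{E}(x,0) = \mathcal{E}_0(x)$.

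The main obstacle is the positivity step. The identity $\re\mathcal{E} = 2(1+\hat{m}_{11})/|1+\hat{m}_{11}+\hat{m}_{21}|^2$ only determines the sign up to the dichotomy $\hat{m}_{11}\geq 1$ versus $\hat{m}_{11}\leq -1$ permitted by $|\hat{m}_{11}|\geq 1$, and excluding the second branch---equivalently, ruling out the single degenerate matrix $\hat{m} = -I$ at which (\ref{Edef}) breaks down---is not an algebraic fact but requires the global continuity and connectedness input anchored at the explicitly computed value $\hat{m}(0,0) = I$. Everything else reduces to bookkeeping built on Lemmas \ref{claim4E}, \ref{xpartlemma}, \ref{ypartlemma}, and \ref{uniquenesslemma}.
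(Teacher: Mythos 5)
Your proof is correct and follows essentially the same route as the paper: the identity $\re\mathcal{E} = 2(1+\hat m_{11})/|1+\hat m_{11}+\hat m_{21}|^2$, the constraint $\hat m_{11}\in(-\infty,-1]\cup[1,\infty)$, the continuity argument anchored at $\hat m(0,0)=I$, and the explicit solution $m_0(x,\cdot)$ of the RH problem at $y=0$ are all exactly the paper's steps. The only (harmless) difference is ordering: the paper first gets regularity by observing that the singular sets of the two formulas in (\ref{Edef}) are disjoint and closed, and only then proves $\hat m_{11}\ge 1$, whereas you establish $\hat m_{11}\ge 1$ first and deduce that the denominator never vanishes.
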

\begin{proof}
By Lemma \ref{claim4E}, the map $(x,y) \mapsto \hat{m}(x,y)$ is continuous from $D_\delta$ to $\C$ and is $C^n$ from $\Int D_\delta$ to $\C$.
The first equation in (\ref{Edef}) shows that $\mathcal{E}(x,y)$ also has these regularity properties except possibly on the set
\begin{align}\label{singular1}
\{(x,y) \in D_\delta \, | \, (\hat{m}(x,y))_{11} + (\hat{m}(x,y))_{21} = -1\}
\end{align}
where the denominator vanishes. 
In the same way, the second equation in (\ref{Edef}) shows that $\mathcal{E}(x,y)$ is regular away from the set
\begin{align}\label{singular2}
\{(x,y) \in D_\delta \, | \, (\hat{m}(x,y))_{11} + (\hat{m}(x,y))_{21} = 1\}.
\end{align}
Since the sets (\ref{singular1}) and (\ref{singular2}) are disjoint and closed in $D_\delta$, we conclude that $\mathcal{E} \in C(D_\delta) \cap C^n(\Int D_\delta)$. That $ x^\alpha \mathcal{E}_x, y^\alpha \mathcal{E}_y, x^\alpha y^\alpha \mathcal{E}_{xy} \in C(D_\delta)$ follows by differentiating \eqref{Edef} and applying Lemma \ref{claim4E}.
    
We next show that $\re \mathcal{E} > 0$ on $D_\delta$. 
Equation (\ref{Edef}) yields
$$\mathcal{E} + \bar{\mathcal{E}} = \frac{4\hat{m}_{21}}{(\hat{m}_{11} +\hat{m}_{21} )^2 -1}.$$
In light of the relations $\hat{m}_{11} = \hat{m}_{22}$ and $\det \hat{m} = 1$, this gives
\begin{align}\label{reEm0}
\re \mathcal{E} = \frac{2(1 + \hat{m}_{11})}{|1 + \hat{m}_{11} + \hat{m}_{12}|^2}.
\end{align}
On the other hand, the relations $\hat{m}_{11} = \hat{m}_{22}$ and $\det \hat{m} = 1$ together with (\ref{m0symm}) yield $\hat{m}_{11} \in \R$ and $\hat{m}_{11}^2 - |\hat{m}_{12}|^2 = 1$. We infer that $\hat{m}_{11} \in (-\infty,-1] \cup [1, \infty)$.
For $(x,y) = (0,0)$ we have $m(0,0,z) = I$ for all $z$, because the jump matrix $v$ is the identity matrix. In particular, $\hat{m}_{11}(0,0) = 1$. By continuity, this gives $(\hat{m}(x,y))_{11} \geq 1$ for all  $(x,y) \in D_\delta$.  In view of (\ref{reEm0}), it follows that $\re \mathcal{E}(x,y) > 0$ on $D_\delta$.

Finally, we show that $\mathcal{E}(x,0) = \mathcal{E}_0(x)$ for $x \in [0, 1-\delta)$; the proof that $\mathcal{E}(0,y) = \mathcal{E}_1(y)$ for $y \in [0,1-\delta)$ is similar.
For $y = 0$, the definition (\ref{jumpdef}) of $v$ yields
\begin{align}\label{jumpdef0}
v(x,0,z) = \begin{cases}
 \Phi_0\big(x, F_{(x,y)}^{-1}(z)\big), \quad & z \in \Gamma_0, 
 	\\
I, & z \in \Gamma_1,
\end{cases} \quad x \in [0, 1-\delta).
\end{align}
It follows from part $(c)$ of Lemma \ref{xpartlemma} that the $2\times 2$-matrix valued function $m_0(y,z)$ defined for $x \in [0,1-\delta)$ by
\begin{align}\label{m0def}
m_0(x,z) =  \Phi_0\big(x,\infty^+\big)^{-1} \times
\begin{cases} I,  & z \in \Omega_0, \\
 \Phi_0\big(x,F_{(x,0)}^{-1}(z)\big), & z \in \Omega_1 \cup \Omega_\infty, 
\end{cases}
\end{align}
satisfies the RH problem (\ref{RHm}) associated with $(x,y) = (x,0)$ for each $x \in [0,1-\delta)$.
Furthermore, since $0 \in \Omega_\infty$ and $F_{(x,y)}^{-1}(0) = \infty^-$, the first symmetry in (\ref{phi0symmetries}) yields 
\begin{align}\label{mx00}
m_0(x,0) = \Phi_0\big(x,\infty^+\big)^{-1}\Phi_0\big(x,\infty^-\big)
= \Phi_0\big(x,\infty^+\big)^{-1}\sigma_3\Phi_0\big(x,\infty^+\big)\sigma_3.
\end{align}
Substituting in the expression (\ref{Phi0atinftyplus}) for $\Phi_0\big(x,\infty^+\big)$, the $(11)$ and $(21)$ entries of (\ref{mx00}) give
\begin{align*}
 (m_0(x,0))_{11} =  \frac{1 + \mathcal{E}_0(x) \overline{\mathcal{E}_0(x)}}{\mathcal{E}_0(x) + \overline{\mathcal{E}_0(x)}}, \qquad
  (m_0(x,0))_{21} =  \frac{(1 - \mathcal{E}_0(x))(1 + \overline{\mathcal{E}_0(x)})}{\mathcal{E}_0(x) + \overline{\mathcal{E}_0(x)}}.
\end{align*}
Solving these two equations for $\mathcal{E}_0$ and $\bar{\mathcal{E}}_0$, we find 
\begin{align}\label{E0recover}
\mathcal{E}_0(x) = \frac{1 + (m_0(x,0))_{11} - (m_0(x,0))_{21}}{1 + (m_0(x,0))_{11} + (m_0(x,0))_{21}}.
\end{align}
But by uniqueness of the solution of the RH problem (\ref{RHm}), we have $m_0(x,z) = m(x,0,z)$; hence, comparing (\ref{E0recover}) with (\ref{Erecover}), we deduce that $\mathcal{E}(x,0) = \mathcal{E}_0(x)$ for $x \in [0, 1-\delta)$.
\end{proof}

It only remains to show that $\mathcal{E}(x,y)$ satisfies the hyperbolic Ernst equation (\ref{ernst}) in $\Int(D_\delta)$. The proof of this relies on the construction of an eigenfunction $\Phi$ of the Lax pair.
Equations (\ref{Phiatinftyplus}) and (\ref{mVPhi}) suggest that we define $\Phi(x,y,P)$ for $(x,y) \in D_\delta$ and $P \in F_{(x,y)}^{-1}(\Omega_\infty) \subset \mathcal{S}_{(x,y)}$ by
\begin{align}\label{phidef}
\Phi(x,y,P) = \tilde{\Phi}(x,y)m(x,y,F_{(x,y)}(P)),
\end{align}
where $\tilde{\Phi}(x,y)$ is the function defined in (\ref{tildePhidef}).

\begin{lemma}\label{claim7E}
The function $\Phi$  defined in (\ref{phidef}) satisfies the Lax pair equations
  \begin{align}\label{philax}
\begin{cases}
\Phi_x(x,y, P) = \mathsf{U}(x,y, P) \Phi(x,y, P),
	\\
\Phi_y(x,y, P) = \mathsf{V}(x,y,P) \Phi(x,y,P),
\end{cases}
\end{align}
for $(x,y) \in \Int D_\delta$ and $P \in F_{(x,y)}^{-1}(\Omega_\infty)$.
\end{lemma}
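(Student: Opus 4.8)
The plan is to mirror the argument in the collinear case (Lemma \ref{linearclaim6E}), adapting the scalar additive structure there to the present multiplicative matrix setting. The starting point is the analyticity structure of $n_0$ from Lemma \ref{claim5E}: since $n_0(x,y,\cdot)$ is analytic on $\hat{\C}\setminus\{-1\}$ with at most a simple pole at $z=-1$ and vanishes at $z=\infty$, it must take the form $n_0(x,y,z) = A(x,y)/(z+1)$ for a matrix $A(x,y)$ independent of $z$. Evaluating at $z=0$ and invoking part $(c)$ of Lemma \ref{claim5E} identifies the residue as $A = n_0(x,y,0) = m_x(x,y,0)m(x,y,0)^{-1} = \hat{m}_x\hat{m}^{-1}$.

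Next I would differentiate $\Phi = \tilde{\Phi}m$ in $x$ at fixed $P$. Since $z = F_{(x,y)}(P)$ depends on $(x,y)$, the chain rule gives $\Phi_x = \tilde{\Phi}_x m + \tilde{\Phi}(m_x + z_x m_z)$, which by the definition of $f_0$ equals $\tilde{\Phi}_x m + \tilde{\Phi}f_0 m$. For $P \in F_{(x,y)}^{-1}(\Omega_\infty)$ we have $f_0 = n_0 = A/(z+1)$, so $\Phi_x = \big[\tilde{\Phi}_x\tilde{\Phi}^{-1} + (z+1)^{-1}\tilde{\Phi}A\tilde{\Phi}^{-1}\big]\Phi$. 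The goal then reduces to showing the bracket equals $\mathsf{U}$. Using $\lambda = 1 - 2/(z+1)$, one writes $\mathsf{U} = \mathsf{U}^{(0)} + (z+1)^{-1}\mathsf{U}^{(1)}$, where $\mathsf{U}^{(0)}$ is the value of $\mathsf{U}$ at $\lambda = 1$ and $\mathsf{U}^{(1)} = -(\re\mathcal{E})^{-1}\big(\begin{smallmatrix} 0 & \bar{\mathcal{E}}_x \\ \mathcal{E}_x & 0\end{smallmatrix}\big)$. Matching the $z$-independent and $(z+1)^{-1}$ parts separately, it suffices to verify the two identities $\tilde{\Phi}_x\tilde{\Phi}^{-1} = \mathsf{U}^{(0)}$ and $\tilde{\Phi}A\tilde{\Phi}^{-1} = \mathsf{U}^{(1)}$.

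The first identity is purely algebraic: differentiating the explicit form (\ref{tildePhidef}) of $\tilde{\Phi}$ and multiplying out shows $\tilde{\Phi}_x = \mathsf{U}^{(0)}\tilde{\Phi}$ for any $\mathcal{E}$, the factor $\mathcal{E}+\bar{\mathcal{E}} = 2\re\mathcal{E}$ emerging from the columns of $\tilde{\Phi}$. The second identity is the crux. Here I would insert the relation between $\hat{m}$ and $\tilde{\Phi}$ recorded in (\ref{mhatexpression}) (equivalently (\ref{mxy0})), namely $\hat{m} = \tilde{\Phi}^{-1}\sigma_3\tilde{\Phi}\sigma_3$. Differentiating this, using $(\tilde{\Phi}^{-1})_x = -\tilde{\Phi}^{-1}\mathsf{U}^{(0)}$ (a consequence of the first identity) and $\hat{m}^{-1} = \sigma_3\tilde{\Phi}^{-1}\sigma_3\tilde{\Phi}$, the products telescope to $A = \hat{m}_x\hat{m}^{-1} = \tilde{\Phi}^{-1}\big(\sigma_3\mathsf{U}^{(0)}\sigma_3 - \mathsf{U}^{(0)}\big)\tilde{\Phi}$, whence $\tilde{\Phi}A\tilde{\Phi}^{-1} = \sigma_3\mathsf{U}^{(0)}\sigma_3 - \mathsf{U}^{(0)}$. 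Since conjugation by $\sigma_3$ flips the sign of the off-diagonal entries of $\mathsf{U}^{(0)}$, this difference is exactly $\mathsf{U}^{(1)}$, giving $\Phi_x = \mathsf{U}\Phi$. The $y$-equation follows from the symmetric argument, using $n_1$, parts $(b)$ and $(d)$ of Lemma \ref{claim5E}, and the analogous relation for $\mathsf{V}$.

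The main obstacle is the residue identity $\tilde{\Phi}A\tilde{\Phi}^{-1} = \mathsf{U}^{(1)}$. Unlike the scalar case, where the coefficient $-2V_x$ combined automatically with $1 - 2/(z+1) = \lambda$, here one must exploit the precise algebraic relation $\hat{m} = \tilde{\Phi}^{-1}\sigma_3\tilde{\Phi}\sigma_3$ together with $\tilde{\Phi}_x = \mathsf{U}^{(0)}\tilde{\Phi}$ to see that the $\sigma_3$-conjugated logarithmic derivative of $\hat{m}$ produces precisely the off-diagonal matrix $\mathsf{U}^{(1)}$. Care is also needed to guard against circularity: the identity $\tilde{\Phi}_x = \mathsf{U}^{(0)}\tilde{\Phi}$ holds as an algebraic consequence of the definition (\ref{Edef}) of $\mathcal{E}$, independently of whether $\mathcal{E}$ is yet known to solve the Ernst equation.
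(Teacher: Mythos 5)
Your proposal is correct and follows the paper's proof essentially verbatim: both deduce $n_0(x,y,z)=\hat{m}_x\hat{m}^{-1}/(z+1)$ from Lemma \ref{claim5E}, differentiate $\Phi=\tilde{\Phi}m$, and identify the resulting coefficient with $\mathsf{U}$ via $\lambda = 1-\tfrac{2}{z+1}$. The only difference is that you make explicit the algebra the paper dismisses as a "straightforward computation," organizing it cleanly through the splitting $\mathsf{U}=\mathsf{U}^{(0)}+(z+1)^{-1}\mathsf{U}^{(1)}$ and the identity $\hat{m}_x\hat{m}^{-1}=\tilde{\Phi}^{-1}\bigl(\sigma_3\mathsf{U}^{(0)}\sigma_3-\mathsf{U}^{(0)}\bigr)\tilde{\Phi}$ (correctly using $\hat{m}=\tilde{\Phi}^{-1}\sigma_3\tilde{\Phi}\sigma_3$, i.e.\ the form in (\ref{mxy0}) rather than the version printed in (\ref{mhatexpression}), which is missing an inverse).
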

\begin{proof}
The analyticity structure of $n_0$ established in Lemma \ref{claim5E} implies that there exists a $2\times 2$-matrix valued function  $C(x,y)$ independent of  $z$ such that
\begin{align}\label{n0C}
n_0(x,y,z) = \frac{C(x,y)}{z+1}, \qquad z \in \hat{\C}.
\end{align}
We determine $C(x,y)$ by evaluating (\ref{n0C}) at $z = 0$. By Lemma \ref{claim5E}, this gives $C(x,y) = \hat{m}_x(x,y)\hat{m}(x,y)^{-1}$.
It follows that
\begin{align}\label{noutside}
  n_0 = \frac{\hat{m}_x(x,y)\hat{m}(x,y)^{-1}}{z+1} = \bigg(m_x -\frac{1-z}{1+z} \frac{z}{1 - x - y} m_z\bigg)m^{-1}
\end{align}
for $(x,y) \in D_\delta$ and $z \in \Omega_\infty$. %Note that the case of $n_0$ being free of singularities is included again. Then $C=0=m_x$.

Differentiating (\ref{phidef}) with respect to $x$ and using (\ref{noutside}), we find, for $P \in F_{(x,y)}^{-1}(\Omega_\infty)$,
\begin{align*}
\Phi_x(x,y,P)
& = \tilde{\Phi}_x(x,y)m(x,y,F_{(x,y)}(z))
+ \tilde{\Phi}(x,y)(m_x + z_x m_z)
	\\
& = \tilde{\Phi}_x(x,y)m(x,y,F_{(x,y)}(z))
+ \tilde{\Phi}(x,y) \frac{\hat{m}_x(x,y)\hat{m}(x,y)^{-1}}{z+1}m(x,y,z) 
	\\
& = \bigg(\tilde{\Phi}_x(x,y)\tilde{\Phi}(x,y)^{-1} 
+ \tilde{\Phi}(x,y) \frac{\hat{m}_x(x,y)\hat{m}(x,y)^{-1}}{z(x,y,P)+1}\tilde{\Phi}(x,y)^{-1}\bigg)\Phi(x,y,P) 
\end{align*}
Substituting in the expressions (\ref{tildePhidef}) and (\ref{mhatexpression}) for $\tilde{\Phi}$ and $\hat{m}$ in terms of $\mathcal{E}$, $\bar{\mathcal{E}}$, and recalling that 
$$1 - \frac{2}{z+1} = \lambda,$$
this yields the first equation in (\ref{philax}). A similar argument gives the second equation in (\ref{philax}). 
\end{proof}

\begin{lemma}\label{claim8E}
The complex-valued function $\mathcal{E}:D \to \R$ defined by (\ref{Erecover}) satisfies the hyperbolic Ernst equation (\ref{ernst}) in $\Int(D_\delta)$.
\end{lemma}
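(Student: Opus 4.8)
The plan is to derive the hyperbolic Ernst equation (\ref{ernst}) from the compatibility of the Lax pair satisfied by the eigenfunction $\Phi$ of (\ref{phidef}). By Lemma \ref{claim7E}, for each fixed $k$ with $k^+ \in F_{(x,y)}^{-1}(\Omega_\infty)$, the matrix-valued map $(x,y) \mapsto \Phi(x,y,k^+)$ solves the Lax pair equations $\Phi_x = \mathsf{U}\Phi$ and $\Phi_y = \mathsf{V}\Phi$ on $\Int D_\delta$, where $\mathsf{U}$ and $\mathsf{V}$ are built from the constructed potential $\mathcal{E}$ of (\ref{Edef}). By Lemma \ref{claim6E} we have $\mathcal{E} \in C^n(\Int D_\delta)$, and by Lemma \ref{claim4E} the solution $m$ is $C^n$ on $\Int D_\delta$; together with the smoothness of $F_{(x,y)}$ and the fact that $\tilde{\Phi}$ in (\ref{tildePhidef}) depends polynomially on $\mathcal{E}$ and $\bar{\mathcal{E}}$, this shows that $(x,y) \mapsto \Phi(x,y,k^+)$ is $C^2$ on $\Int D_\delta$. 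Since $n \geq 2$, the mixed partials commute, so $\Phi_{xy} = \Phi_{yx}$.

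First I would differentiate the two Lax pair equations, holding $k$ fixed so that $\lambda = \lambda(x,y,k^+)$ varies with derivatives $\lambda_x = \frac{\lambda}{2(k-x)}$ and $\lambda_y = \frac{1}{2(k-x)\lambda}$, to obtain
$$\Phi_{xy} = (\mathsf{U}_y + \mathsf{U}\mathsf{V})\Phi, \qquad \Phi_{yx} = (\mathsf{V}_x + \mathsf{V}\mathsf{U})\Phi.$$
Subtracting these, invoking $\Phi_{xy} = \Phi_{yx}$, and cancelling the invertible factor $\Phi$ yields the zero-curvature relation
$$\mathsf{U}_y - \mathsf{V}_x + [\mathsf{U}, \mathsf{V}] = 0, \qquad k^+ \in F_{(x,y)}^{-1}(\Omega_\infty).$$
The invertibility of $\Phi$ is guaranteed by $\det \Phi = (\det \tilde{\Phi})(\det m) = (\re \mathcal{E}) \cdot 1 > 0$, which follows from (\ref{detmone}), the value of $\det \tilde{\Phi}$ read off from (\ref{tildePhidef}), and $\re \mathcal{E} > 0$ from Lemma \ref{claim6E}.

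Second I would extract (\ref{ernst}) from this matrix identity. The entries of $\mathsf{U}_y - \mathsf{V}_x + [\mathsf{U},\mathsf{V}]$ are rational functions of $\lambda$ (through $\lambda$, $\lambda^{-1}$, $\lambda_x$, $\lambda_y$) depending analytically on $k$; since the relation holds on the open set $F_{(x,y)}^{-1}(\Omega_\infty)$ of the Riemann surface, it holds identically in $\lambda$. Separating the distinct powers of $\lambda$ then yields scalar equations which, by the very computation that establishes (\ref{lax}) as a Lax pair for (\ref{ernst}) at the start of Section \ref{ernstsec}, reduce to (\ref{ernst}) for $\mathcal{E}$ together with its complex conjugate for $\bar{\mathcal{E}}$. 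Since $\bar{\mathcal{E}}$ equals the complex conjugate of $\mathcal{E}$ (as established just before Lemma \ref{claim6E}), the two collapse to the single equation (\ref{ernst}), proving the claim on $\Int D_\delta$.

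The main obstacle is the bookkeeping in the second step: one must verify that, after inserting the explicit expressions for $\lambda_x$ and $\lambda_y$ and collecting powers of $\lambda$, the diagonal and off-diagonal entries of the zero-curvature matrix collapse exactly to (\ref{ernst}) rather than to a weaker or redundant relation, and in particular that the $\lambda$-dependence keeps the condition nontrivial. This is the matrix analogue of the explicit scalar computation carried out in the proof of Lemma \ref{linearclaim7E}; the cleanest route is to invoke the equivalence between the zero-curvature condition and (\ref{ernst}) already used to justify the Lax pair, rather than redoing the calculation by hand.
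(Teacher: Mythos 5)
Your proposal is correct and follows essentially the same route as the paper: invoke the Lax pair for $\Phi$ from Lemma \ref{claim7E}, use the $C^n$ ($n\geq 2$) regularity to commute the mixed partials, and read off (\ref{ernst}) from the resulting compatibility condition (the paper writes out the $(21)$-entry of $\Phi_{xy}-\Phi_{yx}=0$ explicitly, which is the same computation as your zero-curvature identity $\mathsf{U}_y-\mathsf{V}_x+[\mathsf{U},\mathsf{V}]=0$ after cancelling the invertible factor $\Phi$). Your added remarks on $\det\Phi=\re\mathcal{E}>0$ and on the $\lambda$-dependence are consistent with, and slightly more explicit than, the paper's argument.
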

\begin{proof}
Fix a point $P = (\lambda, k)$ in $F_{(x,y)}^{-1}(\Omega_\infty) \subset \mathcal{S}_{(x,y)}$. By Lemma \ref{claim6E}, the map $(x,y) \mapsto \Phi(x,y, P)$ is $C^n$ from $\Int D_\delta$ to $\C$ and satisfies the Lax pair equations (\ref{philax}). Since  $n \geq 2$, it follows that $\Phi$ satisfies 
\begin{align*}
\Phi_{xy}(x,y,P) - \Phi_{yx} (x,y,P) = 0, \qquad (x,y) \in \Int D_\delta.
\end{align*}
The $(21)$-entry of this equation reads
$$\frac{(1-x-y)\lambda}{2(\re \mathcal{E}(x,y))^2 (1-k-y)}\bigg\{(\re \mathcal{E})\bigg(\mathcal{E}_{xy} - \frac{\mathcal{E}_x + \mathcal{E}_y}{2(1-x-y)}\bigg) - \mathcal{E}_x \mathcal{E}_y\bigg\} = 0.$$
It follows that $\mathcal{E}(x,y)$ satisfies (\ref{ernst}) for $(x,y) \in \Int D_\delta$. 
 This completes the proof of the lemma.
\end{proof}

Lemma \ref{claim8E} completes the proof of part $(a)$ of Theorem \ref{mainth3}.

The following lemma proves part (b).

\begin{lemma}\label{lemma9E}
	There exists a constant $c_\delta > 0$ such that if
	\begin{align}
	\label{BDsmall}
	\| \mathcal{E}_0 / \re \mathcal{E}_0 \|_{L^1([0,1-\delta))} , \, \| \mathcal{E}_1/ \re \mathcal{E}_1 \|_{L^1([0,1-\delta))}    < c_\delta,
	\end{align}
	then the linear operator $I - \mathcal{C}_{w(x,y,\cdot)} \in \mathcal{B}(L^2(\Gamma))$ is bijective for each  $(x,y) \in D_\delta$.
\end{lemma}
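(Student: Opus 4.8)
The plan is to reduce the bijectivity of $I - \mathcal{C}_{w(x,y,\cdot)}$ to a uniform small‑norm condition and then to verify that condition from the hypothesis. Since $\mathcal{B}(L^2(\Gamma))$ is a Banach algebra, the Neumann series shows that $I - \mathcal{C}_{w(x,y,\cdot)}$ is invertible (hence bijective) as soon as $\|\mathcal{C}_{w(x,y,\cdot)}\|_{\mathcal{B}(L^2(\Gamma))} < 1$. By the bound (\ref{Cwnorm}) with $C = \|\mathcal{C}_-\|_{\mathcal{B}(L^2(\Gamma))}$, it therefore suffices to exhibit a constant $c_\delta > 0$ such that (\ref{BDsmall}) forces $\|w(x,y,\cdot)\|_{L^\infty(\Gamma)} < 1/C$ simultaneously for all $(x,y)\in D_\delta$. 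Thus the whole lemma comes down to a uniform $L^\infty$ estimate for $w = v - I$.

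To obtain this estimate I would use the Volterra series bound already established in the proof of Lemma \ref{xpartlemma}$(a)$. On $\Gamma_0$ we have $v(x,y,z) = \Phi_0\big(x,F_{(x,y)}^{-1}(z)\big)$, so writing $k = F_{(x,y)}^{-1}(z)$ that bound gives
\begin{align*}
\big|\Phi_0(x,k^+) - I\big| \le \sum_{j=1}^\infty \frac{1}{j!}\big\|\mathsf{U}_0(\cdot,k^+)\big\|_{L^1([0,x])}^{\,j} = e^{\|\mathsf{U}_0(\cdot,k^+)\|_{L^1([0,x])}} - 1.
\end{align*}
From the explicit form (\ref{U0def}) of $\mathsf{U}_0$, its entries are controlled by $\tfrac12\big(1 + |\lambda(\cdot,0,k^+)|\big)\,|\mathcal{E}_{0x}|/\re\mathcal{E}_0$, so that $\|\mathsf{U}_0(\cdot,k^+)\|_{L^1([0,x])} \le M\,\|\mathcal{E}_{0x}/\re\mathcal{E}_0\|_{L^1([0,1-\delta))}$ once $|\lambda(x',0,k^+)|$ is bounded by a constant $M$ uniformly over $x'\in[0,x]$, $z\in\Gamma_0$, and $(x,y)\in D_\delta$. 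The same argument applied on $\Gamma_1$ with $\mathsf{V}_1$ and $1/\lambda(0,y',k^+)$ yields the analogous bound in terms of $\|\mathcal{E}_{1y}/\re\mathcal{E}_1\|_{L^1}$.

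The hard part is exactly this uniform bound on $\lambda$ and $1/\lambda$, i.e.\ uniform control of $F_{(x,y)}^{-1}(\Gamma)$ over $D_\delta$. I would argue it as follows. Since $\overline{D_\delta}$ is compact, $(x,y)\mapsto F_{(x,y)}^{-1}$ depends continuously on $(x,y)$, and $\Gamma$ is a \emph{fixed} contour disjoint from the intervals $[-\epsilon^{-1},-\epsilon]$ and $[\epsilon,\epsilon^{-1}]$ that contain $F_{(x,y)}(\Sigma_0)$ and $F_{(x,y)}(\Sigma_1)$ for all $(x,y)\in D_\delta$, the sets $F_{(x,y)}^{-1}(\Gamma_0)$ and $F_{(x,y)}^{-1}(\Gamma_1)$ stay in fixed compact sets $K_0\subset\hat{\C}\setminus[0,1-\delta]$ and $K_1\subset\hat{\C}\setminus[\delta,1]$; this is the same uniform compactness already invoked in the proof of Lemma \ref{claim2E}. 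Here the restriction to $D_\delta$ is essential: $x+y\le 1-\delta$ forces $(1-y)-x\ge\delta$, so the two branch cuts, and hence the two families of loops, remain separated. Because $\lambda(x',0,\cdot)$ has its only pole at $k=x'\in[0,1-\delta]$ and $1/\lambda(0,y',\cdot)$ its only pole at $k=1-y'\in[\delta,1]$, both are bounded by a common constant $M$ on $K_0$ and $K_1$, respectively, by part $(f)$ of Lemmas \ref{xpartlemma} and \ref{ypartlemma}.

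Putting the pieces together, with $c := \max\{\|\mathcal{E}_{0x}/\re\mathcal{E}_0\|_{L^1},\,\|\mathcal{E}_{1y}/\re\mathcal{E}_1\|_{L^1}\}$ one gets $\|w\|_{L^\infty(\Gamma)}\le e^{Mc}-1$ uniformly on $D_\delta$, so choosing $c_\delta$ so small that $C\big(e^{Mc_\delta}-1\big) < 1$ completes the argument: (\ref{BDsmall}) then yields $\|\mathcal{C}_{w(x,y,\cdot)}\|_{\mathcal{B}(L^2(\Gamma))} < 1$ and hence bijectivity of $I - \mathcal{C}_{w(x,y,\cdot)}$ for every $(x,y)\in D_\delta$. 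I expect the only genuine work to be the uniform $\lambda$‑bound of the previous paragraph; everything else is the Neumann series together with estimates already recorded in Lemmas \ref{xpartlemma}--\ref{claim2E}.
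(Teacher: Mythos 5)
Your proposal is correct and follows essentially the same route as the paper: the Neumann series criterion $\|\mathcal{C}_w\|_{\mathcal{B}(L^2(\Gamma))}\le \|\mathcal{C}_-\|\,\|w\|_{L^\infty(\Gamma)}<1$ combined with the Volterra series estimate \eqref{phi0jestimate}--\eqref{phi0assum} to make $\|w\|_{L^\infty(\Gamma)}$ small when the $L^1$-norms in \eqref{BDsmall} are small. The only difference is that you spell out the uniform bound on $\lambda$ and $1/\lambda$ over $F_{(x,y)}^{-1}(\Gamma)$ for $(x,y)\in D_\delta$, which the paper's proof leaves implicit in the phrase ``by choosing $c_\delta$ sufficiently small.''
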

\begin{proof}
It follows from \eqref{phi0jestimate} and \eqref{phi0assum} that, by choosing $c_\delta$ sufficiently small, equation \eqref{BDsmall} gives
\[
|\Phi_0(x,k^\pm)-I| < \|\mathcal{C}_-\|_{\mathcal{B}(L^2(\Gamma))}^{-1}
\]
and an analogous estimate holds for $|\Phi_1(y,k^\pm)-I|$. This yields
\begin{align}\label{wLinftyCminus}
\|w(x,y,\cdot)\|_{L^\infty(\Gamma)} <  \|\mathcal{C}_-\|_{\mathcal{B}(L^2(\Gamma))}^{-1}
\end{align}
for all $(x,y) \in D_\delta$ whenever (\ref{BDsmall}) holds. Indeed, equation (\ref{wLinftyCminus}) implies
$$\|\mathcal{C}_w\|_{\mathcal{B}(L^2(\Gamma))} \leq \|\mathcal{C}_-\|_{\mathcal{B}(L^2(\Gamma))} \|w\|_{L^\infty(\Gamma)} < 1$$
for all $(x,y) \in D_\delta$.
Hence $I - \mathcal{C}_{w(x,y,\cdot)}$ is invertible in $\mathcal{B}(L^2(\Gamma))$ for each  $(x,y) \in D_\delta$. 
\end{proof}

For part (c) assume $\mathcal{E}_0, \mathcal{E}_1>0$ and write $V_0= -\log \mathcal{E}_0$, $V_1 = -\log \mathcal{E}_1$. Then there exists a $C^n$-solution $V(x,y)$ of the Goursat problem for the Euler-Darboux equation \eqref{linearernst} with data $\{ V_0,V_1 \}$ by Theorem \ref{linearmainth}. Hence $\mathcal{E}=e^{- V}$ is a $C^n$-solution of the Goursat problem for \eqref{ernst} with data $\{ \mathcal{E}_0,\mathcal{E}_1 \}$. This completes the proof of part (c) and hence of Theorem \ref{mainth3}.

\subsection{Proof of Theorem \ref{mainth4}}
Let $\mathcal{E}_0(x)$, $x \in [0, 1)$, and $\mathcal{E}_1(y)$, $y \in [0,1)$, be complex-valued functions satisfying (\ref{E0E1assumptions}) for some $n \geq 2$ and some $\alpha \in (0,1)$. Suppose $\mathcal{E}(x,y)$ is a $C^n$-solution of the Goursat problem for (\ref{ernst}) in $D$ with data $\{\mathcal{E}_0, \mathcal{E}_1\}$ and define $m_1, m_2 \in \C$ by (\ref{m1m2def}).
We will prove (\ref{boundarylimita}); the proof of (\ref{boundarylimitb}) is similar. 

By \eqref{Erecover}, we have
\begin{align}\label{calEx}
x^\alpha \mathcal{E}_x(x,y) = 2x^\alpha \frac{\hat{m}_{21}(x,y) \hat{m}_{11x}(x,y) - (1 + \hat{m}_{11}(x,y))\hat{m}_{21x}(x,y)}{(1 + \hat{m}_{11}(x,y) + \hat{m}_{21}(x,y))^2},
\end{align}
where, as before, $\hat{m}(x,y) = m(x,y,0)$. Thus, in order to compute $\lim_{x \downarrow 0} x^\alpha \mathcal{E}_x(x,y)$, it is enough to compute $\hat{m}(0,y)$ and $\lim_{x \downarrow 0} x^\alpha \hat{m}_x(x,y)$.
Since $m = I + \mathcal{C}(\mu w)$ and 
\begin{align}\label{mxmux}
m_x =  \mathcal{C}(\mu_x w) + \mathcal{C}(\mu w_x), 
\end{align}
this means that we are interested in the values of
\[
w(0,y,z), \quad \mu(0,y,z),\quad  \lim_{x \downarrow 0} x^\alpha w_x(x,y,z), \quad \lim_{x \downarrow 0} x^\alpha \mu_x(x,y,z).
\]

\begin{lemma}\label{lemmaformulasx=0}
We have
\begin{align}\label{w(0)}
& w(0,y, z) = \begin{cases}
0,  & z \in \Gamma_0, 
\\
\Phi_1\big(y, F_{(0,y)}^{-1}(z)\big) - I, \quad & z \in \Gamma_1,
\end{cases} \quad y \in [0,1),
	\\ \label{mu(0)}
& \mu(0,y,z) = \begin{cases}
\Phi_1\big(y,\infty^+\big)^{-1}  \Phi_1\big(y,F_{(0,y)}^{-1}(z)\big), \quad & z \in \Gamma_0, 
\\
\Phi_1\big(y,\infty^+\big)^{-1} , & z \in \Gamma_1,
\end{cases}\quad y \in [0,1),
\end{align}
and
\begin{align}\label{hatm(0)}
 \hat{m}(0,y) = \Phi_1\big(y,\infty^+\big)^{-1}\sigma_3 \Phi_1\big(y,\infty^+\big) \sigma_3, \qquad y \in [0,1).
\end{align}
\end{lemma}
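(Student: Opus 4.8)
The plan is to exploit the fact that at $x=0$ the jump matrix degenerates to the identity on $\Gamma_0$, so that the RH problem collapses to one supported only on $\Gamma_1$, which can then be solved explicitly in terms of $\Phi_1$. First I would compute $w(0,y,\cdot)$. Setting $x=0$ in the Volterra equation (\ref{Phi0Phi1a}) gives $\Phi_0(0,k^\pm)=I$, so by the definition (\ref{jumpdef}) of $v$ we have $v(0,y,z)=I$ for $z\in\Gamma_0$ and $v(0,y,z)=\Phi_1\big(y,F_{(0,y)}^{-1}(z)\big)$ for $z\in\Gamma_1$. Since $w=v-I$, this immediately yields (\ref{w(0)}).

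Next I would construct the solution $m(0,y,\cdot)$ explicitly. Guided by the boundary formula (\ref{m0def}), I define the candidate
$$\tilde m(y,z) = \Phi_1\big(y,\infty^+\big)^{-1} \times \begin{cases} \Phi_1\big(y,F_{(0,y)}^{-1}(z)\big), & z\in\Omega_0\cup\Omega_\infty,\\ I, & z\in\Omega_1.\end{cases}$$
By part $(c)$ of Lemma \ref{ypartlemma}, $\Phi_1(y,\cdot)$ is analytic on $\mathcal{S}_{(0,y)}\setminus\Sigma_1$, and since $F_{(0,y)}$ is a biholomorphism carrying $\Sigma_1$ into $\Omega_1$, the function $z\mapsto\Phi_1\big(y,F_{(0,y)}^{-1}(z)\big)$ is analytic on $\hat{\C}\setminus F_{(0,y)}(\Sigma_1)$; in particular it is analytic across $\Gamma_0$, so $\tilde m$ has no jump there, consistent with $w(0,y,\cdot)=0$ on $\Gamma_0$. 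Using $F_{(0,y)}^{-1}(\infty)=\infty^+$ gives $\tilde m(y,\infty)=I$, and computing the boundary values across $\Gamma_1$ (with the clockwise orientation, so that the interior $\Omega_1$ lies on the $-$ side and $\Omega_\infty$ on the $+$ side) shows $\tilde m_-^{-1}\tilde m_+=\Phi_1\big(y,F_{(0,y)}^{-1}(z)\big)=v(0,y,z)$. Hence $\tilde m$ solves the RH problem (\ref{RHm}) at $(x,y)=(0,y)$, and by the uniqueness of Lemma \ref{uniquenesslemma} we conclude $m(0,y,\cdot)=\tilde m(y,\cdot)$.

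Finally I would read off the remaining two formulas from this explicit $m$. From the representation $m=I+\mathcal{C}(\mu w)$ together with the fact that $\mu$ satisfies $(I-\mathcal{C}_w)\mu=I$, i.e. $\mu=I+\mathcal{C}_-(\mu w)$, one obtains $\mu=m_-$; taking the boundary value of $m(0,y,\cdot)$ from the $-$ side of each subcontour — which is $\Omega_0$ along $\Gamma_0$ and $\Omega_1$ along $\Gamma_1$ — produces exactly (\ref{mu(0)}). For $\hat m(0,y)=m(0,y,0)$, I note that $0\in\Omega_\infty$ and $F_{(0,y)}^{-1}(0)=\infty^-$, so $\hat m(0,y)=\Phi_1\big(y,\infty^+\big)^{-1}\Phi_1\big(y,\infty^-\big)$; substituting the symmetry $\Phi_1(y,\infty^-)=\sigma_3\Phi_1(y,\infty^+)\sigma_3$ from part $(b)$ of Lemma \ref{ypartlemma} gives (\ref{hatm(0)}).

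The main obstacle is the bookkeeping in the middle step: one must correctly match the three domains $\Omega_0,\Omega_1,\Omega_\infty$ to the two branches of the candidate and track the clockwise orientation so that the $\pm$ boundary values land on the correct sides of $\Gamma_0$ and $\Gamma_1$. Everything else reduces to the analyticity and symmetry properties of $\Phi_1$ already recorded in Lemma \ref{ypartlemma} and to the uniqueness statement of Lemma \ref{uniquenesslemma}; once the side conventions are pinned down, the three identities follow directly.
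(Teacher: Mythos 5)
Your proof is correct and follows essentially the same route as the paper: the paper obtains the explicit formula for $m(0,y,\cdot)$ by specializing the representation (\ref{mVPhi}) at $x=0$, while you arrive at the identical expression by writing it down as a candidate, verifying the jump and normalization, and invoking the uniqueness of Lemma \ref{uniquenesslemma} — a cosmetic difference. The derivations of (\ref{w(0)}), (\ref{mu(0)}) via $\mu=m_-$, and (\ref{hatm(0)}) via $F_{(0,y)}^{-1}(0)=\infty^-$ and the $\sigma_3$-symmetry coincide with the paper's.
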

\begin{proof}
Equation \eqref{w(0)} is immediate from (\ref{jumpdef}). Moreover, by \eqref{mVPhi},
\begin{align} \label{formulam0y}
m(0,y,z) =  \Phi_1\big(y,\infty^+\big)^{-1} \times
\begin{cases} I,  & z \in \Omega_1, \\
\Phi_1\big(y,F_{(0,y)}^{-1}(z)\big), \quad & z \in \Omega_0 \cup \Omega_\infty.
\end{cases}
\end{align}
Equation \eqref{mu(0)} follows from (\ref{formulam0y}) and the fact that $\mu(x,y,z)=m_-(x,y,z)$ for $(x,y) \in D$ and $z \in \Gamma$. Since $0 \in \Omega_0$ and $F_{(0,y)}^{-1}(0) = \infty^-$, equation (\ref{hatm(0)}) follows by setting $z = 0$ in (\ref{formulam0y}) and using the first symmetry in (\ref{phi1symmetries}).
\end{proof}

\begin{lemma}\label{lemmaxalphawxxalphamux}
For $y \in [0,1)$, we have
\begin{align}\label{xalphawx}
\lim_{x \to 0} x^\alpha w_x(x,y, z) = \begin{cases}
\frac{1}{2} 
\begin{pmatrix} \bar{m}_1 & \bar{m}_1 \lambda(0,0, F_{(0,y)}^{-1}(z)) \\
m_1 \lambda(0,0, F_{(0,y)}^{-1}(z))  & m_1 \end{pmatrix}, \quad & z \in \Gamma_0, 
\\
0, & z \in \Gamma_1,
\end{cases}
\end{align}
and
\begin{align}\label{xalphamux}
\lim_{x \downarrow 0} x^\alpha \mu_x(x,y,z)
= \Pi(y,z), \qquad   z \in \Gamma_1,
\end{align}
where the function $\Pi(y,z)$  is defined by
$$\Pi(y,z) = - \frac{1}{\sqrt{1-y}}\frac{\Phi_1\big(y, \infty^+\big)^{-1}}{z+1}
\Phi_1\big(y, 0\big)\begin{pmatrix} 0 & \bar{m}_1  \\
m_1& 0  \end{pmatrix}\Phi_1\big(y, 0\big)^{-1}.$$
\end{lemma}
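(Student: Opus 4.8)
The plan is to prove the two displayed formulas separately: (\ref{xalphawx}) by direct differentiation of the jump matrix, and (\ref{xalphamux}) by recognizing $\lim_{x\downarrow 0}x^\alpha m_x$ as the solution of an auxiliary Riemann--Hilbert problem that can be solved in closed form.

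For (\ref{xalphawx}) I would start from the definition (\ref{jumpdef}) of $v$ and the chain rule. On $\Gamma_0$, with $k = F_{(x,y)}^{-1}(z)$, one has
\[
w_x(x,y,z) = \Phi_{0x}(x,k) + \Phi_{0k}(x,k)\,\tfrac{d}{dx}F_{(x,y)}^{-1}(z), \qquad \tfrac{d}{dx}F_{(x,y)}^{-1}(z) = -\tfrac{(z-1)^2}{4z}.
\]
Multiplying by $x^\alpha$ and letting $x\downarrow 0$, the second term vanishes because $x^\alpha\to 0$ while $\Phi_{0k}$ stays bounded (Lemma \ref{xpartlemma}$(f)$); for the first term I use $\Phi_{0x}=\mathsf{U}_0\Phi_0$, the continuity $\Phi_0(x,\cdot)\to I$, the normalization $\mathcal{E}_0(0)=1$, and $x^\alpha\mathcal{E}_{0x}(x)\to m_1$ to obtain the stated matrix, whose off-diagonal entries carry the factor $\lambda(0,0,F_{(0,y)}^{-1}(z))$. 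On $\Gamma_1$ the $\Phi_1$-factor does not depend on $x$, so $x^\alpha w_x=x^\alpha\Phi_{1k}\,\tfrac{d}{dx}F_{(x,y)}^{-1}(z)\to 0$. This gives (\ref{xalphawx}).

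For (\ref{xalphamux}) I would introduce $M(y,z):=\lim_{x\downarrow 0}x^\alpha m_x(x,y,z)$, which exists for each $z\in\hat{\C}\setminus\Gamma$ by Lemma \ref{claim4E}$(c)$ and is $O(z^{-1})$ at infinity. Differentiating $m=I+\mathcal{C}(\mu w)$ via (\ref{mxmux}), multiplying by $x^\alpha$, and sending $x\downarrow 0$ (all factors converge by Lemmas \ref{claim2E}, \ref{claim3E}, the formulas (\ref{w(0)})--(\ref{mu(0)}), and part (\ref{xalphawx}) above), one finds that $M$ satisfies the Riemann--Hilbert problem
\[
M_+ = M_- + \mu(0,y,\cdot)\,W_1 \ \text{ on } \Gamma_0, \qquad M_+ = M_-\,v(0,y,\cdot) \ \text{ on } \Gamma_1,
\]
where $W_1$ is the right-hand side of (\ref{xalphawx}) (supported on $\Gamma_0$) and $v(0,y,\cdot)=I$ on $\Gamma_0$; since $\mu=m_-$, the quantity sought is $\lim_{x\downarrow 0}x^\alpha\mu_x=M_-$. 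The decisive point is that the additive source lives only on $\Gamma_0$ while the nontrivial multiplicative jump lives only on $\Gamma_1$. I would then remove the multiplicative jump via $M=R\,m(0,y,\cdot)$, with $m(0,y,\cdot)$ given explicitly by (\ref{formulam0y}) and analytic across $\Gamma_0$: a short computation shows $R$ is analytic across $\Gamma_1$, obeys the purely additive jump $R_+-R_-=m(0,y,\cdot)\,W_1\,m(0,y,\cdot)^{-1}$ across $\Gamma_0$, and vanishes at infinity, so by the Plemelj formula $R(z)=\frac{1}{2\pi i}\int_{\Gamma_0}\frac{m(0,y,s)W_1(s)m(0,y,s)^{-1}}{s-z}\,ds$. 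For $z\in\Gamma_1$ this is a regular integral, whence $\lim_{x\downarrow 0}x^\alpha\mu_x(x,y,z)=R(z)\Phi_1(y,\infty^+)^{-1}$ by (\ref{mu(0)}). Inside the clockwise loop $\Gamma_0$ the integrand has a single simple pole at $z=-1=F_{(0,y)}(\{k=0\})$, arising from the pole of $\lambda(0,0,F_{(0,y)}^{-1}(\cdot))$ in (\ref{lambda00F0yinv}); evaluating by residues, using $\res_{s=-1}\lambda(0,0,F_{(0,y)}^{-1}(s))=-2/\sqrt{1-y}$ and $m(0,y,-1)=\Phi_1(y,\infty^+)^{-1}\Phi_1(y,0)$, collapses the expression precisely to $\Pi(y,z)$.

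The main obstacle is the factor $(I-\mathcal{C}_{w})^{-1}$ hidden in $\mu_x$: without additional structure the limit of $x^\alpha\mu_x$ is not directly computable. Reformulating it as a Riemann--Hilbert problem for $M$ and then factoring $M=R\,m(0,y,\cdot)$ is exactly what circumvents this, converting the operator inversion into an explicit scalar additive problem on $\Gamma_0$ whose solution reduces to one residue. The only delicate point is fixing the branch and sign of $\lambda(0,0,F_{(0,y)}^{-1}(\cdot))$ so that the residue equals $-2/\sqrt{1-y}$, in agreement with the prefactor in $\Pi$.
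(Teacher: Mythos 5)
Your proof of \eqref{xalphawx} is exactly the paper's argument: on $\Gamma_0$ the $\Phi_{0k}$-term is killed by the factor $x^\alpha$, and $x^\alpha\Phi_{0x}=x^\alpha\mathsf{U}_0\Phi_0\to\frac12\bigl(\begin{smallmatrix}\bar m_1 & \bar m_1\lambda\\ m_1\lambda & m_1\end{smallmatrix}\bigr)$ using $\Phi_0(0,\cdot)=I$, $\mathcal{E}_0(0)=1$ and $x^\alpha\mathcal{E}_{0x}\to m_1$; on $\Gamma_1$ the limit vanishes. For \eqref{xalphamux}, however, you take a genuinely different route. The paper works at the level of the singular integral equation: from $\mu=I+\mathcal{C}_w\mu$ it gets $x^\alpha\mu_x=(I-\mathcal{C}_w)^{-1}\mathcal{C}_-(\mu\,x^\alpha w_x)$, computes $\tilde\Pi:=\mathcal{C}_-\bigl[\mu(0,y,\cdot)W_1\bigr]$ by a residue at $z'=-1$, and then \emph{verifies} by a second residue computation (deforming $\Gamma_1$ to infinity) that the candidate $\Pi$ satisfies $(I-\mathcal{C}_{w(0,y,\cdot)})\Pi=\tilde\Pi$, so that $\Pi=\lim x^\alpha\mu_x$ by invertibility. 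You instead pass to the function $M=\lim_{x\downarrow 0}x^\alpha m_x$, read off its Riemann--Hilbert problem (additive source $\mu(0,y,\cdot)W_1$ on $\Gamma_0$, multiplicative jump $v(0,y,\cdot)$ on $\Gamma_1$), and solve it in closed form by factoring $M=R\,m(0,y,\cdot)$, reducing everything to a scalar additive problem on $\Gamma_0$ and a single residue at $s=-1$. I checked the details: the jump relations for $M$ and $R$ are correct (using $\mu(0,y,\cdot)=m(0,y,\cdot)$ on $\Gamma_0$ since $v(0,y,\cdot)=I$ there), $m(0,y,-1)=\Phi_1(y,\infty^+)^{-1}\Phi_1(y,0)$, the residue $-2/\sqrt{1-y}$ of $\lambda(0,0,F_{(0,y)}^{-1}(\cdot))$ agrees with the paper's branch convention in \eqref{lambda00F0yinv}, the clockwise orientation of $\Gamma_0$ supplies the correct overall sign, and $M_-=R\,\Phi_1(y,\infty^+)^{-1}$ on $\Gamma_1$ collapses to $\Pi(y,z)$. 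Your approach buys a derivation of $\Pi$ rather than a verification of a guessed formula, and it makes transparent why the answer is a single simple pole at $z=-1$; the paper's approach avoids having to justify that $M$ solves the stated RH problem (the interchange of $x\to0$ with boundary values, which you should make explicit via the $L^2(\Gamma)$-convergence in Lemmas \ref{claim2E} and \ref{claim3E}) at the cost of an extra contour-deformation computation. Both are correct and of comparable length.
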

\begin{proof}
It follows from (\ref{jumpdef}) and (\ref{Finversexyderivatives}) that $\lim_{x \to 0} x^\alpha w_x(x,y, z) =0$ for $z \in \Gamma_1$ and that, for $z\in \Gamma_0$,
\begin{align*}
\lim_{x \to 0} x^\alpha w_x(x,y,z)& = \lim_{x \to 0} x^\alpha \bigg\{\Phi_{0x}(x,F_{(x,y)}^{-1}(z))
+ \Phi_{0k}(x,F_{(x,y)}^{-1}(z))\frac{d}{dx} F_{(x,y)}^{-1}(z)\bigg\}
	\\ 
& =  \lim_{x \to 0} x^\alpha \Phi_{0x}(x,F_{(x,y)}^{-1}(z))
=  \lim_{x \to 0} x^\alpha \U_0(x,F_{(x,y)}^{-1}(z)).
\end{align*}
Recalling the definition (\ref{U0def}) of $\mathsf{U}_0$, \eqref{xalphawx} follows.

To prove (\ref{xalphamux}), we note that differentiation of the relation $\mu = I + \mathcal{C}_w \mu$ gives
\begin{align} \label{mux}
\mu_x = (I-\mathcal{C}_w)^{-1}\mathcal{C}_-(\mu w_x).
\end{align}
We first compute $\lim_{x \downarrow 0} \mathcal{C}_-(\mu x^\alpha w_x)$. Equations \eqref{mu(0)} and \eqref{xalphawx} imply, for $z \in \Gamma_1$,
\begin{align}\nonumber
&\Big\{\mathcal{C}_- \big[\lim_{x \to 0} x^\alpha  \mu(x,y,\cdot) w_x(x,y,\cdot)\big]\Big\}(z)
= \frac{\Phi_1\big(y,\infty^+\big)^{-1} }{2\pi i} 
	\\\nonumber
& \times \int_{\Gamma_0} \frac{\Phi_1\big(y,F_{(0,y)}^{-1}(z')\big) 
	\frac{1}{2} \Big(\begin{smallmatrix}\bar{m}_1 & \bar{m}_1 \lambda(0,0, F_{(0,y)}^{-1}(z')) \\
	m_1 \lambda(0,0, F_{(0,y)}^{-1}(z'))  & m_1 \end{smallmatrix}\Big) dz'}{z' -z}
\\ \label{Cmuxalphawx}
& = 
- \Phi_1\big(y,\infty^+\big)^{-1} \underset{z' = -1}{\res} 
\frac{\Phi_1(y,F_{(0,y)}^{-1}(z'))  \lambda(0,0, F_{(0,y)}^{-1}(z'))\left(\begin{smallmatrix} 0 & \bar{m}_1 \\
	m_1  & 0 \end{smallmatrix}\right)}{2(z' -z)} =: \tilde{\Pi}(y,z).
\end{align}
Recalling the expression (\ref{lambda00F0yinv}) for $\lambda(0,0, F_{(0,y)}^{-1}(z))$ and using that $F_{(0,y)}^{-1}(-1) = 0$, we find
\begin{align*}
\tilde{\Pi}(y,z) = -\frac{\Phi_1\big(y,\infty^+\big)^{-1} \Phi_1(y,0)  \begin{pmatrix} 0 & \bar{m}_1 \\
	m_1  & 0 \end{pmatrix} }{(z+1)\sqrt{1-y}}. 
\end{align*}
In view of (\ref{mux}), it only remains to show that $(I-\mathcal{C}_w)\Pi = \tilde{\Pi}$.
We have, for $z \in \Gamma_1$,
\begin{align*}
& (\mathcal{C}_{w(0,y,\cdot)} \Pi)(z)
= \frac{1}{2\pi i} \int_{\Gamma_1}
\frac{\Pi(y, z')\big(\Phi_1\big(y, F_{(0,y)}^{-1}(z')\big) - I\big)}{z' - z_-} dz'
	\\
& = -\frac{\Phi_1\big(y, \infty^+\big)^{-1}\Phi_1\big(y, 0\big)\begin{pmatrix} 0 & \bar{m}_1  \\
	m_1& 0  \end{pmatrix}\Phi_1\big(y, 0\big)^{-1}}{2\pi i\sqrt{1-y}} 
 \int_{\Gamma_1}
 \frac{\Phi_1\big(y, F_{(0,y)}^{-1}(z')\big) - I}{z' - z_-} \frac{dz'}{z'+1}.
\end{align*}
Deforming the contour to infinity and using that
$$\underset{z' = -1}{\res}\frac{\Phi_1\big(y, F_{(0,y)}^{-1}(z')\big) - I}{z' - z} \frac{1}{z'+1} = -\frac{\Phi_1(y, 0) - I}{z+1}, $$
a residue computation gives
\begin{align*}
& (\mathcal{C}_{w(0,y,\cdot)} \Pi)(z)
=
\frac{\Phi_1\big(y, \infty^+\big)^{-1}\Phi_1\big(y, 0\big)\begin{pmatrix} 0 & \bar{m}_1  \\
	m_1& 0  \end{pmatrix}\Phi_1\big(y, 0\big)^{-1}}{\sqrt{1-y}} \frac{\Phi_1(y, 0) - I}{z+1}.
\end{align*}
Simple algebra now shows that $(I-\mathcal{C}_w)\Pi = \tilde{\Pi}$.
\end{proof}

\begin{lemma} \label{lemmaPhi10}
For $y \in [0,1)$, we have
	\begin{align}\label{formulaPhi10}
	\Phi_1(y, 0) = \begin{pmatrix} e^{\int_0^y \frac{\overline{\mathcal{E}_{1y}(y')}}{2\re \mathcal{E}_1(y')} dy'} & 0 \\ 0 & e^{\int_0^y \frac{\mathcal{E}_{1y}(y')}{2\re \mathcal{E}_1(y')} dy'} \end{pmatrix}.
	\end{align}
\end{lemma}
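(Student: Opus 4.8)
The plan is to evaluate the defining Volterra integral equation (\ref{Phi0Phi1b}) for $\Phi_1$ directly at the point $k=0$ of $\mathcal{S}_{(0,y)}$, which is exactly the point appearing in the statement: indeed $F_{(0,y)}^{-1}(-1)=0$, and $k=0$ coincides with the branch point $x=0$ of $\mathcal{S}_{(0,y)}$. By part $(c)$ of Lemma \ref{ypartlemma}, $\Phi_1(y,\cdot)$ extends continuously and analytically across this branch point (which lies off $\Sigma_1$), so $\Phi_1(y,0)$ is unambiguously defined and the integral equation (\ref{Phi0Phi1b}) persists there by continuity.

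First I would observe that the kernel $\mathsf{V}_1$ simplifies dramatically at $k=0$. From (\ref{lambdadef}) with $x=0$ we have $\lambda(0,y',k)^2=\frac{k-(1-y')}{k}$, so $\lambda(0,y',k)\to\infty$ and hence $1/\lambda(0,y',k)\to 0$ as $k\to 0$, uniformly for $y'\in[0,y]$ (here $1-y'\geq 1-y>0$ stays bounded away from zero). Consequently the off-diagonal entries of $\mathsf{V}_1(y',k^\pm)$ given in (\ref{U0V1def}) vanish in the limit, and $\mathsf{V}_1(y',0)$ reduces to the diagonal matrix $\frac{1}{2\re\mathcal{E}_1(y')}\,\diag\!\big(\overline{\mathcal{E}_{1y}(y')},\mathcal{E}_{1y}(y')\big)$. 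The assumption $y^\alpha\mathcal{E}_{1y}\in C([0,1))$ with $\alpha\in[0,1)$ together with $\re\mathcal{E}_1>0$ guarantees that these entries are integrable on $[0,y]$, so dominated convergence justifies passing to the limit $k\to 0$ inside the integral in (\ref{Phi0Phi1b}).

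Next I would solve the resulting decoupled equation. At $k=0$ the Volterra equation becomes $\Phi_1(y,0)=I+\int_0^y \mathsf{V}_1(y',0)\Phi_1(y',0)\,dy'$ with $\mathsf{V}_1(y',0)$ diagonal. Since diagonal matrices commute, the unique solution is $\Phi_1(y,0)=\exp\!\big(\int_0^y \mathsf{V}_1(y',0)\,dy'\big)$, which is again diagonal and whose off-diagonal entries stay zero for all $y$. Equivalently, the two diagonal entries satisfy the scalar linear equations $\partial_y(\Phi_1)_{11}=\frac{\overline{\mathcal{E}_{1y}}}{2\re\mathcal{E}_1}(\Phi_1)_{11}$ and $\partial_y(\Phi_1)_{22}=\frac{\mathcal{E}_{1y}}{2\re\mathcal{E}_1}(\Phi_1)_{22}$ with initial value $1$ at $y=0$, and integrating these yields precisely the two exponentials in (\ref{formulaPhi10}).

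The only genuine subtlety---hardly an obstacle---is the interchange of limit and integral at the branch point and the verification that $\Phi_1(y,0)$ really is the limit of $\Phi_1(y,k^\pm)$ as $P\to 0$ on $\mathcal{S}_{(0,y)}$; both are controlled by the uniform decay $1/\lambda(0,y',k)\to 0$ and the continuity/analyticity statement in Lemma \ref{ypartlemma}$(c)$. Everything else is the explicit integration of a diagonal linear system.
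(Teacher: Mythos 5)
Your proposal is correct and follows essentially the same route as the paper: the decisive fact in both arguments is that $\lambda(0,y,0)=\infty$, so $1/\lambda$ vanishes at the branch point $k=0$, the coefficient matrix $\mathsf{V}_1(\cdot,0)$ becomes diagonal, and the two resulting scalar linear equations with initial value $1$ integrate to the exponentials in (\ref{formulaPhi10}). The only (minor) difference is that the paper first invokes the symmetries (\ref{phi1symmetries}) at the real branch point to conclude that $\Phi_1(y,0)$ has the form $\diag(f(y),\overline{f(y)})$ before solving for $f$, whereas you obtain diagonality directly from the decoupled Volterra system with identity initial data --- an equally valid and slightly more self-contained step.
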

\begin{proof}
Since $0$ is a real branch point of the Riemann surface $\Sigma_{(0,y)}$, the symmetries (\ref{phi1symmetries}) of $\Phi_1$ imply that
$$\Phi_1(y, 0^+) = \Phi_1(y, 0^-) = \sigma_3 \Phi_1(y, 0^+)\sigma_3 \quad \text{and} \quad
\Phi_1(y, 0) = \sigma_1\overline{\Phi_1(y, 0)}\sigma_1.$$
Hence $\Phi_1(y, 0)$ has the form
$$\Phi_1(y, 0) = \begin{pmatrix} f(y) & 0 \\ 0 & \overline{f(y)} \end{pmatrix},$$
where $f(y)$ is a function of $y$. Since $\lambda(0, y, 0) = \infty$, we can determine $f(y)$ by solving the equation
$$\Phi_{1y}(y, 0) = \frac{1}{2 \re \mathcal{E}_{1}(y)}  \begin{pmatrix} \overline{\mathcal{E}_{1y}(y)} & 0 \\ 0 & \mathcal{E}_{1y}(y) \end{pmatrix} \Phi_1(y,0),$$
which is a consequence of (\ref{lax}). This gives the desired statement.
\end{proof}

The following lemma completes the proof of Theorem \ref{mainth4}.

\begin{lemma}\label{lemmaxalphaEx} For $y \in [0,1)$, we have
	\begin{align}\label{formulaxalphaEx}
 \lim_{x \downarrow 0} x^\alpha \mathcal{E}_x(x,y) = m_1 \frac{e^{i\int_0^y \frac{\im \mathcal{E}_{1y}(y')}{\re \mathcal{E}_1(y')} dy'} \re \mathcal{E}_1(y)}{\sqrt{1-y}}.
	\end{align}
\end{lemma}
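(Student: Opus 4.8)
The plan is to read the limit off directly from the representation (\ref{calEx}), which writes $x^\alpha\mathcal{E}_x$ as a rational expression in the entries of $\hat m(x,y)=m(x,y,0)$ and of $\hat m_x(x,y)$. Since all these entries are continuous up to $x=0$ by Lemma \ref{claim4E}, and since (as I check first) the denominator $(1+\hat m_{11}+\hat m_{21})^2$ does not vanish at $x=0$, it suffices to compute $\hat m(0,y)$ and $\lim_{x\downarrow0}x^\alpha\hat m_x(x,y)$ and substitute. The value $\hat m(0,y)$ is given by (\ref{hatm(0)}); inserting the explicit form (\ref{Phi1atinftyplus}) of $\Phi_1(y,\infty^+)$ and repeating the algebra from the proof of Lemma \ref{claim3} (with $\mathcal{E}$ replaced by $\mathcal{E}_1(y)$) gives $\hat m_{11}(0,y)=\frac{1+|\mathcal{E}_1|^2}{2\re\mathcal{E}_1}$ and $\hat m_{21}(0,y)=\frac{(1-\mathcal{E}_1)(1+\overline{\mathcal{E}_1})}{2\re\mathcal{E}_1}$, whence $1+\hat m_{11}(0,y)+\hat m_{21}(0,y)=\frac{1+\overline{\mathcal{E}_1}}{\re\mathcal{E}_1}\neq0$.

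For the derivative I start from $m=I+\mathcal{C}(\mu w)$, so that $\hat m_x=\mathcal{C}(\mu_x w+\mu w_x)$ evaluated at the interior point $z=0\in\Omega_\infty$, i.e. $x^\alpha\hat m_x=\frac{1}{2\pi i}\int_\Gamma\frac{(x^\alpha\mu_x)w+\mu(x^\alpha w_x)}{s}\,ds$. The continuity statements of Lemmas \ref{claim2E} and \ref{claim3E} (convergence of $w,x^\alpha w_x$ in $L^\infty(\Gamma)$ and of $\mu-I,x^\alpha\mu_x$ in $L^2(\Gamma)$ as $x\downarrow0$) let me pass the limit under the integral sign. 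Using $w(0,y,\cdot)|_{\Gamma_0}=0$ and $\lim_{x\downarrow0}x^\alpha w_x|_{\Gamma_1}=0$ from Lemmas \ref{lemmaformulasx=0} and \ref{lemmaxalphawxxalphamux}, only one product survives on each subcontour, and the limit collapses to $\lim_{x\downarrow0}x^\alpha\hat m_x=\frac{1}{2\pi i}\int_{\Gamma_0}\frac{\mu(0,y,s)L_0(s)}{s}\,ds+\frac{1}{2\pi i}\int_{\Gamma_1}\frac{\Pi(y,s)(\Phi_1(y,F_{(0,y)}^{-1}(s))-I)}{s}\,ds$, with $L_0=\lim_{x\downarrow0}x^\alpha w_x|_{\Gamma_0}$ from (\ref{xalphawx}) and $\Pi$ from (\ref{xalphamux}).

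Both integrals will be computed by residues. In the limit $x\downarrow0$ the loop $\Gamma_0$ contracts onto $z=-1$, and the only singularity of the first integrand there is the simple pole of $\lambda(0,0,F_{(0,y)}^{-1}(\cdot))$ recorded in (\ref{lambda00F0yinv}); here I must use the branch convention stated after (\ref{lambda00F0yinv}) to fix the sign of the residue (the same convention that produced $-\tfrac{2}{\sqrt{1-y}}$ in the linear computation), and then $F_{(0,y)}^{-1}(-1)=0$ gives $\mu(0,y,-1)=\Phi_1(y,\infty^+)^{-1}\Phi_1(y,0)$. For the second integral the branch cut of $\Phi_1(y,F_{(0,y)}^{-1}(\cdot))$ lies inside $\Gamma_1$, so I instead deform the clockwise loop outward to $\infty$, picking up the simple poles at $s=0$ and $s=-1$; using $F_{(0,y)}^{-1}(0)=\infty^-$, the symmetry $\Phi_1(y,\infty^-)=\sigma_3\Phi_1(y,\infty^+)\sigma_3$, and $F_{(0,y)}^{-1}(-1)=0$, this integral equals $\Phi_1(y,\infty^+)^{-1}\Phi_1(y,0)N\Phi_1(y,0)^{-1}\big(\Phi_1(y,0)-\sigma_3\Phi_1(y,\infty^+)\sigma_3\big)/\sqrt{1-y}$, where $N=\bigl(\begin{smallmatrix}0&\bar m_1\\ m_1&0\end{smallmatrix}\bigr)$. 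The decisive simplification is that the $\Gamma_0$ contribution exactly cancels the $\Phi_1(y,0)$-part of the $\Gamma_1$ contribution, leaving the single clean term $\lim_{x\downarrow0}x^\alpha\hat m_x=-\frac{1}{\sqrt{1-y}}\Phi_1(y,\infty^+)^{-1}\Phi_1(y,0)N\Phi_1(y,0)^{-1}\sigma_3\Phi_1(y,\infty^+)\sigma_3$.

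The final step is bookkeeping. I insert the diagonal form of $\Phi_1(y,0)$ from Lemma \ref{lemmaPhi10}, using $(\Phi_1(y,0))_{11}(\Phi_1(y,0))_{22}=e^{\int_0^y(\ln\re\mathcal{E}_1)_{y'}dy'}=\re\mathcal{E}_1(y)$ and $(\Phi_1(y,0))_{22}/(\Phi_1(y,0))_{11}=e^{i\int_0^y\frac{\im\mathcal{E}_{1y'}}{\re\mathcal{E}_1}dy'}$ (both using $\re\mathcal{E}_1(0)=1$), together with $\Phi_1(y,\infty^+)$ and the entries $\hat m_{11}(0,y),\hat m_{21}(0,y)$ above, into (\ref{calEx}). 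In the combination $\hat m_{21}\hat m_{11x}-(1+\hat m_{11})\hat m_{21x}$ the off-diagonal $\bar m_1$-terms proportional to one phase cancel, and the surviving terms assemble into $m_1\,e^{i\int_0^y\frac{\im\mathcal{E}_{1y'}}{\re\mathcal{E}_1}dy'}\re\mathcal{E}_1(y)/\sqrt{1-y}$, as claimed. I expect the main obstacle to be exactly this matrix bookkeeping, together with getting the residue signs right in the $\Gamma_0$ and $\Gamma_1$ evaluations so that the cancellation above occurs; the analytic input (limit interchange and contour deformation) is routine given the preceding lemmas.
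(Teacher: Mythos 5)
Your proposal is correct and follows essentially the same route as the paper's proof: it starts from \eqref{calEx}, computes $\hat m(0,y)$ from \eqref{hatm(0)} and $\lim_{x\downarrow 0}x^\alpha\hat m_x(x,y)$ by splitting $m_x=\mathcal{C}(\mu_x w)+\mathcal{C}(\mu w_x)$, evaluates the two Cauchy integrals at $z=0$ by residues at $z=-1$ and $z=0$ using Lemmas \ref{lemmaformulasx=0} and \ref{lemmaxalphawxxalphamux}, and your observed cancellation is exactly what produces the paper's formula \eqref{formulaxalphamx} before the final substitution of Lemma \ref{lemmaPhi10}. The only (harmless) imprecision is the phrase that $\Gamma_0$ ``contracts onto $z=-1$'': the contour is fixed, and it is the singularity inside $\Gamma_0$ that reduces to the simple pole at $-1$ in the limit.
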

\begin{proof}
We first compute $ \lim_{x \downarrow 0} x^\alpha m_x(x,y,0)$. Proceeding as in the proof of (\ref{xalphamux}), we find
\begin{align}\label{Cxalphamuwx}
\mathcal{C} \big[\lim_{x \to 0} x^\alpha  \mu(x,y,\cdot) w_x(x,y,\cdot)\big](0)=-\frac{\Phi_1\big(y,\infty^+\big)^{-1} \Phi_1(y,0)}{\sqrt{1-y}}\begin{pmatrix} 0 & \bar{m}_1 \\
	m_1  & 0 \end{pmatrix},
\end{align}
and
\begin{align}\nonumber
\mathcal{C} \big[\lim_{x \to 0} x^\alpha  \mu_x(x,y,\cdot) w(x,y,\cdot)\big](0)
= &\; \frac{\Phi_1\big(y, \infty^+\big)^{-1}
\Phi_1\big(y, 0\big)}{\sqrt{1-y}}\begin{pmatrix} 0 & \bar{m}_1  \\
m_1& 0  \end{pmatrix}\Phi_1\big(y, 0\big)^{-1}
\\ \label{Cxalphamuxw}
& \times 
\left( \Phi_1(y,0) - \sigma_3 \Phi_1(y,\infty^+) \sigma_3 \right),
\end{align}
where the derivation of (\ref{Cxalphamuxw}) employs Lemma \ref{lemmaformulasx=0} and Lemma \ref{lemmaxalphawxxalphamux} as well as the residue calculation
$$-\frac{1}{2\pi i} \int_{\Gamma_1} \frac{\Phi_1(y, F_{(0,y)}^{-1}(z)) - I}{z} \frac{dz}{z+1} = \Phi_1(y,0) - \sigma_3 \Phi_1(y,\infty^+) \sigma_3.$$
Adding (\ref{Cxalphamuwx}) and (\ref{Cxalphamuxw}) and recalling (\ref{mxmux}), we obtain
\begin{align}\nonumber
\lim_{x \to 0} x^\alpha m_x(x,y,0) 
= & - \frac{1}{\sqrt{1-y}}\Phi_1\big(y, \infty^+\big)^{-1}
\Phi_1\big(y, 0\big)\begin{pmatrix} 0 & \bar{m}_1  \\
m_1& 0  \end{pmatrix}
	\\\label{formulaxalphamx}
&\times \Phi_1\big(y, 0\big)^{-1}\sigma_3 \Phi_1(y,\infty^+) \sigma_3.
\end{align}
Substituting \eqref{Phi1atinftyplus}, \eqref{hatm(0)}, \eqref{formulaPhi10}, and \eqref{formulaxalphamx} into (\ref{calEx}), long but straightforward computations yield (\ref{boundarylimita}).
\end{proof}

\section{Examples}\label{examplesec}
We consider two examples of exact solutions---one with collinear polarization and one with noncollinear polarization. For each example, we verify explicitly that the formulas (\ref{boundarylimit}) of Theorem \ref{mainth4} on the behavior near the boundary are satisfied.

\subsection{The Khan-Penrose solution}
The Khan-Penrose \cite{KP1971} solution is given by the potential
		\[
		\mathcal{E}(x,y)= \frac{1+\sqrt{x}\sqrt{1-y}+\sqrt{y}\sqrt{1-x}}{1-\sqrt{x}\sqrt{1-y}-\sqrt{y}\sqrt{1-x}}, \qquad (x,y) \in D.
		\]
		Straightforward computations show that $m_1=1=m_2$ and 
		\begin{align*}
		 \lim_{x \downarrow 0} \sqrt{x} \mathcal{E}_x(x,y) &= \frac{\sqrt{1-y}}{(1-\sqrt{y})^2} = m_1 \frac{e^{i\int_0^y \frac{\im \mathcal{E}_{1y}(y')}{\re \mathcal{E}_1(y')} dy'} \re \mathcal{E}_1(y)}{\sqrt{1-y}} =\frac{\re \mathcal{E}_1(y)}{\sqrt{1-y}},
		 \\
		  \lim_{y \downarrow 0} \sqrt{y} \mathcal{E}_y(x,y) &= \frac{\sqrt{1-x}}{(1-\sqrt{x})^2} = m_2 \frac{e^{i\int_0^x \frac{\im \mathcal{E}_{0x}(x')}{\re \mathcal{E}_1(x')} dx'} \re \mathcal{E}_0(x)}{\sqrt{1-x}} =\frac{\re \mathcal{E}_0(x)}{\sqrt{1-x}}.
		\end{align*}

\subsection{The Nutku-Halil solution}	
One version of the Nutku-Halil \cite{NH1977} solution is given by
		\[
		\mathcal{E}(x,y)= \frac{1-i\sqrt{x}\sqrt{1-y}+i\sqrt{y}\sqrt{1-x}}{1+i\sqrt{x}\sqrt{1-y}-i\sqrt{y}\sqrt{1-x}}, \qquad (x,y)\in D.
		\]
		In this case, $m_1=-i=-m_2$ and we compute
		\begin{align*}
			\lim_{x \downarrow 0} \sqrt{x} \mathcal{E}_x(x,y) &= \frac{i\sqrt{1-y}}{(i+\sqrt{y})^2}  = m_1 \frac{e^{i\int_0^y \frac{\im \mathcal{E}_{1y}(y')}{\re \mathcal{E}_1(y')} dy'} \re \mathcal{E}_1(y)}{\sqrt{1-y}},
			\\
			\lim_{y \downarrow 0} \sqrt{y} \mathcal{E}_y(x,y) &= -\frac{i\sqrt{1-x}}{(i-\sqrt{x})^2}  = m_2 \frac{e^{i\int_0^x \frac{\im \mathcal{E}_{0x}(x')}{\re \mathcal{E}_1(x')} dx'} \re \mathcal{E}_0(x)}{\sqrt{1-x}}.
		\end{align*}

\appendix
\section{Gravitational waves and the hyperbolic Ernst equation} \label{Aapp}
\renewcommand{\theequation}{A.\arabic{equation}}
It is shown in Eq. (11.7) in \cite{G1991} that the Ernst potential $\mathcal{E}$ satisfies
$$2(\re \mathcal{E}) \left(2\mathcal{E}_{uv} - U_u \mathcal{E}_v - U_v \mathcal{E}_u\right) = 4\mathcal{E}_u \mathcal{E}_v.$$
where $e^{-U(u,v)} = f(u) + g(v)$ and $f(u)$ and $g(v)$ are monotonically decreasing for positive argument and $f(0) = g(0) = 1/2$. 
(Note that Griffiths writes $Z$ for the Ernst potential.)
%The metric has a singularity (a curvature or at least coordinate singularity) as $f + g \to 0$. 
As suggested by Szekeres \cite{S1972}, it is possible to use $(f, g)$ as coordinates. 
This leads to the equation
\begin{align}\label{ernstfg}
2(\re \mathcal{E}) \left(2\mathcal{E}_{fg} + \frac{\mathcal{E}_f + \mathcal{E}_g}{f+g}\right) = 4\mathcal{E}_f \mathcal{E}_g,
\end{align}
where $(f,g)$ belongs to the triangular region 
$$\bigg\{(f,g) \in \R^2 \, \bigg| \, f \leq \frac{1}{2}, \; g \leq \frac{1}{2}, \; f + g > 0\bigg\}.$$
The change of variables $x = \frac{1}{2} - g$, $y = \frac{1}{2} - f$ transforms (\ref{ernstfg}) into (\ref{ernst}).

In order for the solution to describe gravitational waves, the following boundary condition must be satisfied (Eq. (7.15) in \cite{G1991}; see also (11.23) in \cite{G1991} but in (11.23) equation $(f,g)$ approaches the corner whereas in (7.15) the two edges are approached; also in (7.15) there is a factor $(f+g)$ missing; this factor comes from (7.9))
\begin{align*}
& \lim_{g \to \frac{1}{2}} \bigg[\Big(\frac{1}{2} -g\Big) (f+g)\frac{|\mathcal{E}_g|^2}{(\mathcal{E} + \bar{\mathcal{E}})^2}\bigg] = \frac{k_2}{2},
	\\
& \lim_{f \to \frac{1}{2}} \bigg[\Big(\frac{1}{2} -f\Big) (f+g) \frac{|\mathcal{E}_f|^2}{(\mathcal{E} + \bar{\mathcal{E}})^2}\bigg] = \frac{k_1}{2},
\end{align*}
for some constants $k_1, k_2 \in [\frac{1}{2}, 1)$. In terms of $(x,y)$,  these conditions become
\begin{align*}
& \lim_{x \to 0} \frac{x(1-x-y)|\mathcal{E}_x|^2}{(\mathcal{E} + \bar{\mathcal{E}})^2}= \frac{k_2}{2},
	\\
& \lim_{y \to 0} \frac{y(1-x-y)|\mathcal{E}_y|^2}{(\mathcal{E} + \bar{\mathcal{E}})^2} = \frac{k_1}{2},
\end{align*}
for some constants $k_1, k_2 \in [\frac{1}{2}, 1)$. That is, since $\re \mathcal{E} > 0$,
\begin{align*}
& \lim_{x \to 0} \sqrt{x} \sqrt{1-x-y}\frac{|\mathcal{E}_x(x,y)|}{\re \mathcal{E}(x,y)} = \sqrt{2k_2} = m_1, \qquad y \in [0,1),
	\\
& \lim_{y \to 0} \sqrt{y} \sqrt{1-x-y}\frac{|\mathcal{E}_y(x,y)|}{\re \mathcal{E}(x,y)} = \sqrt{2k_1} = m_2,\qquad x \in [0,1),
\end{align*}
for some constants $m_1 , m_2 \in [1, \sqrt{2})$. 
If we assume that $\mathcal{E} \in C(D)$, these conditions become
\begin{align*}
& \lim_{x \to 0} \sqrt{x} \sqrt{1-y}\frac{|\mathcal{E}_x(x,y)|}{\re \mathcal{E}_1(y)} = \sqrt{2k_2} = m_1, \qquad y \in [0,1),
	\\
& \lim_{y \to 0} \sqrt{y} \sqrt{1-x}\frac{|\mathcal{E}_y(x,y)|}{\re \mathcal{E}_0(x)} = \sqrt{2k_1} = m_2,\qquad x \in [0,1).
\end{align*}
These are the conditions given in (\ref{boundaryconditions}) with $\alpha = 1/2$.
In particular,
\begin{align*}
& \mathcal{E}_{0x}(x) = \frac{m_1 + o(1)}{\sqrt{x}} \quad \text{i.e.} \quad \mathcal{E}_{0}(x) \sim 2m_1\sqrt{x}, \qquad  x \downarrow 0,
	\\
& \mathcal{E}_{1y}(y) = \frac{m_2 + o(1)}{\sqrt{y}} \quad \text{i.e.} \quad \mathcal{E}_{1}(y) \sim 2m_2\sqrt{y}, \qquad  y \downarrow 0,
\end{align*}
where $m_1, m_2 \in [1, \sqrt{2})$.

\bigskip
\noindent
{\bf Acknowledgement} {\it The authors acknowledge support from the European Research Council, Grant Agreement No. 682537, the Swedish Research Council, Grant No. 2015-05430, and the G\"oran Gustafsson Foundation, Sweden.}

\bibliographystyle{plain}
%\bibliography{is}

\end{document}